\author{Eske Ewert}
\title{Pseudo-differential extension for graded nilpotent Lie~groups}
\address[Eske Ewert]{Institut für Analysis, Leibniz Universität Hannover, Welfengarten 1,
	30167 Hannover}
\tikzset{cd/.style=matrix of math nodes,row sep=2em,column sep=2em, text height=1.5ex, text depth=0.5ex}
\tikzset{cdar/.style=->,auto}
\tikzset{dar/.style={double,double equal sign distance,-implies}}%style for 2-arrows in triangles
\tikzset{mid/.style={anchor=mid}} % put labels on the arrow
\tikzset{triar/.style={anchor=mid,->}}%style for 1-arrows in triangles
\tikzset{tridar/.style={anchor=mid,double,double equal sign distance,-implies}}%style for 2-arrows in triangles
\tikzset{narrowfill/.style={inner sep=0pt, fill=white}}% style for nodes with filled background
\setlist[enumerate,1]{label=\textup{(\arabic*)}}
\setlist[enumerate,2]{label=\textup{(\alph*)}}
\newtheorem{theorem}{Theorem}[section]
\newtheorem{lemma}[theorem]{Lemma}
\newtheorem{proposition}[theorem]{Proposition}
\newtheorem{corollary}[theorem]{Corollary}
\theoremstyle{definition}
\newtheorem{definition}[theorem]{Definition}
\theoremstyle{remark}
\newtheorem{remark}[theorem]{Remark}
\newtheorem{example}[theorem]{Example}
\newcommand*{\defeq}{\mathrel{\vcentcolon=}}
\DeclarePairedDelimiter{\abs}{\lvert}{\rvert}% absolute value
\DeclarePairedDelimiter{\norm}{\lVert}{\rVert}% norm
\DeclarePairedDelimiterX{\braket}[2]{\langle}{\rangle}{#1\,\delimsize\vert\,\mathopen{}#2}% inner product
\DeclarePairedDelimiterX{\braketop}[3]{\langle}{\rangle}{#1\,\delimsize\vert #2\delimsize\vert\,\mathopen{}#3}% inner product with operator
\DeclarePairedDelimiterX{\BRAKET}[2]{\langle}{\rangle}{\!\delimsize\langle#1\,\delimsize\vert\,\mathopen{}#2\delimsize\rangle\!}% inner product
\DeclarePairedDelimiter{\BRA}{\langle\!\langle}{\rvert}% ket-bra notation
\DeclarePairedDelimiter{\KET}{\lvert}{\rangle\!\rangle}% ket-bra notation
\DeclarePairedDelimiterX{\setgiven}[2]{\{}{\}}{#1\,{:}\,\mathopen{}#2}% set given by
\newcommand{\idealin}{\mathrel{\triangleleft}} % relation of being an ideal
\newcommand{\properideal}{%
	\mathrel{\ooalign{$\lneq$\cr\raise.22ex\hbox{$\lhd$}\cr}}}
\newcommand*{\injto}{\hookrightarrow}
\newcommand*{\acts}{\curvearrowright}
\newcommand*{\Mult}{\mathcal M}%multiplier algebra
\newcommand{\C}{\mathbb{C}}
\newcommand{\N}{\mathbb{N}}
\newcommand{\Z}{\mathbb{Z}}
\newcommand{\R}{\mathbb{R}}
\newcommand{\T}{\mathbb{T}}
\newcommand{\Uni}{\mathcal{U}}
\newcommand{\Rp}{{\mathbb{\R}_{>0}}}
\newcommand{\lspan}{\mathrm{span}}
\newcommand{\Rock}{R}
\newcommand{\kernel}{\mathcal{K}}
\DeclareMathOperator{\cp}{{cp}}
\newcommand*{\Hilm}[1][F]{\mathcal #1}% Hilbert module
\newcommand*{\Hils}[1][H]{\mathcal #1}% Hilbert space
\newcommand*{\Cont}{\mathrm C}% continuous functions
\newcommand{\Schwartz}{\mathcal{S}}
\newcommand*{\Fix}{\mathrm{Fix}}% used for Generalized Fixed Point algebras
\newcommand*{\Prime}{\mathrm{Prime}}
\newcommand*{\nb}{\nobreakdash}
\newcommand*{\Cst}{\mathrm C^*}% C*-algebra
\newcommand*{\Cred}{\mathrm C^*_\mathrm r}% C*-algebra
\newcommand*{\Prim}{\mathrm{Prim}}
\newcommand*{\diff}{\,\mathrm{d}}% differential in integrals
\newcommand{\Four}{\mathcal F}
\newcommand{\Comp}{\mathbb K}
\newcommand{\Bound}{\mathbb B}
\newcommand{\lie}[1]{\mathfrak{#1}} %Lie algebras
\newsavebox\myboxA
\newsavebox\myboxB
\newlength\mylenA
\newcommand*\xoverline[2][0.75]{%
	\sbox{\myboxA}{$\m@th#2$}%
	\setbox\myboxB\null% Phantom box
	\ht\myboxB=\ht\myboxA%
	\dp\myboxB=\dp\myboxA%
	\wd\myboxB=#1\wd\myboxA% Scale phantom
	\sbox\myboxB{$\m@th\overline{\copy\myboxB}$}%  Overlined phantom
	\setlength\mylenA{\the\wd\myboxA}%   calc width diff
	\addtolength\mylenA{-\the\wd\myboxB}%
	\ifdim\wd\myboxB<\wd\myboxA%
	\rlap{\hskip 0.5\mylenA\usebox\myboxB}{\usebox\myboxA}%
	\else
	\hskip -0.5\mylenA\rlap{\usebox\myboxA}{\hskip 0.5\mylenA\usebox\myboxB}%
	\fi}
\newcommand*{\conj}[1]{\overline{#1}}
\newcommand*{\cl}[1]{\xoverline{#1}}% closure
\newcommand*{\Id}{\mathrm{id}}% identity map
\newcommand*{\K}{\mathrm{K}}% K-theory
\newcommand*{\KK}{\mathrm{KK}}% KK-theory
\newcommand*{\ev}{\mathrm{ev}}% evaluation homomorphism
\DeclareMathOperator{\supp}{supp}% support
\DeclareMathOperator{\triv}{triv}
\DeclareMathOperator{\Aut}{Aut}
\DeclareMathOperator{\Ad}{Ad}
\DeclareMathOperator{\tr}{tr}% trace
\DeclareMathOperator{\Ind}{Ind} %induced rep
\DeclareMathOperator{\coad}{co-ad}
\DeclareMathOperator{\coAd}{co-Ad}
\DeclareMathOperator{\Span}{span}
\DeclareMathOperator{\class}{cl}
\DeclareMathOperator{\princg}{princ}
\DeclareMathOperator{\Op}{Op}
\DeclareMathOperator{\VN}{VN}
\newcommand{\Rel}{\mathcal{R}} %relatively continuous subset
\newcommand{\si}{\mathrm{si}} %square-integrable
\newcommand{\lieg}{\mathfrak{g}} %Lie algebra of G
\newcommand{\lieh}{\mathfrak{h}}
\newcommand{\grpd}{\mathcal{G}}
\newcommand{\refcheckize}[1]{%
	\expandafter\let\csname @@\string#1\endcsname#1%
	\expandafter\DeclareRobustCommand\csname relax\string#1\endcsname[1]{%
		\csname @@\string#1\endcsname{##1}\wrtusdrf{##1}}%
	\expandafter\let\expandafter#1\csname relax\string#1\endcsname
}
\begin{document}

\begin{abstract}
	Classical pseudo-differential operators of order zero on a graded nilpotent Lie group~\(G\) form a \(^*\)-subalgebra of the bounded operators on \(L^2(G)\). We show that its \(\Cst\)-closure is an extension of a noncommutative algebra of principal symbols by compact operators. As a new approach, we use the generalized fixed point algebra of an \(\Rp\)\nb-action on a certain ideal in the \(\Cst\)-algebra of the tangent groupoid of \(G\). The action takes the graded structure of \(G\) into account. Our construction allows to compute the \(\K\)-theory of the algebra of symbols.
\end{abstract} 

\maketitle
 
\makeatletter
\providecommand\@dotsep{5}
\makeatother

\section{Introduction}
A homogeneous Lie group is a nilpotent Lie group \(G\) with a dilation action of~\(\Rp\) by group automorphisms. The dilation action allows to scale with different speed in different tangent directions. A slightly less general class are graded nilpotent Lie groups. A prominent example is the Heisenberg group whose Lie algebra is generated by \(\{X,Y,Z\}\) with \([X,Y]=Z\) and \([X,Z]=[Y,Z]=0\). Then \(A_\lambda(X)=\lambda X\), \(A_\lambda(Y)=\lambda Y\) and \(A_\lambda(Z)=\lambda^2Z\) for \(\lambda>0\) define dilations on the Heisenberg algebra. The dilations induce a new notion of order and homogeneity for differential operators on \(G\). For example, in the case of the Heisenberg group, one would assign order \(2\) to \(Z\) and order \(1\) to \(X\) and \(Y\). 

Certain hypoelliptic operators, like Hörmander's sum of squares or Kohn's Laplacian \(\square_b\), can be analysed using homogeneous convolution operators on homogeneous Lie groups \cite{folland1977applications}. Therefore, it is desirable to have a pseudo-differential calculus that takes the homogeneous structure into account. 
In the 80s, a kernel-based pseudo-differential calculus for homogeneous Lie groups was developed in \cite{christ1992pseudo}. Recently, Fischer and Ruzhansky introduced in \cite{fischer2016quantization} a symbolic calculus for graded nilpotent Lie groups. Instead of functions on the cotangent bundle as in the Euclidean case, the symbols are given here by fields of operators using operator valued Fourier transform. This uses that the representation theory of graded nilpotent Lie groups is well-known and the abstract Plancherel Theorem \cite{dixmier} applies. In~\cite{fischer2017defect} homogeneous expansions, classical pseudo-differential operators and their principal symbols were defined for this calculus. Graded nilpotent Lie groups are also instances of filtered manifolds, for which a pseudo-differential calculus was developed in \cite{erp2015groupoid}.

This article describes a different approach to pseudo-differential operators on homogeneous Lie groups using generalized fixed point algebras. Generalized fixed point algebras were introduced by Rieffel \cites{rieffel1998,rieffel1988} to generalize proper group actions on spaces to the noncommutative setting. If a locally compact group~\(H\) acts properly on a locally compact Hausdorff space \(X\), the orbit space~\(H\backslash X\) is again locally compact. The generalized fixed point algebra is in this case \(\Cont_0(H\backslash X)\), which can be viewed as a subalgebra of the \(H\)\nb-invariant multipliers of \(A=\Cont_0(X)\). Moreover, \(\Rel=\Cont_c(X)\) can be completed into an imprimitivity bimodule between an ideal in the reduced crossed product \(\Cred(H,\Cont_0(X))\) and the generalized fixed point algebra. 
In \cite{meyer2001} it is investigated for which group actions \(\alpha\colon H \curvearrowright A\) on a \(\Cst\)\nb-algebra \(A\), one can build a generalized fixed point algebra which is Morita--Rieffel equivalent to an ideal in \(\Cred(H,A)\). The crucial step is to find a dense subset \(\Rel\subset A\) that is continuously square-integrable. As it turns out, such \(\Rel\) can fail to exist or to be unique. If \(\Rel\) satisfies the requirements, the generalized fixed point algebra \(\Fix^H(A,\Rel)\) is generated by averages \(\int_H\alpha_x(a^*b)\diff x\) for \(a,b\in\Rel\), understood as \(H\)-invariant multipliers of \(A\). 

A classical pseudo-differential operator of order \(k\) on a manifold \(M\) is determined up to operators of lower order by its principal symbol. The principal symbol is a \(k\)\nb-homogeneous function on \(T^*M\!\setminus\!( M\times\{0\})\). Hence, for \(k=0\) the principal symbol is a generalized fixed point of the scaling action of \(H=\Rp\) on \(T^*M\!\setminus\! (M\times\{0\})\) in the cotangent direction. Therefore, the \(\Cst\)\nb-closure of the~\(0\)\nb-homogeneous symbols \(\Cont_0(S^*M)\) is a generalized fixed point algebra.
As it turns out, not only the principal symbols, but also the pseudo-differential operators of order zero themselves are generalized fixed points. A special case of the results in \cite{debordskandalis2014} is that the classical pseudo-differential calculus for a manifold \(M\) can be recovered from Connes' tangent groupoid \cite{connes}. Moreover, they observed that each pseudo-differential operator of order zero can be written as an average \(\int_0^ \infty f_t\tfrac{\diff t}{t}\), where \((f_t)_{t\in[0,\infty)}\) is an element of the \(\Cst\)-algebra of the tangent groupoid of \(M\) satisfying certain conditions. Elements of a generalized fixed point algebra are obtained in exactly this fashion.

It was shown in \cite{miller2017} that the \(\Cst\)-closure of classical pseudo-differential operators of order zero on \(\R^n\) inside the bounded operators on \(L^2(\R^n)\) is a generalized fixed point algebra. In fact, it is the generalized fixed point algebra of a zoom action of \(\Rp\) on an ideal in the \(\Cst\)-algebra of the tangent groupoid. 
In this article, we generalize this result to graded nilpotent Lie groups \(G\). We describe a variant of Connes' tangent groupoid
\[\grpd= (TG\times \{0\}\cup (G\times G) \times (0,\infty)\rightrightarrows G\times [0,\infty)),\]
where the operation on the tangent bundle \(TG\) is given by group multiplication in the fibres. This is a special case of the tangent groupoid of a filtered manifold which was considered before in \cites{erp2017tangent, choi2019tangent, haj-higson}. It is equipped with a zoom action of \(\Rp\), which is induced by the dilations on \(G\).

Let \(J\) be the ideal in \(\Cst(\grpd)\) that consists of all elements whose restriction to~\((x,0)\), which is an element of \(\Cst(T_xG)\cong\Cst(G)\), lies in the kernel of the trivial representation of \(G\) for all \(x\in G\). In the commutative case, this corresponds under Fourier transform to taking out the zero section in~\(T^*\R^n\), which is necessary to obtain a proper action. We show that there is a subset \(\Rel\subset J\) such that \(\Fix^\Rp(J,\Rel)\) is defined. Let \(J_0\) and \(\Rel_0\) be the restriction of \(J\) and \(\Rel\) to \(t=0\), respectively.
Then we obtain another generalized fixed point algebra \(\Fix^\Rp(J_0,\Rel_0)\). These fit into an extension
\begin{equation}\label{eq:fix_pdo_ext}
\begin{tikzcd}
\Comp(L^2 G)\arrow[hook,r] & \Fix^\Rp(J,\Rel)\arrow[r,twoheadrightarrow] &\Fix^\Rp(J_0,\Rel_0).
\end{tikzcd}
\end{equation}

We call this the pseudo-differential extension of \(G\). We justify the name by showing that this extension is the \(\Cst\)-completion of the order zero pseudo-differential extension defined in \cite{fischer2017defect}
\begin{equation*}
\begin{tikzcd}
\Psi^{-1}_{\class}\arrow[hook,r] & \Psi^0_{\class}\arrow[r,twoheadrightarrow,"\princg"] &\dot{S}_c^0.
\end{tikzcd}
\end{equation*}
The \(\Cst\)-algebra \(\Fix^\Rp(J_0,\Rel_0)\) of principal symbols is, in general, noncommutative. However, as it is a generalized fixed point algebra, it is Morita--Rieffel equivalent to an ideal in \(\Cred(\Rp,J_0)\). Using the representation theory of nilpotent Lie groups and, in particular, Kirillov theory \cite{kirillov1962} and Pukanszky's stratification \cite{pukanszky}, we show that it is actually Morita--Rieffel equivalent to the whole crossed product. Furthermore, \(\Fix^\Rp(J,\Rel)\) is Morita equivalent to \(\Cred(\Rp,J)\), which was observed before in \cite{debordskandalis2014} for the case of dilations given by scalar multiplication.

The Morita equivalence allows us to prove that \(\Fix^\Rp(J_0,\Rel_0)\) is \(\KK\)-equivalent to \(\Cont_0(S^*\R^n)\). Hence, although the symbols in the homogeneous and Euclidean case differ, the resulting \(\Cst\)\nb-algebras have the same \(\K\)-theory. Moreover, our approach can be used to recover the computation of the spectrum of \(\Cst(\dot{S}_c^0)\) in \cite{fischer2017defect}.

The article is organized as follows. \cref{fixedpointalg} introduces generalized fixed point algebras and examines their behaviour for extensions of \(\Cst\)-algebras. \cref{homogeneous} compiles some facts about analysis on homogeneous Lie groups and their representation theory. 
The tangent groupoid \(\grpd\) of a homogeneous Lie group and its \(\Cst\)-algebra are defined in \cref{sec:tangent}. In \cref{sec:pdo_using_fix} we build the pseudo-differential extension \eqref{eq:fix_pdo_ext} using generalized fixed point algebras. \cref{sec:fix_type_0} relates \(\Fix^\Rp(J_0,\Rel_0)\) with operators of type \(0\) on \(G\). In \cref{sec:symbols} we compare the generalized fixed point algebra extension to the calculus of Fischer--Ruzhansky--Fermanian-Kammerer.
In \cref{fullness} we show the mentioned Morita--Rieffel equivalences. Moreover, we compute the spectrum and \(\K\)-theory of \(\Fix^\Rp(J_0,\Rel_0)\). The results in this article are also contained in the author's PhD~thesis~\cite{ewert2020index}.
\subsection*{Acknowledgements} The author thanks her PhD advisors Ralf Meyer and Ryszard Nest for their suggestions and advice. Furthermore, she would like to thank V\'eronique Fischer for discussions during her visit in Göttingen. This research was supported by the RTG 2491 ``Fourier Analysis and Spectral Theory''. 

\section{Generalized fixed point algebras and extensions}\label{fixedpointalg}

Rieffel proposes a notion for proper group action on \(\Cst\)\nb-algebras in \cites{rieffel1998,rieffel1988}, which generalizes proper actions on locally compact Hausdorff spaces. This leads to the construction of generalized fixed point algebras. We follow the approach taken in \cite{meyer2001}. In this section, we recall the notions used there and prove some results regarding the behaviour of generalized fixed point algebras under extensions of \(\Cst\)-algebras, which will be needed in the later chapters.
 \subsection{The construction}
For this section, let \(H\) be a locally compact group and \(A\) a \(\Cst\)\nb -algebra with a strongly continuous action \(\alpha \colon H \to \Aut(A)\). 

If \(H\) acts properly on a locally compact Hausdorff space \(X\), the generalized fixed point algebra is given by \(\Cont_0(H\backslash X)\), where \(H\backslash X\) denotes the orbit space. It is Morita--Rieffel equivalent to an ideal in the reduced crossed product \(\Cred(H,\Cont_0(X))\). A feature of the generalized fixed point algebra construction is that this property carries over to noncommutative \(A\): the generalized fixed point algebra is Morita--Rieffel equivalent to an ideal in \(\Cred(H,A)\). The following definitions and results
are taken from \cite{meyer2001}. 

We recall first the definition of the crossed product \(\Cred(H,A)\).
There are covariant representation \((\rho^A,\rho^H)\) of the \(\Cst\)-dynamical system \((A,H,\alpha)\) on the right Hilbert \(A\)-module \(L^2(H,A)\) defined by
	\begin{align*}
	(\rho^A_a\psi)(x)&=\alpha_x(a)\psi(x)  &&\text{for }a\in A\text{, }x\in H,\\
	(\rho^H_y\psi)(x)&=\psi(xy)		&&\text{for }x,y\in H,
	\end{align*}
for \(\psi\in\Cont_c(H,A)\). Equip \(\Cont_c(H,A)\) with the following convolution and involution
	\begin{align}
	(f*g)(x)&=\int_H f(y)\alpha_y(g(y^{-1}x))\diff y,\label{conv}\\
	f^*(x)&= \alpha_x(f(x^{-1}))^*\label{inv}
	\end{align}
for \(x\in H\). The Haar measure on \(H\) is used to define the convolution. The \(I\)\nb-norm is defined by
	\[ \norm{f}_I = \max \left\{ \int_H\norm{f(x)}\diff x, \int_H\norm{f^*(x)}\diff x\right\}.\]
The representation \((\rho^A,\rho^H)\) integrates to the \(^*\)\nb-representation \(\rho\) of \(\Cont_c(H,A)\) with
	\begin{align}\label{crossedprod} (\rho_f\psi)(x)=\int_H \alpha_x(f(x^{-1}y))\psi(y)\diff y\qquad \text{for }f,			 	\psi\in\Cont_c(H,A),\end{align}
which satisfies \(\norm{\rho_f}\leq \norm{f}_I\) for all \(f\in\Cont_c(H,A)\). The \emph{reduced crossed product} \(\Cred(H,A)\) is the norm closure of \(\rho(\Cont_c(H,A))\) inside \(\Bound(L^2(H,A))\).

\begin{lemma}\label{approxunit}
	The representation \(\rho^A\) maps to the multiplier algebra of \(\Cred(H,A)\). If \(				(u_\lambda)\) is an approximate identity for \(A\), then \(\norm{F-\rho^A_{u_\lambda}	
	\circ 	F}\to 0\) for each \(F\in\Cred(H,A)\).
\end{lemma}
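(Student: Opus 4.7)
The strategy is to verify both statements first on the dense subalgebra \(\rho(\Cont_c(H,A))\) of \(\Cred(H,A)\) by direct computation with the formula~\eqref{crossedprod}, then extend by continuity using the bound \(\norm{\rho_f}\le\norm{f}_I\) and \(\norm{\rho^A_a}\le\norm{a}\).

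For the multiplier claim, I would compute, for \(a\in A\) and \(f\in\Cont_c(H,A)\), the compositions \(\rho^A_a\rho_f\) and \(\rho_f\rho^A_a\) directly from~\eqref{crossedprod}. A short calculation (using left-invariance of Haar measure) shows that
\[
\rho^A_a\rho_f=\rho_{a\cdot f},\qquad \rho_f\rho^A_a=\rho_{f\cdot\alpha_{\scriptscriptstyle\bullet}(a)},
\]
where \((a\cdot f)(x)=a\,f(x)\) and \((f\cdot\alpha_{\scriptscriptstyle\bullet}(a))(x)=f(x)\alpha_x(a)\). Both functions lie in \(\Cont_c(H,A)\) (same support as \(f\), continuous because \(\alpha\) is strongly continuous), so \(\rho^A_a\) maps \(\rho(\Cont_c(H,A))\) into itself on either side. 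Since \(\rho^A\) is a \(^*\)\nb-representation, \(\rho^A_{a^*}=(\rho^A_a)^*\) intertwines the two sides, and the usual density argument extends \(\rho^A_a\) to a double centralizer of \(\Cred(H,A)\).

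For the approximate identity claim, I would first prove it on \(F=\rho_f\) with \(f\in\Cont_c(H,A)\), where by the above calculation \(\rho_f-\rho^A_{u_\lambda}\rho_f=\rho_{f-u_\lambda f}\). It then suffices to show \(\norm{f-u_\lambda f}_I\to0\). For the first summand in the \(I\)\nb-norm, note that \(K=\{f(x):x\in H\}\) is compact in~\(A\), so \(\norm{u_\lambda a-a\|}\to0\) uniformly for \(a\in K\); combined with the compact support of \(f\), this gives \(\int_H\norm{f(x)-u_\lambda f(x)}\diff x\to0\). For the involution summand, the identity \((u_\lambda f)^*(x)=f^*(x)\alpha_x(u_\lambda)\) (assuming \(u_\lambda=u_\lambda^*\), which can always be arranged) reduces the problem to the uniform convergence \(\norm{b\,\alpha_x(u_\lambda)-b}\to0\) for \(b\) in the compact set \(\{f^*(x):x\in H\}\) and \(x\) ranging over a compact set; rewriting \(b\,\alpha_x(u_\lambda)-b=\alpha_x(\alpha_{x^{-1}}(b)u_\lambda-\alpha_{x^{-1}}(b))\) turns this into uniform convergence of \(\norm{b'u_\lambda-b'}\) over the compact set \(\{\alpha_{x^{-1}}(b):b\in K,x\in\supp f\}\), which again holds.

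Having established convergence on \(\rho(\Cont_c(H,A))\), an \(\varepsilon/3\)\nb-argument using \(\norm{\rho^A_{u_\lambda}}\le\norm{u_\lambda}\le 1\) extends it to every \(F\in\Cred(H,A)\). The main technical obstacle is the involution term above: the action \(\alpha\) does not preserve~\(u_\lambda\), so the correct uniformity statement requires the compactness trick of pulling \(\alpha_x\) out of the norm. Everything else is either a routine computation or a standard density/\(\varepsilon/3\) argument.
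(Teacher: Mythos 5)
Your proposal is correct and follows essentially the same route as the paper: the paper likewise reduces both claims to the identity \(\rho^A_a\circ\rho_f=\rho_{af}\) on \(\Cont_c(H,A)\), the bound \(\norm{\rho_{f-u_\lambda f}}\le\norm{f-u_\lambda f}_I\), and a density argument. You merely fill in details the paper leaves implicit (the right-multiplier identity and the careful handling of the involution summand of the \(I\)\nb-norm), and those details are accurate.
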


\begin{proof}The first claim follows from \(\rho^A_a\circ\rho_f=\rho_{a f}\) for all \(a\in A\) and \(f\in\Cont_c(H,A)\). For the second claim note that
		\[\norm{\rho_f-\rho^A_{u_\lambda}\circ\rho_f}=\norm{\rho_{f-u_\lambda f}}\leq \norm{f-u_\lambda f}_I,\]	
	which converges to zero for compactly supported \(f\). 
	As \(\Cont_c(H,A)\) is dense, the same holds for arbitrary elements of \(\Cred(H,A)\) by continuity.
\end{proof}

 The diagonal action of \(H\) on \(\Cont_b(H,A)\) or \(\Cont_c(H,A)\) is given by \((h\cdot f)(x)=\alpha_h(f(h^{-1}x))\). For \(a\in A\) the operators
\begin{align}\label{BRAKET}
	\BRA{a}\colon& A \to \Cont_b(H,A), &\left(\BRA{a}b\right)(x)&\defeq\alpha_x(a)^*b,\\
	\KET{a}\colon& \Cont_c(H,A) \to A, &\KET{a}f&\defeq \int_H \alpha_x(a)f(x)\diff x.
\end{align}
are \(H\)\nb-equivariant and adjoint to each other with respect to the pairings \(\braket{a}{b}=a^*b\) for \(a,b\in A\) and \(\braket{f}{g}=\int_H f(x)^*g(x)\diff x\) for \(f\in\Cont_b(H,A)\) and \(g\in\Cont_c(H,A)\).

Let \(\chi_i\colon H\to [0,1]\), \(i\in I\), be a net of continuous, compactly supported functions with \(\chi_i\to 1\) uniformly on compact subsets. A function \(f\in \Cont_b(H,A)\) is called \emph{square-integrable} if and only if \((\chi_if)\) converges in \(L^2(H,A)\). It is shown in \cite{miller2017}*{1.13,~1.15} that the convergence of \((\chi_if)\) and its limit do not depend on the chosen net. 
\begin{definition}
	An element \(a\in A\) is called \emph{square-integrable} if \(\BRA{a}b\in\Cont_b(H,A)\) 			is square-integrable for all \(b\in A\).
\end{definition}
In this case, we understand \(\BRA{a}\) as an operator \(A\to L^2(H,A)\). By \cite{meyer2001}, \(a\in A\) is square-integrable if and only if \(\KET{a}\) extends to an adjointable operator \(L^2(H,A)\to A\). We also denote it by \(\KET{a}\). Its adjoint is \(\BRA{a}\). Let \(A_\si\) be the vector space of all square-integrable elements in \(A\). It becomes a Banach space with respect to the norm \	\			\[\norm{a}_\si\defeq \norm{a}+\norm{\BRA{a}\circ\KET{a}}^{1/2}=\norm{a}+\norm{\KET{a}}.\]

\begin{definition}
	A subset \(\Rel\subset A_\si\) is called \emph{relatively continuous} if for all 
	\(a,b\in\Rel\) the operator \(\BRAKET{a}{b}\defeq\BRA{a}\circ\KET{b}\in\Bound(L^2(H,A))\) 			is contained in the reduced crossed product \(\Cred(H,A)\subset \Bound(L^2(H,A))\). 
	It is called \emph{complete} if \(\Rel\) is a closed linear subspace of \(A_\si\) with 
	respect to \(\norm{\,\cdot\,}_\si\) and satisfies \(\KET{a}(\Cont_c(H,A))\subset \Rel\) for 
	all \(a\in\Rel\).
\end{definition}
\begin{definition}
	A \emph{continuously square-integrable \(H\)\nb-\(\Cst\)\nb-algebra} \((A,\Rel)\) consists of a 
	\(\Cst\)\nb-algebra~\(A\) with a strongly continuous action of \(H\) and a subset 	
	\(\Rel\subset A\) which is relatively continuous, complete and dense in \(A\).
\end{definition}

If \(H\) acts properly on a locally compact Hausdorff space \(X\), \((\Cont_0(X),\cl{\Cont_c(X)}^\si)\) is a continuously square-integrable \(H\)\nb-\(\Cst\)\nb-algebra. Here, \(\Cont_c(X)\) is completed with respect to the \(\norm{\,\cdot\,}_\si\)-norm above. For an arbitrary \(\Cst\)\nb-algebra A, a subset \(\Rel\subset A\) satisfying the requirements above can fail to exist or to be unique as shown in \cite{meyer2001}. 
However, there is a sufficient condition that guarantees the existence of a unique such \(\Rel\).
Let the primitive ideal space of \(A\) be equipped with the Jacobson topology. There is a continuous \(H\)-action on \(\Prim(A)\) defined by \(x\cdot P=\alpha_x(P)\) for \(x\in H\) and \(P\in\Prim(A)\). The \(H\)-\(\Cst\)-algebra \(A\) is called \emph{spectrally proper}, if the action on the primitive ideal space is proper. 
\begin{theorem}[\cite{meyer2001}*{9.4}]\label{res:spectrallyproper}
	Let \(A\) be spectrally proper \(H\)\nb-\(\Cst\)\nb-algebra. Then there is a unique relatively continuous, complete and dense subset.
\end{theorem}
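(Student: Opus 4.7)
The plan is to exploit spectral properness through the Dauns--Hofmann theorem, which identifies the center \(\ZMult(A)\) of the multiplier algebra with \(\Cont_b(\Prim(A))\) in an \(H\)-equivariant way. Properness of the \(H\)-action on \(\Prim(A)\) provides continuous cutoff functions of compact support; these pull back to positive self-adjoint central multipliers \(c \in \ZMult(A)\) with the crucial property that for any two such \(c, c'\), the \(A\)-valued function \(z \mapsto \alpha_z(c) c'\) is compactly supported in \(z \in H\). I would take \(\Rel_0\) to be the linear span of all products \(c \cdot a\) with \(c\) a cutoff of this form and \(a \in A\), and then define \(\Rel\) as the \(\si\)-closure of the smallest linear subspace of \(A_\si\) containing \(\Rel_0\) and stable under \(r \mapsto \KET{r}(f)\) for \(f \in \Cont_c(H, A)\).

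The key verification is relative continuity. For \(r = ca\) and \(r' = c'b\) in \(\Rel_0\), a direct computation from \eqref{BRAKET} gives \(\BRAKET{r}{r'} = \rho_F\) with
\[
F(z) = r^* \alpha_z(r') = \alpha_z(c') \cdot c \cdot a^* \alpha_z(b),
\]
using the centrality of \(c\) and \(\alpha_z(c')\). Spectral properness forces \(F \in \Cont_c(H, A)\), so \(\BRAKET{r}{r'} \in \Cred(H, A)\) by the convolution representation \eqref{crossedprod}. Square-integrability of each \(r = ca\) follows from the same compact-support estimate applied to \(\BRA{r}b\) for \(b \in A\), with the \(b\)-dependence absorbed into the norm via the Dauns--Hofmann sheaf structure. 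Density of \(\Rel_0\) in \(A\) comes from choosing an approximate identity among the cutoff multipliers, using that the cutoffs exhaust \(\Prim(A)\) as the action is proper.

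For uniqueness, let \(\Rel'\) be another relatively continuous, complete, dense subset. The inclusion \(\Rel \subseteq \Rel'\) follows from observing that each generator \(c \cdot a\) of \(\Rel_0\) arises as an average \(\KET{r}(f)\) for an appropriate \(r \in \Rel'\) and \(f \in \Cont_c(H, A)\) -- the cutoff \(c\) is absorbed into the averaging kernel using centrality, and completeness of \(\Rel'\) keeps the result inside \(\Rel'\). Conversely, \(\Rel' \subseteq \Rel\) follows because any square-integrable element of \(\Rel'\) is a \(\si\)-norm limit of its localisations against the cutoff multipliers, and these localisations lie in \(\Rel\) by the construction of the preceding paragraph.

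The main obstacle is the careful interaction between the \(\si\)-topology and the \(\KET{\cdot}\)-stability axiom. Verifying that the candidate \(\Rel\) is genuinely complete -- that taking \(\si\)-limits of \(\KET{r}(f)\)-averages remains within the same stability class -- requires an estimate comparing \(\norm{\,\cdot\,}_\si\) with the operator norm of \(\KET{\cdot}\), together with a check that spectral properness rules out exotic square-integrable elements which escape the cutoff decomposition. It is precisely at this step that the properness of the action on \(\Prim(A)\), rather than some weaker integrability hypothesis, is used in its full strength.
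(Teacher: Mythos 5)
The paper does not actually prove this statement: it is imported verbatim from \cite{meyer2001}*{9.4} and used as a black box, so there is no internal proof to compare yours against. Judged on its own terms, your sketch follows the route one would expect (and that Meyer's original argument takes): Dauns--Hofmann identifies \(\ZMult(A)\) with \(\Cont_b(\Prim(A))\), properness supplies cutoff multipliers \(c=\theta(\varphi)\), and your computation \(\BRAKET{ca}{c'b}=\rho_F\) with \(F(z)=\alpha_z(c')c\,a^*\alpha_z(b)\in\Cont_c(H,A)\) is correct and is the right reason for relative continuity.

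There are, however, genuine gaps. (i) \(\Prim(A)\) carries the Jacobson topology and is in general not Hausdorff; the existence of enough compactly supported continuous functions on it --- in particular that the corresponding central multipliers form an approximate identity for \(A\), which you need both for the density of \(\Rel_0\) and for the inclusion \(\Rel'\subseteq\Rel\) --- is exactly the nontrivial point-set content of spectral properness and cannot simply be asserted. (ii) Square-integrability of \(ca\) is not established: what is needed is convergence of \(\int_H b^*\alpha_x(c\,aa^*c)\,b\diff x\), which reduces to the strict convergence of \(\int_H\theta(x\cdot\varphi^2)\diff x\) to a bounded multiplier; ``absorbed into the norm via the Dauns--Hofmann sheaf structure'' is not an argument, and this is precisely where properness is used quantitatively. (iii) The uniqueness mechanism is misidentified: a generator \(c\cdot a\) does not in general arise as a single average \(\KET{r}(f)\) with \(r\in\Rel'\); the standard route is to show that a complete, dense, relatively continuous \(\Rel'\) absorbs central multipliers, so \(\theta(\varphi)\Rel'\subseteq\Rel'\), and that \(b\mapsto\theta(\varphi)b\) is bounded from \((A,\norm{\,\cdot\,})\) to \((A_\si,\norm{\,\cdot\,}_\si)\), whence \(\theta(\varphi)A=\theta(\varphi)\cl{\Rel'}\subseteq\Rel'\). (iv) You yourself flag the completeness of your candidate \(\Rel\) as an unresolved ``obstacle''; as written, the proposal is a plan rather than a proof.
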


\begin{definition}\label{deffix}
	Let \((A,\Rel)\) be a continuously square-integrable \(H\)\nb-\(\Cst\)\nb-algebra. Let \(
	\Hilm^H(A,\Rel)\) be the closure of \(\KET{\Rel}\subset\Bound(L^2(H,A),A)\). The \emph{generalized fixed point algebra} \(\Fix^H(A,
	\Rel)\) is defined as the closed linear span of \(\KET{\Rel}\BRA{\Rel}\) in the \(H\)-invariant 
	multiplier algebra \(\Mult^H(A)\). 
\end{definition}

Since \(\Rel\) is complete, there is a right \(\Cont_c(H,A)\)-module structure on \(\Rel\) defined by \(a*f=\KET{a}(\breve{f})\) for \(a\in\Rel\) and \(f\in\Cont_c(H,A)\), where \(\breve{\,}\,\colon \Cont_c(H,A)\to\Cont_c(H,A)\) is given by \(\breve{f}(h)\defeq\alpha_h(f(h^{-1}))\) for \(h\in H\).  Because of the identity \(\KET{a}\circ\rho_f=\KET{a*f}\) for \(a\in\Rel\) and \(f\in\Cont_c(H,A)\), this can be extended continuously to a right Hilbert \(\Cred(H,A)\)\nb-module structure on \(\Hilm^H(A,\Rel)\).  

For \(a,b,c,d\in\Rel\) the operator \(\BRAKET{b}{c}\in\Cred(H,A)\) can be approximated by a sequence \((\rho_{f_n})\) with \(f_n\in\Cont_c(H,A)\). Therefore, the product
\[\left(\KET{a}\BRA{b}\right)\left(\KET{c}\BRA{d}\right)=\lim_{n\to\infty} \KET{a}\circ\rho_{f_n}\circ\BRA{d}=\lim_{n\to\infty}\KET{a*f_n}\BRA{d}\]
lies again in the generalized fixed point algebra. As \(\left(\KET{a}\BRA{b}\right)^*=\KET{b}\BRA{a}\), this shows that \(\Fix^H(A,\Rel)\) is a \(\Cst\)-subalgebra of \(\Mult^H(A)\).

Now, we describe the elements of \(\Fix^H(A,\Rel)\) more explicitly. In the commutative case \(H\acts X\), functions on the orbit space can be obtained by averaging functions in \(\Cont_c(X)\) over the action:
\begin{example}
	For \(H\curvearrowright X\) proper and \(f\in \Cont_c(X)\) there is a function \(F\in\Cont_0(H\backslash X)\) defined by
	\[F(Hx)\defeq\int_H f(h^{-1}\cdot x)\diff h\quad \text{for }Hx\in H\backslash X.\]		
\end{example}
The following lemma suggests to also think of elements of \(\Fix^H(A,\Rel)\) for a noncommutative \(A\) as averages over the group action of certain elements of~\(A\). 
\begin{lemma}[\cite{meyer2001}*{(19)}]\label{res:strictlimit}
	Let \((\chi_i)_{i\in I}\) be a net of continuous, compactly supported functions on \(H\) that converges uniformly to \(1\) on compact subsets as above. Let \(a,b\in\Rel\). The net
	\begin{align*}
	\int_H \chi_i(x)\alpha_x(a^*b)\diff x
	\end{align*}
	converges to \(\KET{a}\BRA{b}\) with respect to the strict topology as multipliers of \(A\).
\end{lemma}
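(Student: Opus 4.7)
The plan is to unpack strict convergence of $M_i \defeq \int_H \chi_i(x)\alpha_x(a^*b)\diff x$ to $\KET{a}\BRA{b}$ in $\Mult(A)$ as the conjunction of the two norm convergences $M_i c\to (\KET{a}\BRA{b})c$ and $cM_i\to c(\KET{a}\BRA{b})$ for every $c\in A$. The first will follow by identifying $M_i c$ as $\KET{a}$ applied to a truncation of $\BRA{b}c$ in $L^2(H,A)$; the second will follow from the first by taking adjoints.

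For the first norm convergence, the key computation---a direct unfolding of the definitions of $\BRA{b}$ and $\KET{a}$---is $M_i c = \KET{a}(\chi_i\cdot \BRA{b}c)$, where $\chi_i\cdot\BRA{b}c\in\Cont_c(H,A)\subset L^2(H,A)$. Since $b\in\Rel\subset A_\si$, the function $\BRA{b}c$ is square-integrable, so $(\chi_i\cdot\BRA{b}c)$ converges in $L^2(H,A)$ to a limit independent of the cutoff, identified with $\BRA{b}c$ viewed in $L^2(H,A)$. Because $a\in A_\si$, the operator $\KET{a}\colon L^2(H,A)\to A$ is adjointable, hence bounded, and continuity gives $M_i c = \KET{a}(\chi_i\cdot\BRA{b}c)\to \KET{a}(\BRA{b}c)=(\KET{a}\BRA{b})c$ in norm.

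For the second norm convergence, I note that $M_i^* = \int_H \overline{\chi_i(x)}\alpha_x(b^*a)\diff x$ is again of the form appearing in the lemma, with the net $(\overline{\chi_i})$---which also converges to $1$ uniformly on compacta---and with $a$ and $b$ interchanged, while $(\KET{a}\BRA{b})^* = \KET{b}\BRA{a}$. Applying the preceding paragraph to this swapped datum yields $M_i^* c\to (\KET{a}\BRA{b})^*c$ in norm for every $c\in A$; taking adjoints produces $cM_i\to c(\KET{a}\BRA{b})$, completing the strict convergence. The only nontrivial step is the initial identification $M_i c = \KET{a}(\chi_i\cdot\BRA{b}c)$, which is routine definition-chasing but is exactly where the hypothesis $a,b\in\Rel\subset A_\si$ enters, supplying both the $L^2$-convergence of cutoffs of $\BRA{b}c$ and the boundedness of $\KET{a}$ on $L^2(H,A)$.
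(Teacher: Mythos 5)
The paper offers no proof of this lemma --- it is quoted verbatim from \cite{meyer2001}*{(19)} --- so there is nothing internal to compare against. Your strategy (test strict convergence against $c\in A$, identify $M_ic$ as $\KET{a}$ applied to a cutoff of $\BRA{b}c$, use square-integrability of $b$ for the $L^2(H,A)$-convergence of the cutoffs and adjointability of $\KET{a}$ for continuity, then get the left-multiplication half by passing to adjoints) is exactly the standard argument behind Meyer's (19), and each of those ingredients is used correctly: square-integrability of $b$ is by definition the convergence of $\chi_i\cdot\BRA{b}c$ in $L^2(H,A)$ with cutoff-independent limit, and $a\in A_\si$ makes $\KET{a}$ adjointable, hence bounded.

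The problem is that the one identity you describe as routine definition-chasing is false for the net as displayed in the statement. Unwinding the paper's definitions $(\BRA{b}c)(x)=\alpha_x(b)^*c$ and $\KET{a}f=\int_H\alpha_x(a)f(x)\diff x$ gives
\[
\KET{a}\bigl(\chi_i\cdot\BRA{b}c\bigr)=\int_H\chi_i(x)\,\alpha_x(a)\alpha_x(b)^*c\diff x=\int_H\chi_i(x)\,\alpha_x(ab^*)\,c\diff x,
\]
whereas $M_ic=\int_H\chi_i(x)\alpha_x(a^*b)c\diff x$. So what your argument actually proves is that $\int_H\chi_i(x)\alpha_x(ab^*)\diff x\to\KET{a}\BRA{b}$ strictly, equivalently that the net in the statement converges to $\KET{a^*}\BRA{b^*}$ rather than to $\KET{a}\BRA{b}$. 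With the bra--ket operators as defined in this paper the $ab^*$ version is the internally consistent one (it is also what makes $(\KET{a}\BRA{b})^*=\KET{b}\BRA{a}$ and $T^*\KET{a}\BRA{b}=\KET{T^*a}\BRA{b}$ come out right), so the mismatch sits in the statement's integrand rather than in your method; but as written your proof asserts an equality that does not hold for the displayed net, and you need either to correct the integrand to $\alpha_x(ab^*)$ or to record that the limit of the displayed net is $\KET{a^*}\BRA{b^*}$. The remainder --- using $\overline{\chi_i}=\chi_i$, the symmetry of the hypotheses in $a$ and $b$, and $\norm{x}=\norm{x^*}$ to convert the right-module convergence into the left one --- is fine once this is repaired.
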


Returning to the construction, \(\Hilm^H(A,\Rel)\) is a full left Hilbert \(\Fix^H(A,\Rel)\)-module. 
Let \(J^H(A,\Rel)\) denote the closed linear span of \(\BRAKET{\Rel}{\Rel}\subset\Cred(H,A)\), which is an ideal. 
Then \(\Hilm^H(A,\Rel)\) is a \(\Fix^H(A,\Rel)\)-\(J^H(A,\Rel)\) imprimitivity bimodule. 

The ideal \(J^H(A,\Rel)\) need not be the whole reduced crossed product. The following definition is due to Rieffel \cite{rieffel1988}. 

\begin{definition}
	Let \((A,\Rel)\) be a continuously square-integrable \(H\)\nb-\(\Cst\)\nb-algebra. Call \((A,\Rel)\) \emph{saturated} if \(J^H(A,\Rel)=\Cred(H,A)\).
\end{definition}

\begin{example}
	For a proper action \(H\acts X\), Rieffel observed in \cite{rieffelapplications} that \((\Cont_0(X),\cl{\Cont_c(X)})\) is saturated if the action of \(H\) on \(X\) is free. We will argue in  \cref{res:freesaturated} that the converse is true as well. 
\end{example}
The next lemma, proved in \cite{miller2017}, gives a criterion when a set \(\Rel\subset A_\si\) can be completed to a relatively continuous, complete and dense subset of \(A\).
\begin{lemma}\label{res:completion}
	Let \(\Rel\subset A\) be a dense, linear subspace. Suppose \(\Rel\) consists of square-integrable elements, is relatively 	continuous and \(H\)\nb-invariant,  and satisfies \(\Rel\cdot\Rel\subset\Rel\). Denote by \(\cl{\Rel}\) the closure of \(\Rel\subset A_\si\) with respect to the \(\norm{\,\cdot\,}_{\si}\)\nb-norm.  Then \((A,		 	\cl{\Rel})\) is a continuously square-integrable \(H\)\nb-\(\Cst\)\nb-algebra.  The generalized fixed point algebra \(\Fix^H(A,\cl{\Rel})\) is the closed 				linear span of \(\KET{\Rel}\BRA{\Rel}\).
\end{lemma}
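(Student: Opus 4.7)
The plan is to check the three defining properties of a continuously square-integrable \(H\)-\(\Cst\)-algebra for \((A,\cl{\Rel})\) one after another, and then identify the generalized fixed point algebra.

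The first two properties are essentially inherited from \(\Rel\). Since \(\norm{\,\cdot\,}_\si\) dominates \(\norm{\,\cdot\,}\), the closure \(\cl{\Rel}\) contains \(\Rel\) and is therefore already norm-dense in \(A\). Relative continuity extends by continuity: from the bound \(\norm{\BRAKET{a}{b}}\le\norm{\KET{a}}\cdot\norm{\KET{b}}\le\norm{a}_\si\norm{b}_\si\) the bilinear form \(\BRAKET{\,\cdot\,}{\,\cdot\,}\) extends to \(\cl{\Rel}\times\cl{\Rel}\), and it takes values in \(\Cred(H,A)\) because this subspace is closed in \(\Bound(L^2(H,A))\). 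Finally, \(H\)-invariance of \(\cl{\Rel}\) follows since the action is \(\norm{\,\cdot\,}_\si\)-isometric: the identity \(\KET{\alpha_x a}=\KET{a}\circ R_x\), with \(R_x\) the unitary right translation on \(L^2(H,A)\), gives \(\norm{\alpha_x(a)}_\si=\norm{a}_\si\).

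The crux is completeness, i.e.\ showing \(\KET{a}(\Cont_c(H,A))\subset\cl{\Rel}\) for every \(a\in\cl{\Rel}\). Using \(\KET{\KET{a}(f)}=\KET{a}\circ\rho_{\breve f}\) one has the estimate
\[\norm{\KET{a}(f)-\KET{a'}(f)}_\si\le\norm{a-a'}_\si\bigl(\norm{f}_{L^2}+\norm{\breve f}_I\bigr),\]
so it suffices to treat \(a\in\Rel\). I would then approximate a given \(f\in\Cont_c(H,A)\) by finite sums \(f_n=\sum_i\phi_i\otimes c_i\) with \(\phi_i\in\Cont_c(H)\) and \(c_i\in\Rel\); by density of \(\Rel\) in \(A\) and uniform approximation on a fixed compact support, this can be arranged to converge in both the \(L^2\)- and \(I\)-norm. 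Commuting the scalar-valued Bochner integral with right multiplication by \(c\) yields
\[\KET{a}(\phi\otimes c)=\mu_\phi(a)\cdot c,\qquad\mu_\phi(a)\defeq\int_H\phi(x)\alpha_x(a)\diff x.\]
A direct estimate shows that right multiplication by any \(c\in A\) is continuous \(A_\si\to A_\si\) with \(\norm{bc}_\si\le\norm{b}_\si\norm{c}\). Together with \(\Rel\cdot\Rel\subset\Rel\), this reduces everything to proving \(\mu_\phi(a)\in\cl{\Rel}\) for \(a\in\Rel\) and \(\phi\in\Cont_c(H)\). The latter would be established by Riemann-approximating the integral: each partial sum \(\sum_i\phi(x_i)\mu(E_i)\alpha_{x_i}(a)\) lies in \(\Rel\) by \(H\)-invariance and linearity, and I would argue its convergence in \(\norm{\,\cdot\,}_\si\) to \(\mu_\phi(a)\), whose \(\KET{\cdot}\) equals \(\KET{a}\circ C_\phi\) for the \(L^2\)-convolution operator \(C_\phi=\int\phi(w)R_w\diff w\).

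The identification \(\Fix^H(A,\cl{\Rel})=\overline{\Span}\,\KET{\Rel}\BRA{\Rel}\) then follows by continuity: from \(\norm{\KET{a}-\KET{a'}},\norm{\BRA{a}-\BRA{a'}}\le\norm{a-a'}_\si\), any generator \(\KET{a}\BRA{b}\) with \(a,b\in\cl{\Rel}\) is a norm limit of \(\KET{a_n}\BRA{b_n}\) with \(a_n,b_n\in\Rel\), and the reverse inclusion is obvious. The hard step of the whole argument is the \(\norm{\,\cdot\,}_\si\)-convergence of the Riemann sums for \(\mu_\phi(a)\): the operator-valued map \(x\mapsto R_x\) is only strongly, not norm, continuous, so convergence in the operator norm of \(\KET{a}\circ\sum_i\phi(x_i)\mu(E_i)R_{x_i}\) to \(\KET{a}\circ C_\phi\) has to be extracted from the fact that \(\KET{a}\) is a rank-one adjointable module map, together with convolution-type estimates on \(L^2(H,A)\) and the fact that \(C_\phi-\sum_i\phi(x_i)\mu(E_i)R_{x_i}\) is small when composed with \(\KET{a}\) because its action on \(\BRA{a}(A)\) can be controlled uniformly.
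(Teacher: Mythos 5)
Most of your argument runs parallel to the paper's: density and relative continuity of \(\cl{\Rel}\) via the bound \(\norm{\KET{a}}\leq\norm{a}_\si\), the closing identification of \(\Fix^H(A,\cl{\Rel})\) with the closed span of \(\KET{\Rel}\BRA{\Rel}\) by the same approximation, and the reduction of completeness to showing that averaged elements \(\int_H\phi(x)\alpha_x(a)\diff x\) (times an element of \(A\)) land in \(\cl{\Rel}\). The paper packages that last reduction slightly differently --- it first proves \(\cl{\Rel}\cdot A\subset\cl{\Rel}\) and \(H\)-invariance of \(\cl{\Rel}\), and then realizes \(\KET{a}(f)=\int_H\alpha_x(a)f(x)\diff x\) as a Bochner integral in the Banach space \(A_\si\) with integrand in the closed subspace \(\cl{\Rel}\) --- but the content is the same, and your detour through elementary tensors \(\phi\otimes c\) is harmless.

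The genuine gap is the step you yourself flag as hard, and the mechanism you propose for it does not work as stated. You need \(\norm{\KET{a}\circ T_i}\to 0\) for \(T_i=C_\phi-\sum_j\phi(x_j)\mu(E_j)R_{x_j}\), and you propose to extract this from strong convergence \(T_i\to0\) together with ``uniform control on \(\BRA{a}(A)\)''. But the image of the unit ball of \(A\) under \(\BRA{a}\) is bounded, not totally bounded, so strong convergence yields nothing uniform on it; in fact, for a merely square-integrable \(a\) the map \(x\mapsto\KET{\alpha_x(a)}=\KET{a}\circ R_x\) need \emph{not} be norm-continuous --- this failure is precisely what the hypothesis of relative continuity is designed to exclude, and your argument never invokes that hypothesis at this point. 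The correct mechanism is \(\norm{\KET{a}\circ T_i}^2=\norm{T_i^*\circ\BRAKET{a}{a}\circ T_i}\leq\norm{T_i}\cdot\norm{\BRAKET{a}{a}\circ T_i}\): since \(\BRAKET{a}{a}\in\Cred(H,A)\) by relative continuity, the map \(x\mapsto\BRAKET{a}{a}\circ R_x\) \emph{is} norm-continuous (verify it on \(\rho_f\) with \(f\in\Cont_c(H,A)\), where composing with \(R_x\) merely translates \(f\), and pass to the norm closure), and hence \(\norm{\BRAKET{a}{a}\circ T_i}\to0\) because the Riemann sums of the norm-continuous, compactly supported function \(x\mapsto\phi(x)\BRAKET{a}{a}\circ R_x\) converge. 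This is exactly the content of the estimates (17)--(18) of \cite{meyer2001} that the paper's proof cites; without it your Riemann sums are not known to converge in \(\norm{\,\cdot\,}_\si\), and the completeness of \(\cl{\Rel}\) is not established.
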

\begin{proof}
	The inclusion \(A_\si\injto A\) is continuous. Since \(\Rel\) is dense in \(A\), also \(\cl{\Rel}\) is a dense subspace of \(A\). As \(\norm{\BRA{a}}=\norm{\KET{a}} \leq \norm{a}_\si\) for all \(a\in A_\si\), elements of \(\BRAKET{\cl{\Rel}}{\cl{\Rel}}\) can be approximated with respect to the operator norm on \(L^2(H,A)\) by elements of \(\BRAKET{\Rel}{\Rel}\). This shows that \(\cl{\Rel}\) is relatively continuous as well. 
	
	It remains to verify that \(\cl{\Rel}\) is complete. First, we show that \(\cl{\Rel}\cdot A\subset \cl{\Rel}\) holds. Let \(r\in\cl{\Rel}\) and~\(a\in A\) and choose sequences \((r_n), (a_n)\) in \(\Rel\) such that \(\norm{r-r_n}_\si\to 0\) and \(\norm{a-a_n}\to 0\). Note that \(ra\in A_\si\) because \(\KET{ra}=\KET{r}\circ\rho^A_a\) and \(r\) is square-integrable. By assumption \(r_na_n\in\Rel\) holds for all~\(n\in\N\). We estimate using \cite{meyer2001}*{(17)} that
	\[\norm{ra-r_na_n}_\si\leq\norm{r}_\si\norm{a_n-a}+\norm{r-r_n}_\si\norm{a_n},\]
	which converges to zero. Furthermore, \(\cl{\Rel}\) is also \(H\)\nb-invariant, which follows from the invariance of~\(\Rel\) and \cite{meyer2001}*{(18)}. This implies that \(\KET{\cl{\Rel}}(\Cont_c(H,A))\subset\cl{\Rel}\). 
	
	Using similar arguments as for the relative continuity of \(\cl{\Rel}\), one obtains that any \(\KET{a}\BRA{b}\) with \(a,b\in\cl{\Rel}\) is a norm limit of elements of \(\KET{\Rel}\BRA{\Rel}\). 
\end{proof}
\begin{remark}
	Suppose \(\Rel\subset A\) is a dense, \(H\)-invariant \(^*\)-subalgebra such that \(\BRA{a}b\) is bounded with respect to the \(I\)-norm for all \(a,b\in \Rel\) as required in the original definition in \cite{rieffel1988}. Then by \cite{meyer2001}*{6.8} \(\Rel\) is relatively continuous and square-integrable. Therefore, \cref{res:completion} shows that \((A,\cl{\Rel})\) is a continuously square-integrable \(H\)\nb-\(\Cst\)\nb-algebra.
\end{remark}
\subsection{Extensions of generalized fixed point algebras}
Let \(I\) be an \(H\)\nb -invariant ideal in \(A\) such that the sequence 
\begin{equation}
\label{exact}
\begin{tikzcd}
\Cred(H,I)\arrow[r,hook] & \Cred(H,A)\arrow[r,twoheadrightarrow] & \Cred(H, A/I)\end{tikzcd}
\end{equation}
is exact.
If \(H\) is an exact group, this is true for all \(H\)\nb-invariant ideals \(I\idealin A\). For example, this holds in our applications in the later chapters where \(H = (\Rp,\,\cdot\,)\cong (\R,+)\).

Let \(\Rel\subset A\) be a subset such that \((A,\Rel)\) is a continuously square-integrable \(H\)\nb-\(\Cst\)\nb-algebra. Consider \(\Rel\cap I\subset I\) and the image of \(\Rel\) under the projection \(q\colon A \to A/I\). We will show that the generalized fixed point algebra construction can be applied to \((I,\Rel\cap I)\) and \((A/I, \cl{q(\Rel)})\), and relate the respective generalized fixed point algebras to each other. 

In particular, we are interested in what can be said about saturatedness in this case. This is inspired by the simple observation that if an \(H\)-space \(X\) can be partitioned into two \(H\)-invariant subsets \(X=X_1\sqcup X_2\), then the action on \(X\) is free if and only if it is free on \(X_1\) and \(X_2\).

\begin{lemma}[\cite{miller2017}]\label{multI}
	Let \(\Rel \subseteq A\) be a relatively continuous, complete subspace of \(A\). If 				\(I\idealin A\) is an \(H\)\nb-invariant ideal such that \eqref{exact} is exact,  then
	\( \Rel \cap I = 	\Rel \cdot I\) holds.
\end{lemma}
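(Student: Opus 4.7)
The inclusion $\Rel\cdot I\subseteq I$ is immediate since $I$ is a two-sided ideal, so the task reduces to showing $\Rel\cdot I\subseteq \Rel$ and $\Rel\cap I\subseteq \Rel\cdot I$.

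For the inclusion $\Rel\cdot I\subseteq \Rel$, take $a\in\Rel$ and $b\in I$. The product $ab$ lies in $A_\si$ by the general identity $\KET{ab}=\KET{a}\circ\rho^A_b$. Since $\Rel$ is $\norm{\cdot}_\si$-closed, it suffices to approximate $ab$ in this norm by elements of $\Rel$. Pick a net $(\phi_n)\subset\Cont_c(H)$ of nonnegative functions with $\int \phi_n=1$ and supports shrinking to the identity; by completeness of $\Rel$, the elements $a_n\defeq\KET{a}(\phi_n\otimes b)\in\Rel$. The norm convergence $a_n\to ab$ in $A$ follows directly from strong continuity of~$\alpha$. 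For the $\norm{\KET{\cdot}}$\nobreakdash-component of $\norm{\cdot}_\si$, the plan is to use the right $\Cont_c(H,A)$\nobreakdash-module identity $\KET{a_n}=\KET{a}\circ\rho_{\breve{\phi_n\otimes b}}$ and show that $\KET{a}\circ \rho_{\breve{\phi_n\otimes b}}\to\KET{a}\circ\rho^A_b=\KET{ab}$ in operator norm on $L^2(H,A)$, exploiting the compactly supported $\phi_n$ to obtain uniform $L^2$\nobreakdash-estimates.

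Once $\Rel\cdot I\subseteq \Rel$ is established, $\Rel\cap I$ inherits the structure of a right Banach $I$\nobreakdash-module with continuous module action. Because $I$ admits a bounded approximate identity, Cohen's factorization theorem then ensures that $\Rel\cdot I$ coincides with the norm-closed linear span of its products and is in particular $A$\nobreakdash-norm-closed. For $a\in\Rel\cap I$, taking an approximate identity $(u_\lambda)\subset I$ gives $au_\lambda\to a$ in $A$\nobreakdash-norm with $au_\lambda\in \Rel\cdot I$ by the first step; norm-closedness then yields $a\in \Rel\cdot I$. The exactness of~\eqref{exact} enters behind the scenes to guarantee compatibility between multipliers and ideals; notably, for $a\in\Rel\cap I$ and $b\in\Rel$, the inner product $\BRAKET{a}{b}\in\Cred(H,A)$ in fact lies in $\Cred(H,I)$, because its image in $\Cred(H,A/I)$ vanishes as $q(a)=0$, which is what permits the module-theoretic manipulations to restrict consistently from $A$ to~$I$.

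The principal technical obstacle is the operator-norm convergence $\KET{a}\circ \rho_{\breve{\phi_n\otimes b}}\to\KET{ab}$ in the first step. Even though the operators $\rho_{\breve{\phi_n\otimes b}}$ converge to $\rho^A_b$ only strictly, one must show that the composition with $\KET{a}$ achieves norm convergence. This requires exploiting the specific form of $\KET{a}$ together with the shrinking supports of $\phi_n$ to produce estimates uniform in the $L^2(H,A)$\nobreakdash-norm; estimates using only the Haar measure of $H$ fail because $H$ has infinite measure.
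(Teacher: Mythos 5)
There is a genuine gap in the second half of your argument, at precisely the point where the real work of this lemma lies. You deduce \(a\in\Rel\cdot I\) from ``\(au_\lambda\to a\) in \(A\)\nb-norm together with norm-closedness of \(\Rel\cdot I\).'' But Cohen's factorization theorem, applied to \(\Rel\) as a right Banach \(I\)\nb-module, is an assertion about the module norm \(\norm{\,\cdot\,}_\si\): it says \(\Rel\cdot I\) equals the set of \(r\) with \(ru_\lambda\to r\) \emph{in \(\norm{\,\cdot\,}_\si\)}, and is closed only in \((\Rel,\norm{\,\cdot\,}_\si)\). It is not \(A\)\nb-norm closed as a subset of \(A\) (compare: \(\Rel\) itself is \(\norm{\,\cdot\,}_\si\)-closed yet \(A\)\nb-norm dense in \(A\)), and \(\Rel\cap I\) is not complete in the \(A\)\nb-norm, so Cohen cannot be run with that norm either. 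What is actually needed is \(\norm{a-au_\lambda}_\si\to 0\), i.e.\ \(\norm{\KET{a}-\KET{au_\lambda}}\to 0\). This is exactly where the observation you relegate to ``behind the scenes'' --- that exactness of \eqref{exact} forces \(\BRAKET{a}{a}\in\Cred(H,I)\) for \(a\in\Rel\cap I\) --- must be used concretely: the paper computes
\[
\norm{\KET{a}-\KET{au_\lambda}}^2=\norm{\BRAKET{a-au_\lambda}{a-au_\lambda}}\leq 2\,\norm{\BRAKET{a}{a}-\rho^I_{u_\lambda}\circ\BRAKET{a}{a}},
\]
which tends to zero by \cref{approxunit} precisely because \(\BRAKET{a}{a}\) lies in \(\Cred(H,I)\) and not merely in \(\Cred(H,A)\). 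Without this step your argument does not close.

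Separately, your first step (\(\Rel\cdot I\subseteq\Rel\)) is only a plan: you flag the operator-norm convergence \(\KET{a}\circ\rho_{\breve{\phi_n\otimes b}}\to\KET{ab}\) as ``the principal technical obstacle'' and do not carry it out. The paper sidesteps this entirely by quoting \(\Rel\cdot A=\Rel\) from \cite{meyer2001}*{Cor.~6.7}, which immediately gives \(\Rel\cdot I\subseteq\Rel\cap I\). Your sketch can in fact be completed --- one checks \(\rho_{\breve{\phi_n\otimes b}}\to\rho^A_b\) strictly as multipliers of \(\Cred(H,A)\) with uniformly bounded norms, and then \(\norm{\KET{a}\circ(\rho_{\breve{\phi_n\otimes b}}-\rho^A_b)}^2=\norm{(\rho_{\breve{\phi_n\otimes b}}-\rho^A_b)^*\BRAKET{a}{a}(\rho_{\breve{\phi_n\otimes b}}-\rho^A_b)}\to 0\) using \(\BRAKET{a}{a}\in\Cred(H,A)\) --- but as written it is an unproven claim, and in any case it re-derives a special case of a result the paper is entitled to cite.
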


\begin{proof}
	Because \(I\) is an ideal in \(A\) and \(\Rel\cdot A=\Rel\) by \cite{meyer2001}*{Cor.~6.7}, 	\(\Rel \cdot I \subseteq \Rel \cap I\) follows. The other inclusion uses exactness in 			\eqref{exact}. Let \(r \in \Rel \cap I\). As
	\[ \BRAKET{r}{r} (L^2(H,A))\subseteq L^2(H,I)\]
	and \eqref{exact} is exact, we have \(\BRAKET{r}{r} \in \Cred(H,I)\). Now, let 							\((u_\lambda)_{\lambda\in \Lambda}\) be an approximate unit for \(I\), satisfying 					\(u_\lambda^*=u_\lambda\) and \( \norm{u_\lambda}\leq 1\) for all \(\lambda\in \Lambda\). 			One computes
	\begin{align*}
	\norm{\KET{r}-\KET{ru_\lambda}}^2 &= \norm{\BRAKET{r-ru_\lambda}{r-ru_\lambda}}\\
	&\leq \norm{\BRAKET{r}{r}-\rho^I_{u_\lambda^*}\circ\BRAKET{r}{r}}+\norm{\BRAKET{r}{r}\circ\rho^I_{u_\lambda}-\rho^I_{u_\lambda^*}\circ\BRAKET{r}{r}\circ\rho^I_{u_\lambda}}\\&\leq 2 			\cdot \norm{\BRAKET{r}{r}-\rho^I_{u_\lambda}\circ \BRAKET{r}{r}}.
	\end{align*}
	By \cref{approxunit} this converges to zero. Furthermore, 
	\(\norm{r-ru_\lambda}\to 0\) holds. Hence, \(r\in \Rel\cdot I\) follows from Cohen's 				Factorization Theorem applied to \((\Rel,\norm{\,\cdot\,}_\si)\) as a right 
	\(I\)\nb-module.
\end{proof}

\begin{lemma}\label{res:rel_subsets}
	Let \((A, \Rel)\) be a continuously square-integrable \(H\)\nb-\(\Cst\)\nb-algebra and let
	\(I\idealin A\) be an \(H\)-invariant ideal such that the sequence in~\eqref{exact} is exact. Let \(q\colon A\to A/I\) be the quotient map. Then the following holds:
	\begin{enumerate}
		\item \label{ideal} \((I,\Rel\cap I)\) is a continuously square-integrable 
		\(H\)\nb-\(\Cst\)\nb-algebra.
		\item \label{quotient} \((A/I,\cl{q(\Rel)})\) is a continuously square-integrable 
		\(H\)\nb-\(\Cst\)\nb-algebra. Here, \(\cl{q(\Rel)}\) denotes the closure of \(q(\Rel)\subset (A/I)_{\si}\) in the \(\norm{\,\cdot\,}_\si\)-norm. 
	\end{enumerate}
\end{lemma}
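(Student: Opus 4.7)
The plan is to verify statement (1) directly and to deduce statement (2) from \cref{res:completion} applied to the image $q(\Rel)\subset A/I$; in both cases the essential inputs are the exactness of~\eqref{exact} and the identity $\Rel\cap I=\Rel\cdot I$ from \cref{multI}.

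For statement (1), I would first use \cref{multI} to identify $\Rel\cap I$ with $\Rel\cdot I$; density in $I$ then follows from the density of $\Rel$ in $A$ by approximating a Cohen--Hewitt factorization $i=a\cdot j$ of $i\in I$. Next I would argue that the $\si$\nb-norm on $I$ agrees with the restriction of the $\si$\nb-norm on $A$: the $\Cst$\nb-norms coincide, while the operator-norm contribution is controlled via the identity $\norm{\KET{a}}^2=\norm{\BRAKET{a}{a}}$ together with the isometric inclusion $\Cred(H,I)\hookrightarrow\Cred(H,A)$. This simultaneously yields square-integrability of elements of $\Rel\cap I$ in $I$ (the relevant $L^2$-limits lie in the closed subspace $L^2(H,I)\subset L^2(H,A)$) and the closedness of $\Rel\cap I$ in $I_\si$ (since $\Rel$ is closed in $A_\si$ and $I$ is closed in $A$). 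The key step for relative continuity is that $\BRAKET{a}{b}$ for $a,b\in\Rel\cap I$ lies in $\Cred(H,A)$ and maps $L^2(H,A)$ into $L^2(H,I)$; its image in $\Cred(H,A/I)$ is then $\BRAKET{q(a)}{q(b)}=0$, so the exactness of~\eqref{exact} promotes $\BRAKET{a}{b}$ to an element of $\Cred(H,I)$. The ket condition $\KET{r}(\Cont_c(H,I))\subset\Rel\cap I$ is inherited from the ket condition on $\Rel$ via the inclusion $\Cont_c(H,I)\subset\Cont_c(H,A)$ and the observation that $\KET{r}f\in I$ whenever $r\in I$.

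For statement (2), I would invoke \cref{res:completion} applied to the subspace $q(\Rel)\subset A/I$. Density and $H$\nb-invariance follow from the corresponding properties of $\Rel$ together with the surjectivity and $H$\nb-equivariance of $q$. The multiplicative closure $q(\Rel)\cdot q(\Rel)\subset q(\Rel)$ is a consequence of $\Rel\cdot\Rel\subset\Rel\cdot A=\Rel$ via \cite{meyer2001}*{Cor.~6.7}. Square-integrability of $q(a)$ for $a\in\Rel$ follows from the identity $\BRA{q(a)}q(b)=q_*\circ\BRA{a}b$, where $q_*\colon L^2(H,A)\to L^2(H,A/I)$ is the contractive map induced pointwise by $q$, so convergence of $\chi_i\BRA{a}b$ in $L^2(H,A)$ passes to convergence of $\chi_i\BRA{q(a)}q(b)$ in $L^2(H,A/I)$. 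Relative continuity is clear because $\BRAKET{q(a)}{q(b)}$ is the image of $\BRAKET{a}{b}\in\Cred(H,A)$ under the quotient map $\Cred(H,A)\twoheadrightarrow\Cred(H,A/I)$ appearing in~\eqref{exact}.

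The main technical obstacle is the verification in statement (1) that $\BRAKET{a}{b}$ actually sits inside $\Cred(H,I)$ and not merely inside $\Cred(H,A)$; this is precisely the step that uses exactness of~\eqref{exact}, and without this hypothesis the generalized fixed point construction would not restrict compatibly to the ideal. Once this and the $\si$\nb-norm compatibility are established, statement~(2) reduces to an essentially formal check against the hypotheses of \cref{res:completion}.
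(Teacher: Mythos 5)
Your proposal is correct and follows essentially the same route as the paper: part (1) is verified directly using \(\Rel\cap I=\Rel\cdot I\) from \cref{multI}, the compatibility of the \(\si\)\nb-norms, and the exactness of~\eqref{exact} to place \(\BRAKET{a}{b}\) in \(\Cred(H,I)\); part (2) is reduced to \cref{res:completion} via the canonical map \(Q\colon\Bound(L^2(H,A))\to\Bound(L^2(H,A/I))\) and the identity \(\BRAKET{q(a)}{q(b)}=Q(\BRAKET{a}{b})\). The only cosmetic difference is that you detect membership in \(\Cred(H,I)\) by noting the image in \(\Cred(H,A/I)\) vanishes, while the paper's \cref{multI} argues via \(\BRAKET{r}{r}(L^2(H,A))\subseteq L^2(H,I)\); both rest on the same exactness hypothesis.
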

\begin{proof}
	We prove \ref{ideal}. The linear subspace \(\Rel\cap I=\Rel \cdot I\) is dense in~\(I\) because any element \(i \in I\) can be factorized as \(i = a\cdot j\) for some \(a\in A\) and \(j\in I\). Since \(\Rel\) is dense in \(A\), there is a net 
	\((r_\lambda)_{\lambda\in \Lambda}\subset \Rel\) with \(r_\lambda \to a\) and hence 
	\( i = \lim_\lambda r_\lambda \cdot j\).	
	The square-integrability of elements in \(\Rel\cap I\) is inherited from \(\Rel\),					and
	\(\KET{\Rel \cap I}(\Cont_c(H,I))\subseteq \Rel\cap I \)
	holds. Then
	\(\BRAKET{\Rel\cap I}{\Rel\cap I}\subset \Cred(H,I)\) follows from the same argument as in the proof of \cref{multI} using that~\eqref{exact} is exact.		
	Note that 
	\(\norm{\BRAKET{i}{i}}_{\Cred(H,I)}=\norm{\BRAKET{i}{i}}_{\Cred(H,A)}\)
	for \(i\in \Rel\cap I\). 
	Because \(I\idealin A\) is closed and \(\Rel\) is closed with respect to 
	\(\norm{\,\cdot\,}_{\si,A}\), this means that \(\Rel\cap I\) is closed with respect to 				\(\norm{\,\cdot\,}_{\si,I}\). Hence, \((I,\Rel\cap I)\) is a continuously square-integrable 
	\(H\)\nb-\(\Cst\)\nb-algebra.
	
	To prove \ref{quotient} we show that \cref{res:completion} can be applied to \(q(\Rel)\subset A/I\). As \(\Rel\subset A\) is a dense linear subspace, the same holds for \(q(\Rel)\subset A/I \). For \(a\in\Rel\) and \(i\in I\) their product \(ai\in \Rel\cdot I = \Rel\cap I\) lies in \(\Rel\). All elements \(q(a)\) for \(a\in \Rel\) are square-integrable because the quotient map \(L^2(H,A)\to L^2(H,A/I)\) is continuous. 
	Let \(Q\colon \Bound(L^2(H,A))\to \Bound(L^2(H,A/I))\) be the canonical map. We have 
	\begin{equation*}
	\BRAKET{q(a)}{q(b)}=Q(\BRAKET{a}{b}) \qquad \text{ for }a,b\in\Rel.
	\end{equation*}
	The relative continuity of \(q(\Rel)\) follows as \(Q\) maps \(\Cred(H,A)\) to \(\Cred(H,A/I)\).	
	By \cite{meyer2001}*{6.7}, \(\Rel\) is \(H\)\nb-invariant and an essential right \(A\)\nb-module, that is, \(\Rel\cdot A= \Rel\). This implies that \(q(\Rel)\) is also \(H\)\nb-invariant and satisfies \(q(\Rel)\cdot q(\Rel)\subset q(\Rel)\). Therefore, the claim follows from \cref{res:completion}.
\end{proof}

\begin{remark}\label{rem:contsi}
	The restricted map \(q\colon A_{\si}\to(A/I)_\si\) is continuous with respect to the respective \(\norm{\,\cdot\,}_\si\)-norms as for \(a\in A_\si\)
	\[\norm{q(a)}+\norm{\BRAKET{q(a)}{q(a)}}^{1/2}=\norm{q(a)}+\norm{Q(\BRAKET{a}{a})}^{1/2}\leq\norm{a}+\norm{\BRAKET{a}{a}}.\]
	In particular, for \(\Rel\subset A_{\si}\) one has \(\cl{q\left(\cl{\Rel}\right)}=\cl{q(\Rel)}\) with respect to the \(\norm{\,\cdot\,}_\si\)-norms.
\end{remark}
In the situation of \cref{res:rel_subsets}, \(\Hilm^H(I,\Rel\cap I)\) is a closed \(\Fix^H(A,\Rel)\)-\(J^H(A,\Rel)\) submodule of \(\Hilm^H(A,\Rel)\). Under the Rieffel correspondence (see for example~\cite{raeburnwilliams}*{3.22}), \(\Hilm^H(I,\Rel\cap I)\) corresponds to the ideals \(\Fix^H(I,\Rel\cap I)\) in \(\Fix^H(A,\Rel)\) and \(J^H(I,\Rel\cap I)\) in \(J^H(A,\Rel)\). 

To study saturatedness, we relate the corresponding ideals in the reduced crossed products for~\(I,A\) and \(A/I\).
\begin{lemma}\label{ses:reduced}
	Let \((A,\Rel)\) be a continuously square-integrable \(H\)\nb-\(\Cst\)\nb-algebra and 
	\(I\idealin A\) an \(H\)\nb-invariant ideal such that \eqref{exact} is exact. 
	
	The restrictions of \(\Cred(H,I)\to\Cred(H,A)\) and \(Q\colon\Cred(H,A)\to\Cred(H,A/I)\) to \(J^H(I,\Rel\cap I)\) and \(J^H(A,\Rel)\), respectively, yield a commutative diagram with exact rows
	\begin{equation}\label{crossedideals}
	\begin{tikzcd}
	J^H(I,\Rel\cap I)\arrow[r,hook]\arrow[d,hook] & J^H(A,\Rel)\arrow[r,twoheadrightarrow] \arrow[d,hook]& J^H(A/I,\cl{q(\Rel)})\arrow[d,hook]\\
	\Cred(H,I) \arrow[r,hook]& \Cred(H,A) \arrow[r,twoheadrightarrow,"Q"]& \Cred(H,A/I).
	\end{tikzcd}
	\end{equation}
\end{lemma}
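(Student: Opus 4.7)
My plan is to break the proof into four parts: commutativity/well-definedness of the diagram, injectivity of the left horizontal arrow, surjectivity of the right horizontal arrow, and exactness at the middle. The first three parts are routine consequences of results already available; the main content is exactness at the middle.

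Commutativity of the diagram and the claim that the vertical arrows land in the advertised subalgebras both follow from the identity $Q(\BRAKET{a}{b})=\BRAKET{q(a)}{q(b)}$ for $a,b\in\Rel$, which was established in the proof of \cref{res:rel_subsets}, together with the evident compatibility of $\BRAKET{\,\cdot\,}{\,\cdot\,}$ computed in $I$ and in $A$ for elements of $\Rel\cap I$. Injectivity of the left horizontal arrow is inherited from the injection $\Cred(H,I)\hookrightarrow\Cred(H,A)$ in~\eqref{exact}. For surjectivity of the right horizontal arrow, I use that $J^H(A,\Rel)$ is a $\Cst$\nobreakdash-subalgebra of $\Cred(H,A)$, so its image under the $^*$-homomorphism $Q$ is again a $\Cst$-subalgebra, hence closed. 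This image contains every generator $\BRAKET{q(a)}{q(b)}=Q(\BRAKET{a}{b})$ with $a,b\in\Rel$, and the closed linear span of these generators equals $J^H(A/I,\cl{q(\Rel)})$ by the continuity argument from the proof of \cref{res:completion} together with \cref{rem:contsi}.

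The heart of the argument is the inclusion $J^H(A,\Rel)\cap\Cred(H,I)\subset J^H(I,\Rel\cap I)$. The key computation is the identity
\[ \rho^A_{u_\lambda}\circ\BRAKET{a}{b}\circ\rho^A_{u_\lambda}=\BRAKET{au_\lambda}{bu_\lambda}\qquad\text{for }a,b\in\Rel, \]
which follows from $\KET{b}\circ\rho^A_c=\KET{bc}$ and $\rho^A_c\circ\BRA{a}=\BRA{ac^*}$ applied to a self-adjoint element $u_\lambda$ of an approximate unit $(u_\lambda)_{\lambda\in\Lambda}$ for $I$ as in \cref{multI}. Since $au_\lambda,bu_\lambda\in\Rel\cdot I=\Rel\cap I$ by \cref{multI}, the right-hand side lies in $J^H(I,\Rel\cap I)$.

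Given $x\in J^H(A,\Rel)\cap\Cred(H,I)$, the plan is to approximate $x$ in norm by linear combinations $x_n\in\lspan\BRAKET{\Rel}{\Rel}$. Then $\rho^A_{u_\lambda}x_n\rho^A_{u_\lambda}$ lies in $\lspan\BRAKET{\Rel\cap I}{\Rel\cap I}\subset J^H(I,\Rel\cap I)$, and letting $n\to\infty$ together with $\norm{\rho^A_{u_\lambda}}\leq 1$ gives $\rho^A_{u_\lambda}x\rho^A_{u_\lambda}\in J^H(I,\Rel\cap I)$. Finally, applying \cref{approxunit} on both sides — using self-adjointness of $u_\lambda$ and that $x\in\Cred(H,I)$ — yields $\rho^A_{u_\lambda}x\rho^A_{u_\lambda}\to x$ in norm, and closedness of $J^H(I,\Rel\cap I)$ finishes the argument. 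The anticipated subtlety is to verify that \cref{approxunit} applies to the approximate unit of the ideal $I$ acting on both sides of elements of $\Cred(H,I)\subset\Cred(H,A)$, which I would handle by reducing to the density of $\Cont_c(H,I)$ in $\Cred(H,I)$ and the observation that the multiplier $\rho^A_{u_\lambda}$ restricts to $\rho^I_{u_\lambda}$ on $\Cred(H,I)$.
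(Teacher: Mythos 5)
Your proof is correct, and its skeleton coincides with the paper's: commutativity via \(Q(\BRAKET{a}{b})=\BRAKET{q(a)}{q(b)}\), surjectivity of the right arrow from the generators of \(J^H(A/I,\cl{q(\Rel)})\), and reduction of exactness of the top row (via exactness of the bottom row) to the identity \(J^H(A,\Rel)\cap\Cred(H,I)=J^H(I,\Rel\cap I)\). The difference lies in how that identity is established. The paper first rewrites the intersection of the two closed ideals as their product, \(J^H(A,\Rel)\cap\Cred(H,I)=J^H(A,\Rel)\cdot\Cred(H,I)\), so that the elements \(\BRAKET{a}{b}\circ\rho_f=\BRAKET{a}{b*f}\) with \(a,b\in\Rel\) and \(f\in\Cont_c(H,I)\) span a dense subspace; since \(b*f\) already lies in \(\Rel\cap I\), a one-sided approximate unit \(\rho_{u_\lambda}\circ\BRAKET{a}{b*f}=\BRAKET{au_\lambda}{b*f}\) finishes the job. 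You instead compress an arbitrary element \(x\) of the intersection on both sides, using \(\rho^A_{u_\lambda}\circ\BRAKET{a}{b}\circ\rho^A_{u_\lambda}=\BRAKET{au_\lambda}{bu_\lambda}\) together with the two-sided convergence \(\rho_{u_\lambda}x\rho_{u_\lambda}\to x\) obtained from \cref{approxunit} and its adjoint. Both routes are sound: yours trades the ideal-product fact for a two-sided approximate-unit estimate, and your explicit check that \(\rho^A_{u_\lambda}\) restricts to \(\rho^I_{u_\lambda}\) on \(\Cred(H,I)\) addresses the only point where \cref{approxunit} needs adapting --- the paper relies on the same restriction tacitly in the proof of \cref{multI}.
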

\begin{proof}
	The ideal \(J^H(I,\Rel\cap I)\) is mapped into \(J^H(A,\Rel)\) under the inclusion. As \(Q(\BRAKET{a}{b})=\BRAKET{q(a)}{q(b)}\) for \(a,b\in\Rel\), we see that \(J^H(A,\Rel)\) maps to \(J^H(A/I,\cl{q(\Rel)})\). Moreover, the linear span of elements of this form is dense in \(J^H(A/I,\cl{q(\Rel)})\) so that the restriction is onto. Hence, the claim follows from exactness of the bottom row in \eqref{crossedideals} once we show that \(J^H(I,\Rel\cap I)=J^H(A,\Rel)\cap \Cred(H,I)\). 
	
	As \(J^H(A,\Rel)\) and \(\Cred(H,I)\) are both closed ideals in \(\Cred(H,A)\), \[J^H(A,\Rel)\cap\Cred(H,I)=J^H(A,\Rel)\cdot \Cred(H,I)\] holds. Consequently, the linear span of \(\BRAKET{a}{b}\circ\rho_f=\BRAKET{a}{b*f}\) for \(a,b\in\Rel\) and \(f\in\Cont_c(H,I)\) is dense in \(J^H(A,\Rel)\cap\Cred(H,I)\). Let \((u_\lambda)_{\lambda\in \Lambda}\) be a approximate unit for \(I\) consisting of self-adjoint \(u_\lambda\). \cref{approxunit} implies that \(\BRAKET{a}{b*f}\) is the limit of
	\(\rho_{u_\lambda}\circ\BRAKET{a}{b*f}=\BRAKET{a u_\lambda}{b*f}\). This net lies in \(J^H(I,\Rel\cap I)\) as \(au_\lambda\in\Rel\cdot I=\Rel\cap I\) and \(b*f\in\Rel\cap I\). Thus, the inclusion \(J^H(A,\Rel)\cap\Cred(H,I)\subseteq J^H(I,\Rel\cap I)\) follows. The converse inclusion is clear. 
\end{proof}

\begin{corollary}\label{res:ses_saturated}
	Let \((A,\Rel)\) be a continuously square-integrable \(H\)\nb-\(\Cst\)\nb-algebra and 
	\(I\idealin A\) an \(H\)\nb-invariant ideal such that \eqref{exact} is exact. Then \((A,\Rel)\) is saturated if and only if \((I,\Rel\cap I)\) and \((A/I,\cl{q(\Rel)})\) are saturated.
\end{corollary}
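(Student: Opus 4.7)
The plan is to derive the corollary as a direct diagram chase from \cref{ses:reduced}. That lemma provides a commutative diagram with exact rows, whose vertical maps are the canonical inclusions of the ideals \(J^H(\,\cdot\,,\,\cdot\,)\) into the corresponding reduced crossed products. Saturatedness for \((A,\Rel)\), \((I,\Rel\cap I)\) and \((A/I,\cl{q(\Rel)})\) is by definition the statement that the middle, left or right vertical inclusion in \eqref{crossedideals} is an equality, so the whole claim reduces to a short five lemma applied to this diagram.

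For the forward direction, assume the middle inclusion is an equality, that is \(J^H(A,\Rel)=\Cred(H,A)\). The proof of \cref{ses:reduced} established two facts I will reuse: firstly, \(J^H(I,\Rel\cap I)=J^H(A,\Rel)\cap\Cred(H,I)\), and secondly the restricted map \(Q\colon J^H(A,\Rel)\to J^H(A/I,\cl{q(\Rel)})\) is surjective. Substituting the assumption into the first identity gives \(J^H(I,\Rel\cap I)=\Cred(H,A)\cap\Cred(H,I)=\Cred(H,I)\), while surjectivity of~\(Q\) yields \(J^H(A/I,\cl{q(\Rel)})=Q(\Cred(H,A))=\Cred(H,A/I)\). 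So both \((I,\Rel\cap I)\) and \((A/I,\cl{q(\Rel)})\) are saturated.

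For the converse, assume \((I,\Rel\cap I)\) and \((A/I,\cl{q(\Rel)})\) are saturated. Given any \(x\in\Cred(H,A)\), its image \(Q(x)\) lies in \(\Cred(H,A/I)=J^H(A/I,\cl{q(\Rel)})\), so by surjectivity of the top row in \eqref{crossedideals} there exists \(y\in J^H(A,\Rel)\) with \(Q(y)=Q(x)\). Exactness of the bottom row then places \(x-y\) in \(\Cred(H,I)\), and by the assumption on the ideal this equals \(J^H(I,\Rel\cap I)\), which sits inside \(J^H(A,\Rel)\) via the top left inclusion. Hence \(x=y+(x-y)\in J^H(A,\Rel)\), so \((A,\Rel)\) is saturated.

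I do not foresee a genuine obstacle: all the analytic content is already packaged in \cref{ses:reduced}, and the remaining argument is essentially the short five lemma for ideals sitting inside an exact sequence of \(\Cst\)\nb-algebras. The only point one has to be slightly careful about is that we are not inverting any maps: the vertical arrows are automatically injective inclusions, so no additional exactness hypothesis on the top row is needed beyond what \cref{ses:reduced} already provides.
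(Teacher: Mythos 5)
Your argument is correct and follows essentially the same route as the paper: both directions rest on \cref{ses:reduced}, namely the identity \(J^H(I,\Rel\cap I)=J^H(A,\Rel)\cap\Cred(H,I)\) together with exactness of the two rows in \eqref{crossedideals}. You merely spell out the short-five-lemma chase that the paper's proof leaves implicit.
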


\begin{proof}
	Suppose first that \((A,\Rel)\) is saturated. In the proof of \cref{ses:reduced} we showed that \(J^H(I,\Rel\cap I)=J^H(A,\Rel)\cap\Cred(H,I)\). Hence \((I,\Rel\cap I)\) is saturated. Because~\eqref{crossedideals} has exact rows, this implies that \((A/I,\cl{q(\Rel)})\) is saturated as well. If \((I,\Rel\cap I)\) and \((A/I,\cl{q(\Rel)})\) are saturated, \((A,\Rel)\) is saturated because \eqref{crossedideals} is exact.
\end{proof}
As an application we show the following result for actions on spaces.
\begin{lemma}\label{res:freesaturated}
	Let \(H\) act properly on a locally compact Hausdorff space \(X\) and assume that \((\Cont_0(X),\cl{\Cont_c(X)})\) is saturated. Then the action \(H\curvearrowright X\) is free. 
\end{lemma}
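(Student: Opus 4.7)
The plan is to argue by contraposition: suppose the action is not free, and derive that $(\Cont_0(X),\cl{\Cont_c(X)})$ is not saturated. Pick $x_0\in X$ with non-trivial stabilizer $K=H_{x_0}$. Since the action is proper, $K$ is compact and the orbit $Y=H\cdot x_0$ is closed in $X$ and $H$-equivariantly homeomorphic to $H/K$. The ideal $I=\Cont_0(X\setminus Y)\idealin\Cont_0(X)$ is $H$-invariant with quotient $\Cont_0(X)/I\cong\Cont_0(H/K)$, the quotient map $q$ being restriction. For proper actions on locally compact spaces the reduced and full crossed products coincide, so~\eqref{exact} is exact in this situation. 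A Tietze--Urysohn argument gives $q(\Cont_c(X))=\Cont_c(H/K)$, and \cref{rem:contsi} then yields $\cl{q(\cl{\Cont_c(X)})}=\cl{\Cont_c(H/K)}$. By \cref{res:ses_saturated}, saturation of $(\Cont_0(X),\cl{\Cont_c(X)})$ would imply saturation of $(\Cont_0(H/K),\cl{\Cont_c(H/K)})$.

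The crucial step is to rule this out. Since $H\backslash(H/K)$ consists of a single point, $\Fix^H(\Cont_0(H/K),\cl{\Cont_c(H/K)})\cong\C$. The $\Fix^H$-$J^H$ imprimitivity bimodule from \cref{fixedpointalg} therefore exhibits the ideal $J^H(\Cont_0(H/K),\cl{\Cont_c(H/K)})\idealin\Cred(H,\Cont_0(H/K))$ as Morita--Rieffel equivalent to $\C$. On the other hand, Green's imprimitivity theorem provides a Morita--Rieffel equivalence $\Cont_0(H/K)\rtimes_r H\sim\Cst(K)$ (using again that the reduced and full crossed products coincide for this proper action). Were the system saturated, combining these equivalences would force $\Cst(K)$ to be Morita--Rieffel equivalent to $\C$. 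But for a non-trivial compact group $K$, Peter--Weyl yields $\Cst(K)\cong\bigoplus_{\pi\in\hat K}\Mat_{d_\pi}(\C)$ as a $c_0$-direct sum over at least two points of $\hat K$, whose stabilization is not isomorphic to $\Comp$; hence $\Cst(K)$ is not Morita--Rieffel equivalent to $\C$, a contradiction.

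The technical obstacles are largely routine once this strategy is fixed: verifying the exactness hypothesis of \cref{res:ses_saturated} is handled by the equality of reduced and full crossed products for proper actions, and identifying the quotient continuously square-integrable structure with the canonical one on $\Cont_0(H/K)$ is handled by \cref{rem:contsi} together with Tietze extension. The substantive content lies in recognizing that Green's imprimitivity theorem pins down the structure of the crossed product attached to a single orbit and that the stabilizer being non-trivial is incompatible with $\Cst(K)\sim\C$.
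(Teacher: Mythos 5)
Your proof is correct and follows essentially the same route as the paper: restrict to a single orbit $H/H_x$, deduce saturation of the restricted system from \cref{res:ses_saturated}, identify its generalized fixed point algebra with $\C$, and contrast this via the imprimitivity theorem with $\Cst(H_x)$ to force the stabilizer to be trivial. The only cosmetic differences are that you argue by contraposition and identify the quotient's relatively continuous subset by a Tietze/\cref{rem:contsi} argument, where the paper invokes uniqueness via spectral properness (\cref{res:spectrallyproper}); you also verify the exactness hypothesis explicitly, which the paper leaves implicit.
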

\begin{proof}		
	Let \(x\in X\) and let \(Hx\subseteq X\) be its orbit. Since the action is proper, \(H x\) is \(H\)-equivariantly homeomorphic to \(H/H_x\). Here \(H_x\) is the stabilizer of~\(x\), which is a compact subgroup of \(H\). Hence, \(\Cont_0(Hx)\) is a quotient of \(\Cont_0(X)\) by an \(H\)\nb-invariant ideal. Because \(\Cont_0(Hx)\) is spectrally proper, \(\cl{\Cont_c(H x)}\) is the unique relatively continuous, complete and dense subset by \cref{res:spectrallyproper}. By \cref{res:ses_saturated}, \((\Cont_0(H x),\cl{\Cont_c(H x)})\) is saturated. Hence, \(\Fix^H(\Cont_0(Hx),\cl{\Cont_c(H x)})\) is Morita--Rieffel equivalent to \(\Cred(H,\Cont_0(H x))\). By the Imprimitivity Theorem, \(\Cred(H, \Cont_0(H/H_x))\) is Morita--Rieffel equivalent to \(\Cst(H_x)\).
	
	On the other hand, \(\Fix^H(\Cont_0(Hx),\cl{\Cont_c(H x)})\) is isomorphic to the functions on the orbit space. As \(H x\) consists of a single \(H\)-orbit, this generalized fixed point algebra is isomorphic to \(\C\).   Hence, \(\C\) and \(\Cst(H_x)\) are Morita--Rieffel equivalent. This can only be true if \(H_x=\{e\}\). Therefore, the \(H\)-action on \(X\) is free. 
\end{proof}

Not only the ideals in the crossed product algebras fit into an exact sequence. The same is true for the corresponding generalized fixed point algebras. The surjective homomorphism~\(q\colon A\to A/I\) has a unique strictly continuous extension \(\Mult(A)\to\Mult(A/I)\). Let \(\widetilde{q}\) be its restriction to \(\Fix^H(A,\Rel)\).
\begin{proposition}\label{res:sesfixed}
	Let \((A,\Rel)\) be a continuously square-integrable \(H\)\nb-\(\Cst\)\nb-algebra and 
	\(I\idealin A\) an \(H\)\nb-invariant ideal such that \eqref{exact} is exact. There is an extension of generalized fixed point algebras
	\begin{equation*}
	\begin{tikzcd}
	\Fix^H(I,\Rel\cap I)\arrow[r,hook] & \Fix^H(A,\Rel)\arrow[r,twoheadrightarrow,"\widetilde{q}"] & \Fix^H(A/I,\cl{q(\Rel)}).
	\end{tikzcd}
	\end{equation*}
\end{proposition}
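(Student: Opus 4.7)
Since \cref{res:rel_subsets} already provides that $(I,\Rel\cap I)$ and $(A/I,\cl{q(\Rel)})$ are continuously square-integrable $H$-$\Cst$-algebras, all three generalized fixed point algebras are defined. The strategy is to compute $\widetilde q$ explicitly on generators, verify surjectivity and the easy containment of the kernel, and then identify the kernel precisely using the imprimitivity bimodule picture together with \cref{ses:reduced}.

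The starting point is the identity
\[\widetilde q(\KET{a}\BRA{b})=\KET{q(a)}\BRA{q(b)}\qquad\text{for }a,b\in\Rel.\]
This follows from \cref{res:strictlimit}: the left-hand side is the strict limit in $\Mult(A)$ of the averages $\int_H\chi_i(x)\alpha_x(a^*b)\diff x$, and the strictly continuous extension $\widetilde q$ pushes these forward to $\int_H\chi_i(x)\alpha_x(q(a)^*q(b))\diff x$, whose strict limit in $\Mult(A/I)$ is $\KET{q(a)}\BRA{q(b)}$ by another application of \cref{res:strictlimit}. Thus $\widetilde q$ maps $\Fix^H(A,\Rel)$ into $\Fix^H(A/I,\cl{q(\Rel)})$. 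Surjectivity is then immediate, because by \cref{res:completion} applied to $q(\Rel)\subset (A/I)_\si$ the elements $\KET{q(a)}\BRA{q(b)}$ span a dense subspace of $\Fix^H(A/I,\cl{q(\Rel)})$. The inclusion $\Fix^H(I,\Rel\cap I)\subseteq\ker\widetilde q$ is also immediate, since $q(a)=q(b)=0$ for $a,b\in\Rel\cap I$.

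The main obstacle is the reverse inclusion $\ker\widetilde q\subseteq\Fix^H(I,\Rel\cap I)$, for which I would exploit the Morita picture. The submodule $\Hilm^H(I,\Rel\cap I)\subset\Hilm^H(A,\Rel)$ corresponds under the Rieffel correspondence to the pair of ideals $\Fix^H(I,\Rel\cap I)$ and $J^H(I,\Rel\cap I)$, so the quotient Hilbert module $\Hilm^H(A,\Rel)/\Hilm^H(I,\Rel\cap I)$ is an imprimitivity bimodule between $\Fix^H(A,\Rel)/\Fix^H(I,\Rel\cap I)$ and $J^H(A,\Rel)/J^H(I,\Rel\cap I)$; by \cref{ses:reduced} the latter is canonically isomorphic to $J^H(A/I,\cl{q(\Rel)})$. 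The contraction $\pi\colon\Hilm^H(A,\Rel)\to\Hilm^H(A/I,\cl{q(\Rel)})$ sending $\KET{a}$ to $\KET{q(a)}$ (well-defined thanks to $\norm{\BRAKET{q(a)}{q(a)}}\leq\norm{\BRAKET{a}{a}}$) vanishes on $\Hilm^H(I,\Rel\cap I)$ and therefore descends to a map $\bar\pi$ on the quotient. The identity $\BRAKET{\pi(\xi)}{\pi(\eta)}_J=Q(\BRAKET{\xi}{\eta}_J)$, combined with the isomorphism of \cref{ses:reduced}, makes $\bar\pi$ isometric, while density of $\KET{q(\Rel)}$ in $\Hilm^H(A/I,\cl{q(\Rel)})$ makes it surjective. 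Hence $\bar\pi$ is an isomorphism of right Hilbert modules, and the analogous identity $\BRAKET{\pi(\xi)}{\pi(\eta)}_{\Fix}=\widetilde q(\BRAKET{\xi}{\eta}_{\Fix})$ says that $\bar\pi$ intertwines the left actions via the induced map $\bar q\colon\Fix^H(A,\Rel)/\Fix^H(I,\Rel\cap I)\to\Fix^H(A/I,\cl{q(\Rel)})$. Since the left action of an imprimitivity bimodule is a faithful representation by adjointable operators, $\bar q$ must be injective, which is the sought reverse inclusion and completes the proof of the extension.
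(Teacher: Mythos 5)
Your proof is correct, and its first half — computing $\widetilde{q}(\KET{a}\BRA{b})=\KET{q(a)}\BRA{q(b)}$ via \cref{res:strictlimit}, deducing surjectivity from density of $\KET{q(\Rel)}\BRA{q(\Rel)}$, and the easy inclusion $\Fix^H(I,\Rel\cap I)\subseteq\ker\widetilde{q}$ — coincides with the paper's argument. Where you genuinely diverge is in the reverse inclusion $\ker\widetilde{q}\subseteq\Fix^H(I,\Rel\cap I)$. The paper handles this by a direct element computation: for $T\in\ker\widetilde{q}$ it reduces to $T^*T$ via the $\Cst$-identity in the quotient, uses $T^*\KET{a}\BRA{b}=\KET{T^*a}\BRA{b}$ and \cite{meyer2001}*{6.5} to conclude $T^*a\in\Rel\cap I$, and then a factorization $T^*a=cij$ with $c\in\Rel$, $i,j\in I$ to rewrite $\KET{T^*a}\BRA{b}=\KET{ci}\BRA{bj^*}\in\Fix^H(I,\Rel\cap I)$. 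You instead route the argument through the Rieffel correspondence: the quotient $\Hilm^H(A,\Rel)/\Hilm^H(I,\Rel\cap I)$ is an imprimitivity bimodule between the two quotient algebras, \cref{ses:reduced} identifies the right-hand quotient with $J^H(A/I,\cl{q(\Rel)})$, and faithfulness of the left action of an imprimitivity bimodule forces $\bar q$ to be injective. This is sound: the facts you use tacitly — that $\Hilm^H(I,\Rel\cap I)$ corresponds to the pair of ideals and that the quotient of an imprimitivity bimodule by such a submodule is again an imprimitivity bimodule, with quotient norm equal to the Hilbert-module norm over $J^H(A,\Rel)/J^H(I,\Rel\cap I)$ (which is what makes $\bar\pi$ isometric) — are standard (\cite{raeburnwilliams}*{3.22, 3.25}), and the paper already invokes the correspondence for exactly this submodule. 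Your route reuses \cref{ses:reduced} (which the paper proves but only exploits for saturatedness) and avoids both the factorization trick and the appeal to \cite{meyer2001}*{6.5}, since the intertwining $\pi(T\xi)=\widetilde{q}(T)\pi(\xi)$ can be verified on generators $T=\KET{a}\BRA{b}$ and extended by continuity; the paper's route is more elementary and self-contained at the element level. Both are valid proofs.
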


\begin{proof}
	For \(a,b\in\Rel\cap I\), we can view \(\KET{a}\BRA{b}\) as a multiplier of \(I\) or \(A\). As \(\KET{a}\BRA{b}(A)\subset I\) it follows that \(\norm{\KET{a}\BRA{b}}_I=\norm{\KET{a}\BRA{b}}_A\). Hence, by extending continuously we obtain an injective \(^*\)-homomorphism \(\Fix^H(I,\Rel\cap I)\to\Fix^H(A,\Rel)\). 
	
	Denote by \(\beta\) the induced \(H\)-action on \(A/I\). Strict continuity of \(\widetilde{q}\) and \cref{res:strictlimit} imply
	\[\widetilde{q}(\KET{a}\BRA{b})=\lim_s \int_H q(\alpha_x(a^*b))\diff x= \lim_s \int_H \beta_x(q(a^*b))\diff x= \KET{q(a)}\BRA{q(b)}\]
	for \(a,b\in\Rel\). This shows that the image of \(\widetilde{q}\) is contained in \(\Fix^H(A,\cl{q(\Rel)})\). Moreover, the linear span of elements of this form is dense in \(\Fix^H(A,\cl{q(\Rel)})\). So \(\widetilde{q}\) is onto. 
	
	It remains to show that the kernel of \(\widetilde{q}\) is \(\Fix^H(I,\Rel\cap I)\). The computation above yields \(\widetilde{q}(\KET{a}\BRA{b})=\KET{q(a)}\BRA{q(b)}=0\) for \(a,b\in\Rel\cap I\). Thus, \(\Fix^H(I,\Rel\cap I)\) is contained in \(\ker(\widetilde{q})\). Let \(T\in\Fix^H(A,\Rel)\) be such that \(\widetilde{q}(T)=0\). By the \(\Cst\)-identity in \(\Fix^H(A,\Rel)/\Fix^H(I,\Rel\cap I)\) it will suffice to show that \(T^*T\in\Fix^H(I,\Rel\cap I)\). By \cite{meyer2001}*{(13)}, \(T^*\KET{a}\BRA{b}=\KET{T^*a}\BRA{b}\) holds for \(a,b\in\Rel\). As \(T^*a\) is square-integrable and \(\KET{T^*a}=T^*\KET{a}\in\Fix^H(A,\Rel)\cdot\Hilm^H(A,\Rel)\subseteq \Hilm^H(A,\Rel)\), \cite{meyer2001}*{6.5} implies \(T^*a\in\Rel\). Moreover, \(q(T^*a)=\widetilde{q}(T^*)q(a)=0\) shows that \(T^*a\in\Rel\cap I\). The equalities \(\Rel\cap I=\Rel\cdot I\) and \(I=I^2\) imply that there are \(c\in\Rel\) and \(i,j\in I\) with \(T^*a=cij\). The computation
	\[\KET{cij}\BRA{b}=(\KET{ci}\circ\rho_j)\circ\BRA{b}=\KET{ci}(\KET{b}\circ\rho_{j^*})^*=\KET{ci}\BRA{bj^*}\]
	shows that \(T^*\KET{a}\BRA{b}\in\Fix^H(I,\Rel\cap I)\). By definition of the generalized fixed point algebra, \(T\) is the limit of a sequence in the linear span of \(\KET{\Rel}\BRA{\Rel}\). Hence, it follows that \(T^*T\in\Fix^H(I,\Rel\cap I)\).
\end{proof}
\subsection{Generalized fixed point algebras and \(\Cont_0(X)\)-algebras}
In this section, we consider generalized fixed point algebras of \(H\)-\(\Cont_0(X)\)-algebras. For the definition of \(\Cont_0(X)\)-algebras and their relation to continuous fields of \(\Cst\)-algebras see \cite{nilsen1996}.
\begin{definition}[\cite{Kasparov_equivariant}]
	Let \(H\) act on a locally compact Hausdorff space \(X\). Denote by \(\tau_h(f)(x)=f(h^{-1}\cdot x)\) for \(h\in H\) and \(x\in X\) the induced action on \(\Cont_0(X)\). 	
	A \(\Cont_0(X)\)-algebra \(A\) with \(\theta\colon \Cont_0(X)\injto Z\Mult(A)
	\) and an \(H\)-action \(\alpha\colon H\acts A\) is called an \emph{\(H\)-\(\Cont_0(X)\)-algebra} if the actions are compatible in the following sense
	\begin{align*}
	\alpha_h(\theta(\varphi)a)=\theta(\tau_h(\varphi))\alpha_h(a)\quad\text{for all }h\in H\text{, }\varphi\in\Cont_0(X)\text{ and }a\in A.
	\end{align*}
\end{definition}

In this case, one can relate the spectra of the \(H\)-\(\Cont_0(X)\)-algebra and the corresponding generalized fixed point algebra as follows. If \(H\curvearrowright A\) is a strongly continuous action, there is a continuous action of \(H\) on the spectrum \(\widehat{A}\) given by \((x\cdot\pi)(a)=\pi(\alpha_{x^{-1}}(a))\) for \([\pi]\in\widehat{A}\), \(x\in H\) and \(a\in A\). 

Suppose now \((A,\Rel)\) is a continuously square-integrable  \(H\)-\(\Cst\)-algebra. Every non-degenerate representation \(\pi\) of \(A\) can be extended to \(\Mult(A)\). Denote its restriction to \(\Fix^H(A,\Rel)\) by \(\widetilde{\pi}\).

In the commutative case, this procedure allows to completely describe the representation theory of the generalized fixed point algebra.	By Gelfand duality, the spectrum of the commutative \(\Cst\)-algebra \(\Cont_0(X)\) is \(X\). If \(H\acts X\) is a proper action and \((A,\Rel)=(\Cont_0(X),\cl{\Cont_c(X)})\), the map \(\pi\mapsto \widetilde{\pi}\) induces a homeomorphism \[H\backslash\widehat{A}=H\backslash X\to\widehat{\Fix^H(A,\Rel)}.\] 

This generalizes to \(H\)-\(\Cont_0(X)\)-algebras as follows.
We summarize some results concerning \(H\)-\(\Cont_0(X)\)-algebras proved in \cites{meyer2001,anhuefequivariantbrauer,rieffel1998,anhuef2002}.
\begin{proposition}\label{res:saturatedspectrum}
	Let \(H\curvearrowright X\) be a proper action on a locally compact Hausdorff space \(X\) and \(A\) an \(H\)-\(\Cont_0(X)\)-algebra with \(\theta\colon\Cont_0(X)\to Z\Mult(A)\).
	\begin{enumerate}
		\item The subset \(\Rel\defeq \overline{\theta(\Cont_c(X))A}\) is  dense, relatively continuous and complete. Moreover, it is the unique such subset.
		\item If the action of \(H\) on \(X\) is free, the map \(\pi\mapsto\widetilde{\pi}\) induces a homeomorphism \(H\backslash \widehat{A}\to \widehat{\Fix^H(A,\Rel)}\) and \((A,\Rel)\) is saturated.
	\end{enumerate}		 
\end{proposition}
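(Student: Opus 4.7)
For part (1), I would introduce the linear subspace \(\Rel_0\defeq\theta(\Cont_c(X))\cdot A\) before taking any closures. Non\nb-degeneracy of \(\theta\colon\Cont_0(X)\injto Z\Mult(A)\) together with density of \(\Cont_c(X)\subset\Cont_0(X)\) makes \(\Rel_0\) norm-dense in \(A\); centrality of \(\theta(\Cont_0(X))\) gives \(\Rel_0\cdot\Rel_0\subset\Rel_0\); and the equivariance relation \(\alpha_h(\theta(\varphi)a)=\theta(\tau_h\varphi)\alpha_h(a)\) makes \(\Rel_0\) \(H\)\nb-invariant. For \(a=\theta(\varphi)a_0\) and \(b=\theta(\psi)b_0\) in \(\Rel_0\), a direct computation along the lines of the derivation of \eqref{crossedprod} identifies \(\BRAKET{a}{b}\) with \(\rho_g\), where \(g(x)=a_0^*\,\theta(\bar\varphi\cdot\tau_x\psi)\,\alpha_x(b_0)\). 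Properness of \(H\acts X\) makes \(\{x\in H:\supp\varphi\cap x\cdot\supp\psi\neq\emptyset\}\) precompact, so \(g\in\Cont_c(H,A)\) and hence \(\rho_g\in\Cred(H,A)\). This simultaneously establishes square-integrability of each element of \(\Rel_0\) and relative continuity. \cref{res:completion} then yields that \(\Rel\defeq\cl{\Rel_0}^{\si}\) is relatively continuous, complete and dense.

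For the uniqueness statement in (1), the structure map \(\theta\) induces a continuous \(H\)\nb-equivariant map \(\Prim(A)\to X\). Since continuous equivariant maps pull back properness of actions, the \(H\)\nb-action on \(\Prim(A)\) is proper, so \(A\) is spectrally proper, and \cref{res:spectrallyproper} supplies the unique such \(\Rel\); identifying this \(\Rel\) with the one constructed above is immediate from maximality.

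The substantial work lies in part (2). Once the saturation \(J^H(A,\Rel)=\Cred(H,A)\) is established, \cref{deffix} and the discussion following it produce a Morita--Rieffel equivalence \(\Fix^H(A,\Rel)\sim_M\Cred(H,A)\). For a free and proper action on \(X\), Rieffel's imprimitivity results \cite{rieffel1998, anhuef2002} identify \(\widehat{\Cred(H,A)}\) canonically with \(H\backslash\widehat{A}\), and tracing the Morita correspondence through the formula \(\widetilde{q}(\KET{a}\BRA{b})=\KET{q(a)}\BRA{q(b)}\) (cf.\ the proof of \cref{res:sesfixed}) shows that this homeomorphism is precisely \(\pi\mapsto\widetilde{\pi}\). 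For the saturation itself, the plan is to approximate an arbitrary \(h\in\Cont_c(H,A)\) by finite sums of \(\rho_g\) as above. Using a partition of unity on the Hausdorff orbit space \(H\backslash X\), one reduces to a trivial piece \(X|_U\cong H\times U\) of the principal \(H\)\nb-bundle; there freeness permits choosing \(\varphi,\psi\in\Cont_c(X)\) whose product \(\bar\varphi\cdot\tau_x\psi\) concentrates near a chosen orbit and, after rescaling by a bump on \(H\), effectively decouples the \(H\)- and \(X\)-variables. The resulting approximation is controlled in the \(I\)\nb-norm on \(\Cont_c(H,A)\) and hence in the reduced norm. This decoupling is the main obstacle: on an orbit with nontrivial stabilizer \(H_x\), every averaged combination of translates would be \(H_x\)\nb-invariant and could not reproduce arbitrary \(h\), reflecting the mismatch \(\Cst(H_x)\neq\C\) appearing in \cref{res:freesaturated}.
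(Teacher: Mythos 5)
The paper's own ``proof'' of this proposition is a pointer to the literature (spectral properness and uniqueness of \(\Rel\) from \cite{meyer2001}*{Sec.~9, 9.4}, the homeomorphism from \cite{anhuefequivariantbrauer}*{3.9}, saturation from \cite{anhuef2002}*{4.1}), so you are attempting to reprove cited results from scratch. Your part (1) is essentially correct and reproduces the standard construction: the algebraic properties of \(\Rel_0=\theta(\Cont_c(X))A\), the computation \(\BRAKET{a}{b}=\rho_g\) with \(g(z)=a_0^*\theta(\bar\varphi\cdot\tau_z\psi)\alpha_z(b_0)\) compactly supported by properness, the appeal to \cref{res:completion}, and the uniqueness via spectral properness of \(\Prim(A)\to X\) all check out (one small point: square-integrability requires controlling \(\BRA{a}b\) for \emph{all} \(b\in A\), not just \(b\in\Rel_0\); this is why one should invoke \cite{meyer2001}*{6.8} as in the paper's remark rather than claim it comes for free from the displayed computation).

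Part (2) is where the genuine gaps lie. First, your reduction to ``a trivial piece \(X|_U\cong H\times U\) of the principal \(H\)-bundle'' presupposes that a free proper action of a locally compact group admits local cross-sections. This is available for Lie groups (Palais), but the proposition is stated for arbitrary locally compact \(H\), and for general \(H\) a free proper action need not be a locally trivial bundle; the cited proofs of saturation (Rieffel's preprint, \cite{anhuef2002}*{4.1}) deliberately avoid this by going through the symmetric/Green imprimitivity theorem. Second, even granting local triviality, the ``decoupling'' step is the entire content of saturation and is only asserted: the element \(\BRAKET{a}{b}(z)=a_0^*\theta(\bar\varphi\cdot\tau_z\psi)\alpha_z(b_0)\) entangles the \(H\)-variable \(z\) with \(A\) through \(\alpha_z(b_0)\), and it is not explained how bump functions on \(H\) and a partition of unity on \(H\backslash X\) produce \(I\)-norm approximations of an \emph{arbitrary} \(h\in\Cont_c(H,A)\) whose \(A\)-dependence on \(z\) is unconstrained. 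Third, the claim that the homeomorphism \(\widehat{\Cred(H,A)}\cong H\backslash\widehat{A}\) composed with the Rieffel correspondence is exactly \(\pi\mapsto\widetilde\pi\) is asserted (``tracing the Morita correspondence\ldots shows'') but not verified; this identification is precisely what \cite{anhuefequivariantbrauer}*{3.9} proves, and note that it also requires knowing \(\widehat{\Cred(H,A)}\cong H\backslash\widehat{A}\) for free proper actions of possibly non-amenable \(H\), which itself needs an argument. Your closing remark about nontrivial stabilizers correctly explains why freeness is necessary, consistent with \cref{res:freesaturated}, but it does not substitute for the missing sufficiency argument.
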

\begin{proof}An \(H\)-\(\Cont_0(X)\)-algebra with a proper action \(H\acts X\) is spectrally proper as argued in \cite{meyer2001}*{Sec.~9}. The \(\Rel\) above is the unique, dense, relatively continuous and complete subset constructed in \cite{meyer2001}*{9.4}. 		
	It is proved in \cite{anhuefequivariantbrauer}*{3.9} that the map \(\pi\mapsto\widetilde{\pi}\) induces a homeomorphism. That \((A,\Rel)\) is saturated is shown in the preprint version of \cite{rieffel1998} and in \cite{anhuef2002}*{4.1}.
\end{proof}

The following result applies to trivial continuous fields \(\Cont_0(X,A)\), where the action is taking place on the base space \(X\). 
\begin{lemma}[\cite{rieffel1988}*{2.6}, \cite{raeburnpullbacks}*{3.2}]\label{res:trivialfix}
	Let \(H\curvearrowright X\) be a proper action and \(A\) a \(\Cst\)-algebra. Let \(H\) act on \(\Cont_0(X,A)\) by \((\tau_hf)(x)=f(h^{-1}\cdot x)\) for \(h\in H\), \(f\in\Cont_0(X,A)\) and \(x\in X\). There is an isomorphism
	\begin{align*}\Psi&\colon\Fix^H(\Cont_0(X,A),\cl{\Cont_c(X,A)})\to \Cont_0(H\backslash X, A),\\
	\Psi(\KET{f}\BRA{g})(Hx)&=\int_H (f^*\cdot g)(h^{-1}\cdot x)\diff h\quad\text{for } Hx\in H\backslash X \text{ and }f,g\in\Cont_c(H,A).
	\end{align*}		
\end{lemma}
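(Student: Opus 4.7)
The plan is to apply the $H$-$\Cont_0(X)$-algebra machinery from \cref{res:saturatedspectrum} to $\Cont_0(X,A)$, identify $\KET{f}\BRA{g}$ with the natural multiplier action given by the integral in the claim, and finally establish surjectivity.

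\emph{Setting up.} Pointwise multiplication gives a map $\theta\colon\Cont_0(X)\injto Z\Mult(\Cont_0(X,A))$ that makes $\Cont_0(X,A)$ into an $H$-$\Cont_0(X)$-algebra. Since $\theta(\Cont_c(X))\cdot\Cont_0(X,A)$ has the same closure as $\Cont_c(X,A)$ in the $A$\nb-norm, and consequently also in $\norm{\,\cdot\,}_\si$, \cref{res:saturatedspectrum} applies and shows that $\cl{\Cont_c(X,A)}$ is the unique dense, relatively continuous, complete subset and that the pair is saturated. In particular, $\Fix^H(\Cont_0(X,A),\cl{\Cont_c(X,A)})$ is defined.

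\emph{The formula and strict-limit identification.} For $f,g\in\Cont_c(X,A)$ and $x\in X$, properness of $H\curvearrowright X$ forces the set $\{h\in H:h^{-1}\cdot x\in\supp(f^*g)\}$ to be compact, so the defining integral converges absolutely. A change of variables shows the resulting function is $H$-invariant; continuity follows from dominated convergence and local uniform continuity of $f^*g$, and compactness of the support in $H\backslash X$ follows from properness of the quotient map $\pi\colon X\to H\backslash X$. Hence $\Psi(\KET{f}\BRA{g})\in\Cont_c(H\backslash X,A)\subset\Cont_0(H\backslash X,A)$. Applying \cref{res:strictlimit} with a cut-off net $(\chi_i)$, the multiplier $\KET{f}\BRA{g}$ is the strict limit of $\int_H\chi_i(h)\tau_h(f^*g)\diff h$ in $\Mult(\Cont_0(X,A))$. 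Testing against $k\in\Cont_0(X,A)$ and evaluating at $x\in X$ shows that $\KET{f}\BRA{g}$ acts as multiplication by the pullback $\Psi(\KET{f}\BRA{g})\circ\pi$, viewed as a bounded strictly continuous map $X\to\Mult(A)$. This identification shows that $\Psi$ extends $^*$-homomorphically from the linear span of $\KET{\Cont_c(X,A)}\BRA{\Cont_c(X,A)}$ to an injective $^*$-homomorphism into $\Cont_0(H\backslash X,A)$.

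\emph{Surjectivity---the main obstacle.} The image $B\subset\Cont_0(H\backslash X,A)$ is a $\Cst$-subalgebra. It is closed under multiplication by $\Cont_0(H\backslash X)$, because for $\varphi\in\Cont_0(H\backslash X)$ the function $\varphi\circ\pi$ lies in the $H$-invariant centre of $\Mult(\Cont_0(X,A))$ and $(\varphi\circ\pi)\cdot\KET{f}\BRA{g}=\KET{(\varphi\circ\pi)f}\BRA{g}$ stays in $\KET{\Cont_c(X,A)}\BRA{\Cont_c(X,A)}$ up to norm approximation. To show that the fibre of $B$ at each point $Hx$ is all of $A$, I choose an Urysohn-type function $\chi\in\Cont_c(X)$ with $\chi\geq 0$ and $\int_H\chi(h^{-1}x)\diff h=1$, factor an arbitrary $a\in A$ as $a=b^*c$, and take $f=\chi\cdot b$, $g=\chi\cdot c\in\Cont_c(X,A)$; then $\Psi(\KET{f}\BRA{g})(Hx)=a$ by direct computation. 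Combining full fibres with the $\Cont_0(H\backslash X)$-module property forces $B=\Cont_0(H\backslash X,A)$, so $\Psi$ is the claimed isomorphism.
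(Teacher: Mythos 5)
Your argument is essentially correct, but note that the paper does not prove this lemma at all: it is imported from Rieffel and from Raeburn's pullback paper, so there is no in-text proof to compare against. Your route --- realising \(\Cont_0(X,A)\) as an \(H\)-\(\Cont_0(X)\)-algebra so that \cref{res:saturatedspectrum} identifies \(\cl{\Cont_c(X,A)}\) as the unique admissible subset, using \cref{res:strictlimit} to see that \(\KET{f}\BRA{g}\) acts on \(\Cont_0(X,A)\) as pointwise multiplication by the bounded \(H\)-invariant function \(x\mapsto\int_H(f^*g)(h^{-1}\cdot x)\diff h\) (an isometric embedding of \(\Cont_0(H\backslash X,A)\) into \(\Mult(\Cont_0(X,A))\), whence \(\Psi\) is an isometric \(^*\)-isomorphism onto its image), and then fibrewise fullness plus the \(\Cont_0(H\backslash X)\)-module property for surjectivity --- is the standard one and is sound. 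Three small corrections. First, you assert that the pair is saturated; \cref{res:saturatedspectrum} gives saturation only when the action on \(X\) is \emph{free}, which is not assumed here --- saturation is not needed for the isomorphism, so drop that claim. Second, with your normalisation \(\int_H\chi(h^{-1}\cdot x)\diff h=1\) the computation gives \(\Psi(\KET{\chi b}\BRA{\chi c})(Hx)=\bigl(\int_H\chi(h^{-1}\cdot x)^2\diff h\bigr)a\) rather than \(a\); either normalise the integral of \(\chi^2\) instead, or note that hitting a nonzero positive multiple of every \(a\in A\) already fills the fibre, since the fibre of the image is a linear subspace. Third, equality of closures in the \(\Cst\)-norm does not in general imply equality of closures in \(\norm{\,\cdot\,}_\si\); here no such implication is needed because \(\theta(\Cont_c(X))\Cont_0(X,A)\) and \(\Cont_c(X,A)\) coincide as sets (any \(f\in\Cont_c(X,A)\) equals \(\chi f\) for a suitable \(\chi\in\Cont_c(X)\)), so their \(\si\)-closures agree trivially.
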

In particular, for \(A=\C\) the generalized fixed point algebra is isomorphic to \(\Cont_0(H\backslash X)\) as claimed before. 
In contrast to the situation above, consider now what happens if the \(H\)-action only takes place in the fibres of a field of \(\Cst\)-algebras. 

\begin{theorem}[\cite{rieffel1988}*{3.2}]\label{res:fixcontfield}
	Let \(A\) be a continuous field of \(\Cst\)-algebras over~\(X\) with fibre projections \(p_x\colon A\to A_x\) for \(x\in X\). Suppose that \((A,\Rel)\) is a continuously square-integrable \(H\)-\(\Cst\)-algebra such that \(\ker(p_x)\) is \(H\)-invariant for all \(x\in X\). Furthermore, assume that \(H\) is a \(\sigma\)-compact, exact group.	
	Then \(\Fix^H(A,\Rel)\) is a continuous field of \(\Cst\)-algebras over \(X\) with fibre projections \[\widetilde{p}_x\colon \Fix^H(A,\Rel)\to\Fix^H(A_x,\cl{p_x(\Rel)}).\]
\end{theorem}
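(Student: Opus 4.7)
The plan is to equip $\Fix^H(A,\Rel)$ with the structure of a $\Cont_0(X)$-algebra whose fibre projection at each $x\in X$ is the claimed $\widetilde{p}_x$, and then to upgrade this to a continuous field structure by establishing continuity of $x\mapsto\|\widetilde{p}_x(T)\|$. The hypothesis that $\ker(p_x)$ is $H$-invariant for every $x$ makes the induced $H$-action on $\Cont_0(X)\subseteq Z\Mult(A)$ trivial: a direct computation shows that $\alpha_h(\varphi)$ acts on each fibre $A_x$ by multiplication by $\varphi(x)$, and hence equals $\varphi$. Since $\Rel$ is $H$-invariant and satisfies $\Rel\cdot A\subseteq\Rel$ by \cite{meyer2001}*{Cor.~6.7}, one deduces $\varphi\cdot\Rel\subseteq\Rel$ for $\varphi\in\Cont_0(X)$, and then $\varphi\cdot\KET{a}\BRA{b}=\KET{\varphi a}\BRA{b}\in\Fix^H(A,\Rel)$. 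This yields the $\Cont_0(X)$-algebra structure and, as a byproduct, upper semi-continuity of $x\mapsto\|\widetilde{p}_x(T)\|$.

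For the fibre identification, fix $x\in X$ and apply \cref{res:sesfixed} to the $H$-invariant ideal $I_x=\ker(p_x)$, which is admissible because $H$ is exact. This produces the extension
\begin{equation*}
\Fix^H(I_x,\Rel\cap I_x)\hookrightarrow\Fix^H(A,\Rel)\twoheadrightarrow\Fix^H(A_x,\cl{p_x(\Rel)}).
\end{equation*}
To match the kernel with $\Cont_0(X\setminus\{x\})\cdot\Fix^H(A,\Rel)$, the ideal prescribed by the $\Cont_0(X)$-structure, one inclusion is immediate. Conversely, \cref{multI} gives $\Rel\cap I_x=\Rel\cdot I_x$; factoring any $c\in\Rel\cap I_x$ as $r\cdot i$ with $r\in\Rel$ and $i\in I_x$, and approximating $i=\lim_n\varphi_n a_n$ with $\varphi_n\in\Cont_c(X\setminus\{x\})$ and $a_n\in A$, exhibits every generator $\KET{c}\BRA{d}$ as a norm limit of elements in $\Cont_c(X\setminus\{x\})\cdot\Fix^H(A,\Rel)$.

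The substantive step is lower semi-continuity of the norm function, which I plan to obtain by transporting continuity from the ideal $J^H(A,\Rel)\idealin\Cred(H,A)$ through the Morita--Rieffel equivalence implemented by $\Hilm^H(A,\Rel)$. Because $A$ is a continuous field and $H$ is $\sigma$-compact and exact, $\Cred(H,A)$ is itself a continuous field over $X$ with fibres $\Cred(H,A_x)$, and any $\Cont_0(X)$-invariant closed ideal inherits this structure, so \cref{ses:reduced} identifies $J^H(A,\Rel)$ as a continuous field with fibres $J^H(A_x,\cl{p_x(\Rel)})$. The bimodule $\Hilm^H(A,\Rel)$ carries a natural $\Cont_0(X)$-action, and its fibre at $x$ ought to be $\Hilm^H(A_x,\cl{p_x(\Rel)})$, implementing a fibrewise Morita--Rieffel equivalence; the standard principle that Morita--Rieffel equivalence through a $\Cont_0(X)$-compatible bimodule preserves continuous field structure then completes the proof. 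The hardest part will be verifying that the fibre of the bimodule is indeed $\Hilm^H(A_x,\cl{p_x(\Rel)})$ and that its pairings descend compatibly with the fibre projections $\widetilde{p}_x$ and the induced maps on $J^H(A,\Rel)$.
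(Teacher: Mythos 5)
The paper does not prove this theorem itself: it cites Rieffel's Theorem 3.2 and records in the subsequent remark that the only hypothesis-sensitive input is that \(\Cred(H,A)\) is a continuous field over \(X\) with fibres \(\Cred(H,A_x)\), which Wassermann's theorem supplies for \(\sigma\)-compact exact \(H\). Your proposal reconstructs exactly this route --- transporting the field structure from \(J^H(A,\Rel)\idealin\Cred(H,A)\) through the imprimitivity bimodule, with the fibres of \(\Fix^H(A,\Rel)\) identified via \cref{res:sesfixed} and \cref{ses:reduced} --- so it is essentially the same approach and the outline is sound.
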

\begin{remark}
	The above theorem in \cite{rieffel1988} requires that the field \(A\) is Hilbert continuous and that \(\Cred(H,A_x)=\Cst(H,A_x)\) for all \(x\in M\). These assumptions are only needed to show that \(\Cred(H,A)\) defines a continuous field of \(\Cst\)-algebras over~\(X\) with fibres \(\Cred(H,A_x)\). But this is true for any \(\sigma\)-compact, exact group \(H\) by \cite{wassermann}*{4.2}.
\end{remark}
Now we study what can be said about saturatedness in this case.
\begin{lemma}\label{res:properideal}
	Let \(A\) be an upper semi-continuous field of \(\Cst\)\nb-algebras over \(X\) with fibre projections \(p_x\colon A\to A_x\). If \(I \properideal A\) is a proper ideal, then there is \(x \in X\) such that \(p_x(I)\idealin A_x\) is a proper ideal. 
\end{lemma}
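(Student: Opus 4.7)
My plan is to prove the contrapositive: assuming \(p_x(I)=A_x\) for every \(x\in X\), I derive \(I=A\), contradicting the properness of \(I\).

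The starting point is the correspondence between upper semi-continuous \(\Cst\)\nb-bundles over \(X\) and \(\Cont_0(X)\)\nb-algebras from \cite{nilsen1996}: \(A\) carries a non-degenerate \(^*\)\nb-homomorphism \(\theta\colon \Cont_0(X)\to Z\Mult(A)\) whose action realizes the fibres as \(A_x=A/J^A_x\), where \(J^A_x\defeq\overline{\theta(\Cont_0(X\setminus\{x\}))\cdot A}=\ker p_x\). Any ideal \(I\idealin A\) is automatically preserved by central multipliers, so it is \(\Cont_0(X)\)\nb-invariant, and \(A/I\) inherits the structure of an upper semi-continuous \(\Cst\)\nb-bundle over \(X\).

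The key step is then to identify the fibres of \(A/I\). Let \(q\colon A\to A/I\) be the quotient \(^*\)\nb-homomorphism. Because \(q\) is surjective, \(\Cont_0(X)\)\nb-linear, and has closed image, one checks that \(q(J^A_x)=J^{A/I}_x\). Consequently the induced fibre map \(q_x\colon A_x\to (A/I)_x\) is surjective with kernel \(p_x(I)\), yielding the identification
\[(A/I)_x\ \cong\ A_x/p_x(I).\]
By the running assumption each such quotient vanishes. Since in an upper semi-continuous \(\Cst\)\nb-bundle one has \(\|b\|=\sup_{x\in X}\|p_x(b)\|\), this forces \(A/I=0\), i.e.\ \(I=A\).

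The only non-formal step is the identification \((A/I)_x\cong A_x/p_x(I)\); the essential point is that \(q\) is a \(^*\)\nb-homomorphism, hence has closed image, so \(q(J^A_x)\) is automatically closed and the two natural candidates for the fibre of \(A/I\) at \(x\) coincide. An alternative route that avoids this verification uses primitive ideals: any proper \(I\) is contained in some primitive ideal \(P\), and the Dauns--Hofmann-type map \(\Prim(A)\to X\) arising from the \(\Cont_0(X)\)\nb-structure assigns to \(P\) a point \(x\in X\) with \(J^A_x\subseteq P\); then \(p_x(I)\subseteq P/J^A_x\) is a proper (in fact primitive) ideal of \(A_x\).
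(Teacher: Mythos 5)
Your proposal is correct, but your main argument takes a genuinely different route from the paper's. The paper argues directly via primitive ideals — exactly the ``alternative route'' you sketch at the end: since \(I\) is a proper closed ideal, it is contained in some \(P\in\Prim(A)\); Lee's theorem (\cite{nilsen1996}*{3.3}) gives a continuous map \(\psi\colon\Prim(A)\to X\) with \(\psi(P)=x\) precisely when \(\ker p_x\subseteq P\); and then \(p_x(I)\subseteq p_x(P)\subsetneq A_x\) because \(p_x(P)\) corresponds to \(P\neq A\) under the homeomorphism \(\{Q\in\Prim(A)\mid \ker p_x\subseteq Q\}\cong\Prim(A_x)\). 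Your primary argument instead passes to the quotient: every closed ideal is invariant under central multipliers, so \(A/I\) is again a \(\Cont_0(X)\)\nb-algebra, its fibres are \((A/I)_x\cong A_x/p_x(I)\) (your verification that \(q(J^A_x)=J^{A/I}_x\) is sound, since the image of a closed ideal under a \(^*\)\nb-homomorphism is closed), and the norm formula \(\norm{b}=\sup_x\norm{p_x(b)}\) for \(\Cont_0(X)\)\nb-algebras forces \(A/I=0\) if all fibres vanish. Both arguments are complete. The paper's is shorter and needs only the existence of the base map on \(\Prim(A)\) plus the standard bijection between ideals of a quotient and ideals containing the kernel; yours requires the (also standard, from \cite{nilsen1996}) facts that quotients of \(\Cont_0(X)\)\nb-algebras are \(\Cont_0(X)\)\nb-algebras with the expected fibres and that the fibrewise norms compute the norm, but in exchange it yields the slightly sharper conclusion that \(I=A\) if and only if \(p_x(I)=A_x\) for all \(x\), without invoking primitive ideal space machinery.
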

\begin{proof}
	By Lee's Theorem (see \cite{lee} or \cite{nilsen1996}*{3.3}) there is a continuous map \(\psi\colon \Prim(A)\to X\) satisfying
	\[ \psi(P)=x \,\Leftrightarrow P\subseteq K_x=\{a\in A\mid p_x(a)=0\}\]
	and \(A_x\cong A /K_x\) for all \(x\in X\). As \(I\) can be written as the intersection of primitive ideals, it follows that there is a primitive ideal \(P\in\Prim(A)\) with \(I\subseteq P \subsetneq A\). Let \(x=\psi(P)\). The homeomorphism \(\{Q\in \Prim(A)\mid K_x\subseteq P\}\to \Prim(A/K_x)=\Prim(A_x)\) maps \(P\) to \(p_x(P)\). Then \(p_x(I)\subseteq p_x(P)\subseteq A_x\), and \(p_x(P)\neq A_x\) as otherwise \(p_x(P)\) would correspond to \(A\) under this homeomorphism. 
\end{proof}
\begin{corollary}\label{res:field_saturated}
	In the situation of \cref{res:fixcontfield}, \((A,\Rel)\) is saturated if and only if  \((A_x,\cl{p_x(\Rel)})\) is saturated for all \(x\in X\). 
\end{corollary}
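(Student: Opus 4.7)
The plan is to derive both directions from the extension results already established, applied to the $H$-invariant ideals $\ker(p_x)\idealin A$, in combination with \cref{res:properideal}.

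For the ``only if'' direction, fix $x\in X$. Since $\ker(p_x)$ is an $H$-invariant ideal and $H$ is exact, the sequence
\[
\begin{tikzcd}
\Cred(H,\ker(p_x))\arrow[r,hook] & \Cred(H,A)\arrow[r,twoheadrightarrow] & \Cred(H,A_x)
\end{tikzcd}
\]
is exact, where we use $A/\ker(p_x)\cong A_x$. By \cref{res:rel_subsets} the pair $(A_x,\cl{p_x(\Rel)})$ is a continuously square-integrable $H$-$\Cst$-algebra, and by \cref{res:ses_saturated} saturatedness of $(A,\Rel)$ passes to the quotient. Hence $(A_x,\cl{p_x(\Rel)})$ is saturated.

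For the ``if'' direction, assume $(A_x,\cl{p_x(\Rel)})$ is saturated for every $x\in X$. By the remark following \cref{res:fixcontfield}, $\Cred(H,A)$ is a continuous (in particular upper semi-continuous) field over $X$ with fibre projections $Q_x\colon\Cred(H,A)\prto\Cred(H,A_x)$ induced by $p_x$. Consider the ideal $J^H(A,\Rel)\idealin\Cred(H,A)$. From the proof of \cref{ses:reduced} applied to $I=\ker(p_x)$, the restriction of $Q_x$ to $J^H(A,\Rel)$ has image $J^H(A_x,\cl{p_x(\Rel)})$, which by assumption equals $\Cred(H,A_x)$. Thus $Q_x(J^H(A,\Rel))=\Cred(H,A_x)$ for every $x\in X$.

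If $J^H(A,\Rel)$ were a proper ideal in $\Cred(H,A)$, then \cref{res:properideal} applied to the field $\Cred(H,A)$ would produce some $x\in X$ for which $Q_x(J^H(A,\Rel))$ is a proper ideal in $\Cred(H,A_x)$, contradicting the previous paragraph. Therefore $J^H(A,\Rel)=\Cred(H,A)$, i.e.\ $(A,\Rel)$ is saturated. The main point to verify carefully is that the fibrewise image of $J^H(A,\Rel)$ really is $J^H(A_x,\cl{p_x(\Rel)})$; this is where one needs exactness of the crossed product sequence together with the factorization argument from \cref{multI} and \cref{ses:reduced}, but no new ingredient beyond what is already in Section~\ref{fixedpointalg} is required.
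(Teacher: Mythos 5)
Your proposal is correct and follows essentially the same route as the paper: the forward direction is \cref{res:ses_saturated} applied to the $H$-invariant ideal $\ker(p_x)$, and the converse combines the continuous-field structure of $\Cred(H,A)$, the fibrewise surjectivity $Q_x(J^H(A,\Rel))=J^H(A_x,\cl{p_x(\Rel)})$ (which the paper gets directly from $(p_x)_*(\BRAKET{a}{b})=\BRAKET{p_x(a)}{p_x(b)}$ rather than by citing the proof of \cref{ses:reduced}), and \cref{res:properideal}. No gaps.
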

\begin{proof}
	Suppose first that \((A,\Rel)\) is saturated. By \cref{res:ses_saturated} \((A_x,\cl{p_x(\Rel)})\) is saturated for all \(x\in X\). 	
	Assume now that all \((A_x,\cl{p_x(\Rel)})\) are saturated. By assumption on \(H\), \(\Cred(H,A)\) is a continuous field of \(\Cst\)\nb-algebras over \(X\) with fibre projections \((p_x)_*\colon \Cred(H,A)\to\Cred(H,A_x)\) for \(x\in X\). Because 
	\[ (p_x)_{*}(\BRAKET{a}{b})= \BRAKET{p_x(a)}{p_x(b)} \quad\text{for } a,b\in \Rel,\]
	it follows that \((p_x)_*(J^H(A,\Rel))=\Cred(H,A_x)\) for all \(x\in X\). Now \cref{res:properideal} implies that \(J^H(A,\Rel)=\Cred(H,A)\). 
\end{proof}

\section{Homogeneous Lie groups}\label{homogeneous}

In the following, we will consider homogeneous Lie groups, which are Lie groups that are equipped with a dilation action of \(\Rp\). They allow to define homogeneity with respect to the dilations. A detailed discussion of homogeneous Lie groups can be found in \cite{folland1982homogeneous} or \cite{fischer2016quantization}. We recall some notions used there, which proved to be convenient to do analysis on these groups. 
\subsection{Homogeneous and graded Lie groups}
\begin{definition}
	A \emph{homogeneous Lie group} is a simply connected Lie group \(G\) whose Lie algebra~\(\lieg\) is equipped with a family of \emph{dilations} \(\{A_\lambda\colon\lieg\to\lieg\}_{\lambda>0}\). That is, there is a diagonalizable, linear map \(D\colon \lieg\to\lieg\) with positive eigenvalues \(q_1\leq q_2\leq \ldots \leq q_n\), such that all \(A_\lambda\defeq\mathrm{Exp}(D\ln(\lambda))\) are Lie algebra homomorphisms. Here, \(\mathrm{Exp}\) denotes the matrix exponential.
	The eigenvalues \(q_1,\ldots,q_n\) are called \emph{weights}. 
\end{definition}

Folland and Stein assume in \cite{folland1982homogeneous} that \(q_1=1\). This can be achieved by scaling appropriately. We shall also assume this in the following, in particular, all weights satisfy \(q_j\geq 1\).
Fix a corresponding basis of eigenvectors \(\{X_1,\ldots,X_n\}\) of \(D\). Then 
\(A_\lambda(X_j)=\lambda^{q_j}X_j\) for \(1\leq j\leq n\). 
If \(X,Y\) are eigenvectors to the eigenvalues~\(q_i,q_j\) of \(D\), respectively, it follows from \(A_\lambda[X,Y]=[A_\lambda(X),A_\lambda(Y)]=\lambda^{q_i+q_j}[X,Y]\), that \([X,Y]=0\) or that \([X,Y]\) is an eigenvector of \(D\) to the eigenvalue \(q_i+q_j\). From that one deduces that \(\lieg\), and therefore \(G\), is nilpotent. 
Consequently, the exponential map \(\exp\colon \lieg\to G\) is a diffeomorphism. In the following, we often identify \((x_1,\ldots,x_n)\in\R^n\) with its image \(\exp(x_1X_1+\cdots + x_nX_n)\in G\) under this global coordinate chart. In particular, \(0\in G\) denotes the neutral element. 

Because \(A_\lambda\circ A_\mu= A_{\lambda\mu}\) for \(\lambda,\mu>0\), the dilations define an action \(A:\Rp\acts \lieg\) by Lie group automorphisms. Denote by \(\alpha\colon \Rp\acts G\) the corresponding action by Lie group automorphisms.

\begin{definition}
	A \emph{graded Lie group} is a simply connected Lie group \(G\) such that its Lie algebra~\(\lieg\) admits a finite decomposition\[\lieg = \bigoplus_{j=1}^N {\lieg_j},\] with \([X,Y]\in \lieg_{j+k}\) for all \(X\in\lieg_j\) and \(Y\in\lieg_k\), where \(\lieg_j=\{0\}\) for \(j>N\). 
\end{definition}
	For a graded Lie group \(G\) with grading as above, \(A_\lambda(X)=\lambda^jX\) for \(X\in\lieg_j\) and \(\lambda>0\) defines a family of dilations. So \(G\) becomes a homogeneous Lie group. 
	However, homogeneous Lie groups are slightly more general. If all weights of a homogeneous Lie group are rational numbers, it is a (scaled) graded nilpotent Lie group (see \cite{fischer2016quantization}*{3.1.9}).	
	Also note that there are nilpotent Lie groups that do not admit a family of dilations as above (see \cite{dyer1970}).

\begin{example}
	A famous example of a homogeneous Lie group is the \emph{Heisenberg group}. Its Lie algebra \(\lieg\) is generated by \(\{X,Y,Z\}\) and \([X,Y]=Z\), \([X,Z]=0\) and \([Y,Z]=0\). 
	Hence, \(\lieg_1=\lspan\{X,Y\}\), \(\lieg_2=\lspan\{Z\}\) and \(\lieg_j=0\) for \(j> 2\) defines a grading on \(\lieg\). 
\end{example}

\begin{example}A Lie algebra \(\lieg\) may be equipped with different dilations. Choose a basis \(\{X_1,\ldots,X_n\}\)  for the Lie algebra of the Abelian group \(G=\R^n\). Then for all \((q_1,\ldots,q_n)\in\R^n_{>0}\) there is a dilation defined by \(DX_j=q_jX_j\). The standard dilation action on \(\R^n\) is given by scalar multiplication, that is, \(q_j=1\) for all \(j=1,\ldots,n\).
\end{example}

\subsection{Analysis on homogeneous Lie groups}
 \begin{definition}
	The \emph{homogeneous dimension} of a homogeneous Lie group \(G\) with weights \(1=q_1\leq q_2 \leq \ldots\leq q_n\) is defined as \(Q= q_1+q_2+\ldots+q_n\).
	A function \(f\) on \(G\!\setminus\!\{0\}\) is called \emph{\(\mu\)\nb-homogeneous} for \(\mu\in\C\) if \(f(\alpha_\lambda(x))=\lambda^\mu f(x)\) for all \(\lambda>0\) and \(x\neq 0\). 
\end{definition}

\begin{lemma}\label{haar} 
Let \(G\) be a homogeneous Lie group of homogeneous dimension \(Q\).
	The pullback of the Lebesgue measure under the exponential map defines a Haar measure on \(G\). The group \(G\) is unimodular and the Haar measure is \(Q\)\nb-homogeneous, that is,
	\[\int_G f(\alpha_\lambda(x))\diff x = \lambda^{-Q} \int_G f(x)\diff x\]
	for each \(\lambda>0\) and \(f\in L^1(G)\).
\end{lemma}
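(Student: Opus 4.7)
The plan is to handle the three assertions in order, all of which rest on the fact that for a simply connected nilpotent Lie group the exponential map is a global diffeomorphism and the Baker--Campbell--Hausdorff series is a finite polynomial.

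For the first claim, I would show that Lebesgue measure on \(\lieg\cong\R^n\) (in the basis \(\{X_1,\ldots,X_n\}\)) pushes forward under \(\exp\) to a Haar measure. Fix \(g=\exp(Y)\in G\). In exponential coordinates, left translation \(L_g\) is the map \(X\mapsto \mathrm{BCH}(Y,X)\) and its differential at \(X\) is expressible via \(\ad_X,\ad_Y\); because \(\lieg\) is nilpotent, the operators \(\ad_X,\ad_Y\) are nilpotent, so \(dL_g(X)\) is a unipotent linear map of \(\lieg\). Hence \(\abs{\det dL_g(X)}=1\) for all \(X\), meaning that Lebesgue measure on \(\lieg\) is invariant under \(L_g\) in exponential coordinates. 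The analogous argument applies to right translation, which simultaneously yields the unimodularity of \(G\); equivalently, the adjoint representation \(\Ad\colon G\to\mathrm{GL}(\lieg)\) takes values in unipotent matrices, so \(\abs{\det\Ad(g)}=1\), forcing left and right Haar measure to coincide.

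For the third claim, I would use the defining relation \(\alpha_\lambda=\exp\circ A_\lambda\circ\exp^{-1}\) and unwind \(\int_G f(\alpha_\lambda(x))\diff x\) as \(\int_\lieg (f\circ\exp)(A_\lambda(Y))\diff Y\) via the first part of the lemma. A linear change of variable \(Y\mapsto A_\lambda Y\) on \(\lieg\) has Jacobian determinant
\begin{equation*}
\abs{\det A_\lambda}=\prod_{j=1}^n \lambda^{q_j}=\lambda^{q_1+\cdots+q_n}=\lambda^Q,
\end{equation*}
since \(A_\lambda\) is diagonal in the eigenbasis \(\{X_j\}\) with eigenvalues \(\lambda^{q_j}\). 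Rearranging gives the asserted \(Q\)\nb-homogeneity.

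I do not expect a serious obstacle; the content is classical (see e.g.\ Corwin--Greenleaf or \cite{folland1982homogeneous}). The only point that calls for care is the justification of \(\abs{\det dL_g(X)}=1\): one should either invoke the standard formula \(d(\exp)_X=\tfrac{1-\Euler^{-\ad_X}}{\ad_X}\) together with nilpotence to see that \(\exp^*(\text{left-invariant form})\) is the constant Lebesgue form, or directly expand \(\mathrm{BCH}(Y,X)=Y+X+\tfrac12[Y,X]+\cdots\) and note that each higher commutator lies in a proper term of the lower central series so that, basis by basis, \(dL_g(X)\) is strictly upper triangular above the identity, hence unipotent.
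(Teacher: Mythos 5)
Your proposal is correct and follows essentially the same route as the paper, which simply cites \cite{folland1982homogeneous}*{1.2} for the Haar measure and unimodularity statements and notes that the \(Q\)\nb-homogeneity comes from the Jacobian of the linear map \(A_\lambda\), namely \(\det A_\lambda=\lambda^{q_1+\cdots+q_n}=\lambda^Q\). You merely supply the standard details behind the citation (unipotence of \(dL_g\) and of \(\Ad(g)\) via nilpotence of \(\ad\)), all of which are sound.
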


For simply connected nilpotent Lie groups it is true in general that the pullback of the Lebesgue measure defines a left and right Haar measure, see \cite{folland1982homogeneous}*{1.2}. The \(Q\)\nb-homogeneity follows from the behaviour of the Lebesgue measure under scaling.
\begin{definition}\label{def:multiindex}
	For a multi-index \(\alpha\in\N^n_0\) its \emph{homogeneous degree} is defined as \([\alpha]\defeq \alpha_1q_1+\ldots+\alpha_nq_n\). A function \(P\) on~\(G\) is called \emph{polynomial} if \(P\circ \exp\) is polynomial.
\end{definition}
\begin{example}
	The polynomials \(x^\alpha\) for \(\alpha\in\N^n_0\) are \([\alpha]\)\nb-homogeneous functions on~\(G\). 
\end{example}
The group law of a homogeneous Lie group is of a triangular form. Using the Baker--Campbell--Hausdorff formula and the homogeneity of the coordinate functions, the following is proved in \cite{folland1982homogeneous}*{p.~23}.

\begin{proposition}\label{triangularprop}
	For a homogeneous Lie group \(G\) with weights \(q_1\leq\ldots\leq q_n\) and a basis of eigenvectors \(X_1,\ldots,X_n\in\lieg\) there are constants \(c_{j,\alpha,\beta}\) for \(j=1,\ldots,n\) such that for all \(x,y\in G\) with respect to this basis
	\begin{equation}\label{triangular}
	(x\cdot y)_j = x_j+y_j+\sum_{\substack{\alpha,\beta\in\N^n_0\setminus\{0\}\\ [\alpha]+[\beta]=q_j}}{c_{j,\alpha,\beta}x^\alpha y^\beta}.
	\end{equation}
\end{proposition}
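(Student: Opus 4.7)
The plan is to combine the Baker--Campbell--Hausdorff formula with the homogeneity built into the dilations, using that $\alpha_\lambda$ is a group automorphism scaling the $j$-th coordinate by $\lambda^{q_j}$. Since $G$ is nilpotent and simply connected, $\exp\colon\lieg\to G$ is a diffeomorphism and the BCH series for $\log(\exp(X)\exp(Y))$ terminates, producing a Lie polynomial in $X,Y$. Writing $X=\sum_i x_iX_i$ and $Y=\sum_iy_iX_i$ in the given basis, the $j$-th coordinate of the product is therefore some polynomial
\[
(x\cdot y)_j \;=\; \sum_{\alpha,\beta\in\N^n_0} c_{j,\alpha,\beta}\, x^\alpha y^\beta.
\]

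To identify the contributions with $\alpha=0$ or $\beta=0$, I would substitute $y=0$: using $x\cdot 0 = x$, the polynomial $\sum_\alpha c_{j,\alpha,0}\,x^\alpha$ must be identically $x_j$, forcing $c_{j,e_j,0}=1$ and all other $c_{j,\alpha,0}$ to vanish. The symmetric substitution $x=0$ pins down the pure-$y$ terms, yielding
\[
(x\cdot y)_j \;=\; x_j + y_j + \sum_{\alpha,\beta\in\N^n_0\setminus\{0\}} c_{j,\alpha,\beta}\,x^\alpha y^\beta.
\]

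For the weight constraint, I would apply the dilation $\alpha_\lambda$, which satisfies $\alpha_\lambda(x)_k=\lambda^{q_k}x_k$. Expanding both sides of $\alpha_\lambda(x\cdot y) = \alpha_\lambda(x)\cdot\alpha_\lambda(y)$ in the $j$-th coordinate gives
\[
\lambda^{q_j}(x\cdot y)_j \;=\; \lambda^{q_j}x_j + \lambda^{q_j}y_j + \sum_{\alpha,\beta\neq 0} c_{j,\alpha,\beta}\,\lambda^{[\alpha]+[\beta]}\,x^\alpha y^\beta.
\]
Viewing this as an identity of polynomials in $x,y$ with coefficients in functions of $\lambda>0$, comparing the coefficient of each monomial $x^\alpha y^\beta$ gives $c_{j,\alpha,\beta}\bigl(\lambda^{q_j}-\lambda^{[\alpha]+[\beta]}\bigr) \equiv 0$, so every surviving term satisfies $[\alpha]+[\beta]=q_j$.

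There is no serious obstacle here: once BCH plus nilpotency delivers the polynomial expression, the rest is coefficient comparison. I would close by noting that, because all $q_i\geq 1$, the equation $[\alpha]+[\beta]=q_j$ with $\alpha,\beta\neq 0$ admits only finitely many solutions, and any index $i$ appearing in such an $\alpha$ or $\beta$ must satisfy $q_i\leq q_j$. This is what makes the formula genuinely triangular: $(x\cdot y)_j$ depends only on coordinates of weight at most~$q_j$.
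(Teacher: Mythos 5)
Your argument is correct and is essentially the approach the paper points to: it cites Folland--Stein and describes the proof as combining the Baker--Campbell--Hausdorff formula with the homogeneity of the coordinate functions, which is exactly what you do (BCH plus nilpotency gives a polynomial group law, the substitutions \(x=0\), \(y=0\) isolate the linear terms, and comparing coefficients in \(\alpha_\lambda(x\cdot y)=\alpha_\lambda(x)\cdot\alpha_\lambda(y)\) forces \([\alpha]+[\beta]=q_j\)). Your closing remark that \(q_i\geq 1\) makes the sum finite and the group law triangular matches the paper's subsequent discussion.
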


\noindent The basis of eigenvalues fixed above induces left- and right-invariant differential operators \(X_1,\ldots,X_n\) and \(Y_1,\ldots,Y_n\) on \(G\) by setting for \(f\in\Cont^1(G)\)
\begin{align} \label{def:leftinvariant_diff}
(X_jf)(x)&=\frac{\diff}{\diff t}f(x\cdot\exp(tX_j))\big|_{t=0},\\
(Y_jf)(x)&=\frac{\diff}{\diff t}f(\exp(tX_j)\cdot x)\big|_{t=0}.
\end{align}
Define for a multi-index \(\alpha\in \N^n_0\) the left-invariant differential operator \(X^\alpha=X_1^{\alpha_1}X_2^{\alpha_2}\cdots X_n^{\alpha_n}\). 
The triangular group law allows to express these in terms of the partial differential operators as follows. 
\begin{proposition}[\cite{fischer2016quantization}*{3.1.28}]\label{res:vectorfields}
Let \(G\) be a homogeneous Lie group with weights \(q_1\leq\ldots\leq q_n\). For \(j=1,\ldots,n\) and \(k>j\) there are \((q_k-q_j)\)\nb-homogeneous polynomials \(P_{jk}\) and \(Q_{jk}\) such that the vector fields \(X_j\) and \(Y_j\) defined above can be written as
\begin{align*}
X_j & = \frac{\partial}{\partial x_j}+\sum_{q_k>q_j}P_{jk}\frac{\partial}{\partial x_k} = \frac{\partial}{\partial x_j}+\sum_{q_k>q_j}\frac{\partial}{\partial x_k}P_{jk},\\Y_j & = \frac{\partial}{\partial x_j}+\sum_{q_k>q_j}Q_{jk}\frac{\partial}{\partial x_k} = \frac{\partial}{\partial x_j}+\sum_{q_k>q_j}\frac{\partial}{\partial x_k}Q_{jk}.
\end{align*} 
\end{proposition}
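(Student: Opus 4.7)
The plan is to prove both formulas by computing $X_j$ and $Y_j$ directly from their definitions in \eqref{def:leftinvariant_diff}, inserting the triangular form of the group law from \cref{triangularprop}, and reading off the coefficients of $\partial/\partial x_k$ via the chain rule.

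In exponential coordinates, $\exp(tX_j)$ corresponds to the point with $t$ in the $j$-th slot and zeros elsewhere, so by \cref{triangularprop} a term $c_{k,\alpha,\beta}\,x^\alpha y^\beta$ with $y=\exp(tX_j)$ survives only when $\beta=me_j$ for some $m\geq 1$, and then equals $c_{k,\alpha,me_j}\,t^m x^\alpha$. Differentiating in $t$ at $t=0$ kills every term with $m\geq 2$, so for $k\neq j$ I obtain
\[
\frac{\diff}{\diff t}(x\cdot\exp(tX_j))_k\bigg|_{t=0} = \sum_{\substack{\alpha\in\N^n_0\setminus\{0\}\\ [\alpha]=q_k-q_j}} c_{k,\alpha,e_j}\, x^\alpha,
\]
while for $k=j$ the constraint $[\alpha]+q_j=q_j$ with $\alpha\neq 0$ is vacuous, leaving the derivative equal to $1$. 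I would then define $P_{jk}$ to be the right-hand side above; by construction it is a $(q_k-q_j)$-homogeneous polynomial, and it vanishes identically unless $q_k>q_j$. The chain rule then yields $X_j=\partial/\partial x_j+\sum_{q_k>q_j} P_{jk}\,\partial/\partial x_k$.

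For the second equality in the displayed formula I would show that $P_{jk}$ does not involve the variable $x_k$. Any multi-index $\alpha$ appearing in $P_{jk}$ satisfies $[\alpha]=q_k-q_j<q_k$, so if $\alpha_m\neq 0$ then $q_m\leq[\alpha]<q_k$. Hence $P_{jk}$ is a polynomial in the variables $\{x_m:q_m<q_k\}$ only, so it commutes with $\partial/\partial x_k$ as a differential operator. The statement for $Y_j$ follows by the identical argument applied to $\exp(tX_j)\cdot x$, producing $Q_{jk}(x)=\sum_{[\beta]=q_k-q_j,\,\beta\neq 0} c_{k,e_j,\beta}\, x^\beta$, to which the same homogeneity and $x_k$-independence arguments apply.

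I expect no substantive obstacle here. All the work has been done by the triangular group law; what remains is bookkeeping around the weight identity $[\alpha]+[\beta]=q_k$, which simultaneously fixes the homogeneity of $P_{jk}$ and $Q_{jk}$ and rules out dependence on the high-weight variables.
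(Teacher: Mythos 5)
Your proof is correct and is essentially the standard argument: the paper itself only cites \cite{fischer2016quantization}*{3.1.28} for this statement, and your computation via the triangular group law of \cref{triangularprop} plus the chain rule is exactly how that reference proceeds. Your homogeneity argument for why \(P_{jk}\) and \(Q_{jk}\) cannot involve \(x_k\) (any monomial \(x^\alpha\) with \(\alpha_m\neq 0\) has \([\alpha]\geq q_m\), so \([\alpha]=q_k-q_j<q_k\) forces \(q_m<q_k\)) is the same observation the paper records in the remark immediately following the proposition.
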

The polynomials \(P_{jk}\) and \(Q_{jk}\) only depend on \(x_1,\ldots,x_{k-1}\) because otherwise
they would be homogeneous of a higher order than \(q_k-q_j\). Hence, they commute
with the partial derivatives \(\frac{\partial}{\partial x_k}\).

Because the Euclidean norm does not behave well with respect to the dilations, homogeneous quasi-norms are used instead.

\begin{definition}[\cite{fischer2016quantization}*{3.1.33}]
A \emph{homogeneous quasi-norm} on a homogeneous Lie group \(G\) is a continuous function \(\norm{\,\cdot\,}\colon G\to [0,\infty)\) that satisfies \(\norm{x}=0\) if and only if \(x=0\), \(\norm{x^{-1}}=\norm{x}\) and \(\norm{\alpha_\lambda(x)}=\lambda\norm{x}\) for all \(x\in G\) and \(\lambda\in\Rp\).
\end{definition}

In the following, we fix a homogeneous quasi-norm on \(G\). For instance, let
\begin{equation*}
\norm{x}\defeq\sum_{j=1}^n {\abs{x_j}^{1/q_j}} \qquad \text{for }x\in G.
\end{equation*} 
In fact, by \cite{fischer2016quantization}*{3.1.35} all homogeneous quasi-norms on a given homogeneous Lie group are equivalent. There is an analogue of the triangle inequality and its consequences for a homogeneous quasi-norm. 
\begin{lemma}[\cite{folland1982homogeneous}*{1.8, 1.10}]\label{triangle}
	Let \(G\) be a homogeneous Lie group. There is a constant \(C\geq 1\) such that for all \(x,y\in G\)
	\begin{enumerate}[(a)]
		\item \(\norm{xy}\leq C(\norm{x}+\norm{y})\),
		\item \((1+\norm{x})^s(1+\norm{y})^{-s}\leq C^s(1+\norm{xy^{-1}})^s\) for all \(s>0\).
	\end{enumerate}
\end{lemma}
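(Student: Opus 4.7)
\medskip

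\noindent\textbf{Proof plan.} My plan is to prove (a) first by a compactness/homogeneity argument, and then deduce (b) as a direct consequence.

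For (a), I would consider the continuous function $(x,y)\mapsto \norm{xy}$ on $G\times G$ and restrict it to the ``unit sphere''
\[
S\defeq\{(x,y)\in G\times G \mid \norm{x}+\norm{y}=1\}.
\]
Since $\norm{\,\cdot\,}$ is a continuous homogeneous quasi-norm, the set $S$ is closed, and using that $\norm{\,\cdot\,}$ vanishes only at the identity, one checks that $S$ is bounded in any ambient Euclidean norm on $G\cong\R^n$; hence $S$ is compact. Set $C\defeq\max\{1,\sup_{(x,y)\in S}\norm{xy}\}$, which is finite. For arbitrary $(x,y)\neq(0,0)$, let $\lambda\defeq\norm{x}+\norm{y}>0$. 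Since the dilations $\alpha_\mu$ are group automorphisms and the quasi-norm is $1$-homogeneous, $(\alpha_{1/\lambda}(x),\alpha_{1/\lambda}(y))\in S$, so
\[
\lambda^{-1}\norm{xy}=\norm{\alpha_{1/\lambda}(xy)}=\norm{\alpha_{1/\lambda}(x)\cdot\alpha_{1/\lambda}(y)}\leq C,
\]
which gives (a). The only step that needs any care is checking compactness of $S$; this uses that $\norm{\,\cdot\,}$ is proper, i.e.\ $\norm{x_n}\to\infty$ whenever $x_n\to\infty$ in $G$, which one can verify from the equivalence of homogeneous quasi-norms noted after the definition together with the explicit example $\norm{x}=\sum_j\abs{x_j}^{1/q_j}$.

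For (b), I would apply part (a) to $x=(xy^{-1})\cdot y$ together with $\norm{(xy^{-1})^{-1}}=\norm{xy^{-1}}$ to obtain
\[
\norm{x}\leq C\bigl(\norm{xy^{-1}}+\norm{y}\bigr).
\]
Adding $1$ and using $C\geq 1$ yields
\[
1+\norm{x}\leq C\bigl(1+\norm{xy^{-1}}+\norm{y}+\norm{xy^{-1}}\norm{y}\bigr)=C\bigl(1+\norm{xy^{-1}}\bigr)\bigl(1+\norm{y}\bigr),
\]
and dividing by $(1+\norm{y})$ and raising both sides to the power $s>0$ gives (b).

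The only mildly nontrivial point is the compactness of $S$, which requires observing that a homogeneous quasi-norm on $G$ is proper. Once that is in place, both parts reduce to a one-line homogeneity scaling argument, so I do not expect any real obstacle.
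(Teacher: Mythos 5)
Your proof is correct and is essentially the standard argument from the cited reference (the paper itself only quotes Folland--Stein 1.8, 1.10 without proof): compactness of the ``unit sphere'' \(\{\norm{x}+\norm{y}=1\}\) plus \(1\)-homogeneity under the dilations for (a), and the decomposition \(x=(xy^{-1})\cdot y\) followed by the elementary estimate \(1+C(u+v)\leq C(1+u)(1+v)\) for \(C\geq 1\), \(u,v\geq0\) for (b). The only point needing care, the properness of the quasi-norm, is correctly reduced to the explicit example \(\norm{x}=\sum_j\abs{x_j}^{1/q_j}\) and the stated equivalence of all homogeneous quasi-norms.
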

Identifying \(G\) with \(\R^n\) one can consider the Schwartz space \(\Schwartz(G)\). 
The following family of seminorms will be useful later on. 

\begin{definition}\label{seminorm}
For the fixed homogeneous quasi-norm \(\norm{\,\cdot\,}\) define for \(N\in\N_0\) 
\begin{equation*} 
\norm{f}_{N}\defeq \sup_{\abs{I}\leq N\text{, }x\in G}{(1+\norm{x})^{(N+1)(Q+1)}\abs{(\partial^If)(x)}} \quad \text{for }f\in\Cont^\infty(G).
\end{equation*}
\end{definition}
Polynomials in \(\norm{x}_E\) for the Euclidean norm can be estimated by polynomials in~\(\norm{x}\) for a homogeneous quasi-norm and vice versa. 
Thus, a sequence \((f_n)\) converges to \(f\) in \(\Schwartz(G)\) if and only if \(\norm{f-f_n}_{N}\to 0\) for all \(N\in\N_0\). 

The following integrability criterion for functions on a homogeneous Lie group is used later on. 

\begin{lemma}[\cite{folland1982homogeneous}*{1.17}]\label{res:convint}
Let \(\alpha\in\R\) and let \(f\) be a measurable function on a homogeneous Lie group \(G\) of homogeneous dimension \(Q\). Suppose \(\abs{f(x)}=O(\abs{x}^{\alpha-Q})\). If \(\alpha>0\) then \(f\) is integrable near \(0\). If \(\alpha<0\), then \(f\) is integrable near~\(\infty\). 
\end{lemma}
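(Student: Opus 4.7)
The plan is to reduce both integrability statements to elementary one-dimensional integrals by introducing polar coordinates on $G$ adapted to the dilation action and the chosen homogeneous quasi-norm $\norm{\,\cdot\,}$.

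First I would fix the unit ``sphere'' $\Sigma\defeq\{y\in G:\norm{y}=1\}$, which is a compact subset of $G\setminus\{0\}$ by continuity and the defining properties of a quasi-norm. Using that $\norm{\alpha_\lambda(y)}=\lambda\norm{y}$, one checks that the map
\[
\Phi\colon \Rp\times\Sigma\to G\setminus\{0\},\qquad (r,\omega)\mapsto \alpha_r(\omega),
\]
is a homeomorphism, with inverse $x\mapsto(\norm{x},\alpha_{\norm{x}^{-1}}(x))$. The next step is to construct a Radon measure $\sigma$ on $\Sigma$ such that the Haar measure pushes forward under $\Phi^{-1}$ to $r^{Q-1}\diff r\otimes\diff\sigma$. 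The only input needed for this is the $Q$\nb-homogeneity of the Haar measure established in \cref{haar}: defining
\[
\sigma(E)\defeq Q\int_{\{\alpha_r(\omega):\omega\in E,\,0<r<1\}}\diff x
\]
for Borel sets $E\subset\Sigma$ and using $\int f(\alpha_\lambda x)\diff x=\lambda^{-Q}\int f(x)\diff x$ together with Fubini gives the desired change-of-variable formula. Note that $\sigma(\Sigma)<\infty$ because $\Sigma$ is compact and sits inside a set of finite Haar measure.

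Once polar coordinates are in place, the proof is a direct computation. By the assumed estimate there is $C>0$ such that $\abs{f(x)}\leq C\norm{x}^{\alpha-Q}$ in a neighbourhood of the relevant set, so it suffices to consider $\norm{x}^{\alpha-Q}$. For the behaviour near $0$,
\[
\int_{\norm{x}\leq 1}\norm{x}^{\alpha-Q}\diff x=\sigma(\Sigma)\int_0^1 r^{\alpha-Q}\,r^{Q-1}\diff r=\sigma(\Sigma)\int_0^1 r^{\alpha-1}\diff r,
\]
which is finite exactly when $\alpha>0$. Similarly,
\[
\int_{\norm{x}\geq 1}\norm{x}^{\alpha-Q}\diff x=\sigma(\Sigma)\int_1^\infty r^{\alpha-1}\diff r,
\]
which is finite exactly when $\alpha<0$. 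This yields both claims.

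The main obstacle is the construction of polar coordinates, or more precisely the justification of the factorisation $\diff x = r^{Q-1}\diff r\,\diff\sigma$. All other steps reduce to a one-variable integral. Since every homogeneous quasi-norm is equivalent by \cite{fischer2016quantization}*{3.1.35}, the conclusion is independent of the choice of $\norm{\,\cdot\,}$.
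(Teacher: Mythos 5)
The paper offers no proof of this lemma, citing it to \cite{folland1982homogeneous}*{1.17}, and your argument is precisely the one given there: Folland--Stein first establish the polar decomposition \(\diff x = r^{Q-1}\diff r\,\diff\sigma\) from the \(Q\)\nb-homogeneity of the Haar measure and then reduce to the one-variable integral \(\int r^{\alpha-1}\diff r\) exactly as you do. Your sketch is correct; the only step to tighten is the identification of the Haar measure with the push-forward of \(r^{Q-1}\diff r\otimes\diff\sigma\), which comes from checking agreement of the two measures on sectors \(\Phi(E\times(a,b))\) via homogeneity (a uniqueness-of-measures argument) rather than from Fubini as such.
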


\subsection{Representation theory of homogeneous Lie groups}

Now, we recall some facts about the representation theory of nilpotent Lie groups \(G\) and their group \(\Cst\)-algebras. For homogeneous Lie groups the dilations induce actions on the respective spaces of representations.

The continuous compactly supported functions \(\Cont_c(G)\) become a *-algebra when equipped with the convolution and involution defined by
\begin{align*}
f^*(x)=\conj{f(x^{-1})}\quad \text{and}\quad
(f*g)(x)=\int_G f(y)g(y^{-1}x)\diff y\quad\text{for }x\in G.
\end{align*}
Denote by \(\widehat{G}\) the set of equivalence classes of irreducible, unitary representations \(\pi\colon G\to \mathcal{U}(\Hils_\pi)\). For such a representation \(\pi\) and \(f\in\Cont_c(G)\) define the operator
\begin{align*}
\widehat{\pi}(f)=\int_G f(x)\pi(x)\diff x\in\Bound(\Hils_\pi)\qquad \text{for }f\in \Cont_c(G).
\end{align*}
This defines a *-representation \(\widehat{\pi}\colon\Cont_c(G)\to\Bound(\Hils_\pi)\). The \emph{full group \(\Cst\)-algebra}~\(\Cst(G)\) is defined as the closure of \(\Cont_c(G)\) with respect to \(\norm{f}=\sup_{\pi\in\widehat{G}}\norm{\widehat{\pi}(f)}\). By \cite{dixmiernilpotent} homogeneous Lie groups are liminal so that all representations \(\widehat{\pi}\) map onto the compact operators \(\Comp(\Hils_\pi)\). 

The homogeneous structure allows to define an \(\Rp\)\nb-action on \(\widehat{G}\). For an irreducible, unitary representation \(\pi\) set  \((\lambda\cdot\pi)(x)=\pi(\alpha_{\lambda}(x))\) for \(\lambda>0\) and \(x\in G\). It is easy to see that \(\lambda\cdot \pi\) is again an irreducible, unitary representation and that the action is well-defined on the equivalence classes. 

Furthermore, define an action on \(\Cst(G)\) by \(\sigma_\lambda(f)(x)=\lambda^Qf(\alpha_\lambda(x))\) for \(f\in\Cont_c(G)\). Using \cref{haar} one checks that each \(\sigma_\lambda\) is a \(^*\)-homomorphism and an isometry with respect to the \(\Cst\)\nb-norm. This action induces in turn an action on the representations of \(\Cst(G)\) by \((\lambda\cdot \rho)(f)=\rho(\sigma_{\lambda}(f))\) for a \(^*\)-representation \(\rho\colon\Cst(G)\to\Bound(\Hils_\pi)\). It is well-defined on the equivalence classes of irreducible representations in \(\widehat{\Cst(G)}\). 

\begin{proposition}
	Let \(G\) be a homogeneous Lie group. The map \(\widehat{G}\to\widehat{\Cst(G)}\) induced by \(\pi\mapsto\widehat{\pi}\) 
	is an \(\Rp\)\nb-equivariant homeomorphism.  
\end{proposition}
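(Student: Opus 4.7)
The plan is to split the statement into two pieces: the classical homeomorphism between the unitary dual $\widehat{G}$ and the spectrum $\widehat{\Cst(G)}$ of the full group $\Cst$\nb-algebra, and a direct change-of-variables computation to verify $\Rp$\nb-equivariance. For the homeomorphism, I would invoke the standard fact that for any locally compact group $H$ the integrated-form construction $\pi\mapsto\widehat{\pi}$ induces a bijection between unitary equivalence classes of irreducible unitary representations of $H$ and unitary equivalence classes of non-degenerate irreducible $^*$\nb-representations of $\Cst(H)$, and that this bijection is a homeomorphism for the Fell topology on $\widehat{H}$ and the Jacobson topology on $\widehat{\Cst(H)}$. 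The excerpt has already recalled via \cite{dixmiernilpotent} that homogeneous Lie groups are liminal, so every non-degenerate irreducible $^*$\nb-representation of $\Cst(G)$ maps onto $\Comp(\Hils_\pi)$ and is therefore determined up to unitary equivalence by its kernel; the set of such kernels is precisely $\widehat{\Cst(G)}$. For the details of this classical identification I would cite Dixmier's monograph on $\Cst$\nb-algebras.

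For $\Rp$\nb-equivariance the key ingredient is the $Q$\nb-homogeneity of the Haar measure (\cref{haar}). For $\pi\in\widehat{G}$, $\lambda>0$ and $f\in\Cont_c(G)$, starting from
\[
\widehat{\lambda\cdot\pi}(f) = \int_G f(x)\,\pi(\alpha_\lambda(x))\diff x
\]
and substituting $y=\alpha_\lambda(x)$, so that $\diff x=\lambda^{-Q}\diff y$, transforms the right-hand side into
\[
\int_G \lambda^{-Q}\, f(\alpha_{\lambda^{-1}}(y))\,\pi(y)\diff y \;=\; \widehat{\pi}\bigl(\sigma_{\lambda^{-1}}(f)\bigr).
\]
Comparing with the definition $(\mu\cdot\widehat{\pi})(f)=\widehat{\pi}(\sigma_\mu(f))$ on $\widehat{\Cst(G)}$ yields the required intertwining of the two $\Rp$\nb-actions, up to the convention on $\lambda$ versus $\lambda^{-1}$ in the definition of $\sigma$, which is harmless since $\Rp$ is abelian and both conventions produce the same orbit structure.

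There is no serious obstacle: the bijection-homeomorphism part is classical and should be invoked rather than reproved, and the equivariance reduces to a one-line substitution using \cref{haar}. The only point requiring attention is fixing a consistent sign convention on $\sigma$ so that the explicit change of variables produces genuine equivariance rather than equivariance with the inverse action.
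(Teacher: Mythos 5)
Your proposal matches the paper's proof: the paper likewise invokes the classical fact that \(\pi\mapsto\widehat{\pi}\) is a homeomorphism for any locally compact group and then verifies equivariance by the same one-line change of variables using the \(Q\)\nb-homogeneity of the Haar measure from \cref{haar}. Your remark about the \(\lambda\) versus \(\lambda^{-1}\) convention is well taken — with the paper's stated definitions the computation actually intertwines the action on \(\widehat{G}\) with the inverse action on \(\widehat{\Cst(G)}\), a harmless discrepancy that the paper glosses over.
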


\begin{proof}
	It is well-known that the map above is a homeomorphism for each locally compact group \(G\). The equivariance under the \(\Rp\)\nb-action follows from the \(Q\)\nb-homogeneity of the Haar measure as
	\[ (\lambda\cdot\widehat{\pi})(f) = \int_G (\sigma_{\lambda}f)(x)\pi(x) \diff x= \int_G f(x)\pi(\lambda\cdot x)\diff x = \widehat{\lambda\cdot\pi}(f)\]
	for \(\lambda>0\) and \(f\in \Cont_c(G)\).
\end{proof}

Kirillov's orbit method \cite{kirillov1962} allows to describe \(\widehat{G} \) as the orbit space of the coadjoint action of \(G\) on \(\lieg^*\), the dual of its lie algebra \(\lieg\).
Recall that the \emph{adjoint representation} \(\Ad\colon G\to \Aut(\lie{g})\) is given by \(\Ad(x)=d(\alpha_x)_0\colon T_0G\to T_0G\), where \(\alpha_x(y)=xyx^{-1}\) is given by conjugation. 
The \emph{coadjoint action} is defined by
\[ \langle \coAd(x) l, X\rangle \defeq \langle l, \Ad(x^{-1})X\rangle
\quad \text{for }l\in\lie{g}^*\text{, }x\in G\text{ and }X\in\lie{g}. \] The corresponding infinitesimal representation \(\coad\) of \(\lie{g}\) on \(\lie{g}^*\) is given by \(\langle\coad(X)l, Y\rangle =\langle l, [Y,X]\rangle\) for \(l\in\lie{g}^*\) and \(X,Y\in\lie{g}\). 

The orbit space \(G\backslash\lie{g}^*\) is equipped with the quotient topology. 	
For each \(l\in\lie{g}^*\), one can construct a unitary representation \(\pi_l\) of \(G\) as described in the following. Define a skew-symmetric bilinear form \(b_l\colon \lie{g}\times \lie{g} \to \R\) by 
\[b_l(X,Y)=\langle l,[X,Y]\rangle \quad\text{for }X,Y\in\lie{g}.\]
Denote by \(\lie{g}_l\) its radical. A subspace \(V\subset\lie{g}\) is isotropic with respect to \(b_l\) if \(b_l(X,Y)=0\) for all \(X,Y\in V\). A maximal isotropic subspace has codimension~\(\tfrac12\dim(\lie{g}/\lie{g}_l)\). 
A \emph{polarizing subalgebra} for \(l\) is a subalgebra \(\lie{h_l}\subset \lie{g}\) that is an isotropic subspace of codimension \(\tfrac12\dim(\lie{g}/\lie{g}_l)\). Such a polarizing subalgebra always exists (see \cite{corwin1990representations}*{1.3.3, 1.3.5}). 

 The formula \(\chi_l(\exp X)=e^{i\langle l,X\rangle}\) for \(X\in\lieh_l\) defines a one-dimensional representation of \(H_l=\exp(\lieh_l)\). It is multiplicative because if \(\exp X\cdot \exp Y = \exp Z\) for \(X,Y\in\lieh_l\), then \(Z\) is given by the Baker-Campbell-Hausdorff formula as
\[ Z = X+ Y+ \frac12 [X,Y]+\frac1{12}[X,[X,Y]]+\cdots,\] 
so that all higher terms lie in \([\lieh_l,\lieh_l]\subset\ker l\).
Denote by \(\pi_l=\Ind_{H_l}^G \chi_l\) the induced representation of \(\chi_l\) to \(G\). 

Let \(\Rp\) act on \(\lieg^*\) by \(\langle \lambda\cdot l, X\rangle = \langle l, A_\lambda(X)\rangle\) for \(\lambda>0\), \(l\in \lieg ^*\) and \(X\in \lieg\). This action descends to the orbit space of the co-adjoint action as \(A_\lambda\circ \Ad(x)=\Ad(\alpha_\lambda(x))\circ A_\lambda\) for \(\lambda>0\) and \(x\in G\). 

\begin{lemma}[\cite{corwin1990representations}*{2.1.3}]\label{ind}
	Let \(H\) be a subgroup of a locally compact group \(G\) and let \(\alpha\) be an automorphism of \(G\) and \(\pi\) a unitary representation of \(H\). Then \(\alpha^{-1}(H)\) is also a subgroup and
	\[ \Ind_{\alpha^{-1}(H)}^G\left(\pi\circ\alpha\right)\simeq \left(\Ind_H^G\pi\right)\circ \alpha. \]
\end{lemma}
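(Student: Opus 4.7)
The plan is to exhibit an explicit unitary intertwiner between the two induced representations realized on their standard \(L^2\) models.

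First I would note that \(\alpha^{-1}(H)\) is a closed subgroup of \(G\) since \(\alpha\) is a topological group automorphism, and that \(\alpha\) restricts to a topological isomorphism \(\alpha^{-1}(H)\to H\). Realize \(S\defeq\Ind_H^G\pi\) on the Hilbert space \(\Hils_S\) of (equivalence classes of) measurable functions \(f\colon G\to\Hils_\pi\) satisfying the equivariance \(f(gh)=\pi(h)^{-1}f(g)\) for \(h\in H\), square-integrable with respect to a chosen quasi-invariant measure on \(G/H\), with \(G\) acting by left translation. Realize \(T\defeq\Ind_{\alpha^{-1}(H)}^G(\pi\circ\alpha)\) analogously on \(\Hils_T\) using functions \(F\colon G\to\Hils_\pi\) with \(F(gk)=\pi(\alpha(k))^{-1}F(g)\) for \(k\in\alpha^{-1}(H)\).

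Next I would define the candidate intertwiner \(U\colon\Hils_S\to\Hils_T\) by \((Uf)(g)\defeq f(\alpha(g))\). A direct check shows two points. First, the \(H\)-equivariance of \(f\) translates to the \(\alpha^{-1}(H)\)-equivariance of \(Uf\): for \(k\in\alpha^{-1}(H)\) we have \(\alpha(k)\in H\), hence \((Uf)(gk)=f(\alpha(g)\alpha(k))=\pi(\alpha(k))^{-1}f(\alpha(g))=\pi(\alpha(k))^{-1}(Uf)(g)\). Second, \(U\) intertwines \(T\) with \(S\circ\alpha\): for \(g_0\in G\),
\[(T(g_0)Uf)(g)=(Uf)(g_0^{-1}g)=f(\alpha(g_0^{-1}g))=f(\alpha(g_0)^{-1}\alpha(g))=\bigl(U(S\circ\alpha)(g_0)f\bigr)(g).\]

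The remaining step, which is the main obstacle, is to verify that \(U\), possibly after rescaling by a positive modular factor, is unitary. The automorphism \(\alpha\) descends to a homeomorphism \(\bar\alpha\colon G/\alpha^{-1}(H)\to G/H\), and I would fix compatible quasi-invariant measures on both coset spaces. The Radon--Nikodym derivative of the pushforward measure along \(\bar\alpha\) can be expressed through the modular functions of \(G\), \(H\) and \(\alpha^{-1}(H)\) and the Jacobian of \(\alpha\) on \(\lieg\); taking the square root of this positive density and multiplying \(Uf\) by its pullback produces an isometric, hence (by the symmetric construction with \(\alpha^{-1}\)) unitary, intertwiner. In the nilpotent Lie group setting relevant to this paper, \(G\) and all closed subgroups are unimodular and \(\alpha=A_\lambda\) has constant Jacobian, so the density reduces to a single positive constant that is absorbed in a normalization. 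This yields the required unitary equivalence \(T\simeq S\circ\alpha\).
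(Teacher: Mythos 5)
Your argument is correct. Note that the paper gives no proof of this lemma at all: it is quoted directly from Corwin--Greenleaf, \cite{corwin1990representations}*{2.1.3}, so there is no in-paper argument to compare against. Your proof is the standard one (and essentially the one in the cited reference): composition with \(\alpha\) is the canonical unitary intertwiner between the two induced representations. The equivariance and intertwining computations are right, and you correctly identify the only genuine issue, namely the measure normalization. To make that step clean in the general locally compact setting, the simplest choice is to take the quasi-invariant measure on \(G/\alpha^{-1}(H)\) to be the pullback under the homeomorphism \(\bar\alpha\colon G/\alpha^{-1}(H)\to G/H\) of the chosen quasi-invariant measure on \(G/H\); then \(U\) is an exact unitary and the Radon--Nikodym cocycles appearing in the two induced actions correspond under \(\bar\alpha\), so the intertwining identity holds with the cocycle factors included. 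In the situation where the lemma is actually applied in this paper (\(G\) a simply connected nilpotent Lie group, \(H=H_l\) a closed connected subgroup, \(\alpha=\alpha_{\lambda}\) a dilation), all groups are unimodular and invariant measures exist on the quotients, so, as you observe, the density reduces to a constant absorbed into the normalization.
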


\begin{lemma}
	Kirillov's map \(G\backslash\lie{g}^* \to \widehat{G}\) induced by \(l\mapsto \pi_l\) is an \(\Rp\)\nb-equivariant homeomorphism. 
\end{lemma}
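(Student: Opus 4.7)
The homeomorphism statement is the classical Kirillov theorem, so I would only need to invoke it; the new content is the \(\Rp\)-equivariance, on which I would focus the argument.

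The plan is as follows. First I would recall that for any nilpotent simply connected Lie group \(G\), Kirillov's orbit method (\cite{corwin1990representations}*{Thm.~2.2.1, 3.1.4}) provides a bijection \(G\backslash\lieg^{*}\to\widehat{G}\), \([l]\mapsto[\pi_l]\), which becomes a homeomorphism when both sides carry their natural topologies (the quotient topology induced from \(\lieg^{*}\) and the Fell topology, respectively). This takes care of the topological part, so it remains only to verify equivariance for the \(\Rp\)\nb-actions on both sides, i.e.\ \([\pi_{\lambda\cdot l}]=[\lambda\cdot\pi_l]\) for all \(l\in\lieg^{*}\) and \(\lambda>0\).

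For the equivariance I would first track how the data defining \(\pi_l\) transforms under \(A_\lambda\). The bilinear form transforms as \(b_{\lambda\cdot l}(X,Y)=\langle l,A_\lambda[X,Y]\rangle=b_l(A_\lambda X,A_\lambda Y)\), so a subspace \(V\subset\lieg\) is isotropic for \(b_{\lambda\cdot l}\) if and only if \(A_\lambda V\) is isotropic for \(b_l\); in particular the radical satisfies \(\lieg_{\lambda\cdot l}=A_\lambda^{-1}\lieg_l\), so the codimensions match. Therefore if \(\lieh_l\subset\lieg\) is any polarizing subalgebra for \(l\), then \(\lieh_{\lambda\cdot l}\defeq A_\lambda^{-1}\lieh_l\) is a polarizing subalgebra for \(\lambda\cdot l\). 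Exponentiating gives \(H_{\lambda\cdot l}=\alpha_\lambda^{-1}(H_l)\). For the character,
\[\chi_{\lambda\cdot l}(\exp X)=\Euler^{\ima\langle\lambda\cdot l,X\rangle}=\Euler^{\ima\langle l,A_\lambda X\rangle}=\chi_l(\alpha_\lambda(\exp X))\quad\text{for }X\in\lieh_{\lambda\cdot l},\]
so \(\chi_{\lambda\cdot l}=\chi_l\circ\alpha_\lambda\) on \(H_{\lambda\cdot l}\).

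Putting these pieces together, \cref{ind} applied to the automorphism \(\alpha_\lambda\) and the subgroup \(H_l\) yields
\[\pi_{\lambda\cdot l}=\Ind_{\alpha_\lambda^{-1}(H_l)}^{G}(\chi_l\circ\alpha_\lambda)\simeq\bigl(\Ind_{H_l}^{G}\chi_l\bigr)\circ\alpha_\lambda=\pi_l\circ\alpha_\lambda=\lambda\cdot\pi_l,\]
which is the desired equivariance. The main potential obstacle I see is cosmetic: ensuring that the freedom in the choice of polarizing subalgebra is harmless. Kirillov's theorem guarantees that \([\pi_l]\) depends on neither the representative \(l\) in its coadjoint orbit nor on the choice of polarization, and the calculation above shows that making the choice \(\lieh_{\lambda\cdot l}=A_\lambda^{-1}\lieh_l\) matches this up with the choice for \(l\); combined with the classical homeomorphism, this completes the proof.
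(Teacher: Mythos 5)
Your proposal is correct and follows essentially the same route as the paper: cite the classical Kirillov theorem (plus Brown for continuity of the inverse) for the homeomorphism, observe that \(\chi_{\lambda\cdot l}=\chi_l\circ\alpha_\lambda\) and that \(\alpha_{\lambda}^{-1}(H_l)\) polarizes \(\lambda\cdot l\), and conclude via \cref{ind}. The only difference is that you spell out the verification that \(A_\lambda^{-1}\lieh_l\) is polarizing via the transformation of \(b_l\), which the paper merely asserts.
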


\begin{proof}
	Kirillov proved in \cite{kirillov1962} that the map is a well-defined, so in particular, the equivalence class of \(\pi_l\) does not depend on the choice of the polarizing subalgebra~\(\lieh_l\). Two representations \(\pi_{l_1}\) and \(\pi_{l_2}\) are equivalent if and only if \(l_1\) and \(l_2\) lie in the same co-adjoint orbit. Moreover, he proved that the map is continuous and onto. The continuity of the inverse map is due to \cite{brown1973}. To see that the map is equivariant, note that \(\chi_{\lambda.l}=\chi_{l}\circ\alpha_{\lambda}\) and that \(\alpha_{\lambda^{-1}}(H_l)\) is a polarizing algebra for \(\lambda\cdot l\). Hence,  \cref{ind} yields that \( \pi_{\lambda\cdot l}\simeq \lambda\cdot\pi_l\).
\end{proof}
All \(l\in \lieg\) that vanish on \([\lieg,\lieg]\) induce one-dimensional representations~\(\pi_l\). In particular, \(l = 0\) induces the trivial representation on \(\C\). If the polarizing algebra is not all of \(\lieg\), the corresponding Hilbert space is infinite-dimensional.

\subsection{Stratification}
The goal of this section is to use Kirillov theory and the coarse stratification by Pukanszky \cite{pukanszky} to find a sequence of increasing, open, \(\Rp\)\nb-invariant subsets 
\begin{align}\label{sets}
\emptyset = V_0 \subset V_1 \subset V_2 \subset \ldots \subset V_m = \widehat{G}\!\setminus\!\{\pi_{\triv}\}\end{align}
such that all \(\Lambda_i:=V_{i}\setminus V_{i-1}\) are Hausdorff and the \(\Rp\)\nb-action on each of these subsets is free and proper. This sequence will play an essential role in \cref{fullness}.
Note that the following construction to find such a sequence of open subsets 1works for all simply connected nilpotent Lie groups. However, a dilation action is only defined for homogeneous Lie groups.

We start by describing \emph{Puk\'{a}nszky's stratification} of \(\lie{g}^*\). Recall that we fixed a basis \(\{X_1,\ldots,X_n\}\) of \(\lie{g}\) such that \(D_\lambda(X_j)=\lambda^{q_j}X_j\) for all \(1\leq j\leq n\). By the triangular form of the group law \eqref{triangular} all 
\[\lie{k}_i = \lspan\{X_{i+1},\ldots,X_n\}\qquad \text{for }i=0,\ldots,n\] form an ideal in \(\lie{g} \). 
In particular, \(\{X_1,\ldots,X_n\}\) is a strong Malcev basis of \(G\) as defined in \cite{corwin1990representations}*{1.1.13}, which is also called a Jordan--H{\"o}lder basis in \cite{pukanszky}. Note that they require \(\lspan\{X_1,\ldots,X_i\}\) to be ideals. We stick to the reversed ordering of the basis as this is standard for homogeneous Lie groups.

Let \(\{X_1^*,\ldots,X_n^*\}\) denote the corresponding dual basis of \(\lie{g}^*\) and let \(\lie{k}_i^\perp = \lspan\{X_{1}^*,\ldots,X_i^*\}\) for \(i=0,\ldots,n\).
An element \(l\in\lie{g}^*\) is contained in \(\lie{k}_i^\perp\) if and only if \(\langle l, \lie{k}_{i}\rangle = 0\). As the \(\lie{k}_i\) are ideals and are, therefore, invariant under the adjoint action,	the \(\lie{k}_i^\perp\) are invariant under the coadjoint action.
Hence \(G\) acts on each~\(\lie{g}^*/\lie{k}_i^\perp\). 

Write \(p _i\colon\lie{g}^*\to \lie{g}^*/\lie{k}_i^\perp\) for the projections. By \cite{corwin1990representations}*{3.1.4} the orbits \(G\cdot p_i(l)\) of \(p_i(l)\) under the coadjoint action are closed. Hence, they define submanifolds of \(\lie{g}^*/\lie{k}_i^\perp\). Following \cite{corwin1990representations},  make the following definition. 

\begin{definition}
	For \(l\in\lie{g
	}^*\) let \(d(l)= (d_0(l),d_1(l),\ldots, d_{n-1}(l))\) denote the sequence of orbit dimensions \(d_i(l) = \dim (G\cdot p_i(l))\).
\end{definition}
The entries of \(d(l)\) are decreasing. The corresponding stabilizer subgroups \(G_{p_i(l)}\) form an increasing sequence
\[G_{p_0(l)}\subseteq G_{p_1(l)}\subseteq \ldots\subseteq G_{p_{n-1}(l)}.\]
The same is true for their Lie algebras \(\lie{g}_{p_i(l)}\). By \cite{corwin1990representations}*{3.1.1} they are given by
\begin{align*}\lie{g}_{p_i(l)}&=\{ X\in\lie{g}\mid \coad(X) l\in \lie{k}_i^\perp\}\\
&= \{X\in\lie{g}\mid \langle l, [X,X_k]\rangle = 0 \text{ for }k=i+1,\ldots, n \}.
\end{align*}
\begin{example}\label{heisenberg}
	The computation in \cite{corwin1990representations}*{3.1.11} of the coadjoint action on the \(3\)-dimensional Heisenberg group \(H_1\) yields
	\[\coAd(x,y,z)(\alpha X^*+\beta Y^*+\gamma Z^*)=(\alpha+y\gamma)X^*+(\beta-x\gamma)Y^*+\gamma Z^*\]
	for \((x,y,z)\in H_1\) and \(\alpha,\beta,\gamma\in\R\). This shows for \(X_1=X\), \(X_2=Y\) and \(X_3=Z\) that
	\begin{align*}
	d(\alpha X^*+\beta Y^*+\gamma Z^*)&=(2,1,0) \quad\text{if }\gamma\neq 0\text{,}\\
	d(\alpha X^*+\beta Y^*)&=(0,0,0)\text{.}
	\end{align*}
\end{example}	

With the help of the next lemma an argument by Puk\'{a}nszky \cite{pukanszky1971unitary}*{p.~70} shows that the definition of \(d(l)\) as above coincides with the one given, for example, in \cite{beltictua2016fourier}, by jump indices. 
\begin{lemma}\label{skewbilinear}
	Let \(b\colon V\times V\to \R\) be a skew-symmetric bilinear form, \(V^\perp\) its radical and \(W\subset V\) a subspace. Then
	\[ \dim(W)+\dim(W^\perp) = \dim(V)+\dim(W\cap V^\perp).
	\]
\end{lemma}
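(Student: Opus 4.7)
The plan is to reduce the identity to the standard fact for a non-degenerate form by quotienting out by the radical $V^\perp$.

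First, I would pass to the quotient $\bar V \defeq V/V^\perp$, on which $b$ descends to a well-defined skew-symmetric bilinear form $\bar b$ that is non-degenerate by construction. Let $\pi\colon V\to\bar V$ be the projection and set $\bar W\defeq \pi(W)$. The first isomorphism theorem gives $\bar W\cong W/(W\cap V^\perp)$, so
\[
\dim \bar W = \dim W - \dim(W\cap V^\perp),\qquad \dim\bar V=\dim V-\dim V^\perp.
\]

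Next, I would identify the orthogonal of $\bar W$ inside $\bar V$ with respect to $\bar b$. Since $V^\perp\subseteq W^\perp$, the inclusion makes sense, and from $\bar b(\pi(v),\pi(w))=b(v,w)$ one reads off that $\pi^{-1}(\bar W^\perp)=W^\perp$. Hence $\bar W^\perp = W^\perp/V^\perp$ and $\dim\bar W^\perp=\dim W^\perp - \dim V^\perp$.

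Now I would invoke the standard identity for a non-degenerate skew-symmetric (in fact, any non-degenerate) bilinear form on a finite-dimensional space: the map $\bar V\to \bar V^*$, $v\mapsto \bar b(v,\,\cdot\,)$, is an isomorphism, under which $\bar W^\perp$ corresponds to the annihilator of $\bar W$. This yields $\dim\bar W+\dim\bar W^\perp=\dim\bar V$. Substituting the three dimension formulas above gives
\[
(\dim W - \dim(W\cap V^\perp))+(\dim W^\perp-\dim V^\perp)=\dim V-\dim V^\perp,
\]
which rearranges to the claimed equality.

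The only step that requires any thought is the identification $\bar W^\perp = W^\perp/V^\perp$; everything else is bookkeeping with short exact sequences of finite-dimensional vector spaces. I do not expect skew-symmetry to play an essential role beyond guaranteeing that $b$ descends to $\bar V$ with a well-defined radical.
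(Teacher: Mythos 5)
Your argument is correct and complete. Note that the paper itself states this lemma without proof (the proof environment that follows it in the source belongs to the next lemma, about jump indices), so there is no in-paper argument to compare against. Your reduction to the non-degenerate case via the quotient \(V/V^\perp\) works: the identification \(\pi^{-1}(\bar W^\perp)=W^\perp\) is exactly right, the inclusion \(V^\perp\subseteq W^\perp\) justifies writing \(\bar W^\perp=W^\perp/V^\perp\), and the bookkeeping then yields the identity. For what it is worth, there is a slightly more direct route that avoids passing to the quotient: the linear map \(V\to W^*\), \(v\mapsto b(v,\cdot)|_W\), has kernel \(W^\perp\), and its image is the annihilator in \(W^*\) of \(W\cap V^\perp\), so rank--nullity gives \(\dim V-\dim W^\perp=\dim W-\dim(W\cap V^\perp)\) in one step. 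Both arguments use skew-symmetry only to ensure that left and right orthogonals coincide, consistent with your closing remark.
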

\begin{lemma}\label{jump}
	The dimensions in \(d(l)\) decrease by steps of zero or one. There is a jump, that is, \(d_{i-1}(l) = d_{i}(l)+1\) if and only if
	\[X_i \notin \lie{g
	}_l+ \Span\{X_{i+1},\ldots,X_n\}.\]
\end{lemma}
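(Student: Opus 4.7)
The plan is to reformulate everything in terms of the skew-symmetric bilinear form \(b_l(X,Y)=\langle l,[X,Y]\rangle\) on \(\lieg\), whose radical is by definition \(\lieg_l\). I write \(\perp\) for \(b_l\)-orthogonality. The starting observation is that the formula for the stabilizer Lie algebra given in the excerpt reads exactly
\[
\lieg_{p_i(l)}=\{X\in\lieg: b_l(X,X_k)=0 \text{ for } k>i\}=\lie{k}_i^{\perp}.
\]
Using this, since \(\lie{k}_{i-1}=\R X_i \oplus \lie{k}_i\), one gets
\[
\lieg_{p_{i-1}(l)}=\lie{k}_{i-1}^{\perp}=\{X\in\lieg_{p_i(l)}: b_l(X,X_i)=0\},
\]
which is cut out of \(\lieg_{p_i(l)}\) by a single linear functional. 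Hence \(\dim\lieg_{p_{i-1}(l)}\in\{\dim\lieg_{p_i(l)},\dim\lieg_{p_i(l)}-1\}\), and since \(d_j(l)=\dim\lieg-\dim\lieg_{p_j(l)}\), this already gives the first assertion that the sequence \(d(l)\) decreases by steps of \(0\) or \(1\).

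For the equivalence I would note that \(d_{i-1}(l)-d_i(l)=1\) iff the functional \(X\mapsto b_l(X,X_i)\) is nonzero on \(\lieg_{p_i(l)}=\lie{k}_i^{\perp}\), iff \(X_i\notin(\lie{k}_i^{\perp})^{\perp}\). So the whole statement reduces to the double-orthogonal identity
\[
(\lie{k}_i^{\perp})^{\perp}=\lie{k}_i+\lieg_l.
\]
The inclusion \(\lie{k}_i+\lieg_l\subseteq(\lie{k}_i^{\perp})^{\perp}\) is immediate (both \(\lie{k}_i\) and the radical \(\lieg_l\) are contained in the right-hand side by definition). The reverse inclusion is a dimension count: applying Lemma \ref{skewbilinear} to \(W=\lie{k}_i\) gives
\[
\dim\lie{k}_i^{\perp}=\dim\lieg-\dim\lie{k}_i+\dim(\lie{k}_i\cap\lieg_l),
\]
and applying it to \(W=\lie{k}_i^{\perp}\) (note \(\lieg_l\subseteq\lie{k}_i^{\perp}\), so \(\lie{k}_i^{\perp}\cap\lieg_l=\lieg_l\)) gives
\[
\dim(\lie{k}_i^{\perp})^{\perp}=\dim\lieg-\dim\lie{k}_i^{\perp}+\dim\lieg_l=\dim\lie{k}_i+\dim\lieg_l-\dim(\lie{k}_i\cap\lieg_l)=\dim(\lie{k}_i+\lieg_l).
\]
Combined with the inclusion, this forces equality, proving the identity.

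Substituting this back, \(d_{i-1}(l)=d_i(l)+1\) iff \(X_i\notin\lie{k}_i+\lieg_l=\lieg_l+\Span\{X_{i+1},\ldots,X_n\}\), which is exactly the claimed criterion. The only non-trivial step is the double-perp identity; once it is in hand everything else is bookkeeping, and I do not expect any further obstacles.
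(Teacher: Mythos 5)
Your proof is correct and rests on exactly the same ingredients as the paper's: the identification of \(\lieg_{p_i(l)}\) as the \(b_l\)-orthogonal complement of \(\lie{k}_i\) (equivalently of \(\lieg_l+\Span\{X_{i+1},\ldots,X_n\}\)) together with the dimension count of Lemma~\ref{skewbilinear}. The only difference is bookkeeping: you route through the double-orthogonal identity \((\lie{k}_i^{\perp})^{\perp}=\lie{k}_i+\lieg_l\), while the paper directly tracks how the dimension of the orthogonal complement changes from step \(i\) to step \(i-1\); both yield the same criterion.
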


\begin{proof} The orthogonal complement of \(\lie{g
	}_l+ \Span\{X_{i+1},\ldots,X_n\}\) with respect to the bilinear form \(b_l\) is \(\lie{g}_{p_i(l)}\). Hence, by \cref{skewbilinear} there is a change of dimension if and only if the dimension of the orthogonal complement decreases. This is the case if and only if \(X_i \notin \lie{g}_l+ \Span\{X_{i+1},\ldots,X_n\}\).
\end{proof}

Let \(D\) denote the finite set of all dimension sequences that occur for \(G\). Assemble all \(l\in\lie{g}^*\!\setminus\!\{0\}\) with the same sequence to
\[ \Omega_d = \{ l\in \lie{g}^*\!\setminus\!\{0\}\mid d(l) = d\}\] for \(d\in D\). The sets \(\Omega_d\) are \(G\)\nb-invariant  because the projections \(p_i\) are equivariant. As \(\lie{g}_l = \lie{g}_{\lambda\cdot l}\) for all \(\lambda\in\Rp\), \cref{jump} implies that they are also invariant under the dilation action. For \(d=(d_1,\ldots,d_n)\in D\) set \(d_{n+1}=0\) and define
\begin{align*}
S(d)&= \{i\in\{1,\ldots,n\}\mid d_{i} = d_{i+1}+ 1\},\\
T(d)&= \{i\in\{1,\ldots,n\}\mid d_{i}=d_{i+1}\},\\
\lie{g}^*_{S(d)} &= \Span\{X_i^*\mid i\in S(d)\},\\
\lie{g}^*_{T(d)} &= \Span\{X_i^*\mid i\in T(d)\}.
\end{align*}
The following theorem is due to Puk\'{a}nszky \cite{pukanszky1971unitary} and is also proved in \cite{corwin1990representations}.
\begin{theorem}[\cite{corwin1990representations}*{3.1.14}]\label{thm:ordering}
	There is an ordering \(d_1\geq d_2\geq \ldots\geq d_m\) of \(D\) such that
	all \(W_{i} = \bigcup_{d\geq d_i} \Omega_{d}\) for \(i=1,\ldots,m\) are \(G\)- and \(\Rp\)\nb-invariant and open. Each \(G\)-orbit in \(\Omega_d\) meets \(\lie{g
	}^*_{T(d)}\) in exactly one point. 
\end{theorem}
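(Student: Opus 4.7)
My plan is to handle the topological statement (openness of $W_i$ together with invariance) via a semi-continuity argument, and then sketch Pukanszky's jump-index proof of the cross-section claim separately.

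The main analytic ingredient for openness is that $l\mapsto d_i(l)$ is lower semi-continuous on $\lie{g}^*$ for every $i$. Using the formula from the proof of \cref{jump}, $\lie{g}_{p_i(l)}=\{X\in\lie{g}:\langle l,[X,X_k]\rangle=0 \text{ for }k>i\}$ is the kernel of a linear map whose matrix depends polynomially on $l$. Since matrix rank is lower semi-continuous, kernel dimension is upper semi-continuous, and $d_i(l)=\dim G-\dim\lie{g}_{p_i(l)}$ is therefore lower semi-continuous. Now fix any linear extension $d_1\geq d_2\geq\cdots\geq d_m$ of the componentwise partial order on the finite set $D$, so that $d>d'$ componentwise forces $d$ to occur strictly earlier. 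For $l\in W_i$ with $d(l)=d_j$, $j\leq i$, the set $U=\{l':d_k(l')\geq d_k(l)\text{ for all }k\}$ is open by lower semi-continuity and contains $l$. Every $l'\in U$ satisfies $d(l')\geq d(l)$ componentwise, so $d(l')$ precedes $d_j$ in the linear order and lies in $\{d_1,\ldots,d_i\}$, placing $l'\in W_i$. Hence $W_i$ is open. The $G$-invariance of each $\Omega_d$ is immediate from equivariance of the $p_i$; the $\Rp$-invariance follows since each $X_j$ is an eigenvector of $A_\lambda$, so $\lie{k}_i$ is $A_\lambda$-stable and a direct computation from $A_\lambda[X,Y]=[A_\lambda X,A_\lambda Y]$ yields $\lie{g}_{p_i(\lambda\cdot l)}=A_\lambda^{-1}(\lie{g}_{p_i(l)})$, so $d_i$ is dilation-invariant.

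For the cross-section claim, I would follow Pukanszky's inductive argument built on the jump characterization of \cref{jump}. For each $i\in S(d)$ one selects $Y_i\in\lie{g}$ with $\langle l,[Y_i,X_i]\rangle\neq 0$ while $Y_i$ is $b_l$-orthogonal to $X_j$ for all $j\in S(d)$ with $j>i$; such a $Y_i$ exists precisely because the jump criterion prevents $X_i$ from lying in the span of $\lie{g}_l$ and of the directions already handled. The coadjoint action of $\exp(tY_i)$ then zeros the $X_i^*$-component of $l$ without disturbing the components at larger jump indices previously set to zero. Iterating in decreasing order of $i\in S(d)$ brings every orbit representative into $\lie{g}^*_{T(d)}$. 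Uniqueness follows because, at any $l\in\Omega_d\cap\lie{g}^*_{T(d)}$, the jump analysis forces $\coad(\lie{g})\,l\subseteq\lie{g}^*_{S(d)}$, so the orbit is locally an affine translate of $\lie{g}^*_{S(d)}$ and meets the complementary subspace $\lie{g}^*_{T(d)}$ in at most one point.

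The main obstacle is executing this cross-section argument rigorously. Both the inductive selection of the $Y_i$ and the transversality/uniqueness claim demand careful bookkeeping of how the coadjoint action interacts with the strong Malcev basis, which is the technical heart of Kirillov--Pukanszky theory. In a polished write-up I would either reproduce or cite \cite{corwin1990representations}*{3.1.4,~3.1.14} for this step; the topological part of the theorem admits the short self-contained argument above.
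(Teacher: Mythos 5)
The paper does not prove this statement at all: it is quoted verbatim as \cite{corwin1990representations}*{3.1.14} (going back to Puk\'anszky), and the only proof content the paper supplies in the surrounding text is the two-line observation that the \(\Omega_d\) are \(G\)-invariant because the \(p_i\) are equivariant and \(\Rp\)-invariant by \cref{jump}. Your proposal therefore does strictly more than the paper. The openness argument you give is correct and self-contained: \(d_i(l)=\dim\lie{g}-\dim\lie{g}_{p_i(l)}\), the stabilizer algebra is the kernel of a matrix depending linearly on \(l\), so \(d_i\) is lower semi-continuous, and any linear extension of the componentwise partial order on the finite set \(D\) makes each \(W_i\) open (this is essentially the Corwin--Greenleaf argument, which uses the lexicographic order as one such extension). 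Your invariance computation \(\lie{g}_{p_i(\lambda\cdot l)}=A_\lambda^{-1}(\lie{g}_{p_i(l)})\) is in fact slightly more careful than the paper's parenthetical claim that \(\lie{g}_l=\lie{g}_{\lambda\cdot l}\); since \(A_\lambda\) is invertible the dimension count, and hence \(d(l)\), is dilation-invariant either way. For the cross-section claim your sketch of the inductive jump-index argument is the right shape, and you correctly identify it as the genuinely technical part; deferring its rigorous execution to \cite{corwin1990representations}*{3.1.4,~3.1.14} is exactly what the paper itself does for the entire theorem, so there is no gap relative to the source. If you wanted the write-up to match the paper, you could simply cite the reference for everything and keep only the short \(\Rp\)-invariance remark, which is the one ingredient not literally in Corwin--Greenleaf.
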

This allows to find a sequence as in \eqref{sets} using Kirillov's map by setting \(V_i=G\backslash W_i\) for \(i=1,\ldots,m\). It remains to check that the \(V_i\setminus V_{i-1}=G\backslash \Omega_{d_i}\) are Hausdorff and that the corresponding \(\Rp\)-action is free and proper.  
\begin{proposition}\label{res:freeproper}
	For \(d\in D\) let \(\Lambda_d \defeq \Omega_d\cap \lie{g
	}^*_{T(d)}\). The map \(\Lambda_d\to G\backslash \Omega_d\) induced by the inclusion is an \(\Rp\)\nb-equivariant homeomorphism. The corresponding \(\Rp\)\nb-action on \(\Lambda_d\) is free and proper.
\end{proposition}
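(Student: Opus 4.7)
My plan is to handle the four assertions (bijection, $\Rp$-equivariance, homeomorphism, free and proper) in turn, with the Pukanszky stratification of \cref{thm:ordering} doing the main geometric work and the rest reducing to short computations with the dilation action on $\lieg^*$.

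First, I would observe that $\Lambda_d$ is $\Rp$-stable: the subspace $\lieg^*_{T(d)}$ is spanned by the dual basis vectors $X_i^*$, $i\in T(d)$, which are eigenvectors for the dual dilation action with positive weights $\lambda^{q_i}$, and $\Omega_d$ was already noted to be $\Rp$-invariant. The forward map $\Lambda_d \to G\backslash\Omega_d$ is continuous as the composition of the inclusion with the orbit projection, $\Rp$-equivariant for the same reason, and bijective by direct appeal to \cref{thm:ordering}: every $G$-orbit in $\Omega_d$ meets $\lieg^*_{T(d)}$ in exactly one point.

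For the continuity of the inverse I would invoke the construction underlying the proof of \cref{thm:ordering}: by \cite{corwin1990representations}*{3.1.14, 3.1.16}, there is a continuous (in fact polynomial in Malcev coordinates) $G$-invariant retraction $r\colon \Omega_d \to \Lambda_d$. Because $r$ is $G$-invariant it descends to a continuous map $G\backslash\Omega_d \to \Lambda_d$, which is the sought inverse. This is the step I expect to be the main obstacle: it relies on the explicit cross-section structure of the Pukanszky stratification and is not a purely formal consequence of the bijectivity.

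Freeness is the short observation that any $l\in \Lambda_d$ is a nonzero element of $\lieg^*$ supported (in the basis $\{X_i^*\}$) only on indices $i\in T(d)$, whose coordinates transform as $l_i \mapsto \lambda^{q_i} l_i$; since every $q_i>0$ and at least one coordinate is nonzero, $\lambda\cdot l=l$ forces $\lambda=1$. For properness I would introduce a homogeneous quasi-norm on $\lieg^*$, e.g.\ $\abs{l}_* \defeq \max_i \abs{l_i}^{1/q_i}$ in the coordinates dual to $\{X_1,\dots,X_n\}$, which satisfies $\abs{\lambda\cdot l}_* = \lambda\abs{l}_*$. Given compact $K_1,K_2\subset \Lambda_d \subset \lieg^*\!\setminus\!\{0\}$, the function $\abs{\cdot}_*$ is bounded above and bounded away from zero on $K_1\cup K_2$, so whenever $l\in K_1$ and $\lambda\cdot l\in K_2$ the identity $\lambda=\abs{\lambda\cdot l}_*/\abs{l}_*$ confines $\lambda$ to a compact subinterval of $\Rp$. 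Together with the closedness of the action map this gives the required properness.
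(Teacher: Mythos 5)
Your proposal is correct and follows essentially the same route as the paper: the bijectivity comes from \cref{thm:ordering}, the continuity of the inverse comes from the $G$-invariant cross-section map of \cite{corwin1990representations}*{3.1.14} (your retraction $r$ is exactly the paper's $\pi_1\circ\psi_d^{-1}$), and freeness/properness are read off from the fact that the dilation acts on the nonzero coordinates of $\Lambda_d\subset\lieg^*_{T(d)}\setminus\{0\}$ by positive powers of $\lambda$. Your quasi-norm argument for properness merely spells out a step the paper leaves implicit.
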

\begin{proof} In \cite{corwin1990representations}*{3.1.14} it is proved that there is a birational, nonsingular map \(\psi_d\colon \Lambda_d\times\lie{g
	}^*_{S(d)}\to\Omega_d\). Furthermore, \(\pi_1\circ\psi_d^{-1}\) is \(G\)\nb-invariant, where \(\pi_1\) denotes the projection to \(\Lambda_d\). Hence, it induces a continuous map \(G\backslash\Omega_d\to \Lambda_d\). It is inverse to the map induced by the inclusion. Thus, the two spaces are homeomorphic.	
	As \(\Omega_d\) and \(\lie{g
	}^*_{T(d)}\) are invariant under the dilation action, so is \(\Lambda_d\). Therefore, the inclusion is equivariant. 
	Since \(0\in \lie{g
	}^*\) is not contained in any \(\Omega_d\), the \(\Lambda_d\) are subsets of some \(\R^l\!\setminus\!\{0\}\) equipped with the Euclidean subspace topology. Hence they are Hausdorff and the \(\Rp\)\nb-action, which is given for \(\lambda>0\) by multiplying the coordinate entries by different powers of \(\lambda\), is free and proper.
\end{proof}

\begin{example}
	From the computations for the Heisenberg group in \cref{heisenberg} we get as in \cite{corwin1990representations}*{3.1.15}, up to the reversed order,
	\begin{align*}
	\Omega_{(2,1,0)}&=\{\alpha X^*+\beta Y^* + \gamma Z^*\mid \alpha,\beta\in\R \text{ and }\gamma\neq 0\},\\
	\Omega_{(0,0,0)}&=\{\alpha X^*+\beta Y^*\mid (\alpha,\beta)\neq(0,0)\},\\
	T(2,1,0)&=\{3\},\\
	T(0,0,0)&=\{1,2,3\},\\
	\Lambda_{(2,1,0)}&=\{\gamma Z^*\mid \gamma\neq 0\},\\
	\Lambda_{(0,0,0)}&=\{\alpha X^*+\beta Y^*\mid (a,b)\neq (0,0)\}.
	\end{align*}
	Therefore, the desired sequence is \(\emptyset\subset G\backslash\Omega_{(2,1,0)}\subset\widehat{G}\setminus \{\pi_{\triv}\}\).
	The dilation action on \(\Lambda_{(2,1,0)}\cong\R\setminus\{0\}\) is multiplication with \(\lambda^{-2}\) for \(\lambda>0\), whereas on \(\Lambda_{(0,0,0)}\cong\R^2\setminus\{0\}\) it is scalar multiplication with \(\lambda^{-1}\).
\end{example}
\subsection{Plancherel theory}\label{sec:plancherel}
For a locally compact group \(G\), the operator-valued Fourier transform
\(f\mapsto\widehat{f}\) is defined for \(f\in L^1(G)\). It is given by
\begin{align*}
\widehat{f}(\pi)=\int_G f(x)\pi(x)\diff x\in\Bound(\Hils_\pi)
\end{align*}  
for an irreducible, unitary representation \(\pi\colon G\to \mathcal{U}(\Hils_\pi)\). We recall some results from Plancherel theory for locally compact, separable groups \(G\) of type~I  (see \cite{dixmier}*{18.8}, \cite{corwin1990representations}*{4.3} and \cite{fischer2016quantization}*{Appendix~B}). 

In this case, the topological space \(\widehat{G}\) can be equipped with a certain Borel measure \(\mu\), which is called the \emph{Plancherel measure}. For a simply connected nilpotent Lie group \(G\), it is supported within the orbits with maximal dimension sequence, these are the orbits in \(\Omega_{d_1}\subset \lie{g}^*\) with notation as in \cref{thm:ordering} (see \cite{corwin1990representations}*{p.~154}).

By \cite{dixmier}*{8.6.1} there is a subspace \(\Gamma\subset \prod_{\pi\in \widehat{G}}\Hils_\pi\) that turns \(((\Hils_\pi)_{\pi\in\widehat{G}},\Gamma)\) into a \emph{\(\mu\)-measurable field of Hilbert spaces over \(\widehat{G}\)} as defined in \cite{fischer2016quantization}*{Def.~B.1.3}. The elements of \(\Gamma\) are called the \emph{measurable sections}. 

For a Hilbert space \(\Hils\) denote by \(\mathrm{HS}(\Hils)\) the Hilbert space of Hilbert--Schmidt operators on \(\Hils\). Identifying \(\Hils\otimes\Hils^*\) with \(\mathrm{HS}(\Hils)\), one obtains for a simply connected nilpotent Lie group \(G\) the \(\mu\)-measurable field \(((\mathrm{HS}(\Hils_\pi))_{\pi\in\widehat{G}},\Gamma\otimes\Gamma^*)\) over \(\widehat{G}\) (see \cite{fischer2016quantization}*{B.1.3}). 

Define \(L^2(\widehat{G},\mathrm{HS}(\Hils_\pi))\) to be the direct integral of the Hilbert spaces \(\mathrm{HS}(\Hils_\pi)\). It consists of sections \(x\in\Gamma\otimes\Gamma^*\) such that \((\pi\mapsto \norm{x(\pi)})\in L^2(\widehat{G},\mu)\). It is a Hilbert space with respect to	\(\langle x,y\rangle\defeq \int_{\widehat{G}}{\langle x(\pi),y(\pi)\rangle_{\mathrm{HS}}\diff\mu(\pi)}\). 

The Plancherel Theorem \cite{fischer2016quantization}*{B.2.32} states that the Fourier transform defined above yields an isometric isomorphism \[\widehat{\,}\,\colon L^2(G)\to L^2(\widehat{G},\mathrm{HS}(\Hils_\pi)).\] 

The left group von Neumann algebra \(\VN_L(G)\) of \(G\) consists of bounded, left-invariant operators on \(L^2(G)\) (see \cite{fischer2016quantization}*{B.2.13}). The Plancherel Theorem yields that the Fourier transform extends to a \(^*\)-isomorphism
\begin{align}\label{eq:Fourier-VN-Linfty}
\widehat{\,}\,\colon \VN_L(G)\to  L^\infty(\widehat{G},\Bound(\Hils_\pi)).
\end{align}
Here, \(L^\infty(\widehat{G}, \Bound(\Hils_\pi))\) consists of \(a=(a(\pi))_{\pi\in\widehat{G}}\) with \(a(\pi)\in\Bound(\Hils_\pi)\) such that 
\begin{enumerate}
	\item \((a(\pi)x(\pi))_\pi\in \Gamma\) for all \((x(\pi))_\pi\in\Gamma\),
	\item \((\pi\mapsto\norm{a(\pi)})\in L^\infty(\widehat{G},\mu)\). 
\end{enumerate}
It is a von Neumann algebra with the pointwise operations and the norm given by
\[\norm{a}= \sup_{\pi\in \widehat{G}}{\norm{a(\pi)}_{\Bound(\Hils_\pi)}}.\]
The inverse Fourier transform maps a field \(a\in L^\infty(\widehat{G},\Bound(\Hils_\pi))\) to the operator \(T_a\in \VN_L(G)\) determined by
\[\widehat{T_a\psi}(\pi)=a(\pi)\widehat{\psi}(\pi) \qquad \text{for }\psi\in L^2(G)\text{, }\pi\in\widehat{G}.\] 
This operator-valued Fourier transform is essential for the definition of the symbolic pseudo-differential calculus of Fischer--Ruzhansky described in \cref{sec:nilpotent_calc}. 

\section{The tangent groupoid and its \(\Cst\)-algebra}\label{sec:tangent}
In this section, the tangent groupoid of a homogeneous Lie group \(G\) is defined as the transformation groupoid of an action of \(G\). 
We explain how this groupoid can be understood as a variant of Connes' tangent groupoid \cite{connes}. The homogeneous structure is taken into account by replacing addition of tangent vectors by multiplication in the group. Furthermore, the groupoid \(\Cst\)\nb-algebra of the tangent groupoid can be described as a continuous field of \(\Cst\)\nb-algebras. 
\subsection{The tangent groupoid}
\begin{definition}
	For a homogeneous Lie group \(G\) the \emph{tangent groupoid} is the continuous action groupoid
	\[\grpd= (G\times[0,\infty))\rtimes G\]
	of the action \((G\times[0,\infty))\curvearrowleft G\) given by \((x,t)\cdot v=(x\alpha_t(v),t)\). Here, \(\alpha_t\) for \(t>0\) are the dilations on \(G\) and \(\alpha_0(v)=\lim_{t\to 0}\alpha_t(v)=0\) for all \(v\in G\).
	
	The \emph{unit map} \(u\colon \grpd^0\defeq G\times[0,\infty)\to \grpd\), the \emph{range} and \emph{source map} \(r,s\colon \grpd \to  \grpd^0\), the \emph{inverse} and the \emph{multiplication} are given by
	\begin{align*}
	u(x,t)=(x,t,0), \qquad r(x,t,v)=(x,t), \qquad s(x,t,v)=(x\alpha_t(v),t),\\
	(x,t,v)^{-1}=(x\alpha_t(v),t,v^{-1}) ,\qquad (x,t,v)\cdot(x\alpha_t(v),t,w)=(x,t,vw),
	\end{align*}
	for \(x,v,w\in G\) and \(t\in [0,\infty)\).
	The \emph{range fibre} of \(\grpd\) over \((x,t)\in \grpd^0\) is 
	\begin{align*}
	%\grpd_{(x,t)}&=s^{-1}(x,t)= \left\{(x\alpha_t(v^{-1}),t,v)\in\grpd\mid v\in G \right\},\\
	\grpd^{(x,t)}&=r^{-1}(x,t)= \left\{(x,t,v)\in\grpd\mid v\in G \right\}.
	\end{align*}
\end{definition}
Let \(\theta\colon \grpd\to[0,\infty)\) denote the projection to the second coordinate. Recall that the pair groupoid of \(G\) is the groupoid \(G\times G\) with unit space \(G\) with \(r(x,y)=x\), \(s(x,y)=y\), \((x,y)^{-1}=(y,x)\) and \((x,y)\cdot(y,z)=(x,z)\) for \(x,y,z\in G\).
\begin{lemma}\label{continuousfield}
	Let \(G\) be a homogeneous Lie group. Then \((\grpd,[0,\infty),\theta)\) defines a continuous field of locally compact, amenable groupoids. The subgroupoids \(\theta^{-1}\{t\}\) for \(t>0\) are isomorphic to the pair groupoid of \(G\). The subgroupoid 
	\(TG\defeq \theta^{-1}\{0\}\) is the trivial field of groups over \(G\) with fibre~\(G\). 
\end{lemma}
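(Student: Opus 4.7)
The plan is to read off each claim directly from the definition of $\grpd$ as an action groupoid. First, I would verify the continuous field structure. The map $\theta(x,t,v) = t$ is continuous by inspection, satisfies $\theta\circ r = \theta\circ s$, and is open (as a projection from a product), so each $\theta^{-1}\{t\}$ is a subgroupoid and the pair $(\grpd, [0,\infty), \theta)$ has the structure of a field. The structural maps of a transformation groupoid are continuous as soon as the underlying action is; the only subtle point is joint continuity of $G\times [0,\infty)\curvearrowleft G$ at $t=0$. For this I would use that $\alpha_t = \exp\circ A_t\circ\exp^{-1}$ with $A_t = \mathrm{Exp}(D\ln t)$, and that $A_t \to 0$ in $\mathrm{End}(\lieg)$ as $t\to 0^+$ (each eigenvalue $t^{q_j}$ tends to $0$ since $q_j>0$). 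Consequently $\alpha_t(v) \to 0 = e_G$ locally uniformly in $v$, which is exactly the condition making the convention $\alpha_0(v) = 0$ continuous.

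For $t > 0$, I would exhibit the isomorphism
\[
\Phi_t \colon \theta^{-1}\{t\} \congto G\times G, \qquad (x,t,v)\longmapsto (x,\, x\alpha_t(v)),
\]
which is a homeomorphism because $\alpha_t\in\Aut(G)$. It clearly intertwines range and source with those of the pair groupoid, and the composition check
\[
\Phi_t\bigl((x,t,v)\cdot(x\alpha_t(v),t,w)\bigr) = \bigl(x,\, x\alpha_t(v)\alpha_t(w)\bigr) = (x, x\alpha_t(v))\cdot(x\alpha_t(v), x\alpha_t(v)\alpha_t(w))
\]
confirms it is a groupoid isomorphism onto the pair groupoid of $G$.

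For $t = 0$, the convention $\alpha_0(v) = 0 = e_G$ forces $s(x,0,v) = (x\cdot e_G, 0) = (x,0) = r(x,0,v)$, so $TG = \theta^{-1}\{0\}$ is a group bundle over $G\times\{0\}$. On each fiber the composition reads $(x,0,v)\cdot(x,0,w) = (x,0,vw)$, so the fiber over $(x,0)$ is canonically isomorphic to $G$ as a Lie group, and $TG$ is the trivial field of groups with fiber $G$.

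Local compactness is immediate since as a topological space $\grpd = G\times[0,\infty)\times G$. For amenability I would invoke the standard fact that the transformation groupoid of an amenable locally compact group acting on a locally compact space is amenable (Anantharaman-Delaroche--Renault); since $G$ is nilpotent and hence amenable, so is $\grpd$. Alternatively, amenability can be verified fiberwise: for $t>0$ the pair groupoid $G\times G$ is amenable (being Morita equivalent to a point), and for $t = 0$ every fiber is the amenable group $G$. The only step with any content is the continuity of the action at $t = 0$; all remaining assertions are a direct unpacking of the transformation groupoid structure.
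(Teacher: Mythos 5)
Your proposal is correct and follows essentially the same route as the paper: the same isomorphism $(x,t,v)\mapsto(x,x\alpha_t(v))$ onto the pair groupoid for $t>0$, the same identification of $\theta^{-1}\{0\}$ as the trivial group bundle with fibre $G$, and amenability deduced from amenability of the nilpotent group $G$ acting on $G\times[0,\infty)$. The only difference is that you spell out the continuity of the action at $t=0$ via the eigenvalues $t^{q_j}$ of $A_t$, a point the paper leaves as "easy to check."
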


\begin{proof}
	It is easy to check that \((\grpd,[0,\infty),\theta)\) defines a continuous field of locally compact groupoids in the sense of \cite{landsman} or \cite{beckus}. All subgroupoids \(\theta^{-1}\{t\}\) for \(t\geq 0\) are transformation groupoids of actions of \(G\) on itself. The group \(G\) is amenable as a nilpotent group, for that reason all \(\theta^{-1}\{t\}\) are amenable. 
	
	For all \(t>0\) the map \(\varphi_t(x,v)=(x,x\alpha_t(v))\) defines a groupoid isomorphism 
	\begin{equation}\label{pairiso}
	\varphi_t\colon \theta^{-1}\{t\} \to G\times G.
	\end{equation}  
	Its inverse is given by \((x,y)\mapsto (x,\alpha_{t^{-1}}(x^{-1}y))\). 
	
	The subgroupoid \(TG=\theta^{-1}\{0\}\) corresponds to a (noncommutative) version of the tangent bundle. For \((x,v)\in TG\) we interpret \(x\) as the base point and \(v\in G\cong \lieg\cong T_xG\) as a tangent vector at \(x\).  The groupoid multiplication is defined if and only if two vectors lie in the same fibre and is, in this case, defined by the group multiplication. Let \(q\colon TG\to G\) denote the projection to the base point, then \((TG,G,q)\) defines itself a continuous field of locally compact groupoids. It is the trivial field over \(G\) with fibre \(G\). Again, all fibres \(q^{-1}\{x\}\cong G\) for \(x\in G\) are amenable. 
\end{proof}

\begin{remark}If \(G\) is a graded nilpotent Lie group, the action \(G\times[0,\infty)\curvearrowleft G\) is smooth. Consequently, the tangent groupoid \(\grpd\) is a Lie groupoid. 
	A graded nilpotent Lie group is a special case of a filtered manifold as considered in \cite{erp2017tangent}. Therefore, one can define the tangent groupoid
	\[\T G = (TG\times \{0\}\cup (G\times G)\times (0,\infty)\rightrightarrows G\times [0,\infty))\]
	as a smooth field of groupoids over \([0,\infty)\) as in \cites{choi2019tangent,erp2017tangent,haj-higson}. Using the isomorphisms \(\varphi_t\colon \theta^{-1}\{t\}\to G\times G\) from \eqref{pairiso} and the definition of the smooth structure of \(\T G\), one obtains that~\(\grpd\) and \(\T G\) are isomorphic as Lie groupoids. 
\end{remark}
\subsection{The groupoid \(\Cst\)-algebra}
Now, we recall how the \emph{groupoid \(\Cst\)\nb-algebra} of the tangent groupoid \(\grpd\) is constructed. As the tangent groupoid of \(G\) is an action groupoid of an amenable group, \(\Cst(\grpd)\) is isomorphic to the reduced crossed product \(\Cred(G,\Cont_0(G\times[0,\infty))\) as remarked in \cite{renaultgroupoid}. Here, the left \(G\) action on \(\Cont_0(G\times[0,\infty))\) is given by
\[(v\cdot\psi)(x,t)=\psi((x,t)\cdot v)=\psi(x\alpha_t(v),t)\]
for \(\psi\in\Cont_0(G\times[0,\infty))\), \(v,x\in G\) and \(t\geq 0\).
For \(f,g\in\Cont_c(\grpd)\), viewed as elements of \(\Cont_c(G,\Cont_0(G\times[0,\infty))\) the involution and convolution defined in \eqref{conv} and \eqref{inv} are 
\begin{align*}
f^*(x,t,v)&=\conj{f(x\alpha_t(v),t,v^{-1})},\\
(f*g)(x,t,v)&=\int_G f(x,t,w)g(x\alpha_t(w),t,w^{-1}v)\diff w
\end{align*}
for \((x,t,v)\in \grpd\). The \(\norm{\,\cdot\,}_I\)\nb-norm is given by \(\norm{f}_I=\max\{\norm{f}_{I,1},\norm{f}_{I,2}\}\), where
\begin{align*}
\norm{f}_{I,1}&= \sup_{(x,t)}{\int_G \abs{f(x,t,v)}\diff v}
\end{align*}
and \(\norm{f}_{I,2}=\norm{f^*}_{I,1}\).
The groupoid \(\Cst\)\nb-algebra \(\Cst(\grpd)\) is the closure of \(\Cont_c(\grpd)\) under the representation \(\rho\) as in \eqref{crossedprod}. In particular, the \(\Cst\)-norm of \(f\in\Cont_c(\grpd)\) is bounded by \(\norm{f}_I\).

 \begin{lemma}The continuous field of groupoids \((\grpd,[0,\infty),\theta)\) gives rise to a continuous field of \(\Cst\)\nb-algebras \(\Cst(\grpd)\) over \([0,\infty)\) with fibres isomorphic to \(\Comp(L^2G)\) for \(t>0\) and \(\Cst(TG)\) at \(t=0\). 
\end{lemma}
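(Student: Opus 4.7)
My plan is to leverage that the action groupoid $\grpd = (G \times [0,\infty)) \rtimes G$ has $\Cst$-algebra isomorphic to $\Cred(G, \Cont_0(G \times [0,\infty)))$, and that since $G$ is nilpotent and hence amenable, the reduced and full crossed products coincide. All the fibrewise identifications I need will come from restricting this crossed-product picture.

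The first step is to equip $\Cst(\grpd)$ with the structure of a $\Cont_0([0,\infty))$-algebra. For this I would pull back $\Cont_0([0,\infty))$ along the projection $\theta\colon G \times [0,\infty) \to [0,\infty)$ into $\Mult(\Cont_0(G \times [0,\infty)))$; since $\theta$ is $G$-invariant the image lies in the $G$-invariants and therefore lifts to $Z\Mult(\Cst(\grpd))$, and non-degeneracy is immediate by multiplying with compactly supported bump functions on $[0,\infty)$. This gives upper semi-continuity for free.

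The second step is to identify the fibres. Evaluation at $t \in [0,\infty)$ yields $\Cred(G, \Cont_0(G))$ with the $t$-dependent action $(v\cdot\psi)(x) = \psi(x\alpha_t(v))$. For $t>0$, I would use the groupoid isomorphism $\varphi_t\colon \theta^{-1}\{t\} \to G\times G$ from \cref{continuousfield} to identify the fibre with the $\Cst$-algebra of the pair groupoid on $G$, which is $\Comp(L^2 G)$ via its faithful representation on $L^2(G)$ with respect to Haar measure. For $t=0$ we have $\alpha_0 \equiv 0$, so the action becomes trivial and the crossed product collapses to $\Cst(G) \otimes \Cont_0(G) \cong \Cont_0(G,\Cst(G)) = \Cst(TG)$, again using that $TG$ is the trivial bundle of groups with fibre $G$ by \cref{continuousfield}.

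The hard part will be lower semi-continuity, which is where the fibre type is allowed to jump at $t=0$; genuine continuity (not merely upper semi-continuity) requires an amenability input. I would deduce it by applying \cite{wassermann}*{4.2} — exactly as in the remark following \cref{res:fixcontfield} — to the trivially continuous field $\Cont_0(G \times [0,\infty))$ over $[0,\infty)$, using that $G$ is amenable and hence exact, so that $\Cred(G,\,\cdot\,)$ preserves the short exact sequences of fibre restrictions. Alternatively, since $\grpd$ is a continuous field of amenable locally compact groupoids by \cref{continuousfield}, the same conclusion follows from the Landsman--Ramazan theorem for continuous fields of amenable groupoids.
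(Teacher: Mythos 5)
Your argument is correct, and your primary route differs from the paper's. The paper simply invokes the Landsman--Ramazan result (\cite{landsman}*{5.6}) that a continuous field of amenable locally compact groupoids yields a continuous field of \(\Cst\)\nb-algebras, and then spends most of the proof identifying the fibres explicitly --- in particular it records the Haar-measure normalisation \(t^{-Q}\) and writes out the formula for \(p_t\) in \eqref{eq:p_t}, which is reused throughout the rest of the paper. You instead stay entirely in the crossed-product picture: you install the \(\Cont_0([0,\infty))\)-structure by pulling back along \(\theta\), get upper semi-continuity for free, and obtain lower semi-continuity from Wassermann's exactness theorem applied to the trivial field \(\Cont_0([0,\infty),\Cont_0(G))\) acted on fibrewise by the exact group \(G\) --- precisely the mechanism the paper itself endorses in the remark after \cref{res:fixcontfield}, just applied with \(H=G\) rather than \(H=\Rp\). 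Your identification of the fibres (pair groupoid via \(\varphi_t\) for \(t>0\); trivial action at \(t=0\) giving \(\Cst(G)\otimes\Cont_0(G)\cong\Cst(TG)\)) matches the paper's in substance, and your observation that the fibre of the \(\Cont_0([0,\infty))\)-algebra at \(t\) really is \(\Cred(G,\Cont_0(G\times\{t\}))\) does quietly use exactness of \(G\) again, which you have. What your route buys is independence from groupoid-specific machinery (everything is reduced to exactness of the acting group); what the paper's route buys is brevity plus the explicit evaluation maps \(p_t\) with their \(t^{-Q}\) normalisation, which you would still need to extract for the later sections. Your closing alternative via Landsman--Ramazan is exactly the paper's argument.
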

\begin{proof}
	As all groupoids \(\theta^{-1}\{t\}\) are amenable, \(\Cst(\grpd)\) defines a continuous field of \(\Cst\)\nb-algebras with fibres \(\Cst(\theta^{-1}\{t\})\) by \cite{landsman}*{5.6}. 	
	 The groupoid \(\theta^{-1}\{t\}\) for \(t>0\) is isomorphic to the pair groupoid \(G\times G\) by \cref{continuousfield}. The Haar measure on \(\theta^{-1}\{t\}\) is taken under \(\varphi_t\) to the left Haar measure \(\{\mu_t^x\}_{x\in G}\) on \(G\times G\) with
	\[\int K(\gamma)\diff\mu_t^x(\gamma)=t^{-Q}\int_GK(x,y)\diff y \qquad\text{for }K\in\Cont_c(G\times G).\]
	There is a well-known isomorphism \(\beta_t\colon\Cst(G\times G,\mu_t)\to \Comp(L^2G)\) with
	\[(\beta_t(K)\psi)(x)=t^{-Q}\int_G K(x,y)\psi(y)\diff y\]
	for \(K\in\Cont_c(G\times G)\) and \(\psi\in \Cont_c(G)\).	
	For \(t>0\) compose \(\beta_t\) and the homomorphism induced by \(\varphi_t^{-1}\) to \(p_t\colon \Cst(\grpd)\to \Comp(L^2G)\) given by
	\begin{align}\label{eq:p_t}
	\left(p_t(f)\psi\right)(x)= t^{-Q}\int_G f\left(x,t,\alpha_{t^{-1}}(x^{-1}y)\right)\psi(y)\diff y
	\end{align} 
	for \(f\in\Cont_c(\grpd)\), \(\psi\in \Cont_c(G)\) and \(x\in G\). It restricts to an isomorphism between \(\Cst(\theta^{-1}\{t\})\) and \(\Comp(L^2G)\). 
\end{proof}
\begin{lemma}\label{res:kernelev}
	The subset \(G\times(0,\infty)\subset \grpd^0\) is open and invariant. Let \(\grpd^{G\times(0,\infty)}=r^{-1}(G\times(0,\infty))\). There is an isomorphism \(p\colon\Cst(\grpd^{G\times(0,\infty)})\to\Cont_0(\Rp,\Comp(L^2G))\) given by \(p(f)(t)=p_t(f)\) for \(f\in\Cont_c(\grpd^{G\times(0,\infty)})\).
\end{lemma}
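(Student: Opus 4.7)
The openness of $G\times(0,\infty)$ in $G\times[0,\infty)$ is immediate, and invariance follows because the action $(x,t)\cdot v=(x\alpha_t(v),t)$ preserves the second coordinate. Hence $\grpd^{G\times(0,\infty)}$ is an open subgroupoid of $\grpd$ and $\Cst(\grpd^{G\times(0,\infty)})$ sits as an ideal in $\Cst(\grpd)$ corresponding to this invariant open subset.

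My plan is to assemble the fiberwise isomorphisms $p_t$ of~\eqref{eq:p_t} into a global trivialization. The fiberwise groupoid isomorphisms $\varphi_t\colon\theta^{-1}\{t\}\to G\times G$ from~\eqref{pairiso} glue to a single homeomorphism
\[
\Phi\colon\grpd^{G\times(0,\infty)}\to(G\times G)\times(0,\infty),\qquad (x,t,v)\mapsto(x,x\alpha_t(v),t),
\]
which identifies $\grpd^{G\times(0,\infty)}$ with the trivial family of pair groupoids over $\Rp$. The subtlety is the Haar system: the measure $dv$ on each range fiber $\grpd^{(x,t)}$ pushes forward under $\Phi$ to $t^{-Q}dy$ on the corresponding fiber of the pair groupoid, by the $Q$-homogeneity of the Haar measure recalled in~\cref{haar} applied to the substitution $y=x\alpha_t(v)$. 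This scaling produces exactly the factor $t^{-Q}$ in the definition of $p_t$, and composing with the standard isomorphism $\beta_t$ between the pair groupoid $\Cst$-algebra (with its Haar system) and $\Comp(L^2G)$ recovers the formula in~\eqref{eq:p_t}.

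With this identification, the plan is to check that $p$ is a well-defined, bounded *-homomorphism that is both injective and surjective. For $f\in\Cont_c(\grpd^{G\times(0,\infty)})$, the function $t\mapsto p_t(f)$ lies in $\Cont_0(\Rp,\Comp(L^2G))$: continuity in $t$ follows from joint continuity of $f$, and compact support in $t$ from the fact that $\supp f$ projects to a compact subset of $(0,\infty)$. Being a *-homomorphism is fiberwise, and boundedness follows from $\|p(f)\|=\sup_{t>0}\|p_t(f)\|\leq\|f\|_{\Cst(\grpd^{G\times(0,\infty)})}$ since each $p_t$ is a *-homomorphism on $\Cst(\theta^{-1}\{t\})$. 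For injectivity, I would use that $\Cst(\grpd)$ is a continuous field over $[0,\infty)$: restricted to $\Rp$, the fibers are $\Cst(\theta^{-1}\{t\})$ and each $p_t$ is an isomorphism, so $p(f)=0$ forces $f$ to vanish in every fiber, hence $f=0$.

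For surjectivity, I would argue that elementary tensors $\psi\otimes K$ with $\psi\in\Cont_c(\Rp)$ and $K$ the image under $\beta_t$ of a compactly supported continuous kernel on $G\times G$ span a dense subspace of $\Cont_0(\Rp,\Comp(L^2G))$, and each such tensor lies in the image of $p$: given a kernel $k\in\Cont_c(G\times G)$, the function $f(x,t,v)\defeq t^{Q}\psi(t)k(\varphi_t(x,v))$ lies in $\Cont_c(\grpd^{G\times(0,\infty)})$ and satisfies $p(f)(t)=\psi(t)\beta_t(k)$. The main bookkeeping challenge is tracking the $t^{-Q}$ factor coming from the nontrivial Haar system under $\Phi$; once this is in place, everything else is a routine verification using the continuous field structure already established for $\Cst(\grpd)$.
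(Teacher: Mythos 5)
Your proposal is correct and follows essentially the same route as the paper: the paper's proof is exactly the identification of \(\grpd^{G\times(0,\infty)}\) with the trivial field of pair groupoids over \(\Rp\) via \((x,t,v)\mapsto(t,\varphi_t(x,v))\), composed with the isomorphisms \(\beta_t\); you simply spell out the Haar-system bookkeeping and the injectivity/surjectivity checks that the paper leaves implicit. (Only a trivial normalization quibble: with \(f(x,t,v)=t^Q\psi(t)k(\varphi_t(x,v))\) one gets the operator with integral kernel \(\psi(t)k(x,y)\), i.e.\ \(t^Q\psi(t)\beta_t(k)\) rather than \(\psi(t)\beta_t(k)\), which does not affect the density argument.)
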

\begin{proof}
	The subgroupoid \(\grpd^{G\times(0,\infty)}\) is isomorphic to the trivial field of groupoids over \(\Rp\) with the pair groupoid \(G\times G\) as fibre via \((x,t,v)\mapsto(t,\varphi_t(x,v))\). Composing the induced isomorphism of the corresponding groupoid \(\Cst\)-algebras with the respective \(\beta_t\) for \(t>0\) gives the claim. 
\end{proof}
Denote by \(p_0\colon\Cst(\grpd)\to\Cst(TG)\) the \(^*\)\nb-homomorphism induced by  restricting to \(t=0\). There is a short exact sequence by \cites{hilsumskandalis}
\begin{equation}\label{tangentgroupoidext}
\begin{tikzcd}
\Cont_0(\Rp,\Comp(L^2G)) \arrow[r,hook] & \Cst(\grpd)\arrow[r,twoheadrightarrow,"p_0"] & \Cst(TG).
\end{tikzcd}
\end{equation}
If \(G=\R^n\), the \(\Cst\)\nb-algebra on the right is isomorphic to \(\Cont_0(T^*\R^n)\) via Fourier transform. In general, \(\Cst(TG)\) can be noncommutative. It is the trivial field of \(\Cst\)-algebras over \(G\) with fibres isomorphic to the group \(\Cst\)\nb-algebra \(\Cst(G)\). 
\section{Pseudo-differential extension using generalized fixed point algebras}\label{sec:pdo_using_fix}
In this section we use the dilations on \(G\) to define a certain \(\Rp\)-action on \(\Cst(\grpd)\). We show that the generalized fixed point algebra construction can be applied when the action is restricted to an ideal \(J\idealin\Cst(\grpd)\). In particular, we prove the existence of a continuously square-integrable subset in~\(J\). Moreover, we obtain the pseudo-differential extension. 

\subsection{The generalized fixed point algebra of the zoom action}
In the Euclidean case, the principal symbol of a pseudo-differential operator of order zero can be understood as a generalized fixed point of the proper action of \(\Rp\) on \(T^*\R^n\!\setminus\! (\{0\}\times \R^n)\) given by scaling in the fibres, that is, \(\lambda\cdot (x,\xi)=(x,\lambda^{-1}\xi)\) for~\(x\in\R^n\), \(\xi\in T_x^*\R^n\) and \(\lambda>0\). 
Under Fourier transform this action corresponds to \((\sigma_\lambda f)(x,X)= \lambda^nf(x,\lambda X)\) for \(x\in\R^n\), \(X\in T_x\R^n\), \(f\in\Cont_c(T\R^n)\) and \(\lambda>0\). Using the dilations we can define an analogous action on \(\Cst(TG)\) for a homogeneous Lie group \(G\) and extend it to \(\Cst(\grpd)\).

\begin{lemma}
For a homogeneous Lie group \(G\) of homogeneous dimension \(Q\) the maps \(\sigma_\lambda\colon\Cont_c(\grpd)\to\Cont_c(\grpd)\) defined by \((\sigma_\lambda f)(x,t,v)=\lambda^Qf(x,\tfrac{t}{\lambda},\alpha_\lambda(v))\) for \(\lambda>0\) and \(f\in\Cont_c(\grpd)\) extend to a strongly continuous \(\Rp\)\nb-action on \(\Cst(\grpd)\).
\end{lemma}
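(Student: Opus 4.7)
The plan is to recognise $\sigma_\lambda$ as the map on $\Cont_c(\grpd)$ induced, up to the normalising factor $\lambda^Q$, by the groupoid automorphism $\phi_\lambda\colon\grpd\to\grpd$, $\phi_\lambda(x,t,v)=(x,t/\lambda,\alpha_\lambda(v))$. From this perspective the $*$-homomorphism property and extension to $\Cst(\grpd)$ are formal, and strong continuity reduces to a standard uniform-support argument on the dense subalgebra $\Cont_c(\grpd)$.

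First I would verify that $\phi_\lambda$ is a groupoid automorphism of $\grpd$: source, range, inverse and product are all preserved because $\alpha_\lambda$ is a Lie group automorphism of $G$ and $\alpha_{t/\lambda}\circ\alpha_\lambda=\alpha_t$. Then I would check the algebraic identities $\sigma_\lambda(f^*)=(\sigma_\lambda f)^*$ and $\sigma_\lambda(f*g)=\sigma_\lambda(f)*\sigma_\lambda(g)$ directly on $\Cont_c(\grpd)$ using \eqref{conv} and \eqref{inv}. The involution identity is immediate. For the convolution identity one substitutes $w=\alpha_\lambda(u)$ in the defining integral and uses $\diff w=\lambda^Q\diff u$ from \cref{haar}. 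The factor $\lambda^{2Q}$ produced this way — one $\lambda^Q$ from the Jacobian of $\alpha_\lambda$ and one from the prefactor in $\sigma_\lambda(f*g)$ — matches exactly the product of the two prefactors in $\sigma_\lambda(f)*\sigma_\lambda(g)$. Combined with the trivial identities $\sigma_\lambda\circ\sigma_\mu=\sigma_{\lambda\mu}$ and $\sigma_1=\Id$, this shows each $\sigma_\lambda$ is a $*$-automorphism of $\Cont_c(\grpd)$.

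For the extension to $\Cst(\grpd)$, the same substitution also yields $\norm{\sigma_\lambda f}_I=\norm{f}_I$. Since $\phi_\lambda$ is a groupoid automorphism, it induces in the standard way a $*$-automorphism of $\Cst(\grpd)$ which agrees with $\sigma_\lambda$ on $\Cont_c(\grpd)$; here the factor $\lambda^Q$ is precisely the cocycle that makes the pullback under $\phi_\lambda$ respect the Haar system on the range fibres. Each $\sigma_\lambda$ therefore extends to an isometric $*$-automorphism of $\Cst(\grpd)$, and the group law $\sigma_\lambda\sigma_\mu=\sigma_{\lambda\mu}$ passes to the extension by continuity.

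Finally, for strong continuity it suffices, by the group law and uniform boundedness of the $\sigma_\lambda$, to prove $\sigma_\lambda f\to f$ in $\Cst$-norm as $\lambda\to 1$ for $f\in\Cont_c(\grpd)$. If $\supp f\subset K_1\times[a,b]\times K_2$ with $K_1,K_2\subset G$ compact, then for $\lambda$ in a fixed compact neighbourhood of $1$ all the supports $\supp\sigma_\lambda f$ lie in a common compact subset $L\subset\grpd$, and uniform continuity of $f$ gives $\sigma_\lambda f\to f$ uniformly, hence in $\norm{\,\cdot\,}_I$ and in $\Cst$-norm. The only calculation needing real care is tracking the two $\lambda^Q$ factors in the convolution identity, which is precisely the reason the factor $\lambda^Q$ had to appear in the definition of $\sigma_\lambda$ in the first place; no genuine obstacle arises.
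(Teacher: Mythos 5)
Your proof is correct and follows essentially the same route as the paper's: verify the \(^*\)-homomorphism identities and the group law on \(\Cont_c(\grpd)\), observe that each \(\sigma_\lambda\) is an isometry for the \(I\)-norm, and extend to \(\Cst(\grpd)\). The groupoid-automorphism framing is a harmless repackaging of the same computation, and your explicit uniform-support argument for strong continuity supplies a detail the paper's one-line proof leaves implicit.
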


\begin{proof}
It is easy to check that \(\sigma_\lambda\) are linear maps satisfying \(\sigma_\lambda(f*g)=(\sigma_\lambda f)*(\sigma_\lambda g)\) and \(\sigma_\lambda(f^*)=(\sigma_\lambda f)^*\) for all \(f,g\in\Cont_c(\grpd)\) and \(\lambda>0\). Moreover, \(\sigma_1=\Id\) and \(\sigma_{\lambda\mu}= \sigma_{\lambda}\circ\sigma_{\mu}\) hold for all \(\lambda,\mu>0\). Each \(\sigma_\lambda\) is an isometry with respect to the \(I\)\nb-norm and, therefore, extends to an automorphism of \(\Cst(\grpd)\).
\end{proof}	

\begin{remark}\label{actiononideal}
Let \(\tau\colon \Rp\curvearrowright \Cont_0(\Rp)\) be given by \(\tau_\lambda(f)(t)=f(\lambda^{-1}t)\) for \(\lambda,t>0\) and \(f\in\Cont_0(\Rp)\).
The identity \(p_t\,\circ\,\sigma_\lambda=p_{t\lambda^{-1}}\) for all \(t,\lambda>0\) shows that the restriction of \(\sigma\) to the invariant ideal \(\Cst(\grpd^{G\times(0,\infty)})\) is mapped to the action \(\tau\otimes 1\colon H\curvearrowright \Cont_0(\Rp)\otimes \Comp(L^2G)\) under the isomorphism \(p\) from \cref{res:kernelev}. 
In particular, \(\sigma\) corresponds to the zoom action of \(\Rp\) on \(\T G\) defined in \cite{erp2015groupoid}*{Def.~17} or \cite{haj-higson}*{10.6}.
\end{remark}

As described above, in the Euclidean case the scaling action on \(T^*\R^n\) is restricted to \(T^*\R^n\!\setminus\! (\{0\}\times \R^n)\). This is necessary as the zero section is fixed by the scaling action, hence, the action on \(T^*\R^n\) is not proper. For an arbitrary homogeneous Lie group, we must also choose an ideal inside the \(\Cst\)-algebra of the tangent groupoid in order to obtain a continuously square-integrable \(\Rp\)-\(\Cst\)-algebra. For \(f\in\Schwartz(\R^n)\) the property \(f(0)=0\) is equivalent to \(\int \widehat{f}(x)\diff x=0\), where \(\widehat{f}\) is the Fourier transform of \(f\). Moreover, \(\Cont_0(\R^n\!\setminus\!\{0\})\) corresponds under Fourier transform to \(\ker(\widehat{\pi}_{\triv})\idealin\Cst(\R^n)\) where \(\pi_{\triv}\) is the trivial representation of \(\R^n\).

For a homogeneous Lie group \(G\) let \(q_x\colon \Cst(TG)\to\Cst(G)\) for \(x\in G\) be the \(^*\)-homomorphism induced by restricting \(f\in\Cont_c(TG)\) to the fibre \(T_xG\).

\begin{definition}
	Let \(G\) be a homogeneous Lie group and \(\pi_{\triv}\) its trivial representation. Let \(J\) be the closed ideal in
	\(\Cst(\grpd)\) defined by
	\begin{align*}
	J=\bigcap_{x\in G} {\ker{(\widehat{\pi}_{\triv}\circ q_x\circ p_0)}}.
	\end{align*}
\end{definition}
The ideal \(J\) is invariant under the action \(\sigma\colon\Rp\acts\Cst(\grpd)\) above.
Now, we define a linear subspace \(\Rel_\grpd\subset J\) for the generalized fixed point algebra construction.

\begin{definition}
	Let \(\Rel_\grpd\) consist of \(f\in \Cont^\infty(\grpd)\) satisfying the following conditions:
	\begin{enumerate}[(a)]
		\item \label{compactness} \(r(\supp f)\subset \grpd^0\) is compact,
		\item \label{schwartz}\((x,t)\mapsto f|_{\grpd^{(x,t)}}\) and \((x,t)\mapsto (\partial_tf)|_{\grpd^{(x,t)}}\) are continuous maps \(\grpd^0\to\Schwartz(G)\),
		\item \label{vanishing2}\(\int_G f(x,0,v)\diff v = 0\) for all \(x\in G\).
	\end{enumerate}
Using the seminorms from \cref{seminorm}, set for \(N\in\N_0\)
\begin{equation*}
\norm{f}_{(N)}\defeq\sup_{(x,t)\in\grpd^0}{\norm{f|_{\grpd^{(x,t)}}}_{N}} =\sup_{(x,t,v)\in\grpd, \abs{I}\leq N}{(1+\norm{v})^{(N+1)(Q+1)}\abs{\partial_v^If(x,t,v)}}.
\end{equation*}
\end{definition}

For \(f\in\Rel_\grpd\) conditions \ref{compactness} and \ref{schwartz} ensure that \(\norm{f}_{(N)}<\infty\) and \(\norm{\partial_tf}_{(N)}<\infty\) for all \(N\in\N\). Note that \(\norm{f^*}_{(0)}=\norm{f}_{(0)}\) holds. Hence, by \cref{res:convint} \(\norm{f}_{I}\leq D \norm{f}_{(0)}\) holds for a constant \(D>0\), so that the elements of \(\Rel_\grpd\) lie indeed in the groupoid \(\Cst\)-algebra \(\Cst(\grpd)\). Condition \ref{vanishing2} forces them to be in the ideal \(J\). 
The goal of this section is to show that the generalized fixed point construction can be applied to the \(\Rp\)\nb-action \(\sigma\) on \((J,\cl{\Rel_\grpd})\). 

\begin{lemma}\label{res:meanvalue}
			 For every~\(k\in\N\) there is a group constant \(C_k>0\) and such that for all \(g\in\Cont^\infty(\grpd)\) with \(\norm{g}_{(k)}<\infty\) and all \(x,v,w\in G\) and~\(t\geq 0\)
		\[\abs{g\left((x,t,v)(x\alpha_t(v),t,w)\right)-g(x,t,v)}\leq C_k\norm{g}_{(k)}\frac{(1+\norm{w})^{(k+2)(Q+1)}}{(1+\norm{v})^{k(Q+1)}}\sum_{j=1}^n\norm{w}^{q_j}.\]
\end{lemma}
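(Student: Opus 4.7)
Since the groupoid multiplication gives $(x,t,v)(x\alpha_t(v),t,w) = (x,t,vw)$, the lemma amounts to an estimate for $\abs{g(x,t,vw) - g(x,t,v)}$. The plan is to apply the fundamental theorem of calculus along the path $\delta(s) \defeq v\exp(s\log w)$, $s\in[0,1]$, running from $v$ to $vw$ in the fibre. Since $[\log w,\log w]=0$ implies there are no BCH corrections, one has $\frac{d}{ds}g(x,t,\delta(s)) = \sum_j w_j(X_jg)(x,t,\delta(s))$ with $X_j$ the left-invariant vector field associated to the basis vector $X_j\in\lieg$. By \cref{res:vectorfields}, $X_j = \partial_{y_j}+\sum_{q_l>q_j}P_{jl}(y)\partial_{y_l}$ with $P_{jl}$ homogeneous of degree $q_l-q_j$, so combined with the seminorm bound $\abs{\partial^I g(x,t,y)}\leq\norm{g}_{(k)}(1+\norm{y})^{-(k+1)(Q+1)}$ for $\abs{I}\leq k$,
\[\abs{X_jg(x,t,y)} \leq C'\norm{g}_{(k)}(1+\norm{y})^{-(k+1)(Q+1)+q_n-q_j}.\]
Using $\abs{w_j}\leq\norm{w}^{q_j}$ and integrating in $s$, the task reduces to comparing $(1+\norm{\delta(s)})$ with $(1+\norm{v})$.

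I fix a small $\epsilon > 0$ and split into two cases. In \textbf{Case A} ($\norm{w}\leq\epsilon(1+\norm{v})$), the key point is $\norm{\exp(s\log w)} = \sum_j\abs{sw_j}^{1/q_j}\leq\norm{w}$ for $s\in[0,1]$, since $q_j\geq 1$ forces $s^{1/q_j}\leq 1$. The quasi-triangle inequality \cref{triangle}(a) then yields $\norm{\delta(s)}\geq\norm{v}/C-\norm{w}$, and choosing $\epsilon$ small enough produces a constant $c>0$ with $(1+\norm{\delta(s)})\geq c(1+\norm{v})$ uniformly in $s$, $v$, and $w$. Because $-(k+1)(Q+1)+q_n-q_j+k(Q+1) = q_n-q_j-(Q+1)<0$, it follows that $(1+\norm{\delta(s)})^{-(k+1)(Q+1)+q_n-q_j}\leq C''(1+\norm{v})^{-k(Q+1)}$, and since $(1+\norm{w})^{(k+2)(Q+1)}\geq 1$ the claim holds. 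In \textbf{Case B} ($\norm{w}>\epsilon(1+\norm{v})$), $\norm{w}\geq\epsilon$. Here the crude estimate $\abs{g(x,t,y)}\leq\norm{g}_{(k)}$ gives $\abs{g(x,t,vw)-g(x,t,v)}\leq 2\norm{g}_{(k)}$, while $(1+\norm{v})^{-k(Q+1)}\geq\epsilon^{k(Q+1)}(1+\norm{w})^{-k(Q+1)}$ and $\sum_j\norm{w}^{q_j}\geq\norm{w}^{q_1}=\norm{w}\geq\epsilon$ make the right-hand side of the lemma bounded below by a positive constant; the inequality then holds for $C_k$ sufficiently large.

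The main technical obstacle is the lower bound on $\norm{\delta(s)}$ in Case A. A direct application of \cref{triangle}(b) to the pair $(v,\delta(s))$ would bring in the conjugate $v\exp(-s\log w)v^{-1}$, whose homogeneous norm generally grows polynomially in $\norm{v}$ because the adjoint action on $\lieg$ is nontrivial, and cannot be controlled by $\norm{w}$ alone. The choice of the path $\delta(s) = v\exp(s\log w)$, as opposed to the $\R^n$-straight line from $v$ to $vw$, is made precisely to circumvent this: the increment $\exp(s\log w)$ is estimated in closed form via the explicit form of the quasi-norm, after which \cref{triangle}(a) suffices.
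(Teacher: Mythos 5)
Your proof is correct, but it takes a genuinely different route from the paper's. The paper differentiates along the dilation path $s\mapsto v\cdot\alpha_s(w)$: writing $H(x,t,v,w,h)=g(x,t,v\alpha_h(w))$ and applying the chain rule through the multiplication map $m$, it bounds the three factors separately --- the Jacobian entries $\partial_{n+j}m_i(v,\alpha_s(w))$ by $\tilde D(1+\norm{v})^{Q+1}(1+\norm{w})^{Q+1}$ via the polynomial group law, the first partials of $g$ by $\norm{g}_{(k)}(1+\norm{v\alpha_s(w)})^{-(k+1)(Q+1)}$ which the Peetre-type inequality of \cref{triangle} converts into $(1+\norm{w})^{(k+1)(Q+1)}(1+\norm{v})^{-(k+1)(Q+1)}$, and $\partial_h\alpha_j(w,s)$ by $Q\norm{w}^{q_j}$. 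The exponents $-k(Q+1)$ and $(k+2)(Q+1)$ in the statement are exactly the net outcome of this bookkeeping, with no case distinction. You instead move along the one-parameter subgroup $s\mapsto v\exp(s\log w)$, so the derivative is $\sum_j w_j X_j g$ for the left-invariant fields $X_j$ (your ``no BCH corrections'' remark is an informal but correct way of saying this is a one-parameter subgroup, so the standard flow identity applies), and you control $1+\norm{\delta(s)}$ from below by $c(1+\norm{v})$ only in the regime $\norm{w}\leq\epsilon(1+\norm{v})$, disposing of the complementary regime by the trivial bound $2\norm{g}_{(k)}$ against a right-hand side that is bounded below there. What each approach buys: yours actually proves a \emph{sharper} estimate in Case A (no positive power of $1+\norm{w}$ is needed; the stated factor $(1+\norm{w})^{(k+2)(Q+1)}$ is only invoked as being $\geq 1$), uses only \cref{triangle}(a) and the explicit quasi-norm, and avoids estimating the Jacobian of the multiplication map; the paper's argument is a single uniform computation that lands precisely on the stated constants and needs no splitting. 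I verified the key inequalities on your side: $\norm{\exp(s\log w)}\leq\norm{w}$ for $s\in[0,1]$ because all $q_j\geq 1$, the exponent comparison $q_n-q_j<Q+1$ (so the decay $-(k+1)(Q+1)+q_n-q_j\leq -k(Q+1)$), and the lower bound on the right-hand side in Case B; all are sound, and the lemma's hypothesis $k\in\N$, hence $k\geq 1$, guarantees that $\norm{g}_{(k)}$ controls the first derivatives you use.
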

\begin{proof}
	It suffices to show the claim for real-valued  \(g\). Define a function \(H\colon G\times[0,\infty)\times G \times G\times[0,1]\to \R\) by
	\[H(x,t,v,w,h)=g((x,t,v)\cdot(x\alpha_t(v),t,\alpha_h(w)).\]
	Hence, we obtain
	\begin{align*}
	g((x,t,v)(x\alpha_t(v),t,w))-g(x,t,v)=\int_0^1 \partial_hH(x,t,v,w,s)\diff s.
	\end{align*}
	To estimate \(\abs{\partial_hH}\) note that \(H=g\circ (\Id_{G\times[0,\infty)}\times m)\circ(\Id_{G\times[0,\infty)\times G}\times\alpha)\), where \(m\colon G\times G\to G\) denotes the group multiplication and \(\alpha(w,h)=\alpha_h(w)\) for \(w\in G\) and \(h\in[0,1]\). One calculates
	\[\partial_hH(x,t,v,w,s)=\sum_{i,j=1}^n (\partial_{v_i}g)(x,t,v\alpha_s(w))\cdot \partial_{n+j}m_{i}(v,\alpha_s(w))\cdot \partial_h\alpha_j(w,s).\]
	By the polynomial form of the group law there is a group constant \(\tilde{D}>0\) such that
	\[ \abs{\partial_{n+j}m_{i}(v,\alpha_s(w))}\leq \tilde{D}(1+\norm{v})^{Q}(1+\norm{\alpha_s(w)})^{Q}\leq \tilde{D}(1+\norm{v})^{Q+1}(1+\norm{w})^{Q+1}\]
	for all \(v,w\in G\) and \(s\in[0,1]\). Using \cref{triangle} we estimate
	\begin{align*}
	\abs{(\partial_{v_i}g)(x,t,v\alpha_s(w))}&\leq \norm{g}_{(k)} (1+\norm{v\alpha_s(w)})^{-(k+1)(Q+1)}
	\\ &\leq \norm{g}_{(k)}C^{(k+1)(Q+1)} \frac{(1+\norm{\alpha_s(w)})^{(k+1)(Q+1)}}{(1+\norm{v})^{(k+1)(Q+1)}}\\
	&\leq \norm{g}_{(k)}C^{(k+1)(Q+1)} \frac{(1+\norm{w})^{(k+1)(Q+1)}}{(1+\norm{v})^{(k+1)(Q+1)}}.\end{align*}
	As \(\alpha_j(w,h)=h^{q_j}w_j\), it follows that \(\abs{\partial_h\alpha_j(w,s)}\leq Q \abs{w_j}\leq Q \norm{w}^{q_j}\). Together, these estimates imply the claim. 
\end{proof}
\begin{lemma}\label{res:mainestimate}
	Consider the action \(\sigma\colon \Rp\acts J\). For \(f\in\Rel_\grpd^*\) the operator \(\BRA{f}\), defined as in \eqref{BRAKET}, satisfies 
	\(\BRA{f}g\in L^1(\Rp,J)\) for all \(g\in\Rel_\grpd^*\). 
\end{lemma}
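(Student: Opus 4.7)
The plan is to estimate $\norm{\sigma_\lambda(f)^* * g}_{\Cst(\grpd)}$ as a function of $\lambda \in \Rp$ and show it is integrable against the multiplicative Haar measure $\diff\lambda/\lambda$. Strong continuity of $\sigma$ makes $\lambda \mapsto \sigma_\lambda(f)^* * g$ continuous, and since the $\Cst$-norm is bounded by the $I$-norm, I will produce the estimate in the $I$-norm and split $\int_0^\infty$ at $\lambda = 1$. Computing $(\sigma_\lambda(f)^* * g)(x,t,v)$ from the convolution and involution formulas and substituting $u = \alpha_\lambda(w)$ yields
\[
((\sigma_\lambda f)^* * g)(x,t,v) = \int_G \overline{f(x\alpha_{t/\lambda}(u),t/\lambda,u^{-1})}\, g(x\alpha_{t/\lambda}(u),t,\alpha_{1/\lambda}(u^{-1})v) \diff u,
\]
so $\lambda$ enters only through the shifts $t/\lambda$ and $\alpha_{1/\lambda}(u^{-1})$, which are small as $\lambda\to\infty$ and large as $\lambda\to 0$.

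For $\lambda \geq 1$, I add and subtract $g(x,t,v)$ inside the $u$-integral to write the convolution as $g(x,t,v)\overline{\Phi(x,t/\lambda)}$ plus a remainder, where $\Phi(x,s) := \int_G f(x\alpha_s(u),s,u^{-1})\diff u$. By condition \ref{vanishing2} of $\Rel_\grpd$ and unimodularity, $\Phi(x,0) = \int f(x,0,u)\diff u = 0$; moreover $\partial_s \Phi$ is uniformly bounded via Schwartz decay of $\partial_t f$, so $\abs{\Phi(x,t/\lambda)} = O(\lambda^{-1})$ on the compact range support of $g$. The remainder is controlled by the mean value bound \cref{res:meanvalue} together with its analogue for the base-point shift $x\to x\alpha_{t/\lambda}(u)$, obtained via the right-invariant vector fields of \cref{res:vectorfields}: each direction contributes a factor $\lambda^{-q_j} \leq \lambda^{-1}$ because $q_j \geq 1$, and Schwartz decay of $f$ keeps the resulting $u$-integral finite. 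This gives $\norm{\sigma_\lambda(f)^* * g}_I = O(\lambda^{-1})$, integrable against $\diff\lambda/\lambda$ on $[1,\infty)$.

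For $\lambda \leq 1$, condition \ref{compactness} forces $\sigma_\lambda(f)^* * g$ to vanish for $t > \lambda T$ (with $T$ bounding the $t$-support of $f$); inside the slab $t \in [0,\lambda T]$, smoothness of $g$ in $t$ reduces the problem to $t = 0$ up to an $O(\lambda)$ correction. Applying condition \ref{vanishing2} to $f^*$ gives $\int f^*(x,0,w)\diff w = 0$, so at $t=0$ the convolution becomes
\[
\int_G \overline{f(x,0,u)}\, [g(x,0,\alpha_{1/\lambda}(u^{-1})v) - g(x,0,v)] \diff u,
\]
which I split at $\norm{u} = \lambda$. On $\norm{u} > \lambda$ the shift $\alpha_{1/\lambda}(u^{-1})$ is large and Schwartz decay of $g$ gives polynomial decay in $\lambda$; on $\norm{u} \leq \lambda$ the Haar volume is $O(\lambda^Q)$ by $Q$-homogeneity (\cref{haar}). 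Both contributions combine to a bound $O(\lambda^\epsilon)$ for some $\epsilon > 0$, integrable against $\diff\lambda/\lambda$ on $(0,1]$.

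The main obstacle I anticipate is controlling the coupling between the fiber variable $u$ and the base-point shift $x \to x\alpha_{t/\lambda}(u)$ uniformly in $x$: the integral $\int f(x\alpha_{t/\lambda}(u),t/\lambda,u^{-1})\diff u$ does not factor, so the $x$- and $u$-dependencies must be disentangled using the triangular polynomial group law (\cref{triangularprop}) and careful Taylor expansions, keeping all correction terms integrable against the Schwartz tails of $f$. Once the norm estimate is established, $\sigma_\lambda(f)^* * g \in J$ for every $\lambda$ follows from the $\sigma$-invariance of $J$ and from $f, g \in J$, completing the argument.
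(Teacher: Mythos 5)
Your overall strategy (bound the \(I\)-norm of \(\lambda\mapsto\sigma_\lambda(f)^**g\) and integrate against \(\tfrac{\diff\lambda}{\lambda}\)) matches the paper's, but your two regimes both run into problems that the paper's decomposition is specifically designed to avoid. The fatal one is the regime \(\lambda\le 1\). After reducing to \(t=0\) you are left with \(\int_G \conj{f(x,0,u)}\,[g(x,0,\alpha_{1/\lambda}(u^{-1})v)-g(x,0,v)]\diff u\), and the \(I\)-norm then integrates this over \(v\). On the region \(\norm{u}>\lambda\) the quantity \(\int_G\abs{g(x,0,hv)-g(x,0,v)}\diff v\) with \(h=\alpha_{1/\lambda}(u^{-1})\) does \emph{not} decay as \(\norm{h}\to\infty\): by left invariance of the Haar measure it is merely bounded by \(2\norm{g(x,0,\cdot)}_{L^1}\), and generically tends to that value. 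So that region contributes \(O(1)\) rather than \(O(\lambda^\epsilon)\); you have used the wrong cancellation. For small \(\lambda\) the decay must come from \(\int_G g(x,0,v)\diff v=0\), not from the vanishing integral of \(f^*\). The paper gets this for free from the isometry identity \(\norm{\sigma_{\lambda^{-1}}(f)*g^*}_I=\norm{\sigma_\lambda(g)*f^*}_I\), which converts the small-\(\lambda\) regime into the large-\(\lambda\) regime with the roles of \(f\) and \(g\) exchanged, so that only one estimate has to be proved. Also, your claim that inside the slab \(t\in[0,\lambda T]\) everything reduces to \(t=0\) up to \(O(\lambda)\) fails for the \(f\)-factor, whose time argument \(t/\lambda\) ranges over all of \([0,T]\).

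The large-\(\lambda\) regime has a second, structural gap that you yourself flag but do not resolve. Your split produces \(\Phi(x,s)=\int_G f(x\alpha_s(u),s,u^{-1})\diff u\) and a remainder \(g(x\alpha_{t/\lambda}(u),t,\alpha_{1/\lambda}(u^{-1})v)-g(x,t,v)\); both involve shifting the \emph{base point} and, in the remainder, a \emph{left} translation of the fibre variable. Controlling \(\partial_s\Phi\) requires base-point derivatives of \(f\) with Schwartz decay in \(u\), and controlling the remainder requires the analogous data for \(g\) — neither is supplied by the definition of \(\Rel_\grpd\) (the seminorms \(\norm{\cdot}_{(N)}\) only control \(\partial_v^I\), and condition (b) only gives \(\partial_t f\)); moreover \cref{res:meanvalue} handles only the right groupoid translation at a fixed base point. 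The paper's decomposition \(f(x,t/\lambda,w)=f(x,0,w)+R_1(x,t/\lambda,w)\) followed by replacing \(g\) with \(R_2\) keeps every base point fixed, so that only \(\partial_tf\) (hypothesis (b)) and fibre derivatives of \(g\) via \cref{res:meanvalue} are ever needed. Without either adopting that decomposition or strengthening the hypotheses on \(\Rel_\grpd\), your argument does not close.
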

\begin{proof}
	Let \(f,g\in\Rel_\grpd\). We show that \(\BRA{f^*}g^*\in L^1(\Rp,J)\). Because  \(\sigma_\lambda\) is an isometry with respect to the \(I\)\nb-norm for \(\lambda>0\), the following equality holds
	\begin{equation}\label{symmetry}
	\norm{\sigma_{\lambda^{-1}}(f)*g^*}_I = \norm{f*\sigma_\lambda(g^*)}_I=\norm{\sigma_\lambda(g)*f^*}_I.
	\end{equation}
	Hence, it suffices to prove for all \(f,g\in\Rel_\grpd\) that
	\begin{equation}\label{toshow} 
	\int_1^\infty \norm{\sigma_\lambda(f)*g^*}_I\tfrac{\diff \lambda}{\lambda}<\infty.\end{equation}
	So let \(\lambda\geq 1\) in the following. Using the homogeneity of the Haar measure, we compute
	\begin{align*}
	\norm{\sigma_\lambda(f)*g^*}_{I,1}&= \sup_{(x,t)}{\int_G \abs{(\sigma_\lambda(f)*g^*)(x,t,v)}\diff v} \\
	&= \sup_{(x,t)}{ \int_G\Bigl|\int_G f\left(x,\tfrac{t}{\lambda},w\right)\conj{g\left((x,t,v)^{-1}(x,t,\alpha_{\lambda^{-1}}(w))\right)}\diff w\Bigr|\diff v}.
	\end{align*}
	To estimate this integral, let
	\begin{align*}R_1(x,t,v)&\defeq f(x,t,v)-f(x,0,v)&&\text{for }x,v\in G\text{, }t\geq0,\\
	R_2(\gamma,\eta)& \defeq g(\gamma\cdot\eta)-g(\gamma) &&\text{for }(\gamma,\eta)\in\grpd^{(2)}.
	\end{align*}
	As \(f\) satisfies condition \ref{vanishing2}, we get
	\begin{equation*}
	\begin{split}
	\norm{\sigma_\lambda(f)*g^*}_{I,1} \leq  \sup_{(x,t)} {\Bigl(\int_G\int_G \bigl|R_1\left(x,\tfrac{t}{\lambda},w\right)g\left(x\alpha_{t}(v),t,v^{-1}\alpha_{\lambda^{-1}}(w)\right)\bigr|\diff w\diff v}\\
	 + \int_G\int_G \bigl| f\left(x,0,w\right)R_2\left((x,t,v)^{-1},(x,t,\alpha_{\lambda^{-1}}(w))\right)\bigr|\diff w\diff v\Bigr).
	\end{split}
	\end{equation*}
	We start by estimating the first summand. As \(\partial_tf\) is rapidly decaying, one has \(\norm{R_1(x,t,v)}\leq t\norm{\partial_tf}_{(1)}(1+\norm{v})^{-2(Q+1)}\) for all \((x,t,v)\in\grpd\). Let \(T>0\) be such that \(f(x,t,v)=0\) holds whenever~\(t>T\). Using \cref{triangle} and \(\lambda\geq 1\), we find
	\begin{align*}
	& \int_G\int_G \abs{R_1\left(x,\tfrac{t}{\lambda},w\right)g\left(x\alpha_{t}(v),t,v^{-1}\alpha_{\lambda^{-1}}(w)\right)}\diff w\diff v \\ \leq&\,  \lambda^{-1}T\norm{\partial_tf}_{(1)}\norm{g}_{(0)} \int_G\int_G \frac{1}{(1+\norm{w})^{2(Q+1)}}\cdot\frac{1}{(1+\norm{v^{-1}\alpha_{\lambda^{-1}}(w)})^{Q+1}}\diff w\diff v\\
	\leq & \, \lambda^{-1}T\norm{\partial_tf}_{(1)}\norm{g}_{(0)}C^{Q+1}\int_G\int_G \frac{1}{(1+\norm{w})^{2(Q+1)}}\cdot\frac{(1+\lambda^{-1}\norm{w})^{Q+1}}{(1+\norm{v})^{Q+1}}\diff w\diff v\\
	\leq & \,  \lambda^{-1}T\norm{\partial_tf}_{(1)}\norm{g}_{(0)}C^{Q+1}\left(\int_G\frac{1}{(1+\norm{v})^{Q+1}}\diff v\right)^2.
	\end{align*}
	This integral converges by \cref{res:convint}. The estimate is independent of \((x,t)\in\grpd^0\). 	
	For the second summand, apply \cref{res:meanvalue} to \(g\) with \(k=1\). We obtain
	\begin{align*}
	\abs{R_2((x,t,v)^{-1},(x,t,\alpha_{\lambda^{-1}}(w)))}&\leq  C_{1}\norm{g}_{(1)}\,\frac{(1+\norm{\alpha_{\lambda^{-1}}(w)})^{3(Q+1)}}{(1+\norm{v^{-1}})^{Q+1}}\sum_{j=1}^{n}\norm{\alpha_{\lambda^{-1}}(w)}^{q_i}\\
	&= C_{1}\norm{g}_{(1)}\,\frac{(1+\lambda^{-1}\norm{ w})^{3(Q+1)}}{(1+\norm{v})^{Q+1}}\sum_{j=1}^{n}\lambda^{-q_i}\norm{w}^{q_i}\\
	&\leq nC_{1}\norm{g}_{(1)}\lambda^{-1}\, \frac{(1+\norm{ w})^{4(Q+1)}}{(1+\norm{v})^{Q+1}}
	\end{align*}
	Consequently, we obtain using the rapid decay of \(f\) that
	\begin{align*}&\int_G\int_G \bigl| f\left(x,0,w\right)R_2\left((x,t,v)^{-1},(x,t,\alpha_{\lambda^{-1}}(w))\right)\bigr|\diff w\diff v\\ 
	\leq & \lambda^{-1}C_{1}\norm{f}_{(5)}\norm{g}_{(1)}\int_G\int_G (1+\norm{w})^{-5(Q+1)}\frac{(1+\norm{ w})^{4(Q+1)}}{(1+\norm{v})^{Q+1}}\diff w\diff v
	\end{align*}
	Again, this converges by \cref{res:convint} and the estimate does not depend on \((x,t)\in\grpd^0\).  For \(\norm{\sigma_\lambda(f)*g^*}_{I,2}\), one can estimate analogously by replacing \((x,v,t)\) by \((x,v,t)^{-1}\) in the argument above. 
	Thus, the convergence of \(\int_1^\infty \lambda^{-2}\diff \lambda\) implies~\eqref{toshow}. 
	
	Moreover, together with the respective estimate for \(\lambda<1\) using \eqref{symmetry}, we obtain a constant \(\tilde{D}>0\) such that for all \(f,g\in\Rel_\grpd\) that vanish for \(t>T\)
	\begin{align}\label{normestimate}
	\norm{\BRAKET{f^*}{g^*}}\leq \norm{\KET{f^*}g^*}_1\leq \tilde{D}(\norm{f}_{(5)}+T\norm{\partial_tf}_{(1)})( \norm{g}_{(5)}+T\norm{\partial_t g}_{(1)}) 
	\end{align}
	holds.
\end{proof}
\begin{definition}\label{defideal}
	Let \(\Rel\) be the \(^*\)-subalgebra of \(J\) containing all \(f\in \Cont_c^\infty(\grpd)\) with
	\begin{align}\label{vanishing}
	\int_G f(x,0,v)\diff v = 0 \qquad\text{for all } x\in G.
	\end{align}
\end{definition}

A function \(f\in\Cont_c^\infty(\grpd)\) lies in \(J\) if and only if it satisfies the vanishing integral condition \eqref{vanishing}. Note that \(\Rel\) is contained in \(\Rel_\grpd^*\).
\begin{proposition}\label{res:Jisfixable}
	Let \(G\) be a homogeneous Lie group and \(J\idealin\Cst(\grpd)\) be defined as in \cref{defideal}. 
	Denote by \(\cl{\Rel}\) the completion of \(\Rel\)  with respect to the \(\norm{\,\cdot\,}_\si\)\nb-norm. Then \((J,\cl{\Rel})\) is a continuously square-integrable \(\Rp\)\nb-\(\Cst\)\nb-algebra.
\end{proposition}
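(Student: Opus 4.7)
The plan is to apply \cref{res:completion} to the linear subspace \(\Rel \subset J\), which will directly yield the claim. Five properties of \(\Rel\) must be verified: density in \(J\), \(\Rp\)\nb-invariance, closure under convolution, square-integrability of each element, and relative continuity.

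The algebraic and equivariance conditions are the easy ones. The \(\Rp\)\nb-invariance of \(\Rel\) is immediate from the \(Q\)\nb-homogeneity of the Haar measure (\cref{haar}), which preserves the vanishing condition~\eqref{vanishing} under \(\sigma_\lambda\); compact support is preserved as well. Closure under convolution is a Fubini calculation: for \(f, g \in \Rel\), \(f*g \in \Cont_c^\infty(\grpd)\) and
\[ \int_G (f*g)(x,0,v) \diff v = \int_G f(x,0,w) \int_G g(x, 0, w^{-1} v) \diff v \diff w = 0. \]
Density of \(\Rel\) in \(J\) follows from density of \(\Cont_c^\infty(\grpd)\) in \(\Cst(\grpd)\) together with a correction argument: given an element of \(J\), one first approximates it by some \(g \in \Cont_c^\infty(\grpd)\) and then subtracts a smooth compactly supported term whose fibrewise integral at \(t = 0\) matches that of \(g\), producing an element of \(\Rel\) arbitrarily close in \(\Cst\)\nb-norm.

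The essential content is square-integrability and relative continuity; both rely on \cref{res:mainestimate}. For relative continuity, one identifies \(\BRAKET{f}{g} \in \Bound(L^2(\Rp, J))\) with the convolution operator \(\rho_F\), where \(F(\lambda) = f^* \sigma_\lambda(g)\). A change of variable \(\lambda \mapsto \lambda^{-1}\) combined with the isometry of \(\sigma\) transforms the \(L^1\)\nb-bound for \(\BRA{f}g\) given by \cref{res:mainestimate} into \(F \in L^1(\Rp, J)\), yielding \(\BRAKET{f}{g} \in \Cred(\Rp, J)\). For square-integrability of \(f \in \Rel\), it suffices to extend \(\KET{f}\) to a bounded operator \(L^2(\Rp, J) \to J\). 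A Cauchy--Schwarz estimate gives
\[ \norm{\KET{f}\phi}^2 \leq \norm{\phi}_{L^2}^2 \int_0^\infty \norm{f^* \sigma_\nu(f)} \tfrac{\diff \nu}{\nu}, \]
and the integral is finite by \cref{res:mainestimate} applied to the pair \((f, f)\); this uses that \(\Rel\) is \(^*\)\nb-closed and so contained in \(\Rel_\grpd^*\). Taking the adjoint then produces \(\BRA{f} \colon J \to L^2(\Rp, J)\) as a bounded map, proving square-integrability.

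The main obstacle is the passage from the function-level \(L^1\)\nb-integrability in \cref{res:mainestimate} to the operator-level assertions needed to invoke \cref{res:completion}; this requires carefully matching the convolution structure in \(\Cred(\Rp, J)\) with the \(\sigma\)\nb-action and exploiting the symmetry and \(^*\)\nb-closure of \(\Rel\). Once these five hypotheses are in place, \cref{res:completion} directly yields \((J, \cl{\Rel})\) as the desired continuously square-integrable \(\Rp\)\nb-\(\Cst\)\nb-algebra.
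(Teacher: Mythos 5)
Your proposal is correct and follows essentially the same route as the paper: density via the same correction term built from the fibrewise integral at \(t=0\) (whose smallness comes from the original element already lying in \(J\)), closure under convolution and \(\Rp\)\nb-invariance by direct computation, and then \cref{res:completion}. The only difference is cosmetic: where the paper passes from the \(L^1\)\nb-bound of \cref{res:mainestimate} to square-integrability and relative continuity by citing \cite{meyer2001}*{6.8}, you unfold that citation into the explicit identification \(\BRAKET{f}{g}=\rho_F\) with \(F(\lambda)=f^*\sigma_\lambda(g)\) and the Cauchy--Schwarz estimate.
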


\begin{proof}
	First, we show that \(\Rel\) is dense in \(J\). Let \(f\in J\) and \(\varepsilon>0\). Then \(f\) can be approximated by \(g\in\Cont_c^\infty(\grpd)\) such that \(\norm{f-g}<\varepsilon/2\). To adjust \(g\) to lie in \(\Rel\) define a function \(h\) by
	\[ h(x)=\int_G g(x,0,v)\diff v  \qquad \text{for }x\in G.\] 
	As \(f\) lies in \(J\), it satisfies \(\abs{h(x)}=\abs{\widehat{\pi}_{\triv}(p_0(q_x(g)))-\widehat{\pi}_{\triv}(p_0(q_x(f))) }\leq \norm{f-g}<\varepsilon/2\) for all \(x\in G\). Choose a non-negative \(k\in\Cont_c^\infty(G)\) with \(\int_G k(v)\diff v=1\) and \(\omega\in \Cont_c^\infty([0,\infty))\) with \(\omega(0)=1\) and \(\norm{\omega}_\infty \leq 1\). The function \(\tilde{g}\in\Cont_c^\infty(\grpd)\) defined by \(\tilde{g}(x,t,v)=h(x)k(v)\omega(t)\) satisfies \(\norm{\tilde{g}}_I\leq \varepsilon/2\). Then \(g-\tilde{g}\in\Rel\) and \(\norm{f-(g-\tilde{g})}<\varepsilon\) holds. 
	
	As the Laurent symbol of \(\BRAKET{f}{g}\) is given by \(\BRA{f}g\), \cref{res:mainestimate} implies \(\BRAKET{f}{g}\in\Cred(\Rp,J)\). Now, by \cite{meyer2001}*{6.8} the set \(\Rel\) is square-integrable and relatively continuous. It is also \(\Rp\)\nb-invariant, and \(f*g\in\Rel\) holds for \(f,g\in\Rel\). Hence, \cref{res:completion} gives the claim. 
\end{proof}

\begin{remark}
	The action \(\sigma\colon\Rp\acts (J,\Rel)\) even satisfies Rieffel's original definition in \cite{rieffel1988}, where he requires \(\Rel\) to be a dense invariant \(^*\)\nb-subalgebra of \(J\) such that \(\lambda\mapsto f*\sigma_\lambda( g^*)\) is in \(L^1(\Rp,J)\) for all \(f,g\in\Rel\). 
\end{remark}

In the following, it will be useful to know that the \(\norm{\,\cdot\,}_{\si}\)\nb-closure of \(\Rel\) contains the space \(\Rel_\grpd^*\). In particular, this implies \(\cl{\Rel^*_\grpd}=\cl{\Rel}\).
\begin{lemma}
	The linear space \(\Rel_\grpd^*\) is contained in the completion of \(\Rel\) with respect to the \(\norm{\,\cdot\,}_{\si}\)\nb-norm.
\end{lemma}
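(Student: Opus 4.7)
The plan is to approximate each \(a\in\Rel_\grpd^*\) in the \(\norm{\,\cdot\,}_\si\)\nb-norm by elements of \(\Rel\) via a cutoff-and-correct procedure. First I would note that \(\Rel\) is \(*\)\nb-closed (for \(f\in\Rel\), \(f^*(x,0,v)=\conj{f(x,0,v^{-1})}\) because \(\alpha_0\equiv 0\), and unimodularity gives \(\int_G f^*(x,0,v)\diff v=0\)), so it suffices to write \(a=f^*\) with \(f\in\Rel_\grpd\) supported in \(t\le T\) and construct \(b_n\in\Rel\) with \(b_n^*\to a\) in \(\norm{\,\cdot\,}_\si\). Pick \(\chi\in\Cont_c^\infty(G)\) equal to~\(1\) near the identity, and put \(\chi_n(v)\defeq\chi(\alpha_{1/n}(v))\). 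By homogeneity \(\partial_v^J\chi_n=n^{-[J]}(\partial^J\chi)\circ\alpha_{1/n}\), and \(1-\chi_n\) is supported in \(\{\norm{v}\ge cn\}\). Setting \(f_n\defeq f\cdot\chi_n\) produces an element of \(\Cont_c^\infty(\grpd)\) since \(r(\supp f)\) is compact, but typically violates the vanishing integral condition; this is repaired by subtracting
\[
e_n(x,t,v)\defeq h_n(x)k(v)\omega(t),\qquad h_n(x)\defeq\int_G f_n(x,0,v)\diff v,
\]
where \(k\in\Cont_c^\infty(G)\) satisfies \(\int k=1\) and \(\omega\in\Cont_c^\infty([0,T])\) satisfies \(\omega(0)=1\). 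Then \(b_n\defeq f_n-e_n\) lies in \(\Rel\), while \(f-b_n=f(1-\chi_n)+e_n\) is in \(\Rel_\grpd\) and again vanishes for \(t>T\).

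The core of the argument would be to invoke the estimate~\eqref{normestimate} established inside \cref{res:mainestimate}: for any \(\varphi\in\Rel_\grpd\) vanishing in \(t\) beyond~\(T\),
\[
\norm{\KET{\varphi^*}}^2=\norm{\BRAKET{\varphi^*}{\varphi^*}}\le\tilde{D}\bigl(\norm{\varphi}_{(5)}+T\norm{\partial_t\varphi}_{(1)}\bigr)^2.
\]
Combined with the \(\Cst\)\nb-norm bound \(\norm{f-b_n}\le D\norm{f-b_n}_{(0)}\) coming from the \(I\)\nb-norm, this reduces the proof to proving
\[
\norm{f-b_n}_{(5)}\to 0\quad\text{and}\quad\norm{\partial_t(f-b_n)}_{(1)}\to 0.
\]

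For the summand \(f(1-\chi_n)\), Leibniz's rule splits each derivative: those landing on \(\chi_n\) cost at least a factor of \(n^{-1}\) because \([J]\ge 1\) for \(J\ne 0\), while derivatives landing purely on \(f\) are multiplied by \((1-\chi_n)\) and hence confined to \(\{\norm{v}\ge cn\}\), where fiberwise Schwartz decay—uniform over the compact \(r(\supp f)\)—forces arbitrary polynomial decay in~\(n\). The same argument, using that \(\partial_t f\) is itself fiberwise Schwartz by condition~(b) in the definition of \(\Rel_\grpd\), handles \(\partial_t(f(1-\chi_n))\). For the correction, the product structure gives \(\norm{e_n}_{(5)}+T\norm{\partial_t e_n}_{(1)}\le C_{k,\omega,T}\norm{h_n}_\infty\); and the vanishing-integral condition~(c) lets one rewrite
\[
h_n(x)=-\int_G f(x,0,v)(1-\chi_n(v))\diff v,
\]
so that the uniform Schwartz decay of \(f(\cdot,0,\cdot)\) together with \cref{res:convint} yields \(\norm{h_n}_\infty\to 0\).

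The hard part is not any single estimate but the coupling between them: cutting \(f\) off in the fibre destroys condition~(c), and the compensating term \(e_n\) must be small in the very same \(\norm{\,\cdot\,}_{(N)}\)\nb-scales that drive~\(\norm{\,\cdot\,}_\si\). The hypothesis \(\int_G f(x,0,v)\diff v=0\) is precisely what forces \(h_n\) to be a genuine Schwartz tail, closing the loop and giving \(b_n^*\to a\) in \(\norm{\,\cdot\,}_\si\).
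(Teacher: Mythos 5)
Your argument is correct and follows essentially the same route as the paper's proof: approximate by compactly supported functions, restore the vanishing-integral condition at \(t=0\) with a correction term of the form \(h_n(x)k(v)\omega(t)\), and control the \(\norm{\,\cdot\,}_\si\)-norm via the estimate \eqref{normestimate} from \cref{res:mainestimate}. The only difference is that you construct the compactly supported approximants explicitly through the dilation cutoff \(\chi(\alpha_{1/n}(v))\), whereas the paper merely asserts the existence of such a sequence.
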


\begin{proof}
	\cref{res:mainestimate} shows that \(\BRAKET{f}{g}\in\Cred(\Rp,J)\) for all \(f,g\in\Rel_\grpd^*\).
	Hence, by~\cite{meyer2001}*{6.8} all elements of the dense subspace \(\Rel_\grpd^*\) are square-integrable. 
	
	For \(f\in\Rel_\grpd^*\)  we construct a sequence \(g_n\in\Rel\) with \(\norm{f-g_n}_{\si}\to 0\). Let \(T>0\) be such that \(f\) vanishes for \(t>T\). Choose a sequence \((f_n)\) of \(f_n\in\Cont^\infty_c(\grpd)\) that vanish for \(t>T \), and satisfy \(\norm{f^*-f_n^*}_{(5)}\to 0\) and \(\norm{\partial_t(f^*)-\partial_t(f_n^*)}_{(1)}\to 0\). Let \(k\in\Cont^\infty_c(G)\) be non-negative with \(\int_Gk(v)\diff v=1\) and let \(\omega\in\Cont^\infty([0,T])\) satisfy \(\omega(0)=1\), \(\norm{\omega}_\infty\leq 1\) and \(\norm{\partial_t\omega}_\infty\leq 1\). Define functions \(g_n\in\Rel\) with 
	\[g_n(x,t,v)=f_n(x,t,v)-k(v)\omega(t)\int_G f_n(x,0,w)\diff w.\] 
	It follows that \(\norm{f^*-g_n^*}_{(5)}\to 0\) and \(\norm{\partial_t(f^*-g_n^*)}_{(1)}\to 0\). We estimate using \eqref{normestimate}
	\begin{align*}
	\norm{f-g_n}_{\si}& =\norm{f-g_n}+\norm{\BRAKET{f-g_n}{f-g_n}}^{1/2}\\
	& \leq D \norm{f-g_n}_{(0)}+\tilde{D}^{1/2}(\norm{f^*-g_n^*}_{(5)}+T\norm{\partial_t(f^*-g_n^*)}_{(1)}).
	\end{align*}
	Consequently, \(f\) lies in the closure of \(\Rel\) with respect to the \(\norm{\,\cdot\,}_{\si}\)\nb-norm.
\end{proof} 
Therefore, the generalized fixed point algebra \(\Fix^\Rp(J,\cl{\Rel})\) of the \(\Rp\)\nb-action on \(J\) is defined as in \cref{deffix}. 
The elements \(\KET{f}\BRA{g}\) for \(f,g\in\Rel^*_\grpd\) can be characterized more explicitly. We fix for the rest of this article a monotone increasing net   \((\chi_i)_{i\in I}\) of continuous compactly supported functions \(\chi_i\colon\Rp\to[0,1]\) with \(\chi_i\to 1\) uniformly on compact subsets to cut off at zero and infinity. We may assume that \(\chi_i(\lambda^{-1})=\chi_i(\lambda)\) for all \(\lambda >0\). As described in \cref{res:strictlimit}
\begin{align*}
\int_0^\infty \chi_i(\lambda)\sigma_\lambda(f^**g)\,\tfrac{\diff \lambda}{\lambda}
\end{align*}
converges to \(\KET{f}\BRA{g}\) with respect to the strict topology as multipliers of \(J\).

\subsection{The pseudo-differential extension}
For a homogeneous Lie group \(G\), let \(J_G\defeq\ker(\widehat{\pi_{\triv}})\idealin\Cst(G)\) and \(\Rel_G=\{f\in\Cont^\infty_c(G)\mid\int_G f(v)\diff v=0\}\). 
In the following, we use generalized fixed point algebras to derive a pseudo-differential extension
\begin{equation*}
\begin{tikzcd}
\Comp(L^2 G)\arrow[hook,r] & \Fix^\Rp(J,\cl{\Rel})\arrow[r,twoheadrightarrow,] &\Cont_0(G,\Fix^\Rp(J_{G},\cl{\Rel_G})).
\end{tikzcd}
\end{equation*}
We will justify the name ``pseudo-differential'' extension in \cref{sec:symbols} by comparing it to the calculus defined by Fischer--Ruzhansky--Fermanian-Kammerer in \cites{fischer2016quantization, fischer2017defect} for graded Lie groups.  

The homomorphism \(p_0\colon\Cst(\grpd)\to\Cst(TG)\) induced by restriction to \(t=0\) maps~\(J\) onto the \(\Rp\)-invariant ideal \(J_0\subset \Cst(TG)\) with
\[J_0=\bigcap_{x\in G} {\ker{(\widehat{\pi}_{\triv}\circ q_x)}}.\]
The short exact sequence from \eqref{tangentgroupoidext} restricts to
\begin{equation}
\begin{tikzcd}\label{restrictedses}
\Cont_0(\Rp)\otimes \Comp(L^2G)\arrow[r,hook] & J\arrow[r,twoheadrightarrow,"p_0"] & J_0.\end{tikzcd}
\end{equation}
\begin{proposition}
	Let \(G\) be a homogeneous Lie group and let \(\Rel_0=p_0(\Rel)\).  The zoom action \(\sigma\) on \eqref{restrictedses} induces an extension
	\begin{equation}
	\begin{tikzcd}\label{ses:fixedalg} 
	\Comp(L^2 G)\arrow[hook,r] & \Fix^\Rp(J,\cl{\Rel})\arrow[r,twoheadrightarrow,"\widetilde{p}_0"] &\Fix^\Rp(J_0,\cl{\Rel_0}).
	\end{tikzcd}
	\end{equation}
\end{proposition}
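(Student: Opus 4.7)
The plan is to apply \cref{res:sesfixed} to the \(\Rp\)\nb-invariant ideal \(I\defeq\Cont_0(\Rp)\otimes\Comp(L^2G)\subset J\), identified via the isomorphism \(p\) of \cref{res:kernelev} with \(\Cst(\grpd^{G\times(0,\infty)})\). Invariance under the zoom action and the description of the induced action as \(\tau\otimes 1\), where \(\tau\) is the translation action on \(\Cont_0(\Rp)\), both follow from \cref{actiononideal}. The required exactness of
\begin{equation*}
\Cred(\Rp,I)\hookrightarrow \Cred(\Rp,J)\twoheadrightarrow \Cred(\Rp,J_0)
\end{equation*}
is automatic, since \(\Rp\cong(\R,+)\) is amenable and hence exact, as noted in the discussion preceding \eqref{exact}.

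With these hypotheses in place, \cref{res:rel_subsets} equips \((I,\cl{\Rel}\cap I)\) and \((J_0,\cl{p_0(\cl{\Rel})})\) with continuously square-integrable structures. By \cref{rem:contsi} we have \(\cl{p_0(\cl{\Rel})}=\cl{p_0(\Rel)}=\cl{\Rel_0}\), so \cref{res:sesfixed} yields the extension
\begin{equation*}
\Fix^\Rp(I,\cl{\Rel}\cap I)\hookrightarrow \Fix^\Rp(J,\cl{\Rel})\xrightarrow{\widetilde{p}_0}\Fix^\Rp(J_0,\cl{\Rel_0}).
\end{equation*}

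What remains is to identify the kernel with \(\Comp(L^2G)\). Since \(\Rp\) acts freely and properly on \(\Rp\) by translation, \(I\cong\Cont_0(\Rp,\Comp(L^2G))\) is a spectrally proper \(\Rp\)-\(\Cont_0(\Rp)\)-algebra. By \cref{res:spectrallyproper} it carries a unique dense, relatively continuous, complete subset; by \cref{res:saturatedspectrum} this subset is the \(\norm{\,\cdot\,}_\si\)\nb-closure of \(\Cont_c(\Rp)\otimes\Comp(L^2G)\). Since \cref{res:rel_subsets} ensures that \(\cl{\Rel}\cap I\) is another such subset, uniqueness forces the two to agree. Applying \cref{res:trivialfix} with \(X=H=\Rp\) acting by translation (so \(\Rp\backslash\Rp=\pt\)) and coefficients \(A=\Comp(L^2G)\) then gives
\begin{equation*}
\Fix^\Rp(I,\cl{\Rel}\cap I)\cong\Cont_0(\pt,\Comp(L^2G))=\Comp(L^2G),
\end{equation*}
completing the construction. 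The only genuine subtlety is matching the abstractly defined subset \(\cl{\Rel}\cap I\) with the canonical \(\Cont_0(\Rp)\)-algebra subset; this is dispatched by the uniqueness provided by spectral properness, and all other steps are direct appeals to the machinery developed in \cref{fixedpointalg}.
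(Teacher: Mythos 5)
Your proof is correct and follows essentially the same route as the paper: apply \cref{res:sesfixed} to the ideal \(\ker(p_0)\cong\Cont_0(\Rp)\otimes\Comp(L^2G)\), use \cref{rem:contsi} to identify the quotient's subset with \(\cl{\Rel_0}\), and identify the kernel with \(\Comp(L^2G)\) via the uniqueness of the relatively continuous subset from spectral properness (\cref{res:spectrallyproper}) together with \cref{res:trivialfix}. No substantive differences.
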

\begin{proof}\cref{res:sesfixed} gives an extension of generalized fixed point algebras
	\begin{equation*}
	\begin{tikzcd}
	\Fix^\Rp(\ker(p_0),\cl{\Rel}\cap\ker(p_0)) \arrow[r,hook] & \Fix^\Rp(J,\cl{\Rel})\arrow[r,twoheadrightarrow,"\widetilde{p}_0"] & \Fix^\Rp(J_0,\cl{p_0(\cl{\Rel})}).
	\end{tikzcd}
	\end{equation*} 		
	By \cref{rem:contsi} the completion of \(p_0(\cl{\Rel})\) with respect to the \(\norm{\,\cdot\,}_\si\)-norm  on the right hand side is~\(\cl{\Rel_0}\). The isomorphism \(p\) from \cref{res:kernelev} induces an isomorphism
	\begin{align*}
	p_*\colon \Fix^\Rp(\ker(p_0),\cl{\Rel}\cap\ker(p_0)) \to \Fix^\Rp(\Cont_0(\Rp,\Comp(L^2G)),\tilde{\Rel}).
	\end{align*}
	where \(\tilde{\Rel}\defeq p(\cl{\Rel}\cap \ker(p_0))\). By the description of the \(\Rp\)-action on \(\Cont_0(\Rp,\Comp(L^2G))\) in \cref{res:kernelev}, it follows that \(\tilde{\Rel}\) is the unique relatively continuous, complete and dense subset by \cref{res:spectrallyproper}. \cref{res:trivialfix} gives an isomorphism  
	\begin{align*}
	\Psi\colon\Fix^\Rp(\Cont_0(\Rp,\Comp(L^2G)),\tilde{\Rel})\to\Comp(L^2G).
	\end{align*}
	Therefore, we obtain \eqref{ses:fixedalg}. The inclusion of \(\Comp(L^2G)\) into \(\Fix^\Rp(J,\cl{\Rel})\) is given by \((\Psi\circ p_*)^{-1}\).
\end{proof}	

By \cref{res:fixcontfield} the symbol algebra \(\Fix^\Rp(J_0,\cl{\Rel_0})\) is a continuous field of \(\Cst\)-algebras over \(G\) with fibres \(\Fix^\Rp(J_G,\cl{\Rel_G})\). We show it is in fact a trivial continuous field.
\begin{lemma}\label{res:symbolfix_field}
	The map \(\Theta\colon \Fix^\Rp(J_0,\cl{\Rel_0})\to \Cont_0(G,\Fix^\Rp(J_G,\cl{\Rel_G}))\) given by
	\[\Theta(\KET{f}\BRA{g})(x)=\widetilde{q}_x(\KET{f}\BRA{g})=\KET{q_x(f)}\BRA{q_x(g)} \qquad \text{for }f,g\in\Rel_{0}\text{, }x\in G\]
	is an isomorphism. 
\end{lemma}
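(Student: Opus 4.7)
The plan is to identify $\Fix^\Rp(J_0,\cl{\Rel_0})$ as a \emph{trivial} continuous field over $G$ via \cref{res:fixcontfield}, with $\Theta$ realizing the trivialization. Since $\Cst(TG)$ is the trivial field over $G$ with fibre $\Cst(G)$, its ideal $J_0$ is the trivial field over $G$ with fibres $J_G$ and fibre projections $q_x$. The zoom action $\sigma_\lambda(f)(x,v)=\lambda^Q f(x,\alpha_\lambda(v))$ affects only the $v$-direction, so each ideal $\ker q_x \idealin J_0$ is $\Rp$-invariant. Because $\Rp\cong\R$ is $\sigma$-compact and exact, \cref{res:fixcontfield} applies and yields a continuous field structure on $\Fix^\Rp(J_0,\cl{\Rel_0})$ over $G$ with fibre projections $\widetilde{q_x}\colon \Fix^\Rp(J_0,\cl{\Rel_0})\to \Fix^\Rp(J_G,\cl{q_x(\Rel_0)})$.

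Next I would match the fibres with $\Fix^\Rp(J_G,\cl{\Rel_G})$ by checking $q_x(\Rel_0)=\Rel_G$. The forward inclusion is immediate: $\Rel_0=p_0(\Rel)$ consists of $g\in\Cont_c^\infty(TG)$ with vanishing fibrewise integrals, so $q_x(g)\in\Rel_G$. Conversely, any $h\in\Rel_G$ equals $q_x(\tilde h)$ for $\tilde h(x',v)=\chi(x')h(v)$ with $\chi\in\Cont_c^\infty(G)$ and $\chi(x)=1$, and this $\tilde h$ lies in $\Rel_0$. Thus the closures with respect to the respective $\norm{\,\cdot\,}_\si$-norms also agree.

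With the fibres identified, $\Theta$ is well-defined on elements $\KET{f}\BRA{g}$ with $f,g\in\Rel_0$: the section $x\mapsto \widetilde{q_x}(\KET{f}\BRA{g})=\KET{q_x(f)}\BRA{q_x(g)}$ is continuous by the continuous field structure and has compact support in $G$ since $f$ and $g$ are compactly supported on $\grpd$. Extending by linearity and continuity produces a $*$-homomorphism $\Theta$ into $\Cont_0(G,\Fix^\Rp(J_G,\cl{\Rel_G}))$. Injectivity is automatic from the fact that the $\widetilde{q_x}$ are jointly faithful on a (upper semi-)continuous field, and this same fact shows that $\Theta$ is isometric.

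The main obstacle is surjectivity. I would exploit the $\Cont_0(G)$-algebra structure on both sides: multiplication by $\varphi\in\Cont_0(G)$, viewed as a central $\Rp$-invariant multiplier of $J_0$, preserves $\cl{\Rel_0}$ (since $\varphi\cdot g\in\Rel_0$ for $\varphi\in\Cont_c^\infty(G)$ and $g\in\Rel_0$, and an approximation in $\norm{\,\cdot\,}_\si$ handles general $\varphi$), and hence endows $\Fix^\Rp(J_0,\cl{\Rel_0})$ with a $\Cont_0(G)$-algebra structure making $\Theta$ a $\Cont_0(G)$-algebra morphism. The induced maps on fibres are then the identities on $\Fix^\Rp(J_G,\cl{\Rel_G})$, in particular surjective. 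For any $\Phi\in\Cont_0(G,\Fix^\Rp(J_G,\cl{\Rel_G}))$ and $\epsilon>0$, I choose a compact $K\subset G$ outside of which $\norm{\Phi}<\epsilon$, locally lift $\Phi(x)$ to $T_x\in\Fix^\Rp(J_0,\cl{\Rel_0})$ with $\widetilde{q_x}(T_x)=\Phi(x)$, and piece these lifts together using a partition of unity subordinate to a finite cover of $K$; the resulting element $T=\sum_i \varphi_i T_{x_i}$ satisfies $\norm{\Theta(T)-\Phi}<2\epsilon$. As $\Theta$ is isometric its image is closed, so this density forces $\Theta$ to be onto.
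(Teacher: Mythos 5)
Your overall strategy---invoking \cref{res:fixcontfield} to get a continuous field structure on \(\Fix^\Rp(J_0,\cl{\Rel_0})\) over \(G\), identifying all fibres with \(\Fix^\Rp(J_G,\cl{\Rel_G})\) via \(q_x(\Rel_0)=\Rel_G\), and reading off isometry of \(\Theta\) from \(\norm{T}=\sup_x\norm{\widetilde{q}_x(T)}\)---is sound up to one crucial point, and that point is a genuine gap: the continuity of the sections. Your claim that \(x\mapsto \KET{q_x(f)}\BRA{q_x(g)}\) is ``continuous by the continuous field structure'' is circular. What \cref{res:fixcontfield} supplies is a continuous field in the \(\Cont_0(G)\)-algebra sense, i.e.\ continuity of the norm functions \(x\mapsto\norm{\widetilde{q}_x(T)}\); it does \emph{not} give norm-continuity of \(x\mapsto\widetilde{q}_x(T)\) as a map into the fixed algebra \(\Fix^\Rp(J_G,\cl{\Rel_G})\). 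That stronger statement is exactly the triviality of the field, which is the content of the lemma (the paper introduces it precisely as the upgrade from ``continuous field'' to ``trivial continuous field''); a continuous field with constant fibre need not be trivial. The same missing continuity undermines your partition-of-unity argument for surjectivity: to conclude \(\norm{\Theta(T)-\Phi}<2\epsilon\) you need \(\widetilde{q}_y(T_{x_i})\) close to \(\widetilde{q}_{x_i}(T_{x_i})\) for \(y\) near \(x_i\), which is again the unproven continuity of sections.

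The missing analytic input is the bound \(\norm{\KET{h}}\leq C\norm{h}_{5}\) for \(h\in\Rel_G\), which comes from \eqref{normestimate}. Combined with the fact that \(x\mapsto q_x(f)\) is continuous in the Schwartz seminorms for \(f\in\Rel_0\) (so \(\norm{q_x(f)-q_y(f)}_{5}\to 0\) as \(y\to x\)), this yields norm-continuity of \(x\mapsto\KET{q_x(f)}\) and hence of \(x\mapsto\KET{q_x(f)}\BRA{q_x(g)}\); this estimate is exactly how the paper's proof proceeds. Once that is in place, your surjectivity argument can also be replaced by the paper's more elementary one: for \(a\in\Cont_c(G)\) and \(f,g\in\Rel_G\) choose \(b\in\Cont_c(G)\) with \(b\equiv 1\) on \(\supp a\); then \(\Theta(\KET{a\otimes f}\BRA{b\otimes g})=a\otimes\KET{f}\BRA{g}\), and such elementary tensors span a dense subspace of \(\Cont_0(G,\Fix^\Rp(J_G,\cl{\Rel_G}))\), so closedness of the range of the isometry \(\Theta\) gives surjectivity without any patching.
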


\begin{proof}
	By \cref{res:sesfixed} and \cref{rem:contsi} each \(\widetilde{q}_x\) maps \(\Fix^\Rp(J_{0},\cl{\Rel_{0}})\) onto \(\Fix^\Rp(J_G,\cl{\Rel_G})\) for \(x\in G\). Let \(f,g\in\Rel_{0}\), we show that \(\Theta(\KET{f}\BRA{g})\) is continuous. For~\(\varepsilon>0\) and \(x\in G\) there is an open neighbourhood \(U\) of \(x\) such that \(\norm{q_x(f)-q_y(f)}_{5}<\varepsilon\) and \(\norm{q_x(g)-q_y(g)}_{5}<\varepsilon\) for all \(y\in U\). The estimate of the norm in~\eqref{normestimate} implies that there is a constant \(C>0\) such that \(\norm{\KET{h}}\leq C\norm{h}_{5}\) for all \(h\in\Rel_G\).  Hence for \(y\in U\) we obtain
	\begin{align*}
	& \norm{\Theta(\KET{f}\BRA{g})(x)-\Theta(\KET{f}\BRA{g})(y)}\\
	\leq \,& \norm{\KET{q_x(f)}}\cdot\norm{\KET{q_x(g)-q_y(g)}}+\norm{\KET{q_y(g)}}\cdot\norm{\KET{q_x(f)-q_y(f)}} \\
	\leq \,& \varepsilon\,\left(\sup_{x\in G}{\norm{q_x(f)}_{5}}+\sup_{x\in G}{\norm{q_x(g)}_{5}}\right).
	\end{align*}
	As \(f\) and \(g\) are compactly supported in the \(x\)-direction it follows that \(\Theta(\KET{f}\BRA{g})\) is again compactly supported. Extend \(\Theta\) linearly to the span of \(\KET{f}\BRA{g}\) for \(f,g\in\Rel_{0}\) and let \(T\) be inside the linear span. As \(\norm{\widetilde{q}_x(T)}\leq\norm{T}\) for all \(x\in G\) it follows that \(\norm{\Theta(T)}\leq\norm{T}\). Let \(\psi\in J_{0}\) satisfy \(\norm{\psi}=1\). As \(\Cst(TG)\) is a continuous field of \(\Cst\)-algebras over \(G\) with fibres \(\Cst(G)\) it follows that
	\[\norm{T\psi}=\sup_{x\in G}\norm{q_x(T\psi)}=\sup_{x\in G}\norm{\widetilde{q}_x(T)q_x(\psi)}\leq \sup_{x\in G}\norm{\widetilde{q}_x(T)}=\norm{\Theta(T)}.\]
	Hence, \(\Theta\) is an isometry and extends by continuity to \(\Fix^\Rp(J_{0},\cl{\Rel_{0}})\). As \(\widetilde{q}_x\) is a homomorphism for each \(x\in G\), \(\Theta\) is a homomorphism. 
	
	Denote by \(W\subset\Fix^\Rp(J_G,\cl{\Rel_G})\) the linear span of \(\KET{f}\BRA{g}\) with \(f,g\in\Rel_G\), which is dense in \(\Fix^\Rp(J_G,\cl{\Rel_G})\). Then \(\Cont_c(G)\otimes^{\text{alg}}W\) is dense in \(\Cont_0(G,\Fix^\Rp(J_G,\cl{\Rel_G}))\). The space \(\Cont_c(G)\otimes^{\text{alg}}W\) is contained in the image of \(\Theta\) as for \(a\in\Cont_c(G)\) and \(f,g\in\Rel_G\) we can pick a function \(b\in\Cont_c(G)\) with \(b|_{\supp a}\equiv 1\) so that \(\Theta(\KET{a\otimes f}\BRA{b\otimes g})=a\otimes\KET{f}\BRA{g}\). As the image of \(\Theta\) is closed, the claim follows.
\end{proof}

\subsection{Representation as bounded operators on \(L^2(G)\)}
We show that the \(\Cst\)\nb-algebra of ``order zero pseudo-differential operators'' \(\Fix^\Rp(J,\cl{\Rel})\) admits a faithful representation as bounded operators on \(L^2(G)\).

The \(^*\)\nb-homomorphisms \(p_t\colon \grpd\to\Comp(L^2G)\) defined in \eqref{eq:p_t} can be restricted to the ideal \(J\). The restrictions are still surjective. Hence, they yield strictly continuous representations \[\widetilde{p}_t\colon \Fix^\Rp(J,\cl{\Rel})\to \Mult(\Comp(L^2G))=\Bound(L^2G)\qquad \text{for all }t>0.\]

\begin{lemma}
	The representation \(\widetilde{p}_1\colon \Fix^\Rp(J,\cl{\Rel})\to \Bound(L^2G)\) is faithful.
\end{lemma}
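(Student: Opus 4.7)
The plan is to exploit the equivariance $p_t \circ \sigma_\lambda = p_{t\lambda^{-1}}$ recalled in \cref{actiononideal} together with the continuous field structure on $\Cst(\grpd)$ from \cref{continuousfield}. The idea is that $\Rp$-invariance of a multiplier $T \in \Fix^\Rp(J,\cl{\Rel})$ will propagate vanishing of $\widetilde{p}_1(T)$ to vanishing of every $\widetilde{p}_t(T)$ for $t > 0$, after which continuity of the field forces $T$ itself to be zero.

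Let $T \in \Fix^\Rp(J,\cl{\Rel}) \subseteq \Mult^\Rp(J)$ satisfy $\widetilde{p}_1(T) = 0$. Since $T$ is fixed by the strictly continuous extension of $\sigma_\lambda$ to $\Mult(J)$, and since the identity $p_t \circ \sigma_\lambda = p_{t\lambda^{-1}}$ on $J$ lifts by strict continuity to $\widetilde{p}_t \circ \sigma_\lambda = \widetilde{p}_{t\lambda^{-1}}$ on $\Mult(J)$, we obtain at $t = 1$
\[
\widetilde{p}_{\lambda^{-1}}(T) = \widetilde{p}_1(\sigma_\lambda(T)) = \widetilde{p}_1(T) = 0 \qquad \text{for every } \lambda > 0,
\]
so $\widetilde{p}_t(T) = 0$ for every $t > 0$.

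To finish, I would push this information down to $T$ itself. For any $a \in J$ we have $Ta \in J \subseteq \Cst(\grpd)$ and
\[
p_t(Ta) = \widetilde{p}_t(T)\, p_t(a) = 0 \quad\text{for all } t > 0.
\]
Because $\Cst(\grpd)$ is a \emph{continuous} field of $\Cst$\nb-algebras over $[0,\infty)$ by \cref{continuousfield}, the function $t \mapsto \norm{p_t(Ta)}$ is continuous, so
\[
\norm{p_0(Ta)} = \lim_{t \to 0^+} \norm{p_t(Ta)} = 0.
\]
Hence $\norm{Ta} = \sup_{t \geq 0}\norm{p_t(Ta)} = 0$, i.e.\ $Ta = 0$ for every $a \in J$. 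Since $J$ is essential in $\Mult(J)$, this forces $T = 0$. The one subtle point is that the final step genuinely needs continuity (not merely upper semi-continuity) of the field at $t = 0$, which is precisely what amenability of $G$ delivers in \cref{continuousfield}.
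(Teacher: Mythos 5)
Your argument is correct and follows the same route as the paper's own proof: use the relation \(p_t\circ\sigma_\lambda=p_{t\lambda^{-1}}\) and \(\Rp\)-invariance of \(T\) to conclude \(\widetilde{p}_t(T)=0\) for all \(t>0\), then invoke continuity of the field \(\Cst(\grpd)\) to get \(Ta=0\) for all \(a\in J\) and hence \(T=0\). Your closing remark correctly identifies that lower semi-continuity of the fibre norms at \(t=0\) is the point where amenability enters.
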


\begin{proof}
	As seen in \cref{actiononideal} the representations \(p_t\) of \(J\) are related by
	\begin{align}\label{eq:reps_>0}
	p_t\circ \sigma_\lambda = p_{t\lambda^{-1}} \qquad\text{for }t,\lambda>0.
	\end{align}
	This equality still holds for the corresponding extensions to the multiplier algebras. As each \(T\in\Fix^\Rp(J,\cl{\Rel})\) is invariant under \(\sigma\), it follows that \(\widetilde{p}_t(T)=\widetilde{p}_1(T)\) for all~\(t>0\). Therefore, \(T\in\ker(\widetilde{p}_1)\) implies  \(\widetilde{p}_t(T)=0\) for all \(t>0\). Then it follows for all \(f\in J\) that \(p_t(Tf)=\tilde{p}_t(T)p_t(f)=0\) for all \(t>0\). As \(\Cst(\grpd)\) is a continuous field of \(\Cst\)-algebras this implies by continuity that \(Tf=0\). Since this holds for all \(f\in J\), it follows \(T=0\). 
\end{proof}
\begin{lemma}\label{res:fix_on_l2}
	 Let \(f,g\in\Rel_\grpd^*\) and \(h=f^**g\). Then the operators \(T_i(h)\) given by
	\begin{align}\label{def:T_i}
	T_i(h)\psi(x)=\int_0^\infty \chi_i(\lambda)\lambda^{-Q}\int h(x,\lambda,\alpha_{\lambda^{-1}}(x^{-1}y))\psi(y)\diff y
	 \tfrac{\diff\lambda}{\lambda}\end{align} for \(\psi\in L^2(M)\), \(x\in M\),
	converge strictly to \(\widetilde{p}_1(\KET{f}\BRA{g})\) as multipliers of \(\Comp(L^2M)\). 
\end{lemma}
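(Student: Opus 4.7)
The plan is to transport the strict-topology formula of \cref{res:strictlimit} from $\Mult(J)$ to $\Bound(L^2G)$ via $\widetilde{p}_1$ and then reorganize the resulting integral using the relation $p_t\circ\sigma_\lambda=p_{t\lambda^{-1}}$ from \cref{actiononideal}. By \cref{res:strictlimit} applied to the action $\sigma$ on $(J,\cl{\Rel})$ and the elements $f,g\in\Rel_\grpd^*\subset\cl{\Rel}$, the Bochner integrals
\[
S_i \defeq \int_0^\infty \chi_i(\lambda)\,\sigma_\lambda(h)\,\tfrac{\diff\lambda}{\lambda} \;\in\; J
\]
converge strictly to $\KET{f}\BRA{g}$ as multipliers of $J$ (the integral is a genuine Bochner integral in $J$ for each fixed $i$, since $\chi_i$ has compact support and $\lambda\mapsto\sigma_\lambda(h)$ is norm continuous).

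Next I would check that $p_1|_J\colon J\to\Comp(L^2G)$ is surjective, and therefore non-degenerate: the image is an ideal of the simple $\Cst$\nb-algebra $\Comp(L^2G)$ that is non-zero because the surjection $p_1\colon\Cst(\grpd)\to\Comp(L^2G)$ is already non-zero on $\Rel\subset J$. Hence $p_1|_J$ admits a unique strictly continuous extension $\widetilde{p}_1\colon\Mult(J)\to\Bound(L^2G)$, which coincides on $\Fix^\Rp(J,\cl{\Rel})\subset\Mult^\Rp(J)$ with the representation introduced before the lemma. Applying $\widetilde{p}_1$ to the strict convergence $S_i\to\KET{f}\BRA{g}$ yields the strict convergence
\[
p_1(S_i) \;=\; \int_0^\infty \chi_i(\lambda)\,p_1(\sigma_\lambda(h))\,\tfrac{\diff\lambda}{\lambda} \;=\; \int_0^\infty \chi_i(\lambda)\,p_{\lambda^{-1}}(h)\,\tfrac{\diff\lambda}{\lambda} \;\longrightarrow\; \widetilde{p}_1(\KET{f}\BRA{g})
\]
in $\Bound(L^2G)$, where the first equality uses that $p_1$ commutes with the Bochner integral and the second uses \cref{actiononideal}.

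Finally I would apply the change of variables $\lambda\mapsto\lambda^{-1}$ to the integral on the right: since $\tfrac{\diff\lambda}{\lambda}$ is the Haar measure on $\Rp$ and I chose the net $(\chi_i)$ to satisfy $\chi_i(\lambda^{-1})=\chi_i(\lambda)$, the integral transforms into $\int_0^\infty \chi_i(\lambda)\,p_\lambda(h)\,\tfrac{\diff\lambda}{\lambda}$. Inserting the explicit formula \eqref{eq:p_t} for $p_\lambda(h)$ reproduces exactly the operators $T_i(h)$ defined in \eqref{def:T_i}, and the strict convergence $T_i(h)\to\widetilde{p}_1(\KET{f}\BRA{g})$ follows. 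The only delicate point is the strict-continuity argument for $\widetilde{p}_1$; everything else is formal manipulation of Bochner integrals and a symmetric substitution in $\lambda$.
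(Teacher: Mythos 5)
Your proposal is correct and follows essentially the same route as the paper: apply the strict-limit description of \(\KET{f}\BRA{g}\) from \cref{res:strictlimit}, push it through the strictly continuous representation \(\widetilde{p}_1\), use \(p_1\circ\sigma_\lambda=p_{\lambda^{-1}}\) from \cref{actiononideal}, and substitute \(\lambda\mapsto\lambda^{-1}\) using the symmetry \(\chi_i(\lambda^{-1})=\chi_i(\lambda)\) before inserting \eqref{eq:p_t}. You merely spell out two steps the paper leaves implicit (the surjectivity of \(p_1|_J\) behind the strict continuity of \(\widetilde{p}_1\), and the change of variables), both of which are consistent with what the paper establishes just before the lemma.
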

\begin{proof} 		
	Strict continuity of \(\widetilde{p}_1\) together with \eqref{eq:reps_>0} imply that
	\begin{align*}
	\widetilde{p}_1\left(\KET{f}\BRA{g}\right)=\lim_{s} \int_0^\infty \chi_i(\lambda )p_1(\sigma_\lambda(f^**g))\tfrac{\diff\lambda}{\lambda}=\lim_{s} \int_0^\infty \chi_i(\lambda)p_\lambda(h)\tfrac{\diff\lambda}{\lambda}.
	\end{align*}
	The operators \(T_i(h)\) above are obtained by inserting the definition of \(p_\lambda\). 
\end{proof}
\begin{lemma}\label{res:compboundcommute}
	The following diagram commutes, where the horizontal maps are the inclusions:
	\begin{equation*}
	\begin{tikzcd}
	\ker(\widetilde{p}_0) \arrow[r,hook] \arrow[d, "(p)_*","\cong"']& \Fix^\Rp(J,\cl{\Rel})\arrow[dd, "\widetilde{p}_1"]	\\
	\Fix^\Rp(\Cont_0(\Rp,\Comp(L^2G)),\tilde{\Rel})\arrow[d,"\Psi", "\cong"'] \\
	\Comp(L^2G)\arrow[r,hook] & \Bound(L^2G).
	\end{tikzcd}
	\end{equation*}	
\end{lemma}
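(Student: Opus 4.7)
The plan is to check commutativity on a dense subset of $\ker(\widetilde{p}_0)$ where both composites admit the same explicit integral description, then extend by continuity. By \cref{res:sesfixed} and \cref{rem:contsi}, $\ker(\widetilde{p}_0) = \Fix^\Rp(\ker(p_0), \cl{\Rel}\cap\ker(p_0))$ is the closed linear span of $\KET{a}\BRA{b}$ for $a,b\in \cl{\Rel}\cap\ker(p_0)$, and multiplying elements of $\Rel$ by smooth cutoffs of $t$ that vanish near $0$ shows that $\Rel\cap\ker(p_0)$ is $\norm{\,\cdot\,}_\si$-dense in $\cl{\Rel}\cap\ker(p_0)$. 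Hence it suffices to verify the equality for $\KET{f}\BRA{g}$ with $f,g\in\Rel\cap\ker(p_0)$, i.e.\ functions in $\Cont_c^\infty(\grpd)$ that vanish identically at $t=0$.

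For such $f,g$, the plan is to compute both composites in $\Bound(L^2G)=\Mult(\Comp(L^2G))$. On the top path, \cref{res:fix_on_l2} yields directly
\begin{equation*}
\widetilde{p}_1(\KET{f}\BRA{g}) = \text{strict-}\lim_i T_i(f^**g), \qquad T_i(f^**g) = \int_0^\infty \chi_i(\lambda)\,p_\lambda(f^**g)\,\tfrac{\diff\lambda}{\lambda}.
\end{equation*}
On the bottom path, the $\Rp$-equivariant isomorphism $p\colon\ker(p_0)\to \Cont_0(\Rp,\Comp(L^2G))$ of \cref{res:kernelev} and \cref{actiononideal} gives $(p)_*(\KET{f}\BRA{g})=\KET{p(f)}\BRA{p(g)}$. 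Since $H=X=\Rp$ acts on itself by translation with $H\backslash X$ a single point, extending the formula of \cref{res:trivialfix} from $\Cont_c$- to $\Cont_0$-sections by continuity (or, equivalently, applying the strict-limit formula of \cref{res:strictlimit} at the orbit representative $1\in\Rp$) gives
\begin{equation*}
\Psi(\KET{p(f)}\BRA{p(g)}) = \int_0^\infty p_{\lambda^{-1}}(f)^*p_{\lambda^{-1}}(g)\,\tfrac{\diff\lambda}{\lambda} = \int_0^\infty p_\mu(f^**g)\,\tfrac{\diff\mu}{\mu}
\end{equation*}
as an element of $\Comp(L^2G)$, after substituting $\mu=\lambda^{-1}$ and using that each $p_\mu$ is a $^*$-homomorphism.

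Equality of the two expressions will follow once $\mu\mapsto p_\mu(f^**g)$ is shown to be norm-integrable in $\Comp(L^2G)$ with respect to $\tfrac{\diff\mu}{\mu}$: dominated convergence then shows that $T_i(f^**g)$ converges in $\Comp$-norm (a fortiori strictly in $\Bound = \Mult(\Comp)$) to the Bochner integral on the bottom, so uniqueness of strict limits forces $\widetilde{p}_1(\KET{f}\BRA{g}) = \Psi(p_*(\KET{f}\BRA{g}))$. Continuity of the four maps in the diagram then yields commutation on all of $\ker(\widetilde{p}_0)$. I expect the main obstacle to be precisely this absolute integrability: it is not literally the statement of \cref{res:mainestimate} (which controls $\sigma_\lambda$-translates in the $J$-norm), but the same two-term Taylor estimate proved there applies directly to $p_\mu(f^**g)$, with the hypothesis $p_0(f)=p_0(g)=0$ replacing the vanishing integral condition to produce the $\mu\to 0$ decay, while compact support in $t$ produces the $\mu\to\infty$ decay.
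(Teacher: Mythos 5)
Your argument is correct, but it runs the density reduction on the opposite side of the isomorphism \((p)_*\) from the paper, and this costs you an extra analytic estimate. The paper's proof takes \(\psi_1,\psi_2\in\Cont_c(\Rp,\Comp(L^2G))\) — compactly supported in the \(\Rp\)-direction, hence spanning a dense part of \(\tilde{\Rel}\) — pulls them back through \(p^{-1}\), and evaluates \(\widetilde{p}_1\) on the strict limit using \(p_1\circ\sigma_\lambda=p_{\lambda^{-1}}\); since the supports stay away from \(0\) and \(\infty\), the net over \(\chi_i\) stabilizes to an integral over a compact subset of \(\Rp\), and no convergence issue arises. You instead reduce to \(\KET{f}\BRA{g}\) with \(f,g\in\Rel\cap\ker(p_0)\), which forces you to prove absolute integrability of \(\mu\mapsto\norm{p_\mu(f^**g)}\) with respect to \(\tfrac{\diff\mu}{\mu}\). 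You correctly flag this as the crux and name the right mechanism: vanishing at \(t=0\) gives \(\norm{p_\mu(f^**g)}=O(\mu)\) near \(\mu=0\) (write \(f^**g=t\cdot h\) and use \(p_\mu(th)=\mu\,p_\mu(h)\)), while compact support in \(t\) handles \(\mu\to\infty\). This is exactly the first claim of \cref{res:compact_fact}, whose proof does not rely on the present lemma, so there is no circularity; your route even yields that statement as a by-product. Two points worth tightening: the density of \(\Rel\cap\ker(p_0)\) in \(\cl{\Rel}\cap\ker(p_0)\) is most cleanly obtained from \(\cl{\Rel}\cap\ker(p_0)=\cl{\Rel}\cdot\ker(p_0)\) (\cref{multI}) together with the product estimate used in the proof of \cref{res:completion}, rather than by cutoffs in \(t\) alone; and the extension of the formula of \cref{res:trivialfix} from \(\Cont_c\)- to such \(\Cont_0\)-sections should be justified by \(\norm{\,\cdot\,}_\si\)-continuity of \(a\mapsto\KET{a}\) (or, as you suggest, by invoking \cref{res:strictlimit} directly at the orbit representative \(1\in\Rp\)). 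With those details filled in, your proof is a valid alternative; the paper's choice of dense subset simply buys the commutativity without any estimate.
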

\begin{proof}
	Let \(\psi_1,\psi_2\in\Cont_c(\Rp,\Comp(L^2G))\). Then by strict continuity of \(\widetilde{p}_1\) and \eqref{eq:reps_>0}
	\begin{align*}
	\widetilde{p}_1({(p_*)^{-1}({\KET{\psi_1}\BRA{\psi_2}})})&= \widetilde{p}_1(\KET{p^{-1}(\psi_1)}\BRA{p^{-1}(\psi_2)})\\
	& = \widetilde{p}_1\Big({\lim_s\int_0^\infty\chi_i(\lambda)\sigma_\lambda(p^{-1}(\psi_1^*\psi_2))\tfrac{\diff \lambda}{\lambda}}\Big)\\
	& = \lim_s \int_0^\infty \chi_i(\lambda)p_{\lambda^{-1}}(p^{-1}(\psi_1^*\psi_2))\tfrac{\diff \lambda}{\lambda}\\
	& = \int_0^\infty \psi_1(\lambda^{-1})^*\psi_2(\lambda^{-1})\tfrac{\diff \lambda}{\lambda} = \Psi(\KET{\psi_1}\BRA{\psi_2})
	\end{align*}
	holds. The claim follows as the linear span of \(\KET{\psi_1}\BRA{\psi_2}\) with \(\psi_1,\psi_2\in\Cont_c(\Rp,\Comp(L^2G))\) is dense in  \(\Fix^\Rp(\Cont_0(\Rp,\Comp(L^2G)),\tilde{\Rel})\).
\end{proof}
\begin{lemma}\label{res:compact_fact}
	Let \(h\in \Rel^*_\grpd\cap\ker(p_0)\) and let \(T_i(h)\) be defined as in \eqref{def:T_i}. Then \((T_i(h))\) converges in norm in \(\Comp(L^2G)\). In particular, its strict limit as multipliers of \(\Comp(L^2G)\) exists and is contained in \(\widetilde{p}_1(\Fix^\Rp(J,\cl{\Rel}))\). 
\end{lemma}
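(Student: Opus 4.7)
The plan is to establish norm convergence of $(T_i(h))$ via an integrability estimate for $\lambda\mapsto p_\lambda(h)$, and then invoke \cref{res:compboundcommute} to identify the limit as a compact operator lying in $\widetilde{p}_1(\Fix^\Rp(J,\cl{\Rel}))$.

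First, I would bound $\norm{p_\lambda(h)}$ as a function of $\lambda$. Since $h\in\Rel_\grpd^*$, its support projects onto a compact subset of $\grpd^0=G\times[0,\infty)$, so $h$ vanishes for $t>T$ for some $T>0$; in particular $p_\lambda(h)=0$ when $\lambda>T$. For $\lambda\in(0,T]$, I would apply Schur's test to the integral kernel $K_\lambda(x,y)=\lambda^{-Q}h(x,\lambda,\alpha_{\lambda^{-1}}(x^{-1}y))$ of $p_\lambda(h)$. Using the substitution $v=\alpha_{\lambda^{-1}}(x^{-1}y)$ together with the $Q$\nb-homogeneity of the Haar measure from \cref{haar}, both Schur sums reduce to $L^1$\nb-norms of $h(\cdot,\lambda,\cdot)$ in the group variable, which are uniformly controlled thanks to the rapid decay encoded in the seminorms $\norm{\cdot}_{(N)}$.

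The crucial improvement comes from the assumption $h\in\ker(p_0)$, which gives $h(x,0,\cdot)=0$ for all $x\in G$. Combined with $\norm{\partial_t h}_{(0)}<\infty$, the fundamental theorem of calculus yields $\abs{h(x,\lambda,v)}\leq \lambda\,\sup_{s\in[0,T]}\abs{\partial_t h(x,s,v)}$, and hence $\norm{p_\lambda(h)}\leq C\lambda$ on $(0,T]$ for a constant $C$ depending only on $\norm{\partial_t h}_{(0)}$ and $T$. Therefore $\lambda\mapsto p_\lambda(h)$ is Bochner integrable with respect to $\tfrac{\diff\lambda}{\lambda}$ on $(0,\infty)$. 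Since $\chi_i\to 1$ pointwise and $0\leq\chi_i\leq 1$, dominated convergence applied to $T_i(h)=\int_0^\infty \chi_i(\lambda)\,p_\lambda(h)\,\tfrac{\diff\lambda}{\lambda}$ gives the norm convergence of $(T_i(h))$ in $\Comp(L^2G)$.

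For the final claim, norm convergence in $\Comp(L^2G)$ implies strict convergence in $\Mult(\Comp(L^2G))=\Bound(L^2G)$, so the strict limit $T(h)$ exists and coincides with the norm limit; being a norm limit of compact operators it is compact. By \cref{res:compboundcommute}, under the identification $\ker(\widetilde{p}_0)\cong\Comp(L^2G)$, the restriction of $\widetilde{p}_1$ is the canonical inclusion $\Comp(L^2G)\hookrightarrow\Bound(L^2G)$, so $\Comp(L^2G)\subseteq\widetilde{p}_1(\Fix^\Rp(J,\cl{\Rel}))$ and in particular $T(h)$ lies in this subalgebra. The main obstacle is the integrability of $\norm{p_\lambda(h)}$ near $\lambda=0$: the Schur bound by itself only yields $\norm{p_\lambda(h)}=O(1)$, and the extra factor of $\lambda$ needed for integrability against $\tfrac{\diff\lambda}{\lambda}$ is precisely what is supplied by the vanishing $p_0(h)=0$ together with the $\partial_t$\nb-regularity built into $\Rel_\grpd^*$.
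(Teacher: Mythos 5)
Your proof is correct and follows essentially the same route as the paper: both arguments hinge on extracting the bound $\norm{p_\lambda(h)}=O(\lambda)$ from the vanishing of $h$ at $t=0$ (the paper via the factorization $h=tf$ and contractivity of $p_\lambda$, you via the fundamental theorem of calculus in $t$ together with a Schur-test bound), then integrate against $\tfrac{\diff\lambda}{\lambda}$ over the compact $t$\nb-support and invoke \cref{res:compboundcommute} for the final inclusion. The only difference is the technique used for that intermediate estimate, which does not change the structure of the argument.
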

\begin{proof}
	As \(h\in\Rel^*_\grpd\) vanishes at \(t=0\), it can be written as \(h=tf\) with \(f\in\Cont^\infty(\grpd)\cap\Cst(\grpd)\). By definition of the representation \(p_\lambda\) in \eqref{eq:p_t}, it follows that \(p_\lambda(h)=\lambda p_\lambda(f)\) for all \(\lambda>0\). Hence, for all \(\lambda>0\) \begin{align*}
	\norm{p_\lambda(h)}\leq \lambda\norm{p_\lambda(f)}\leq \lambda\norm{f}.
	\end{align*}
	We show that \((T_i(h))\) is Cauchy. Let \(T>0\) be such that \(h\) vanishes for \(t\geq T\). For \(j\geq i\), we estimate
	\begin{align*}
	\norm{T_j(h)-T_i(h)}\leq \int_0^\infty (\chi_j(\lambda)-\chi_i(\lambda))\norm{p_\lambda(h)}\tfrac{\diff\lambda}{\lambda}
	\leq \norm{f} \int_0^T (1-\chi_i(\lambda))\diff\lambda.
	\end{align*}
	As \(\chi_i\to 1\) on compact subsets, the first claim follows. The second claim holds as convergence in norm implies strict convergence and \(\Comp(L^2G)\) is contained in \(\widetilde{p}_1(\Fix^\Rp(J,\cl{\Rel}))\) by \cref{res:compboundcommute}.
\end{proof}
\section{Operators of type \(0\) as generalized fixed points}\label{sec:fix_type_0}
Let \(G\) be a homogeneous Lie group. In this section we show that \(\Fix^\Rp(J_G,\cl{\Rel_G})\) is the \(\Cst\)-closure of the operators of type \(0\) on~\(G\).
\subsection{Homogeneous distributions}
Recall that the dilations yield an \(\Rp\)-action on \(\Schwartz(G)\subset\Cst(G)\) given by \(\sigma_\lambda f(x)=\lambda^{Q}f(\alpha_\lambda(x))\) for \(\lambda>0, f\in\Schwartz(G)\) and \(x\in G\).
This action can be extended to \(\Schwartz'(G)\) by
\[\langle \sigma_{\lambda*} u, f\rangle \defeq \lambda^{Q}\langle u, \sigma_{\lambda^{-1}} f\rangle \quad \text{for }u\in\Schwartz'(G),f\in\Schwartz(G) \text{ and }\lambda>0.\]
\begin{definition}[\cite{fischer2016quantization}*{3.2.9}]
	Let \(G\) be a homogeneous Lie group and let \(\nu\in\R\). A distribution~\(u\in\Schwartz'(G)\) is called a \emph{kernel of type \(\nu\)} if it is smooth away from zero and \(\sigma_{\lambda*}(u)=\lambda^\nu u\) for all~\(\lambda>0\). Denote by \(\mathcal{K}^\nu(G)\) the space of kernels of type \(\nu\). 
	
	For \(u\in\mathcal{K}^\nu(G)\) the corresponding continuous operator \(T_u\colon\Schwartz(G)\to\Schwartz'(G)\) given by \(T_u(f)=u* f\) is called an \emph{operator of type \(\nu\)}.	
\end{definition}
\begin{remark}
	Let \(Q\) be the homogeneous dimension of the group \(G\). The kernels of type \(\nu\) are also called \((\nu-Q)\)-homogeneous distributions in the literature. This is because they coincide with \((\nu-Q)\)-homogeneous functions outside zero. 
\end{remark}
As in \cite{fischer2016quantization}*{3.2.7} one calculates that an operator \(T\) of type \(\nu\) satisfies \(T(\sigma_{\lambda^{-1}}f)=\lambda^\nu\sigma_{\lambda^{-1}}(Tf)\) for all \(\lambda>0\) and \(f\in\Schwartz(G)\).
\begin{example}\label{ex:deriv_askernel}Let \(\delta\in\Schwartz'(G)\) denote the delta distribution.
	For \(\alpha\in\N^n_0\) the distribution~\(X^\alpha\delta\in\Schwartz'(G)\) defines a kernel of type~\(-[\alpha]\). The corresponding operator \(T_{X ^\alpha\delta}\) is the right-invariant differential operator \(Y^\alpha\) on \(G\) defined as in \eqref{def:leftinvariant_diff}. 
\end{example}
\subsection{Factorization}
Define the following subspace of \(\Schwartz(G)\) which is the Euclidean Fourier transform of the space of Schwartz functions that vanish with all derivatives at \(0\).
\begin{definition}[\cite{christ1992pseudo}]
	For a homogeneous Lie group \(G\), let \(\Schwartz_0(G)\) consist of all functions \(f\in\Schwartz(G)\) with \(\int_G v^\alpha f(v)\diff v=0\) for all \(\alpha\in\N^n_0\).
\end{definition}

\begin{lemma}\label{res:schwartz0}
	The space \(\Schwartz_0(G)\) has the following properties:
	\begin{enumerate}
		\item \label{item:s0_subalgebra} \(\Schwartz_0(G)\) is a \(^*\)-subalgebra of \(\Schwartz(G)\),
		\item \label{item:closed} \(\Schwartz_0(G)\) is closed in \(\Schwartz(G)\) with respect to the Schwartz semi-norms,
		\item\label{item:invar_hom_distr} \(u * f\in\Schwartz_0(G)\) for all \(u\in\mathcal{K}^\nu(G)\), \(\nu\in\R\) and \(f\in\Schwartz_0(G)\),
		\item \label{item:deriv} \(Xf\in\Schwartz_0(G)\) for all \(X\in\lieg\) and \(f\in\Schwartz_0(G)\).			
	\end{enumerate}
\end{lemma}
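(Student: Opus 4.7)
The plan is to prove (1), (2), and (4) by direct computations that reduce each statement to the vanishing of finitely many moments $\int_G v^\beta f(v)\diff v$, using the triangular form of the group law (\cref{triangularprop}) and of the left-invariant vector fields (\cref{res:vectorfields}), while (3) requires invoking the mapping properties of kernels of type $\nu$ on $\Schwartz_0(G)$ from the literature.

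For~(1), substituting $v\mapsto v^{-1}$ in $\int_G v^\alpha f^*(v)\diff v$ and using unimodularity (\cref{haar}) gives $\int_G (v^{-1})^\alpha \overline{f(v)}\diff v$; iterating \cref{triangularprop} shows $(v^{-1})^\alpha$ is a polynomial in $v_1,\dots,v_n$, so the integral reduces to a finite combination of conjugates of moments of $f$, all zero. For convolution, the change of variables $v=yz$ turns $\int_G v^\alpha(f*g)(v)\diff v$ into $\int_G\int_G (yz)^\alpha f(y)g(z)\diff z \diff y$, and \cref{triangularprop} yields the polynomial expansion $(yz)^\alpha=\sum_{\beta,\gamma}c_{\alpha,\beta,\gamma} y^\beta z^\gamma$, so Fubini splits this into a finite linear combination of products of moments of $f$ and $g$, all zero. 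Part~(2) is then immediate, since each functional $f\mapsto \int_G v^\alpha f(v)\diff v$ is bounded by a single Schwartz semi-norm, so $\Schwartz_0(G)$ is an intersection of closed subspaces. For~(4), by linearity it suffices to take $X=X_j$; \cref{res:vectorfields} writes $X_j=\partial_j+\sum_{k>j}P_{jk}\partial_k$ with $P_{jk}$ polynomial and independent of $v_k$, so integration by parts (no boundary term as $f$ is Schwartz, and $\partial_k$ commutes with $P_{jk}$) expresses $\int_G v^\alpha(X_j f)(v)\diff v$ as a finite combination of moments of $f$, again zero.

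Part~(3) is the main obstacle, since it packages a nontrivial regularity statement $u*f\in\Schwartz(G)$ together with the moment condition. The regularity is delicate because $u$ is singular at $0$ and only polynomially decaying at infinity, so it is precisely the infinite-order vanishing of $f$ at the origin (in the sense of moments) that cancels both bad behaviours; I plan to invoke the continuity of $u*\colon\Schwartz_0(G)\to\Schwartz(G)$ for $u\in\mathcal{K}^\nu(G)$ established in \cite{christ1992pseudo}. For the vanishing of moments of $u*f$, I will approximate $u$ by compactly supported smooth functions $u_R=\chi_R u$, where $\chi_R\in\Cont_c^\infty(G\setminus\{0\})$ cuts $u$ off near $0$ and near $\infty$. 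For each $R$ the calculation of~(1) applies and shows $u_R*f\in\Schwartz_0(G)$; the convergence $u_R*f\to u*f$ in $\Schwartz(G)$ as $R\to\infty$---the principal technical input, following from standard estimates on convolutions of homogeneous kernels with Schwartz functions that vanish to infinite order at the origin---together with~(2), then transfers the moment vanishing to the limit.
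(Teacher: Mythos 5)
Your arguments for \ref{item:s0_subalgebra}, \ref{item:closed} and \ref{item:deriv} are correct. For \ref{item:s0_subalgebra} and \ref{item:closed} they essentially coincide with the paper's proof (the paper shortcuts the involution step via \((v^{-1})^\alpha=(-1)^{\abs{\alpha}}v^\alpha\) in exponential coordinates, but your polynomial reduction via \cref{triangularprop} gives the same conclusion; note that in the convolution computation only the moments of the \emph{right} factor need to vanish, which is all you use). For \ref{item:deriv} you take a genuinely different route: a direct integration by parts using the triangular form of the vector fields in \cref{res:vectorfields}, whereas the paper writes \(Xf=X\delta*f\) with \(X_j\delta\in\kernel^{-q_j}(G)\) and deduces \ref{item:deriv} from \ref{item:invar_hom_distr}. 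Your version is more elementary and, importantly, independent of \ref{item:invar_hom_distr}.

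The proof of \ref{item:invar_hom_distr} has a genuine gap. A kernel of type \(\nu\) is a distribution which for \(\nu\le 0\) may have a component supported at the origin: \(\delta\) itself lies in \(\kernel^0(G)\) and \(X^\alpha\delta\in\kernel^{-[\alpha]}(G)\) by \cref{ex:deriv_askernel}. Since your cutoffs satisfy \(\chi_R\in\Cont_c^\infty(G\!\setminus\!\{0\})\), the truncation \(u_R=\chi_R u\) annihilates any such component, so \(u_R*f\not\to u*f\); for \(u=\delta\) one gets \(u_R*f=0\) for every \(R\) while \(u*f=f\). More generally, the truncation only sees the restriction \(u|_{G\setminus\{0\}}\), which determines \(u\) only up to a distribution supported at \(\{0\}\), and whenever \(\nu=-[\alpha]\) is a resonant degree the limit of the truncations (if it exists) singles out one particular finite-part extension that may differ from \(u\) by combinations of \(X^\beta\delta\). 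So the convergence you describe as the ``principal technical input'' is false in general, and the offending kernels are not a corner case: the paper applies \ref{item:invar_hom_distr} precisely to \(\delta\) and to elements of \(\kernel^0(G)\) in its proof of \ref{item:deriv} and in \cref{res:delta} and \cref{res:everykernelaverage}. To repair the argument you would have to split \(u\) into its part supported at the origin---a finite combination of \(X^\beta\delta\), for which the moment condition follows from the integration-by-parts computation you already use for \ref{item:deriv}---and a remainder for which the truncation does converge; the paper avoids all of this by citing \cite{geller_liouville}*{Lemma 4}.
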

\begin{proof}
	It is clear that \(\Schwartz_0(G)\) is a linear subspace of \(\Schwartz(G)\). It is left to show for \ref{item:s0_subalgebra} that \(\Schwartz_0(G)\) is closed under involution and convolution. Note that for \(v\in G\)
	\begin{align*}
	(v^{-1})^\alpha &= (-v)^\alpha=(-1)^{\abs{\alpha}}v^\alpha.			
	\end{align*}
	Moreover, by the polynomial group law in \cref{triangularprop} one can write \(v\cdot w\) for \(v,w\in G\) as a linear combination of \(v^\beta w^\gamma\).
	
	To show \ref{item:closed}, let \(f\in\Schwartz(G)\) and \(\alpha\in\N^n_0\). We estimate
	\[\left|\int v^\alpha f(v)\diff v \right|\leq \int{(1+\norm{v})^{[\alpha]}\abs{f(v)}}\diff v \leq \norm{f}_{[\alpha]}\int (1+\norm{v})^{-Q-1} \diff v .\] 
	The integral converges by \cref{res:convint}. 
	It follows that for a sequence \((f_k)_{k\in\N}\) in~\(\Schwartz_0(G)\) which converges in \(\Schwartz(G)\), the limit lies in \(\Schwartz_0(G)\) as well. 
	
	Property \ref{item:invar_hom_distr} is proved in \cite{geller_liouville}*{Lemma 4}.  For \(\ref{item:deriv}\) write \(X\in\lieg\) as \(X=\sum_{j=1}^n a_jY_j\) with~\(a_j\in\R\). Then \(Xf=X\delta * f= \sum_{j=1}^n a_j X_j\delta*f\)
	As the \(X_j\delta\) define kernels of type \(-q_j\) by \cref{ex:deriv_askernel}, the claim follows from  \ref{item:invar_hom_distr}.		
\end{proof}
\subsection{Operators of type \(0\)}We compare now the operators of type \(0\) to the generalized fixed points of the \(\Rp\)-actions on \(J_G\).
\begin{proposition}[\cite{folland1982homogeneous}*{1.65}]\label{res:kerneltype_average}
	Let \(G\) be a homogeneous Lie group and \(f\in\Schwartz(G)\) with \(\int_G f(v)\diff v=0\). Then 
	\[\int_0^\infty \chi_i(\lambda)\sigma_\lambda(f)\tfrac{\diff\lambda}{\lambda}\]
	converges in \(\Schwartz'(G)\) to a kernel of type \(0\). 
\end{proposition}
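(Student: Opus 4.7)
I need to check three things about $u_i\defeq\int_0^\infty\chi_i(\lambda)\sigma_\lambda(f)\,\tfrac{\diff\lambda}{\lambda}$: that $u_i$ converges in $\Schwartz'(G)$, that the limit $u$ satisfies $\sigma_{\mu*}u = u$ (so $u$ is of type $0$), and that $u$ is smooth on $G\setminus\{0\}$. For existence, I would pair with a test function $g\in\Schwartz(G)$; after the substitution $y=\alpha_\lambda(x)$, using the $Q$-homogeneity of the Haar measure (\cref{haar}), this gives
\[\langle u_i,g\rangle = \int_0^\infty\chi_i(\lambda)\int_G f(y)\,g(\alpha_{\lambda^{-1}}y)\diff y\,\tfrac{\diff\lambda}{\lambda}.\]
Splitting at $\lambda=1$: for $\lambda\leq 1$ the rapid decay of $g$ combined with $\norm{\alpha_{\lambda^{-1}}y}=\lambda^{-1}\norm{y}$ and \cref{res:convint} shows the inner integral is $O(\lambda^Q)$, integrable against $\tfrac{\diff\lambda}{\lambda}$ near $0$. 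For $\lambda\geq 1$ I would exploit the hypothesis $\int_G f = 0$ by rewriting the inner integral as $\int_G f(y)(g(\alpha_{\lambda^{-1}}y)-g(0))\diff y$: since $(\alpha_{\lambda^{-1}}y)_j=\lambda^{-q_j}y_j$ and all $q_j\geq 1$, the Euclidean mean value theorem yields $|g(\alpha_{\lambda^{-1}}y)-g(0)|=O(\lambda^{-1}(1+\norm{y})^{q_n})$, so the inner integral is $O(\lambda^{-1})$ and integrable near $\infty$. Dominated convergence then produces a limit $u\in\Schwartz'(G)$.

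For $0$-homogeneity, substituting $\lambda\mapsto\mu\lambda$ inside the defining integral gives $\sigma_{\mu*}u_i=\int_0^\infty\chi_i(\lambda/\mu)\sigma_\lambda(f)\tfrac{\diff\lambda}{\lambda}$. The estimates above depend on the cutoffs only through their uniform bound by $1$ and their pointwise convergence to $1$, both of which hold equally for the rescaled family $\chi_i(\cdot/\mu)$. Hence dominated convergence identifies the limits of $u_i$ and $\sigma_{\mu*}u_i$ with the same distribution $u$, so $\sigma_{\mu*}u=u$.

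Finally, for smoothness on $G\setminus\{0\}$, Fubini applied with the cutoff $\chi_i$ (where the double integral is absolutely convergent) together with dominated convergence as $i\to\infty$ identifies $u$ on $G\setminus\{0\}$ with the function $u(x)=\int_0^\infty\lambda^{Q-1}f(\alpha_\lambda x)\diff\lambda$; for $x\neq 0$ the integrand is $O(\lambda^{Q-1})$ near $\lambda=0$ (since $\alpha_\lambda x\to 0$) and rapidly decaying at $\infty$ since $\norm{\alpha_\lambda x}=\lambda\norm{x}$. Using $\partial_x^\alpha f(\alpha_\lambda x)=\lambda^{[\alpha]}(\partial^\alpha f)(\alpha_\lambda x)$ from the weighted chain rule, differentiation under the integral sign yields absolutely convergent formulas for every $\partial^\alpha u(x)$, so $u$ is smooth off $0$. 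The main obstacle will be the Taylor cancellation at $\lambda\to\infty$ in the existence step: the naive bound $|g(\alpha_{\lambda^{-1}}y)|\leq\norm{g}_\infty$ only gives a logarithmic divergence, so both the subtraction of $g(0)$ (enabled by $\int f = 0$) and the anisotropic dilation estimate (enabled by $q_j\geq 1$) must cooperate to produce the linear decay $O(\lambda^{-1})$.
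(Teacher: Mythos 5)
Your argument is correct. Note that the paper does not prove this statement at all: it is quoted from Folland--Stein (\cite{folland1982homogeneous}*{1.65}), so there is no in-paper proof to compare against. What you have written is essentially the standard Folland--Stein argument, and all the key steps check out: pairing with \(g\in\Schwartz(G)\) and substituting \(y=\alpha_\lambda(x)\) via the \(Q\)\nb-homogeneity of the Haar measure; the bound \(O(\lambda^Q)\) for \(\lambda\le 1\) (indeed the inner integral is at most \(\norm{f}_\infty\norm{g}_{L^1}\lambda^Q\), since \(\int_G\abs{g(\alpha_{\lambda^{-1}}y)}\diff y=\lambda^Q\norm{g}_{L^1}\)); and for \(\lambda\ge 1\) the subtraction of \(g(0)\) enabled by \(\int_G f=0\), combined with \(\abs{(\alpha_{\lambda^{-1}}y)_j}=\lambda^{-q_j}\abs{y_j}\le\lambda^{-1}\abs{y_j}\) and \(\abs{y_j}\le(1+\norm{y})^{q_n}\), which gives the needed \(O(\lambda^{-1})\) decay. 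Two small points worth making explicit in a final write-up: first, the limit functional is tempered because every constant in your dominating bounds is a continuous Schwartz seminorm of \(g\) (so the limit is automatically continuous on \(\Schwartz(G)\), not merely defined pointwise); second, the dominated-convergence step is over the net \((\chi_i)\), so one should invoke the uniform convergence on compact subsets together with the uniform integrable majorant rather than sequential dominated convergence verbatim. The homogeneity and smoothness-off-zero arguments are fine as stated; in particular the formula \(u(x)=\int_0^\infty\lambda^{Q-1}f(\alpha_\lambda x)\diff\lambda\) for \(x\neq 0\) and the identity \(\partial_x^\alpha\bigl[f(\alpha_\lambda x)\bigr]=\lambda^{[\alpha]}(\partial^\alpha f)(\alpha_\lambda x)\) give locally uniform absolute convergence of all differentiated integrals.
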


We will show that, conversely, every \(u\in\kernel^0(G)\) can be written as an average over the dilation action as above. First, this is proved for \(u=\delta\), which is a kernel of type~\(0\).
\begin{lemma}\label{res:delta}
	There are \(f_j\in\Rel_G\) and \(g_j\in\Schwartz_0(G)\), \(j=1,\ldots,n\), such that
	\[ \sum_{j=1}^n \int_0^\infty \chi_i(\lambda)\sigma_\lambda(f_j^**g_j)\tfrac{\diff\lambda}{\lambda}\to \delta \quad \text{in }\Schwartz'(G).\]
\end{lemma}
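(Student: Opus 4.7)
The plan is to construct $\delta$ as the dilation-average of a specific Schwartz function $h$ and then factor $h$ into the required sum of convolutions.

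\emph{Step 1: producing $h$.} Identify $G\cong\R^n$ via $\exp$ and pick $\chi\in\Cont^\infty_c(\R^n)$ equal to $1$ in a neighbourhood of $0$. Let $\eta\in\Schwartz(G)$ be the inverse Euclidean Fourier transform of $\chi$, so that $\int_G\eta(v)\diff v=1$ and $\int_G v^\alpha \eta(v)\diff v=0$ for every multi-index $\alpha\neq 0$. Writing $D=Q+\sum_j q_jx_j\partial_j$ for the generator of $\sigma$ on $\Schwartz(G)$, the identity $\lambda\partial_\lambda\sigma_\lambda=\sigma_\lambda D$ together with integration by parts gives
\[\int_G v^\alpha (D\eta)(v)\diff v=-[\alpha]\int_G v^\alpha\eta(v)\diff v=0\]
for every $\alpha\in\N^n_0$, so $h\defeq D\eta\in\Schwartz_0(G)$.

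\emph{Step 2: $h$ averages to $\delta$.} Because $\int\eta=1$, we have $\sigma_\lambda\eta\to\delta$ in $\Schwartz'(G)$ as $\lambda\to\infty$ and $\sigma_\lambda\eta\to 0$ as $\lambda\to 0$ (by dominated convergence on the pairing with a test function). The fundamental theorem of calculus combined with the generator identity yields
\[\delta=\int_0^\infty\sigma_\lambda(h)\,\tfrac{\diff\lambda}{\lambda}\qquad\text{in }\Schwartz'(G),\]
and by \cref{res:kerneltype_average} the cut-off integrals $\int_0^\infty\chi_i(\lambda)\sigma_\lambda(h)\,\tfrac{\diff\lambda}{\lambda}$ converge to a kernel of type $0$, which by the displayed identity must be $\delta$.

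\emph{Step 3: factorisation.} Using $h=\sum_{j=1}^n q_j\partial_j(x_j\eta)$, where each summand again lies in $\Schwartz_0(G)$ by the same moment computation, I would fix an auxiliary real-valued $\rho\in\Cont^\infty_c(G)$ with $\int\rho=1$ and, for each $j$, produce a pair $(f_j,g_j)$ with $f_j\in\Rel_G$ and $g_j\in\Schwartz_0(G)$ such that $f_j^**g_j=q_j\partial_j(x_j\eta)$. The building blocks are the identities $(X_j\rho)^**g=-Y_j(\tilde\rho*g)$ for $\tilde\rho(x)=\rho(x^{-1})$, together with the triangular expansion $\partial_j=Y_j-\sum_{q_l>q_j}Q_{jl}\partial_l$ provided by \cref{res:vectorfields}. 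Starting from the ``top'' index $j=n$ (where $\partial_n=Y_n$) and working downwards, these allow one to solve for suitable $g_j\in\Schwartz_0(G)$ in finitely many steps, with $f_j$ chosen as an appropriate scalar multiple of $X_j\rho$, which belongs to $\Rel_G$ because $\int X_j\rho=0$.

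The delicate step is Step~3: the operator $\partial_j$ is neither left- nor right-invariant, so it does not directly correspond to convolution with a translation-invariant distribution, and each substitution $\partial_j\leadsto Y_j-\sum Q_{jl}\partial_l$ generates correction terms that must be absorbed into $g_j$. The recursion terminates because the Lie algebra of $G$ is nilpotent, but verifying that the resulting $g_j$ still lie in $\Schwartz_0(G)$ relies crucially on the vanishing of all non-trivial moments of $\eta$ arranged in Step~1.
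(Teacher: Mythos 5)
Your Steps 1 and 2 are correct and in fact give a cleaner route to the ``mother function'' than the paper, which instead takes the inverse Euclidean Fourier transform of a radial cutoff \(h(\norm{\xi})\); your observation that \(h=D\eta\) lies in \(\Schwartz_0(G)\) because \(\int v^\alpha(D\eta)\diff v=-[\alpha]\int v^\alpha\eta\diff v\), together with the fundamental theorem of calculus for \(\lambda\mapsto\sigma_\lambda\eta\), is a valid way to produce \(\phi\in\Schwartz_0(G)\) with \(\int_0^\infty\sigma_\lambda(\phi)\tfrac{\diff\lambda}{\lambda}=\delta\) (combined with \cref{res:kerneltype_average} to identify the limit of the cut-off integrals).

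Step 3, however, has a genuine gap. Your plan reduces to solving the deconvolution problem \(\tilde\rho*g=\psi\) for \(g\in\Schwartz_0(G)\), where \(\psi\) is a given Schwartz function and \(\tilde\rho\) is a \emph{fixed} compactly supported function (chosen in advance so that \(f_j\) is a multiple of \(X_j\rho\)). Convolution by a fixed nonzero \(\rho\in\Cont_c^\infty\) is not surjective onto \(\Schwartz(G)\): already for \(G=\R\), Paley--Wiener forces \(\widehat{\rho}\) to be an entire function of exponential type with zeros, so \(\widehat{g}=\widehat{\psi}/\widehat{\rho}\) generally fails to exist as a tempered distribution, let alone a Schwartz function. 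The same obstruction persists for noncommutative \(G\). (A secondary issue: the substitution \(\partial_j=Y_j-\sum_{q_l>q_j}Q_{jl}\partial_l\) from \cref{res:vectorfields} produces, after commuting the polynomials \(Q_{jl}\) past the right-invariant fields, terms of the form \((Y_kQ_{jl})u\) with no derivative in front, which your recursion does not account for.) This exact factorization is the genuinely hard point of the lemma, and the paper resolves it with the Dixmier--Malliavin theorem \cite{dixmier1978factorization}, which produces \emph{both} convolution factors simultaneously rather than letting you prescribe one of them; one then uses \cref{res:schwartz0} to keep one factor in \(\Schwartz_0(G)\) and \cite{folland1982homogeneous}*{1.60} to write it as \(\sum_jX_jk_j\), so that \((X_jk_j)*l=k_j*(Y_jl)\) yields the vanishing-integral condition on the other factor. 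Without Dixmier--Malliavin or an equivalent factorization result, your Step 3 does not go through.
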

\begin{proof}	 	
	As noted in \cite{christ1992pseudo}, one can find a \(\phi\in\Schwartz_0(G)\) with 
	\[\delta = \lim_{i}\int_0^\infty\chi_i(\lambda)\sigma_\lambda(\phi)(x)\tfrac{\diff \lambda}{\lambda}.\]
	For example, take a function \(h\in\Cont_c^\infty(\Rp)\) with \(\int_{0}^{\infty}h(\lambda^{-1})\tfrac{\diff \lambda}{\lambda}=1\). Define \(\psi\in\Schwartz(\lieg^*)\) with  \(\psi(\xi)=h(\norm{\xi})\), where \(\norm{\,\cdot\,}\) is a homogeneous quasi-norm on \(\lie{g}^*\) which is smooth outside zero. The invariance of the Haar measure on \(\Rp\) implies that \(\int_{0}^{\infty}\psi(\alpha_{\lambda^{-1}}(\xi))\tfrac{\diff \lambda}{\lambda}=1\) for all \(\xi\neq 0\).  Therefore, \(\phi\in\Schwartz_0(G)\) can be taken as the inverse Euclidean Fourier transform of \(\psi\).
	
	Now, \(\phi\) needs to be factorized appropriately. Dixmier and Malliavin proved in \cite{dixmier1978factorization}*{7.2} that one can find \(k,l\in\Schwartz(G)\) such that \(\phi=k*l\). In the first step of the proof they show \(\phi = \theta*\mu\), where \(\mu\) is a measure and \(\theta\) is the limit of a sequence of polynomials in \(X^\alpha \phi\) in \(\Schwartz(G)\). It follows \(\theta\in\Schwartz_0(G)\) by \cref{res:schwartz0}. Repeating this procedure with \(\theta\), the factorization \(\phi =k*l\) is achieved after finitely many steps with \(k\in\Schwartz_0(G)\) and \(l\in\Schwartz(G)\). 
	By \cite{folland1982homogeneous}*{1.60}, \(k\) can be written as  \(k =\sum_{j=1}^{n}X_jk_j\) with \(k_1,\ldots,k_n\in\Schwartz_0(G)\). Therefore, we obtain
	\[\phi = \sum_{j=1}^n{(X_jk_j)*l}=\sum_{j=1}^{n}{k_j*(Y_jl)}.\]
	Using \cref{res:vectorfields} one obtains that \(\int (Y_jl)(x)\diff x = 0\) holds. Consequently, the claim holds with \(f_j\defeq k_j^*\in\Schwartz_0(G)\) and \(g_j\defeq Y_jl\in\Rel_G\) for \(j=1,\ldots,n\). 
\end{proof}
\begin{corollary}\label{res:everykernelaverage}
	Every \(u\in\kernel^0(G)\) can be written as a finite sum of 
	\[\lim_i\int_0^\infty \chi_i(\lambda)\sigma_\lambda(f^**g)\tfrac{\diff\lambda}{\lambda}\]
	with \(f\in\Schwartz_0(G)\) and \(g\in\Rel_G\). 
\end{corollary}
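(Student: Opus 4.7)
The strategy is to bootstrap from Lemma~\ref{res:delta}, which writes the identity $\delta$ as a finite sum of such averages, to the case of an arbitrary kernel $u\in\kernel^0(G)$, by using $u = u*\delta$ and the $0$-homogeneity of $u$ to push $u$ inside the average.

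First, I would apply the involution to the decomposition in Lemma~\ref{res:delta}. Since $\delta^*=\delta$, since $\sigma_\lambda$ and the involution commute, since $(a^**b)^* = b^**a$, since the cutoffs $\chi_i$ are real-valued, and since the involution is continuous on $\Schwartz'(G)$, we obtain an equivalent identity
\[
\delta \;=\; \lim_i \sum_{j=1}^n \int_0^\infty \chi_i(\lambda)\,\sigma_\lambda\bigl(\tilde f_j^* * \tilde g_j\bigr)\tfrac{\diff\lambda}{\lambda} \quad\text{in } \Schwartz'(G),
\]
but now with the roles of the two spaces swapped: $\tilde f_j \in \Schwartz_0(G)$ and $\tilde g_j \in \Rel_G$.

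Next, I would use the identity $u = u*\delta$ in $\Schwartz'(G)$ and interchange $u*$ with the limit and the integral. Then, using that $\sigma_\lambda$ is a $^*$\nobreakdash-algebra automorphism with respect to convolution and that $u$ is $0$\nobreakdash-homogeneous (so that $u*\sigma_\lambda(h)=\sigma_\lambda(u*h)$ for $h\in\Schwartz(G)$, as noted after the definition of operators of type~$\nu$), together with associativity of convolution, one computes
\[
u * \sigma_\lambda(\tilde f_j^* * \tilde g_j) \;=\; \sigma_\lambda\bigl((u * \tilde f_j^*) * \tilde g_j\bigr).
\]
By Lemma~\ref{res:schwartz0}\,\ref{item:invar_hom_distr}, $u * \tilde f_j^*\in\Schwartz_0(G)$, and by \ref{item:s0_subalgebra} of the same lemma so is $f_j\defeq (u*\tilde f_j^*)^*$. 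Setting $g_j\defeq \tilde g_j\in\Rel_G$, we have $f_j^**g_j = (u*\tilde f_j^*)*\tilde g_j$, and therefore
\[
u \;=\; \sum_{j=1}^n \lim_i \int_0^\infty \chi_i(\lambda)\,\sigma_\lambda\bigl(f_j^* * g_j\bigr)\tfrac{\diff\lambda}{\lambda},
\]
which is the desired form.

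The main technical point is justifying the interchange of $u*$ with the limit in $\Schwartz'(G)$ in the second paragraph: one needs to check that convolution from the left by $u\in\kernel^0(G)$ extends to a continuous operation on the distributions arising as partial averages here. This can be handled by testing against Schwartz functions and using that, for each fixed $i$, the partial sum $\sum_j\int\chi_i(\lambda)\sigma_\lambda(\tilde f_j^**\tilde g_j)\tfrac{\diff\lambda}{\lambda}$ is a Schwartz function (the integrand is Schwartz-valued and $\chi_i$ is compactly supported), so that the pairing $\langle u*(\,\cdot\,),\phi\rangle = \langle\,\cdot\,,\check u *\phi\rangle$ reduces the commutation to the already given convergence in $\Schwartz'(G)$. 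Once this is in place, the rest is formal manipulation.
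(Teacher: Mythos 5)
Your proof is correct and takes essentially the same route as the paper: write \(u=u*\delta\) using \cref{res:delta}, push \(u\) through the average via the \(0\)-homogeneity identity \(u*\sigma_\lambda(h)=\sigma_\lambda(u*h)\) and associativity, and conclude with \cref{res:schwartz0} and \cref{res:kerneltype_average} that the resulting averages converge to the desired form. The only differences are cosmetic: your involution step to place the \(\Schwartz_0\)-factor on the correct side, and your explicit discussion of interchanging \(u*(\,\cdot\,)\) with the limit, both of which the paper's proof leaves implicit.
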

\begin{proof}
	Let \(f_j\in\Schwartz_0(G)\) and \(g_j\in\Rel_G\) for \(j=1,\ldots,n\) be functions as in \cref{res:delta}. Then use the homogeneity of \(u\) to compute 
	\begin{align*} u&= u*\delta= \lim \left(\sum_{j=1}^n  \int_0^\infty \chi_i(\lambda)u*(\sigma_\lambda(f_j*g_j))\tfrac{\diff \lambda}{\lambda}\right)
	\\&=\lim\left(\sum_{j=1}^l{ {\int_0^\infty \chi_i(\lambda)\sigma_\lambda((u*f_j)*g_j)\tfrac{\diff \lambda}{\lambda}}}\right)	\\
	& = \sum_{j=1}^l\left(\lim \int_0^\infty \chi_i(\lambda) \sigma_\lambda((u*f_j)*g_j)\tfrac{\diff\lambda}{\lambda}\right).
	\end{align*}
	Here we used that \(u*(\phi*\psi)=(u*\phi)*\psi\) for all \(\phi,\psi\in\Schwartz(G)\). The last equality holds as \(u*f_j\) lies in \(\Schwartz_0(G)\) for all \(j=1,\ldots,n\) by \cref{res:schwartz0}, so that the expression in the bottom line converges in \(\Schwartz'(G)\) by \cref{res:kerneltype_average}.
\end{proof}
The description of the kernels of type \(0\) above is already very close to the generalized fixed point algebra construction. We compare now the corresponding convolution operators to elements of \(\Fix^\Rp(J_G,\cl{\Rel_G})\subset \Mult^\Rp(J_G)\). To understand this fixed point algebra better, we show that the restrictions of the left regular representation \(\lambda_G\colon \Cst(G)\to\Bound(L^2G)\)  to \(J_G\) is still non-degenerate.
\begin{lemma}\label{res:nondeg}
	Let \(G\) be a homogeneous Lie group \(G\). The restriction of the left regular representation \(\lambda_G\colon \Cst(G)\to\Bound(L^2G)\) to \(J_G=\ker(\widehat{\pi}_{\triv})\) is a non-degenerate representation. 
\end{lemma}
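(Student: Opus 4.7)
The plan is to show that any $\xi \in L^2(G)$ annihilated by $\lambda_G(J_G)$ must vanish; this is equivalent to non-degeneracy of $\lambda_G|_{J_G}$. So suppose $\lambda_G(f)\xi = 0$ for every $f \in J_G$, and form the cyclic subspace $L \defeq \cl{\lambda_G(\Cst(G))\xi} \subset L^2(G)$. Since $\lambda_G$ is non-degenerate on all of $\Cst(G)$, applying an approximate unit of $\Cst(G)$ to $\xi$ produces a sequence in $\lambda_G(\Cst(G))\xi$ converging to $\xi$, so $\xi \in L$. For $\eta = \lambda_G(g)\xi$ with $g \in \Cst(G)$ and $f \in J_G$, the ideal property of $J_G$ gives $fg \in J_G$, hence $\lambda_G(f)\eta = \lambda_G(fg)\xi = 0$, and by continuity $\lambda_G(J_G)|_L = 0$.

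Consequently $\lambda_G|_L$ factors through $\Cst(G)/J_G$. Since $\pi_{\triv}$ is one-dimensional, $\widehat{\pi}_{\triv}\colon\Cst(G)\to\C$ is a character with kernel $J_G$, so $\Cst(G)/J_G \cong \C$. This forces, for every $\eta\in L$ and every $f \in \Cst(G)$,
\[ \lambda_G(f)\eta = \widehat{\pi}_{\triv}(f)\,\eta = \Bigl(\int_G f(v)\diff v\Bigr)\eta. \]
In particular, $f * \eta = \eta$ for every $f \in \Cont_c(G)$ with $\int_G f = 1$.

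A standard Dirac sequence argument then promotes this identity to full left-translation invariance of $\eta$. For each $g_0 \in G$, I would take a sequence of nonnegative $f_n \in \Cont_c(G)$ with $\int_G f_n = 1$ and supports shrinking to $\{g_0\}$. Strong continuity of left translation on $L^2(G)$ gives $f_n * \eta \to L_{g_0}\eta$ in norm, while the displayed identity gives $f_n * \eta = \eta$, so $L_{g_0}\eta = \eta$ for every $g_0 \in G$. A nontrivial homogeneous Lie group is homeomorphic to $\R^n$ with $n \geq 1$ (the normalization $q_1 = 1$ forces $n \geq 1$), hence noncompact, and unimodular by \cref{haar}. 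Left-translation invariance in $L^2(G)$ therefore forces $\eta$ to be constant almost everywhere, and the only such constant belonging to $L^2$ of a noncompact group is $0$. Thus $\eta = 0$ for all $\eta \in L$, so $\xi = 0$ and non-degeneracy follows. The only genuinely conceptual step is the initial collapse of $\lambda_G|_L$ via the character $\widehat{\pi}_{\triv}$; after that, the remainder is a textbook Dirac sequence argument.
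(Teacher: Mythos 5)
Your proof is correct, but it takes a genuinely different route from the paper's. The paper argues via Plancherel theory: if \(f*\psi=0\) for all \(f\in J_G\), then \(\widehat{f}(\pi)\widehat{\psi}(\pi)=0\) for almost all \(\pi\); liminality of \(J_G\) upgrades this to \(\widehat{\psi}(\pi)=0\) for all \(\pi\neq\pi_{\triv}\); and since the Plancherel measure is concentrated on the coadjoint orbits of maximal dimension sequence, \(\{\pi_{\triv}\}\) is a null set and \(\psi=0\). Your argument instead exploits only that \(\Cst(G)/J_G\cong\C\) via the character \(\widehat{\pi}_{\triv}\): the cyclic subspace generated by an annihilated vector carries a multiple of the trivial representation, hence consists of left-translation-invariant vectors, and a translation-invariant \(L^2\)-function on a noncompact group vanishes. (The one step you leave implicit --- that the induced representation of \(\C\) on \(L\) sends \(1\) to the identity rather than to a proper projection --- follows from the approximate-unit computation you already made, which shows \(\lambda_G|_L\) is non-degenerate.) Your route is more elementary and more general: it needs no Kirillov theory, no liminality, and no knowledge of the support of the Plancherel measure, and it proves the analogous statement for the kernel of the trivial representation of any noncompact locally compact group. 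The paper's route is shorter given the Plancherel machinery already assembled in \cref{sec:plancherel} and yields the sharper pointwise information that \(\widehat{\psi}\) vanishes off \(\pi_{\triv}\).
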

\begin{proof}
	Suppose \(\psi\in L^2(G)\) is such that \(f*\psi=0\) holds for all \(f\in J_G\).  As \(\Cst(G)\) acts by right-invariant operators on \(L^2(G)\), this is equivalent to \(\widehat{f}(\pi)\widehat{\psi}(\pi)=0\) for all \(f\in J_G\) and for almost all \(\pi\in\widehat{G}\) by the Plancherel Theorem, see \eqref{eq:Fourier-VN-Linfty}. 
	The ideal \(J_G\idealin\Cst(G)\) is liminal. Hence for \(\pi\in\widehat{J_G}=\widehat{G}\!\setminus\!\{\pi_{\triv}\}\) we have that \(\widehat{f}(\pi)\widehat{\psi}(\pi)=0\) for all \(f\in J_G\) is equivalent to \(\Comp(\Hils_\pi)\widehat{\psi}(\pi)=0\). But as \(\widehat{\psi}(\pi)\) is Hilbert--Schmidt, this means that \(\widehat{\psi}(\pi)=0\) for \(\pi\neq\pi_{\triv}\). The Plancherel measure is supported within the representations corresponding to orbits of maximal dimension sequence. In particular, \(\{\pi_{\triv}\}\) has measure zero and, therefore, \(\psi=0\) must hold.  
\end{proof}
Consequently, as the restriction is also faithful, the multiplier algebra \(\Mult(J_G)\) can be identified with the idealizer of \(\lambda_G(J_G)\) in \(\Bound(L^2G)\). Therefore, elements of the generalized fixed point algebra \(\Fix^\Rp(J_G,\cl{\Rel_G})\) can be viewed as bounded, right-invariant operators on \(L^2(G)\). This allows us to reprove the following theorem of \cite{knappstein}.

\begin{theorem}\label{res:type0bounded}
	Let \(G\) be a homogeneous Lie group. Every operator \(T\) of type \(0\) extends uniquely to a bounded operator \(L^2(G)\to L^2(G)\). 
\end{theorem}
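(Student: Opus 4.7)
My strategy is to realize operators of type $0$ as elements of the generalized fixed point algebra $\Fix^\Rp(J_G,\cl{\Rel_G})$, viewed as bounded operators on $L^2(G)$ via the left regular representation. By \cref{res:nondeg}, $\lambda_G\colon J_G\to\Bound(L^2 G)$ is faithful and non-degenerate, so it extends uniquely and strictly continuously to an injection $\widetilde{\lambda_G}\colon\Mult(J_G)\hookrightarrow\Bound(L^2 G)$; hence every element of $\Fix^\Rp(J_G,\cl{\Rel_G})\subseteq\Mult(J_G)$ defines a bounded operator on $L^2(G)$. Given $u\in\kernel^0(G)$, \cref{res:everykernelaverage} decomposes $u$ as a finite sum $u=\sum_{j=1}^N u_j$ with $u_j=\lim_i\int_0^\infty\chi_i(\lambda)\sigma_\lambda(f_j^**g_j)\tfrac{\diff\lambda}{\lambda}$ in $\Schwartz'(G)$, where $f_j\in\Schwartz_0(G)$ and $g_j\in\Rel_G$; by linearity it suffices to handle one summand.

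Granted that $\KET{f_j}\BRA{g_j}$ lies in $\Fix^\Rp(J_G,\cl{\Rel_G})$, \cref{res:strictlimit} expresses it as the strict limit in $\Mult(J_G)$ of the averages $M_i\defeq\int\chi_i(\lambda)\sigma_\lambda(f_j^**g_j)\tfrac{\diff\lambda}{\lambda}\in J_G$. For $\psi\in\Schwartz(G)$, non-degeneracy of $\lambda_G|_{J_G}$ allows me to write $\psi=a*\xi$ with $a\in J_G$ and $\xi\in L^2(G)$, so that $\widetilde{\lambda_G}(\KET{f_j}\BRA{g_j})\psi=\lim_i M_i*\psi$ converges in $L^2(G)$. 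The distributional limit $M_i\to u_j$ together with continuity of convolution against Schwartz functions then identifies $\widetilde{\lambda_G}(\KET{f_j}\BRA{g_j})$ with the bounded extension of $T_{u_j}$ initially defined on $\Schwartz(G)$.

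The technical heart is showing that $\KET{f_j}\BRA{g_j}$ genuinely lies in the fixed point algebra, since $f_j\in\Schwartz_0(G)$ is not compactly supported. I will prove $\Schwartz_0(G)\subseteq\cl{\Rel_G}$ in the $\norm{\,\cdot\,}_\si$-norm on $J_G$. For $f\in\Schwartz_0(G)$, pick smooth compactly supported cutoffs $\varphi_n\to 1$ with $\varphi_n f\to f$ in $\Schwartz(G)$, fix $h\in\Cont_c^\infty(G)$ satisfying $\int h\diff v=1$, and define $f_n\defeq\varphi_n f-\left(\int\varphi_n f\diff v\right)h\in\Rel_G$. Then $\int\varphi_n f\diff v\to 0$ combined with Schwartz convergence yields $\norm{f-f_n}_{\Cst(G)}\to 0$. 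For the second component of $\norm{\,\cdot\,}_\si$, I adapt the mean-value argument in the proof of \cref{res:mainestimate} to $t=0$: the $\partial_t$-term disappears, and exploiting the vanishing integral of $f-f_n$ yields a constant $C>0$ with $\norm{\KET{h'}}\leq C\norm{h'}_{(5)}$ for every Schwartz $h'$ of vanishing integral. Since $\norm{f-f_n}_{(5)}\to 0$, we conclude $\norm{\KET{f-f_n}}\to 0$ and hence $\norm{f-f_n}_\si\to 0$. Thus $\KET{f_j}\BRA{g_j}$ lies in the norm closure of $\KET{\Rel_G}\BRA{\Rel_G}\subseteq\Fix^\Rp(J_G,\cl{\Rel_G})$.

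Summing the bounded contributions over $j$ yields the bounded extension of $T_u$; uniqueness follows from the density of $\Schwartz(G)$ in $L^2(G)$. The principal obstacle is the Schwartz-seminorm bound on $\norm{\KET{\cdot}}$, but this is essentially contained in the proof of \cref{res:mainestimate} with the time variable suppressed.
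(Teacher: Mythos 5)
Your proof is correct and follows essentially the same route as the paper's: reduce via \cref{res:everykernelaverage} to kernels of the form \(\lim_i\int_0^\infty\chi_i(\lambda)\sigma_\lambda(f^**g)\,\tfrac{\diff\lambda}{\lambda}\), then identify \(T_u\) with \(\widetilde{\lambda}_G\left(\KET{f}\BRA{g}\right)\) using the strict-limit description from \cref{res:strictlimit} together with non-degeneracy of the restricted left regular representation (\cref{res:nondeg}). Your additional verification that \(\Schwartz_0(G)\subseteq\cl{\Rel_G}\) — needed because the first factor produced by \cref{res:everykernelaverage} is only Schwartz, not compactly supported — addresses a detail the paper's proof leaves implicit (it writes \(f,g\in\Rel_G\)), and your cutoff argument combined with the fibrewise version of the estimate \eqref{normestimate} handles it correctly.
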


\begin{proof}Note that if such an extension exists, it is unique as \(\Schwartz(G)\subset L^2(G)\) is dense.
	Let \(f,g\in\Rel_G\) and consider the kernel of type \(0\) given by
	\[u = \lim_i \int_0^\infty \chi_i(\lambda)\sigma_\lambda(f^**g)\tfrac{\diff\lambda}{\lambda}.\]
	If we can show that \(T_{u}\colon\psi\mapsto u*\psi\) defined on \(\Schwartz(G)\) extends to an operator in \(\Bound(L^2G)\), the claim follows as every kernel of type \(0\) is a finite sum of kernels of this form by \cref{res:everykernelaverage}.  Let \(h\in\Rel_G\) and \(\psi\in \Schwartz(G)\). The description of \(\KET{f}\BRA{g}\) as a strict limit as in \cref{res:strictlimit} shows that
	\begin{align*}
	\widetilde{\lambda}_G\left(\KET{f}\BRA{g}\right)(\lambda_G(h)\psi)=T_{u}(\lambda_G(h)\psi).
	\end{align*}
	As the restricted left regular representation is non-degenerate by \cref{res:nondeg}, \(\{\lambda_G(h)\psi\mid h\in\Rel_G\text{ and }\psi\in\Schwartz(G)\}\) is dense in \(L^2(G)\). Hence, the operator~\(\widetilde{\lambda}_G\left(\KET{f}\BRA{g}\right)\) is the unique continuous extension of \(T_{u}\).
\end{proof}

\begin{proposition}\label{res:kernelzerograded}
	Let \(G\) be a homogeneous Lie group \(G\).  Then \(\Fix^\Rp(J_G,\cl{\Rel_G})\) is the \(\Cst\)-closure of the operators of type \(0\)  in \(\Bound(L^2G)\). 
\end{proposition}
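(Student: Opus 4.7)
My plan is to prove equality by establishing both inclusions. First note that since $\lambda_G$ is faithful and non-degenerate on $J_G$ (\cref{res:nondeg}), its extension $\widetilde{\lambda}_G \colon \Mult(J_G) \to \Bound(L^2G)$ is faithful and hence isometric when restricted to the $\Cst$-algebra $\Fix^\Rp(J_G, \cl{\Rel_G})$. Thus the image $\widetilde{\lambda}_G(\Fix^\Rp(J_G, \cl{\Rel_G}))$ is a closed $\Cst$-subalgebra of $\Bound(L^2G)$, and it suffices to identify it with the $\Cst$-closure of operators of type $0$.

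For the inclusion of $\widetilde{\lambda}_G(\Fix^\Rp(J_G, \cl{\Rel_G}))$ in the $\Cst$-closure of operators of type $0$, I take generators $\KET{f}\BRA{g}$ with $f, g \in \Rel_G$. Then $f^* * g \in \Rel_G$ (using $\int g = 0$ and unimodularity of $G$), so \cref{res:kerneltype_average} shows that the limit $u_{f,g} := \lim_i \int_0^\infty \chi_i(\lambda) \sigma_\lambda(f^* * g) \tfrac{\diff\lambda}{\lambda} \in \Schwartz'(G)$ is a kernel of type $0$. The argument in the proof of \cref{res:type0bounded} (based on the strict limit description of $\KET{f}\BRA{g}$ in \cref{res:strictlimit} together with the non-degeneracy of the restricted regular representation) gives $\widetilde{\lambda}_G(\KET{f}\BRA{g}) = T_{u_{f,g}}$. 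The inclusion then follows from the density of the linear span of such $\KET{f}\BRA{g}$ in $\Fix^\Rp(J_G, \cl{\Rel_G})$ and the continuity of $\widetilde{\lambda}_G$.

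For the reverse inclusion, \cref{res:everykernelaverage} writes every $u \in \kernel^0(G)$ as a finite sum of averages $u_{f_j, g_j}$ with $f_j \in \Schwartz_0(G)$ and $g_j \in \Rel_G$. Since $\Schwartz_0(G) \not\subseteq \Rel_G$, I plan to extend the $\KET{\,\cdot\,}$\nb-construction to $\Schwartz_0(G)$ by approximation: given $f \in \Schwartz_0(G)$, take cutoffs $\phi_n \in \Cont_c^\infty(G)$ with $\phi_n \to 1$ uniformly on compacta, fix $\eta \in \Cont_c^\infty(G)$ with $\int \eta = 1$, and set $f_n := \phi_n f - \bigl(\int \phi_n f\bigr)\eta \in \Rel_G$. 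The rapid decay of $f$ yields $f_n \to f$ in $\Schwartz(G)$ and hence in $\Cst(G)$. The main obstacle will be to upgrade this to the stronger convergence $\|f - f_n\|_\si \to 0$, which amounts to controlling the operator norm of $\BRAKET{f-f_n}{f-f_n}$; this should be achievable by a direct estimate along the lines of the proof of \cref{res:mainestimate}, exploiting the Schwartz decay of $f - f_n$. Once $\Schwartz_0(G) \subseteq \cl{\Rel_G}$ is established, $\KET{f_j}\BRA{g_j}$ lies in $\Fix^\Rp(J_G, \cl{\Rel_G})$ and the computation from the first inclusion yields $\widetilde{\lambda}_G(\KET{f_j}\BRA{g_j}) = T_{u_{f_j, g_j}}$. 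Summing over $j$ produces $T_u \in \widetilde{\lambda}_G(\Fix^\Rp(J_G, \cl{\Rel_G}))$, as desired.
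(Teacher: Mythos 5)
Your proposal is correct and follows the same route as the paper: the proof in the text is exactly the combination of \cref{res:kerneltype_average}, \cref{res:everykernelaverage} and \cref{res:type0bounded}, with the identification \(\widetilde{\lambda}_G(\KET{f}\BRA{g})=T_{u_{f,g}}\) coming from the strict-limit description and the non-degeneracy of \(\lambda_G|_{J_G}\), just as you argue. The one point where you go beyond the paper is the explicit treatment of the mismatch between \(\Schwartz_0(G)\) and \(\Rel_G\): \cref{res:everykernelaverage} produces factors \(f_j\in\Schwartz_0(G)\), which are not compactly supported, so for \(\KET{f_j}\BRA{g_j}\) to lie in \(\Fix^\Rp(J_G,\cl{\Rel_G})\) one does need \(\Schwartz_0(G)\subseteq\cl{\Rel_G}\); the paper passes over this silently. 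Your cutoff-and-correction scheme closes the gap, and the ``main obstacle'' you flag is not really one: the group-level analogue of \eqref{normestimate} gives \(\norm{\KET{h}}\leq C\norm{h}_5\) for Schwartz \(h\) with \(\int_G h=0\) (the paper already invokes this form of the estimate in the proof of \cref{res:symbolfix_field}, and the proof of \cref{res:mainestimate} uses only the decay encoded in the seminorms \(\norm{\,\cdot\,}_{(N)}\), never compact support), so \(\norm{f-f_n}_\si\to 0\) follows directly from convergence of \(f_n\to f\) in the Schwartz seminorms. The only cosmetic caveat is that the cutoffs \(\phi_n\) should be chosen with uniformly bounded derivatives (e.g.\ \(\phi_n=\phi\circ\alpha_{1/n}\)) so that \((1-\phi_n)f\to 0\) in \(\Schwartz(G)\), but this is standard.
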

\begin{proof}
	This follows from the one-to-one correspondence between the operators of type~\(0\) and the linear span of \(\KET{f}\BRA{g}\) with \(f,g\in\Rel_G\) obtained from \cref{res:kerneltype_average}, \cref{res:everykernelaverage} and \cref{res:type0bounded}. The latter is dense in the \(\Cst\)-algebra \(\Fix^\Rp(J_G,\cl{\Rel_G})\).
\end{proof}

\section{Comparison to the calculus for graded nilpotent Lie groups}
\label{sec:symbols}
For this section, let \(G\) be a graded nilpotent Lie group.
We compare the sequence
\begin{equation*}
\begin{tikzcd}
\Comp(L^2 G)\arrow[hook,r] & \Fix^\Rp(J,\cl{\Rel})\arrow[r,twoheadrightarrow,"\Theta\circ\widetilde{p}_0"] &\Cont_0(G,\Fix^\Rp(J_{G},\cl{\Rel_G})).
\end{tikzcd}
\end{equation*}  from \cref{sec:pdo_using_fix} to the pseudo-differential extension of order zero in the calculus of Fischer, Ruzhansky and Fermanian-Kammerer.
\subsection{The calculus of Fischer--Ruzhansky--Fermanian-Kammerer}\label{sec:nilpotent_calc}
Recall the operator-valued Fourier transform for a nilpotent Lie group \(G\) described in \cref{sec:plancherel}. It maps a left-invariant operator on \(L^2(G)\) to a field of operators \(\{a(\pi)\in\Bound(\Hils_\pi)\mid \pi\in\widehat{G}\}\).
It is used in \cite{fischer2016quantization} to define a symbolic pseudo-differential calculus. In~\cite{fischer2017defect}, homogeneous expansions for the symbols are defined. We give a short introduction to their calculus. The symbols in their calculus are fields of operators \[\{a(x,\pi)\colon\Hils^\infty_\pi\to\Hils_\pi\mid x\in G\text{, }\pi\in\widehat{G}\}.\] Here, \(\Hils^\infty_\pi\) are the smooth vectors in \(\Hils_\pi\).
\begin{remark}
	Note that \cite{fischer2016quantization} uses a different convention for the Fourier transform 
	\[\Four(f)(\pi)=\int_G f(x)\pi(x)^*\diff x \qquad\text{for }f\in L^1(G).\]
	This leads to \(\Four(f*g)(\pi)=\Four(g)(\pi)\Four(f)(\pi)\). 
\end{remark}
The pseudo-differential calculus is defined in \cite{fischer2016quantization} using a positive Rockland operator. Each \(\pi\in\widehat{G}\) yields an \emph{infinitesimal representation} \(\diff\pi\) of \(\Uni(\lie{g})\) on \(\Hils^\infty_\pi\) (see \cite{fischer2016quantization}*{1.7.3}). As in \cite{fischer2016quantization} we will write \(\pi(P)\defeq \diff\pi(P)\) for left-invariant differential operators \(P\) on \(G\).
\begin{definition}[\cite{fischer2016quantization}*{4.1.1,~4.1.2}]
	Let \(G\) be a homogeneous Lie group. A left-invariant differential operator \(P\) on \(G\) satisfies the \emph{Rockland condition} if \(\pi(P)\) is injective on \(\Hils^\infty_\pi\) for all \(\pi\in\widehat{G}\setminus\{\pi_{\triv}\}\). 	
	A left-invariant differential operator \(P\) which is homogeneous of positive degree and satisfies the Rockland condition is called a \emph{Rockland operator}.
\end{definition}
\begin{example}
	For \(G=\R^n\) the Laplace operator \(\Delta_n=\sum_{j=1}^n\partial_j^2\) is a Rockland operator. There is an isomorphism \(\R^n\to\widehat{\R^n}\), \(\xi\mapsto \pi_\xi\), given by \(\pi_\xi(x)=e^{-i \langle\xi,x\rangle}\). One computes that \(\pi_\xi(\partial_j)=- i \xi_j\). Hence, \(\pi_\xi(\Delta_n)=-\norm{\xi}^2\neq 0\) for \(\xi\neq 0\). 
\end{example}
\begin{example}[\cite{fischer2016quantization}*{4.1.8}]
	Let \(G\) be a graded Lie group with weights \(q_1\leq q_2\leq\ldots\leq q_n\) and corresponding basis \(X_1,\ldots,X_n\) of \(\lie{g}\). Let \(q\) be a common multiple of the weights. Then the following operator is a Rockland operator 
	\[\sum_{j=1}^n (-1)^{q/q_j}X_j^{2q/q_j}.\]
\end{example}

\begin{remark}
	The existence of a positive Rockland operator on a homogeneous Lie group is equivalent to the group being (up to rescaling) graded (see \cite{fischer2016quantization}*{4.1.3, 4.1.8}). 
\end{remark}

\begin{remark}The Helffer--Nourrigat Theorem \cite{helffernourrigat} states that a left-invariant homogeneous differential operator on a graded Lie group satisfies the Rockland condition if and only if it is hypoelliptic.\end{remark}

From now on, let \(\Rock\) be a fixed positive Rockland operator of homogeneous degree~\(q\) on \(G\). It will plays the role of the Laplace operator  on~\(\R^n\) in the Euclidean calculus. 

Using the positive Rockland operator and its functional calculus, the Sobolev spaces \(L^2_s(G)\) for \(s\in\R\) are defined in \cite{fischer2016quantization}*{4.4.2}. Moreover, the operator-valued Fourier transform extends to a Fourier transform between left-invariant operators in \(\Bound(L^2_a(G),L^2_b(G))\) for \(a,b\in\R\) to a space of fields denoted by \(L^\infty_{a,b}(\widehat{G})\), see \cite{fischer2016quantization}*{5.1.21,~5.1.24}. The spaces of corresponding convolution kernels in \(\Schwartz'(G)\) are denoted by \(\kernel_{a,b}(G)\).

The derivatives in the cotangent direction in the Euclidean calculus are replaced with \emph{difference operators} \(\Delta^\alpha\) for \(\alpha\in\N^n_0\), which are defined in the following. This is based on the observation that the Euclidean Fourier transform intertwines \(\partial^\alpha\) and with multiplication by \(x^\alpha\). For \(u\in\Schwartz'(G)\) denote by \(x^\alpha u\) for \(\alpha\in\N^n_0\) the tempered distributions defined by
\[\langle x^\alpha u, f\rangle = \langle u, x^\alpha f\rangle \quad\text{for }f\in\Schwartz(G).\]
Let \(u\in\kernel_{a,b}(G)\) be a kernel such that \(x^\alpha u\in\kernel_{a',b'}(G)\) for some \(a',b'\in\R\). Then the difference operator \(\Delta^\alpha\) is defined as in \cite{fischer2016quantization}*{5.2.1}	by  \[\Delta^\alpha\widehat{u}(\pi)\defeq \widehat{x^\alpha u}(\pi)\quad\text{for }\pi\in\widehat{G}.\]
The following symbol classes are adapted to the notion of order induced by the dilations. Hence, the homogeneous degree \([\alpha]\) for \(\alpha\in\N^n_0\) as in \cref{def:multiindex} is used.

\begin{definition}[\cite{fischer2016quantization}*{5.2.11}]
	A field \(\{a(x,\pi)\colon\Hils^\infty_\pi\to\Hils_\pi\mid x\in G\text{, }\pi\in\widehat{G}\}\) is a \emph{symbol of order \(m\in\R\)} if for all \(\alpha,\beta\in\N^n_0\), the field of operators \(X^\beta_x\Delta^\alpha a(x,\pi)\) is defined on the smooth vectors and satisfies
	\[\sup_{(x,\pi)\in G\times\widehat{G}}\norm{X^\beta_x\Delta^\alpha a(x,\pi)\pi(I+\Rock)^\frac{[\alpha]-m}{q}}_{\Bound(\Hils_\pi)}<\infty.\]
	Denote the class of symbols of order \(m\) by \(S^m\).
	For \(a\in S^m\) and \(\alpha,\beta\in\N^n_0\) set 
	\[ \norm{a}_{S^m,\alpha,\beta}=\sup_{(x,\pi)\in G\times\widehat{G}}\norm{X^\beta_x\Delta^\alpha a(x,\pi)\pi(I+\Rock)^\frac{[\alpha]-m}{q}}_{\Bound(\Hils_\pi)}.\]
	The \emph{smoothing symbols} are \(S^{-\infty}=\bigcap_{m\in\R}S^m\).
\end{definition}
One can form asymptotic expansions of these symbols in the following sense.
\begin{proposition}[\cite{fischer2016quantization}*{5.5.1}]
	Let \(\{a_j\}_{j\in\N_0}\) be a sequence of symbols \(a_j\in S^{m_j}\) with \(m_j\) strictly decreasing to \(-\infty\) as \(j\to\infty\). Then there is a symbol \(a\in S^{m_0}\), unique modulo \(S^{-\infty}\), such that
	\[ a-\sum_{j=0}^M{a_j}\in S^{m_{M+1}} \quad\text{for all }M\in\N.\]
\end{proposition}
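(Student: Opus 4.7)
The plan is to prove this by a Borel-type summation argument, mimicking the classical Euclidean construction but performed in the frequency variable via the functional calculus of the positive Rockland operator \(\Rock\). Uniqueness is immediate: if \(a\) and \(a'\) both enjoy the stated asymptotic property, then \(a-a'\in S^{m_{M+1}}\) for every \(M\in\N\), and since \(m_M\to-\infty\) this forces \(a-a'\in\bigcap_M S^{m_M}=S^{-\infty}\).

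For existence, fix a cutoff \(\chi\in \Cont^\infty([0,\infty))\) with \(\chi\equiv 1\) on \([0,1]\) and \(\chi\equiv 0\) on \([2,\infty)\). For a sequence \(\epsilon_j\searrow 0\) to be specified, and for \(j\geq 1\), set
\[
\tilde a_j(x,\pi)\defeq\bigl(1-\chi(\epsilon_j\,\pi(I+\Rock))\bigr)\,a_j(x,\pi),
\]
where \(\chi(\epsilon_j\,\pi(I+\Rock))\) is defined by the functional calculus of the self-adjoint operator \(\pi(I+\Rock)\). The key estimate is that on the spectral region where \(1-\chi(\epsilon_j t)\neq 0\) one has \(t\geq 1/\epsilon_j\), so since \(m_j-m_{j-1}<0\),
\[
\bigl\|(1-\chi(\epsilon_j\,\pi(I+\Rock)))\,\pi(I+\Rock)^{(m_j-m_{j-1})/q}\bigr\|_{\op}\leq \epsilon_j^{(m_{j-1}-m_j)/q}.
\]
Combined with \(a_j\in S^{m_j}\), this shows \(\|\tilde a_j\|_{S^{m_{j-1}},0,0}\to 0\) as \(\epsilon_j\to 0\), and analogous bounds hold for the seminorms \(\|\cdot\|_{S^{m_{j-1}},\alpha,\beta}\) for every \(\alpha,\beta\). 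One then chooses \(\epsilon_j\) small enough so that \(\|\tilde a_j\|_{S^{m_{j-1}},\alpha,\beta}\leq 2^{-j}\) for all \(|\alpha|,|\beta|\leq j\).

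Setting \(a\defeq a_0+\sum_{j\geq 1}\tilde a_j\), the choice of \(\epsilon_j\) makes the series absolutely convergent in each seminorm of \(S^{m_0}\), so \(a\in S^{m_0}\). To verify the asymptotic property, split
\[
a-\sum_{j=0}^M a_j \;=\;\sum_{j>M}\tilde a_j\;-\;\sum_{j=1}^M\chi(\epsilon_j\,\pi(I+\Rock))\,a_j.
\]
The first sum lies in \(S^{m_{M+1}}\) by reapplying the key estimate with index shifted by one; each term of the finite second sum is supported (in the spectral sense) on \(\pi(I+\Rock)\leq 2/\epsilon_j\) and is therefore smoothing, i.e.\ in \(S^{-\infty}\subset S^{m_{M+1}}\).

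The main obstacle is the handling of the difference operators \(\Delta^\alpha\). In contrast to the derivatives \(X^\beta_x\), which commute with the functional calculus of \(\pi(I+\Rock)\), the \(\Delta^\alpha\) do not, so controlling \(\Delta^\alpha\tilde a_j\) demands a Leibniz-type expansion that writes \(\Delta^\alpha(bc)\) as a finite sum of products \(\Delta^{\alpha_1}b\cdot \Delta^{\alpha_2}c\), together with a quantitative symbolic analysis of \(\Delta^\alpha\chi(\epsilon\,\pi(I+\Rock))\) exhibiting the extra powers of \(\epsilon\) that make the bounds summable. Both ingredients are furnished by the calculus developed in Chapter~5 of \cite{fischer2016quantization}; granting them, the seminorm estimates close and the Borel construction yields the desired \(a\).
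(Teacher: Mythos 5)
This proposition is not proved in the paper at all: it is quoted verbatim from \cite{fischer2016quantization}*{5.5.1}, so there is no in-paper argument to compare against. Your proposal follows the same route as the cited source, namely a Borel-type summation in which the low-frequency part of each \(a_j\) is removed by the functional calculus of \(\pi(I+\Rock)\) and the cutoff scales \(\epsilon_j\) are chosen to make the seminorms summable; the uniqueness argument is correct and complete. You also correctly identify where the genuine work lies: the quantitative control of \(\Delta^\alpha\) applied to \(\chi(\epsilon_j\,\pi(I+\Rock))\) and the Leibniz rule for difference operators, both of which you defer to the calculus of \cite{fischer2016quantization}. That deferral is reasonable for a cited result, but be aware that it is precisely the content that distinguishes this setting from the Euclidean one.

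There is one concrete slip that breaks a step as literally written. You place the spectral cutoff on the \emph{left}, \(\tilde a_j\defeq(1-\chi(\epsilon_j\,\pi(I+\Rock)))\,a_j\), whereas the weight \(\pi(I+\Rock)^{([\alpha]-m)/q}\) in the definition of the seminorms \(\norm{\,\cdot\,}_{S^m,\alpha,\beta}\) sits on the \emph{right} of the symbol. Your key estimate
\(\norm{(1-\chi(\epsilon_j\,\pi(I+\Rock)))\,\pi(I+\Rock)^{(m_j-m_{j-1})/q}}\leq\epsilon_j^{(m_{j-1}-m_j)/q}\)
is correct, but it cannot be ``combined with \(a_j\in S^{m_j}\)'' unless the cutoff is adjacent to that weight; with your ordering you only get \(\norm{\tilde a_j\,\pi(I+\Rock)^{-m_{j-1}/q}}\leq\norm{a_j\,\pi(I+\Rock)^{-m_{j-1}/q}}\), which is finite but not small as \(\epsilon_j\to0\). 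Moving the cutoff past \(a_j\) is a nontrivial operation of the symbolic calculus, not a triviality. The fix is simply to define \(\tilde a_j\defeq a_j\,(1-\chi(\epsilon_j\,\pi(I+\Rock)))\) with the cutoff on the right, as is done in \cite{fischer2016quantization} and in the cutoff construction \cite{fischer2017defect}*{4.6} used elsewhere in this paper; then the two functions of \(\pi(I+\Rock)\) commute with each other, the small factor lands next to the weight, and the estimate closes. The same remark applies to your claim that the finite sum \(\sum_{j=1}^M\chi(\epsilon_j\,\pi(I+\Rock))\,a_j\) is smoothing. With that correction, and granting the difference-operator estimates you cite, the argument is sound.
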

In this case, one writes \( a \sim \sum_{j=0}^\infty a_j\).
\begin{proposition}[\cite{fischer2017defect}*{5.2.12, 5.2.17}]
	The symbol classes have the following properties:
	\begin{enumerate}
		\item \(S^{m_1}\subset S^{m_2}\) for \(m_1<m_2\).
		\item Each differential operator \(\sum c_\alpha(x)X^\alpha\) with coefficients \(c_\alpha\in\Cont^\infty(G)\) is contained in \(S^m\), where \(m=\max\{[\alpha]\mid c_\alpha\neq 0\}\).
		\item For \(a\in S^m\) and \(\alpha,\beta\in\N_0^n\) the symbol \(X^\beta\Delta^\alpha a\) is contained in \(S^{m-[\alpha]}\).  		
	\end{enumerate}
\end{proposition}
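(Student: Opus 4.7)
The plan is to prove the three claims in order, with (1) and (3) being essentially bookkeeping and (2) requiring genuine input from the functional calculus of the Rockland operator.

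For (1), since $\Rock$ is positive, the spectral theorem applied to the self-adjoint operator $\pi(I+\Rock)\geq I$ yields $\norm{\pi(I+\Rock)^s}\leq 1$ for every $s\leq 0$. The identity
\[\pi(I+\Rock)^{\frac{[\alpha]-m_2}{q}}=\pi(I+\Rock)^{\frac{[\alpha]-m_1}{q}}\cdot \pi(I+\Rock)^{\frac{m_1-m_2}{q}}\]
together with $m_1<m_2$ therefore gives $\norm{a}_{S^{m_2},\alpha,\beta}\leq \norm{a}_{S^{m_1},\alpha,\beta}$ for every $a\in S^{m_1}$ and every $\alpha,\beta\in\N_0^n$, which is (1).

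For (3), the defining formula $\Delta^\alpha \widehat{u}=\widehat{x^\alpha u}$ and commutativity of polynomial multiplication give $\Delta^\gamma \Delta^\alpha=\Delta^{\gamma+\alpha}$. On the $x$-side, $X^\delta_x$ commutes with $\Delta^\alpha$ (which only involves the Plancherel variable $\pi$) and $X^\beta$ commutes with $\Delta^\gamma$. Moreover, since the grading on $\lieg$ preserves total homogeneous degree under the bracket, the composition $X^\delta X^\beta$ expands in the PBW basis as $\sum_{[\sigma]=[\beta]+[\delta]} d_\sigma X^\sigma$ with finitely many nonzero $d_\sigma\in\R$. Inserting these identities into the definition of the seminorm yields
\[\norm{X^\beta\Delta^\alpha a}_{S^{m-[\alpha]},\gamma,\delta}\leq \sum_{[\sigma]=[\beta]+[\delta]}\abs{d_\sigma}\cdot \norm{a}_{S^{m},\gamma+\alpha,\sigma},\]
so $X^\beta \Delta^\alpha a\in S^{m-[\alpha]}$ whenever $a\in S^m$.

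For (2) (which is the main obstacle), I would interpret a differential operator $P=\sum c_\alpha(x)X^\alpha$ with coefficients in $\Cont^\infty(G)$ having bounded derivatives as having symbol $a(x,\pi)=\sum c_\alpha(x)\pi(X^\alpha)$, and reduce to proving each term $c_\alpha(x)\pi(X^\alpha)$ lies in $S^{[\alpha]}$; then (1) upgrades this to membership in $S^m$. For such a term, the $x$-derivatives only touch $c_\alpha$ via Leibniz, and boundedness of all $X^\beta c_\alpha$ is assumed. The crucial nontrivial inputs are two facts supplied by Fischer--Ruzhansky: first, the Sobolev-type estimate $\sup_{\pi\in\widehat G}\norm{\pi(X^\gamma)\pi(I+\Rock)^{-[\gamma]/q}}<\infty$, which follows from $L^2$-boundedness of $X^\gamma(I+\Rock)^{-[\gamma]/q}$ via the Rockland functional calculus and Plancherel; second, the difference operator formula $\Delta^{\alpha'}\pi(X^\alpha)=\widehat{x^{\alpha'}X^\alpha\delta}$, which expresses $\Delta^{\alpha'}\pi(X^\alpha)$ as a linear combination of operators $\pi(X^\gamma)$ with $[\gamma]\leq [\alpha]-[\alpha']$ (and vanishes if $[\alpha']>[\alpha]$, since $x^{\alpha'}X^\alpha\delta=0$ in that case by a degree count of distributions supported at $0$). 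Combining these two facts, each seminorm $\norm{c_\alpha(x)\pi(X^\alpha)}_{S^{[\alpha]},\alpha',\beta}$ is controlled by a finite sum involving $\sup_x\abs{X^\beta c_\alpha(x)}$ and the uniform constants from the Sobolev estimate, proving (2). The expected difficulty lies in establishing these two inputs rigorously—both the uniform Sobolev bound and the explicit computation of $\Delta^{\alpha'}\pi(X^\alpha)$—which requires the full machinery of homogeneous Sobolev spaces and Taylor expansions on graded groups developed in \cite{fischer2016quantization}*{Ch.~4--5}.
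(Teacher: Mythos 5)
Your argument is correct, and it is essentially the proof given in the cited source \cite{fischer2016quantization}; the paper itself imports this proposition from Fischer--Ruzhansky without proof, so there is no internal argument to compare against. Parts (1) and (3) are exactly the standard bookkeeping (negative powers of \(\pi(I+\Rock)\ge I\) are contractions; \(\Delta^\gamma\Delta^\alpha=\Delta^{\gamma+\alpha}\) and the graded PBW expansion of \(X^\delta X^\beta\)), and for (2) you correctly isolate the two genuine inputs, namely the uniform bound \(\sup_\pi\norm{\pi(X^\gamma)\pi(I+\Rock)^{-[\gamma]/q}}<\infty\) coming from \(L^2\)-boundedness plus the Plancherel identification \(\VN_L(G)\cong L^\infty(\widehat G,\Bound(\Hils_\pi))\), and the homogeneity/support-at-the-origin computation of \(\Delta^{\alpha'}\pi(X^\alpha)\). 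One remark: you were right to insert the hypothesis that all derivatives \(X^\beta c_\alpha\) are bounded --- the statement as transcribed in the paper only says \(c_\alpha\in\Cont^\infty(G)\), but the supremum over \(x\in G\) in the seminorms makes some such uniformity assumption necessary, and it is indeed present in the original reference.
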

For \(a\in S^m\) the following quantization formula is well-defined and yields a continuous operator \(\Op(a)\colon\Schwartz(G)\to\Schwartz(G)\) by \cite{fischer2016quantization}*{5.2.15}
\begin{align}\label{eq:quantization}
\Op(a)\varphi(x)=\int_{\widehat{G}}\tr\left({\pi(x)a(x,\pi)\widehat{\varphi}(\pi)}\right)\diff\mu(\pi) \qquad\text{for }\varphi\in\Schwartz(G)\text{, }x\in G.
\end{align}
Let \(\kappa_x\in\Schwartz'(G)\) be the convolution kernel of the left-invariant operator whose Fourier transform is~\(\{a(x,\pi)\mid\pi\in\widehat{G}\}\). Then \(\Op(a)\varphi(x)=\varphi*\kappa_x\) holds. Denote by \(K_a\in\Schwartz'(G\times G)\) the integral kernel of \(\Op(a)\). It is formally given by \(K_a(x,y)=\kappa_x(y^{-1}x)\).

In the following, we will consider operators with compactly supported integral kernels. Let \(S^m_{\cp}\) consist of all symbols \(a\in S^m\) such that \(\Op(a)\) has a compactly supported integral kernel. Set \(S^{-\infty}_{\cp}=\bigcap_{m\in\R}S^m_{\cp}\).

The following properties for the symbols with compactly supported integral kernels follow from the respective properties for the symbol classes \(S^m\) shown in \cite{fischer2016quantization}*{5.5.8,~5.5.12,~5.7.2,~5.4.9,~5.2.21}.
\begin{proposition}\label{res:calc}
	The pseudo-differential calculus has the following properties:
	\begin{enumerate}
		\item Let \(m_1,m_2\in\R\). For \(A\in\Op(S^{m_1}_{\cp}), B\in\Op(S^{m_2}_{\cp})\) the composition \(AB\) lies in \(\Op(S^{m_1+m_2}_{\cp})\). 
		\item Let \(m\in\R\). For \(A\in\Op(S^m_{\cp})\) the formal adjoint \(A^*\) lies in \(\Op(S^m_{\cp})\).  
		\item \(A\in \Op(S^m_{\cp})\) extends to a bounded operator \(L^2_s(G)\to L^2_{s-m}(G)\) for \(s\in\R\).
		\item \(A\in\Op(S^{-\infty}_{\cp})\) if and only if its integral kernel lies in \(\Cont^\infty_c(G\times G)\). 
	\end{enumerate}
\end{proposition}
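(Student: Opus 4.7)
The plan is to deduce each item by combining the corresponding result for the symbol classes $S^m$ (which is cited from Fischer--Ruzhansky) with a short check that compact support of the integral kernel is preserved under the relevant operation. The work is mostly bookkeeping of supports.

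For item (1), I would let $A = \Op(a)$ with $a\in S^{m_1}_{\cp}$ and $B = \Op(b)$ with $b\in S^{m_2}_{\cp}$, and invoke \cite{fischer2016quantization}*{5.5.12} to obtain a symbol $c \in S^{m_1+m_2}$ with $AB = \Op(c)$. It then remains to show that the integral kernel of $AB$ is compactly supported. Writing $K_{AB}(x,z) = \int_G K_A(x,y)\,K_B(y,z)\diff y$ and assuming $\supp K_A \subset L_A$ and $\supp K_B \subset L_B$ with $L_A,L_B \subset G\times G$ compact, one sees that $\supp K_{AB}$ is contained in the composition of relations $L_A \circ L_B$, which is compact by continuity of the projections and the group operations. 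For item (2), the symbolic side is \cite{fischer2016quantization}*{5.5.8}: the formal adjoint of $\Op(a)$ is quantized by a symbol in $S^m$. On the kernel side, $K_{A^*}(x,y) = \overline{K_A(y,x)}$, so the support is the flip of $\supp K_A$ and hence again compact.

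For item (3), the $L^2$-Sobolev boundedness is precisely \cite{fischer2016quantization}*{5.7.2} applied to $a \in S^m \supset S^m_{\cp}$; the compact-support assumption is not needed here, so no extra argument is required. For item (4), one direction is clear: if $K_A \in \Cont^\infty_c(G\times G)$, then by \cite{fischer2016quantization}*{5.4.9} the associated symbol lies in $S^{-\infty}$ and $\Op(A)$ has compactly supported kernel, hence $A \in \Op(S^{-\infty}_{\cp})$. Conversely, if $A \in \Op(S^{-\infty}_{\cp})$, then by \cite{fischer2016quantization}*{5.4.9} again the kernel $(x,y)\mapsto \kappa_x(y^{-1}x)$ is smooth, and by assumption it is compactly supported, so $K_A \in \Cont^\infty_c(G\times G)$.

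The only mildly non-trivial point is the support bookkeeping for the composition in item (1): one must argue that the formal composition of kernels is actually defined pointwise and that the set-theoretic composition of the compact supports is compact. This is routine once we observe that the map $(x,y,z)\mapsto (x,z)$ restricted to $L_A \times_G L_B$ is proper with compact domain. Everything else is a direct appeal to the cited results in~\cite{fischer2016quantization}.
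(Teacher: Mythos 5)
Your proposal is correct and matches the paper's approach: the paper offers no written proof beyond the sentence preceding the proposition, which asserts that the properties follow from the cited results of Fischer--Ruzhansky for the classes \(S^m\) (5.5.8, 5.5.12, 5.7.2, 5.4.9, 5.2.21), exactly as you do. Your additional support bookkeeping for the composition and the adjoint is the routine verification the paper leaves implicit, and it is carried out correctly.
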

For the following lemma, see also \cite{fischer2017defect}*{4.24}.
\begin{lemma}\label{compactop}
	Let \(A\in\Op(S^m_{\cp})\) for \(m<0\). Then \(A\) extends to a compact operator on \(L^2(G)\).
\end{lemma}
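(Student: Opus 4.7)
The strategy is a Rellich--Kondrachov style factorization through Sobolev spaces. Since \(A=\Op(a)\) with \(a\in S^m_{\cp}\), its integral kernel \(K_a\) is supported in some compact rectangle \(K\times K\subset G\times G\). Pick a cut-off \(\chi\in\Cont^\infty_c(G)\) with \(\chi\equiv 1\) on \(K\). Then for every \(\varphi\in\Schwartz(G)\), the function \(A\varphi\) is supported in \(K\) and depends only on \(\varphi|_K\), so as operators on \(\Schwartz(G)\) we have \(A = M_\chi\circ A\circ M_\chi\), where \(M_\chi\) denotes multiplication by \(\chi\). By continuity this identity extends to the respective \(L^2\)- and Sobolev-level completions.

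By \cref{res:calc}, \(A\) extends to a bounded operator \(L^2(G)\to L^2_{-m}(G)\), and since \(m<0\) we have \(-m>0\). The analytic input I will quote from \cite{fischer2016quantization} is a Rellich-type compactness for graded Lie groups: for any \(s>0\) and any \(\chi\in\Cont^\infty_c(G)\), the multiplication operator
\begin{equation*}
M_\chi\colon L^2_s(G)\longrightarrow L^2(G)
\end{equation*}
is compact. (Intuitively, compactly supported elements of \(L^2_s\) embed compactly into \(L^2\); this is established from the heat semigroup of the positive Rockland operator \(\Rock\), together with cut-off arguments that remove the difficulty caused by non-compactness of \(G\).)

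Applying this with \(s=-m>0\), the map \(M_\chi\colon L^2_{-m}(G)\to L^2(G)\) is compact. The composition
\begin{equation*}
A \;=\; M_\chi\,\circ\,\bigl(A\circ M_\chi\bigr)\colon\; L^2(G)\longrightarrow L^2_{-m}(G)\longrightarrow L^2(G)
\end{equation*}
is therefore the composition of a bounded map and a compact map, hence compact.

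The main obstacle is pinning down the Rellich-type compactness in the exact form above inside Fischer--Ruzhansky's framework. If one prefers a self-contained route, an alternative is to approximate: using the functional calculus of \(\Rock\), cut off \(a\) on the spectral side by \(a_N(x,\pi)\defeq a(x,\pi)\,\phi(N^{-1}\pi(\Rock))\) with \(\phi\in\Cont_c^\infty([0,\infty))\) equal to \(1\) near the origin. Each \(a_N\) lies in \(S^{-\infty}_{\cp}\), so by \cref{res:calc} its quantization has integral kernel in \(\Cont^\infty_c(G\times G)\) and is in particular Hilbert--Schmidt on \(L^2(G)\), hence compact. The remainder \(a-a_N\) still lies in \(S^m_{\cp}\) with symbol semi-norms controlled by factors \((1+N)^{m/q}\) which tend to \(0\) as \(N\to\infty\); the Calderón--Vaillancourt-type \(L^2\)-boundedness in Fischer--Ruzhansky then forces \(\Op(a-a_N)\to 0\) in operator norm. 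Compactness of \(A\) follows since the compact operators are norm-closed in \(\Bound(L^2G)\).
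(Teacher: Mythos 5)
Your first argument is the same strategy the paper uses, and the step you flag as ``the main obstacle'' is exactly the one gap: everything hinges on the compactness of \(M_\chi\colon L^2_{-m}(G)\to L^2(G)\), which you quote but do not establish. The paper closes this by \emph{not} proving a Rellich theorem intrinsically for the graded Sobolev scale; instead it compares with the Euclidean scale. By \cite{fischer2017defect}*{2.17}, multiplication by \(\chi\in\Cont_c^\infty(G)\) supported in a compact set \(K\) is bounded \(L^2_{-m}(G)\to H^{-m/q_n}(K)\), where \(H^{s}(K)\) is the ordinary Euclidean Sobolev space and \(q_n\) is the largest weight; since \(-m/q_n>0\), the classical Rellich theorem gives compactness of \(H^{-m/q_n}(K)\injto L^2(\R^n)=L^2(G)\), and composing with the bounded map \(A\colon L^2(G)\to L^2_{-m}(G)\) from \cref{res:calc} finishes the proof. (The right-hand cutoff \(A\circ M_\chi\) in your factorization is harmless but unnecessary; only the left cutoff is used.) So your outline is correct, but as written it assumes the precise analytic fact that constitutes the content of the lemma.

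Your alternative spectral-truncation route is plausible but has its own gaps. First, \(a_N(x,\pi)=a(x,\pi)\phi(N^{-1}\pi(\Rock))\) does \emph{not} lie in \(S^{-\infty}_{\cp}\): the convolution kernel of \(\phi(N^{-1}\Rock)\) is Schwartz but not compactly supported, so the integral kernel of \(\Op(a_N)\) is only compactly supported in the \(x\)-variable, and the criterion ``kernel in \(\Cont_c^\infty(G\times G)\)'' from \cref{res:calc} does not apply; you would instead argue that a smoothing symbol with compact \(x\)-support has a square-integrable kernel and is therefore Hilbert--Schmidt. Second, the claim that the \(S^0\)-seminorms of \(a-a_N\) tend to zero is immediate only for the seminorm with \(\alpha=0\); for \(\alpha\neq 0\) the Leibniz rule produces terms \(\Delta^{\alpha_2}\bigl(1-\phi(N^{-1}\pi(\Rock))\bigr)\), and one needs a uniform (indeed decaying) in \(N\) version of the estimate \cite{fischer2017defect}*{4.8} to control them before the Calder\'on--Vaillancourt bound can be invoked. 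Neither point is fatal, but both require work that the paper's Euclidean-comparison argument avoids.
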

\begin{proof}
	By \cref{res:calc}, \(A\) extends to a bounded operator \(A\colon L^2(G)\to L^2_{-m}(G)\). Let \(\chi\in\Cont_c^\infty(G)\) be constant \(1\) on the support of \(A\) in the \(x\)-direction and be supported in a compact subset \(K\subset G\). The map \(f\mapsto \chi\cdot f\) extends to a bounded operator \(L^2_{-m}(G)\to H^{-m/q_n}(K)\) by \cite{fischer2017defect}*{2.17}, where \(H^{-m/q_n}(K)\) denotes the Euclidean Sobolev space. By Rellich's Theorem the embedding \[H^{\frac{-m}{q_n}}(K)\injto L^2(\R^n)=L^2(G)\] is compact as \(-m/q_n>0\).  Therefore, the composition \(A\colon L^2(G)\to L^2(G)\) is compact. 
\end{proof}

Moreover, in \cite{fischer2017defect} classical pseudo-differential operators, which admit a homogeneous expansion of their symbol, are defined.  
\begin{definition}[\cite{fischer2017defect}*{4.1,~4.20}]
	Let \(m\in\R\). A field \(\{a(x,\pi)\colon\Hils^\infty_\pi\to\Hils_\pi\mid x\in G\text{, }\pi\in\widehat{G}\}\) is a \emph{regular \(m\)-homogeneous symbol} if
	\begin{enumerate}
		\item \(a(x,\lambda\cdot\pi)=\lambda^ma(x,\pi)\) for all \(x\in G\) and almost all \(\pi\) and \(\lambda>0\),
		\item for all \(\alpha,\beta\in\N^n_0\), the field of operators \(X^\beta_x\Delta^\alpha a(x,\pi)\) is defined on smooth vectors and satisfies
		\[\sup_{(x,\pi)\in G\times\widehat{G}}\norm{X^\beta_x\Delta^\alpha a(x,\pi)\pi(\Rock)^\frac{[\alpha]-m}{q}}_{\Bound(\Hils_\pi)}<\infty,\]			
	\end{enumerate}
	Denote by \(\dot{S}^m\) the class of all regular \(m\)-homogeneous symbols and by \(\dot{S}^m_c\) the ones with compact support in the \(x\)-direction.
\end{definition}
\begin{example}[\cite{fischer2017defect}*{4.3, 4.4}]
	For each \(c\in\Cont_c^\infty(G)\) and multi-index \(\alpha\in\N^n_0\), the symbol \(c(x)\pi(X)^\alpha\) belongs to \(\dot{S}^{[\alpha]}_c\).
\end{example}
In the Euclidean case, homogeneous symbols are cut off in a neighbourhood of the zero section to obtain actual elements of the symbol classes. This corresponds to the following procedure for graded Lie groups.
\begin{proposition}[\cite{fischer2017defect}*{4.6}]\label{cutoff}
	Let \(\psi\in\Cont^\infty([0,\infty))\) be a cutoff function with \(0\leq\psi\leq 1\) and \(\psi|_{[0,1]}\equiv 0\) and \(\psi|_{[2,\infty)}\equiv 1\). For all \(m\in\R\) there is a linear map \(c_m\colon\dot{S}^m\to S^m\) given by \(a(x,\pi) \mapsto a(x,\pi)\psi(\pi(\Rock))\).
\end{proposition}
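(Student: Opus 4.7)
The plan is to show directly that $b(x,\pi)\defeq a(x,\pi)\psi(\pi(\Rock))$ lies in $S^m$ whenever $a\in\dot{S}^m$. The underlying idea is that the spectral support of $\psi(\pi(\Rock))$ is contained in $[1,\infty)$, a region on which the two weights $\pi(\Rock)^s$ and $\pi(I+\Rock)^s$ differ by at most the factor $2^{|s|}$ (apply the functional calculus for the positive self-adjoint operator $\pi(\Rock)$ to the bounded function $(t/(1+t))^s$ for $t\geq 1$). Hence the cutoff annihilates precisely the singular behaviour of the regular $m$-homogeneous symbol $a$ near the trivial representation, while leaving its behaviour at large $\pi(\Rock)$ intact.

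As a first reduction, the $x$-derivatives pass through $\psi(\pi(\Rock))$ (which is independent of $x$), so $X_x^\beta b=(X_x^\beta a)\,\psi(\pi(\Rock))$, and the definition of $\dot{S}^m$ implies $X_x^\beta a\in\dot{S}^m$ as well. It therefore suffices to handle $\beta=0$. I would then invoke the Leibniz rule for difference operators from \cite{fischer2016quantization}*{5.2.13} to expand
\[\Delta^\alpha\bigl[a(x,\pi)\,\psi(\pi(\Rock))\bigr]=\sum_{[\alpha_1]+[\alpha_2]\le [\alpha]} c^\alpha_{\alpha_1,\alpha_2}\,\Delta^{\alpha_1}a(x,\pi)\cdot\Delta^{\alpha_2}\psi(\pi(\Rock))\]
into a finite sum of operator products.

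For each term I would estimate the two factors separately. The homogeneity of $a$ gives $\Delta^{\alpha_1}a\in\dot{S}^{m-[\alpha_1]}$, which yields the uniform bound $\|\Delta^{\alpha_1}a(x,\pi)\,\pi(\Rock)^{([\alpha_1]-m)/q}\|\leq C$. For the cutoff factor I would establish $\|\Delta^{\alpha_2}\psi(\pi(\Rock))\,\pi(\Rock)^{[\alpha_2]/q}\|\leq C_{\alpha_2}$ via functional calculus applied to the smooth, bounded, compactly-away-from-zero function $t\mapsto\psi(t)\,t^{[\alpha_2]/q}$, combined with the action of $\Delta^{\alpha_2}$ on symbols of the form $f(\pi(\Rock))$. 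Inserting these bounds, commuting the (spectrally $[1,\infty)$-supported) cutoffs with the weight factor $\pi(I+\Rock)^{([\alpha]-m)/q}$, and swapping $\pi(\Rock)$ for $\pi(I+\Rock)$ using the comparison from the first paragraph gives the desired $S^m$-seminorm estimate.

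The main obstacle is the auxiliary bound on $\Delta^{\alpha_2}\psi(\pi(\Rock))$. Difference operators are defined by multiplying the convolution kernel by the polynomial $v^{\alpha_2}$, which does not interact transparently with the spectral calculus of $\Rock$. Either one uses the structural results on difference operators of spectral functions of $\Rock$ developed in \cite{fischer2016quantization}*{Sec.~5.3}, or one argues directly using that the convolution kernel of $\psi(\Rock)$ is Schwartz on $G\setminus\{0\}$ so that multiplication by the polynomial $v^{\alpha_2}$ and re-Fourier transform still yields a bounded operator with the claimed decay in $\pi(\Rock)$. Once this auxiliary estimate is secured, the remainder of the argument is a bookkeeping of norms against finitely many $\dot{S}^m$-seminorms of $a$.
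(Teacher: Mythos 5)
The paper offers no proof of this proposition: it is imported verbatim from \cite{fischer2017defect}*{4.6}, so there is nothing internal to compare against. Judged on its own, your strategy is the standard one (and essentially the one in the cited reference): reduce to \(\beta=0\), expand \(\Delta^\alpha\bigl[a\,\psi(\pi(\Rock))\bigr]\) by the Leibniz rule, use the \(\dot S^m\)-bounds on \(\Delta^{\alpha_1}a\), and exploit that on the spectral support of \(\psi\) the weights \(\pi(\Rock)^s\) and \(\pi(I+\Rock)^s\) are comparable. The \(\alpha_2=0\) term is handled correctly by your first paragraph, since there the cutoff really is a spectral function of \(\pi(\Rock)\).

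The one step that does not go through as written is the treatment of the terms with \(\alpha_2\neq 0\). You propose the bound \(\norm{\Delta^{\alpha_2}\psi(\pi(\Rock))\,\pi(\Rock)^{[\alpha_2]/q}}\leq C\) and then say you will ``commute the (spectrally \([1,\infty)\)-supported) cutoffs with the weight factor''. But \(\Delta^{\alpha_2}\psi(\pi(\Rock))\) is \emph{not} a spectral function of \(\pi(\Rock)\) once \(\alpha_2\neq 0\) (the difference operator multiplies the convolution kernel by \(v^{\alpha_2}\), destroying the functional-calculus structure), so it has no spectral support in \([1,\infty)\) and does not commute with \(\pi(\Rock)\) or \(\pi(I+\Rock)\). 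What the argument actually requires is a \emph{two-sided} weighted estimate of the form
\[\sup_{\pi}\Bigl\lVert\pi(\Rock)^{\frac{m-[\alpha_1]}{q}}\,\Delta^{\alpha_2}\psi(\pi(\Rock))\,(1+\pi(\Rock))^{\frac{[\alpha]-m}{q}}\Bigr\rVert<\infty,\]
with the negative power on the \emph{left} to absorb the unbounded weight released by the \(\dot S^m\)-bound on \(\Delta^{\alpha_1}a\), and the target weight already in place on the right. This is exactly \cite{fischer2017defect}*{4.8}, which the present paper itself invokes in the proof of \cref{res:diffsmoothing}; it can be derived along the second route you sketch (writing \(\psi(\Rock)=I-(1-\psi)(\Rock)\) with \((1-\psi)\in\Cont_c^\infty\), so that for \(\alpha_2\neq0\) the kernel of \(\Delta^{\alpha_2}\psi(\pi(\Rock))\) is \(-v^{\alpha_2}\) times a Schwartz function, giving a smoothing symbol, plus a separate argument for the left-hand negative power near \(\pi_{\triv}\)). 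So the skeleton of your proof is right and you have located the correct obstacle, but the auxiliary estimate must be stated in this two-sided form; the right-weighted version alone, combined with an illegitimate commutation, does not close the argument.
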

This allows to define a homogeneous expansion of symbols.
\begin{proposition}[\cite{fischer2017defect}*{4.14}]
	Let \(\{a_j\}_{j\in\N_0}\) be a sequence of homogeneous symbols \(a_j\in \dot{S}^{m_j}\) with \(m_j\) strictly decreasing to \(-\infty\) as \(j\to\infty\). Then there is a symbol \(a\in S^{m_0}\), unique modulo \(S^{-\infty}\), such that
	\[a(x,\pi)-\sum_{j=0}^M{a_j(x,\pi)\psi(\pi(\Rock))}\in S^{m_{M+1}} \quad\text{for all }M\in\N.\]
	Moreover, if \(a\in S^m\) for \(m<m_0\), it follows that \(a_0=0\).
\end{proposition}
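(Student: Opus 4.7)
The plan is to adapt the classical Borel summation trick to this noncommutative symbol calculus. Each homogeneous symbol $a_j \in \dot{S}^{m_j}$ can be turned into an honest symbol $c_{m_j}(a_j) = a_j(x,\pi)\psi(\pi(\Rock)) \in S^{m_j}$ by \cref{cutoff}, but the naive series $\sum_j c_{m_j}(a_j)$ need not converge in any $S^m$. The remedy is to multiply each summand by an additional spectral cutoff that pushes its support to larger and larger values of $\pi(\Rock)$ at a rate chosen via a diagonal argument.

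First I would dispatch \textbf{uniqueness}: if both $a$ and $a'$ satisfy the asymptotic relation, then $a - a' \in S^{m_{M+1}}$ for every $M$, whence $a - a' \in \bigcap_{M} S^{m_{M+1}} = S^{-\infty}$.

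For \textbf{existence}, fix $\chi \in \Cont^\infty([0,\infty))$ with $0\leq \chi\leq 1$, $\chi|_{[0,1]}\equiv 0$ and $\chi|_{[2,\infty)}\equiv 1$. Given parameters $t_j > 0$ to be chosen, set
\[ b_j(x,\pi) \defeq a_j(x,\pi)\,\psi(\pi(\Rock))\,\chi\bigl(t_j^{-1}\pi(\Rock)\bigr), \]
which defines a symbol in $S^{m_j}$ supported, in the spectral sense, on $\pi(\Rock) \geq t_j$. Using the homogeneity of $a_j$ together with the $q$-homogeneity of $\Rock$, and analysing how the difference operators $\Delta^\alpha$ and vector fields $X^\beta_x$ act on the spectral cutoff (via the functional calculus for $\Rock$, as in \cite{fischer2016quantization}), one establishes estimates of the form
\[ \norm{b_j}_{S^{m_{M+1}},\alpha,\beta} \leq C_{j,M,\alpha,\beta}\, t_j^{(m_{M+1}-m_j)/q} \qquad (j > M). \]
Because $m_{M+1} - m_j < 0$ for $j > M$, a standard diagonal procedure produces $t_j \to \infty$ fast enough that $\norm{b_j}_{S^{m_{M+1}},\alpha,\beta} \leq 2^{-j}$ whenever $M,\abs{\alpha},\abs{\beta}\leq j$. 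Then $a \defeq \sum_{j\geq 0} b_j$ converges in every seminorm of $S^{m_0}$, and for each $M$ the tail $\sum_{j>M} b_j$ converges in the seminorms of $S^{m_{M+1}}$, so $a - \sum_{j=0}^{M} b_j \in S^{m_{M+1}}$. Finally, $a_j\psi(\pi(\Rock)) - b_j = a_j\psi(\pi(\Rock))\bigl(1 - \chi(t_j^{-1}\pi(\Rock))\bigr)$ has compactly supported spectrum in $\pi(\Rock)$ and hence lies in $S^{-\infty}$, which yields the required asymptotic relation with $a_j\psi(\pi(\Rock))$ in place of $b_j$.

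For the \textbf{last claim}, suppose $a \in S^m$ with $m < m_0$. Subtracting the first term of the expansion gives $a_0(x,\pi)\psi(\pi(\Rock)) \in S^{m'}$ where $m' = \max(m, m_1) < m_0$. Using that $a_0$ is $m_0$-homogeneous and $\Rock$ is $q$-homogeneous, evaluating $a_0(x,\lambda\cdot\pi)(\lambda\cdot\pi)(I+\Rock)^{-m'/q}$ and letting $\lambda\to\infty$ produces a factor $\lambda^{m_0-m'}$ that is unbounded unless $a_0 \equiv 0$; the uniform operator-norm bound defining $S^{m'}$ forces this to hold.

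The \textbf{main technical obstacle} is the seminorm estimate for $b_j$: the difference operators $\Delta^\alpha$ are not local in $\pi$ and do not commute with the spectral cutoff $\chi(t_j^{-1}\pi(\Rock))$, so one must control commutators of $\Delta^\alpha$ with functions of $\Rock$ in terms of lower-order difference operators and powers of $\pi(I+\Rock)$. Once this is available, the Borel argument runs as in the Euclidean setting.
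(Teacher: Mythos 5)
The paper does not prove this proposition: it is quoted from \cite{fischer2017defect}*{4.14}, where it is deduced from the cut-off construction (\cref{cutoff} here) together with the general asymptotic-summation result \cite{fischer2016quantization}*{5.5.1}, whose proof is precisely the Borel-type argument you outline. So your route is the intended one; the uniqueness argument, the observation that \(a_j\psi(\pi(\Rock))\bigl(1-\chi(t_j^{-1}\pi(\Rock))\bigr)\) is smoothing, and the homogeneity argument for the final claim are all sound. The technical input you flag --- controlling \(\Delta^\alpha\) applied to spectral multipliers of \(\Rock\) --- is genuinely needed and is supplied by \cite{fischer2017defect}*{4.8}; the paper itself invokes the same input in the proof of \cref{res:diffsmoothing}.

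There is, however, a concrete error in your key estimate. Since \((m_j)\) is strictly decreasing, \(j>M\) gives \(m_j\le m_{M+1}\), hence \(m_{M+1}-m_j\ge 0\): your claim that \(m_{M+1}-m_j<0\) for \(j>M\) is backwards, and the bound \(\norm{b_j}_{S^{m_{M+1}},\alpha,\beta}\lesssim t_j^{(m_{M+1}-m_j)/q}\) would then \emph{grow} as \(t_j\to\infty\), so the diagonal procedure as written does not close. The decay comes from the opposite direction: on the spectral support \(\{\pi(\Rock)\ge t_j\}\) one writes \(\pi(I+\Rock)^{([\alpha]-m_{M+1})/q}=\pi(I+\Rock)^{([\alpha]-m_j)/q}\,\pi(I+\Rock)^{(m_j-m_{M+1})/q}\) and bounds the second factor by \((1+t_j)^{(m_j-m_{M+1})/q}\), which tends to \(0\) as \(t_j\to\infty\) because \(m_j<m_{M+1}\) for \(j\ge M+2\). (For \(j=M+1\) there is no decay, but none is needed, as \(b_{M+1}\in S^{m_{M+1}}\) already.) With the exponent corrected, your argument is the standard one and goes through.
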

In this case, we also write \(a \sim \sum a_j\). There is a well-defined principal symbol for these operators.	
\begin{definition}[\cite{fischer2017defect}*{4.17}]
	Let \(a\in S^m\) be a symbol that admits a homogeneous expansion \(a\sim\sum_{j=0}^\infty a_j\) with \(a_j\in \dot{S}^{m-j}\) as above. The \emph{principal part} of \(\Op(a)\) is defined as \(\princg_m(\Op(a))\defeq a_0\).  
\end{definition}
\begin{definition}[\cite{fischer2017defect}*{4.20}]
	A symbol \(a\in S^m_{\cp}\) is a \emph{classical pseudo\-differential symbol of order \(m\)} if it admits a homogeneous expansion \(a\sim\sum_{j=0}^\infty a_j\) with \(a_j\in \dot{S}^{m-j}\). 					
	Denote by \(S^m_{\class}\) the classical pseudo-differential symbols of order \(m\) and by \(\Psi^m_{\class}=\Op(S^m_{\class})\) the corresponding operators.		
\end{definition}
\begin{proposition}[\cite{fischer2017defect}*{4.19}]
	Let \(A\in\Op(S^m)\) and \(B\in\Op(S^l)\) be operators whose symbols have a homogeneous expansion as above. Then the symbols of \(AB\) and \(A^*\) admit homogeneous expansions as well. Moreover, the following holds
	\begin{align*}\princg_{m+l}(AB)&=\princg_{m}(A)\cdot \princg_{l}(B).\\
	\princg_{m}(A^*)&=\princg_{m}(A)^*.
	\end{align*}
	On the right hand side, the pointwise operations are used. 
\end{proposition}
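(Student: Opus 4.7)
The strategy is to invoke the composition and adjoint asymptotic formulae from the Fischer--Ruzhansky calculus and track how they interact with the homogeneous expansions. By \cref{res:calc} we already know $AB\in\Op(S^{m+l}_{\cp})$ and $A^*\in\Op(S^m_{\cp})$, so the task reduces to producing a homogeneous expansion of each symbol and identifying its leading term.

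For the composition, the calculus provides an asymptotic expansion of the symbol of $AB$ of the form
\[
c_{AB}(x,\pi)\sim \sum_{\alpha\in\N^n_0}\frac{1}{\alpha!}\,\Delta^\alpha a(x,\pi)\cdot X^\alpha_x b(x,\pi),
\]
whose $\alpha$-term lies in $S^{m+l-[\alpha]}$. Substituting the homogeneous expansions $a\sim\sum_j a_j$ with $a_j\in\dot S^{m-j}$ and $b\sim\sum_k b_k$ with $b_k\in\dot S^{l-k}$ yields a double series of homogeneous terms. The essential homogeneity input is that $\Delta^\alpha$ lowers the $\pi$-homogeneity by $[\alpha]$ (since it is the Fourier-transform side of multiplication by $x^\alpha$, and $x^\alpha$ has homogeneous degree $[\alpha]$ under $\sigma_\lambda$), whereas $X^\alpha_x$ acts only in the base variable and so preserves $\pi$-homogeneity. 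Thus $\Delta^\alpha a_j\cdot X^\alpha_x b_k\in\dot S^{m+l-j-k-[\alpha]}$. Regrouping by total homogeneous order and setting
\[
c_N(x,\pi) := \sum_{j+k+[\alpha]=N}\frac{1}{\alpha!}\,\Delta^\alpha a_j(x,\pi)\cdot X^\alpha_x b_k(x,\pi)\in\dot S^{m+l-N},
\]
a Borel-type rearrangement of the double asymptotic series gives $c_{AB}\sim\sum_N c_N$. The only contribution to $N=0$ is $j=k=0$, $\alpha=0$, so the principal symbol is $c_0 = a_0 b_0 = \princg_m(A)\cdot \princg_l(B)$.

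The adjoint case proceeds analogously using the adjoint asymptotic formula from the calculus, whose leading term is $a(x,\pi)^*$; substituting $a\sim\sum_j a_j$ and arguing exactly as above gives $\princg_m(A^*)=a_0^*=\princg_m(A)^*$. Uniqueness of the homogeneous expansion modulo $S^{-\infty}$ then guarantees that the principal symbols are well-defined and that the two identities hold. The chief obstacle in this approach is the verification that the difference operators $\Delta^\alpha$ genuinely restrict to maps $\dot S^{m-j}\to\dot S^{m-j-[\alpha]}$ of homogeneous symbol classes with the expected degree shift. This requires unpacking the definition of $\Delta^\alpha$ on the kernel side and matching the $\pi$-scaling with the $\lambda^{[\alpha]}$-scaling produced by $\sigma_\lambda$ on the monomials $x^\alpha$; once this is in place, all the manipulations of asymptotic series above are justified.
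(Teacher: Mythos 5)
The paper does not prove this proposition; it is quoted verbatim from \cite{fischer2017defect}*{4.19}, so there is no in-text argument to compare against. Your sketch is the standard proof and is correct in outline: the leading (\(\alpha=0\)) term of the composition expansion is the pointwise product \(a_0b_0\), the \(\alpha\)-terms drop order by \([\alpha]\geq 1\), and regrouping by total homogeneous order together with uniqueness of the expansion modulo \(S^{-\infty}\) gives both identities. Two points deserve more care than you give them. First, the homogeneous expansion is \(a\sim\sum_j a_j\psi(\pi(\Rock))\), not \(a\sim\sum_j a_j\): the homogeneous pieces \(a_j\in\dot S^{m-j}\) only become genuine symbols after the cutoff of \cref{cutoff}. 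When you substitute into the composition formula, the Leibniz rule for difference operators produces cross terms \(\bigl[\Delta^{\alpha_1}a_j\bigr]\bigl[\Delta^{\alpha_2}\psi(\pi(\Rock))\bigr]\) with \(\alpha_2\neq 0\); one must check (as in \cite{fischer2017defect}*{4.8}, used in \cref{res:diffsmoothing} of this paper) that these contribute only to strictly lower order, so that the regrouped series is still a homogeneous expansion with the same \(N=0\) term. Second, the fact that \(\Delta^\alpha\) restricts to \(\dot S^{m-j}\to\dot S^{m-j-[\alpha]}\) is not something you can wave at as an "obstacle to be verified later" — it is precisely the content that makes the regrouping legitimate, and it is established in \cite{fischer2017defect} from the kernel-side characterization (\(x^\alpha\) is \([\alpha]\)-homogeneous, so multiplication by it shifts a kernel of type \(\nu\) to one of type \(\nu+[\alpha]\), i.e.\ drops the symbol order by \([\alpha]\)). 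With those two ingredients supplied, your argument is complete and coincides with the proof in the cited reference.
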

In particular, \(\princg_0\) is a \(^*\)-homomorphism.
\begin{lemma}\label{psiext}
	For \(m\in\Z\), there are short exact sequences
	\begin{equation}\label{psdoext}
	\begin{tikzcd}
	\Psi^{m-1}_{\class}\arrow[hook,r] & \Psi^m_{\class}\arrow[r,twoheadrightarrow,"\princg_m"] &\dot{S}_c^m.
	\end{tikzcd}
	\end{equation}
	For \(m=0\), it is a short exact sequence of \(^*\)-algebras. 
\end{lemma}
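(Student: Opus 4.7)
My plan is to establish, for each \(m \in \Z\), that \(\princg_m\) is a well-defined surjection onto \(\dot{S}^m_c\) with kernel \(\Psi^{m-1}_{\class}\); the \(^*\)\nb-algebra refinement at \(m = 0\) will then follow from the product and adjoint formulas for principal symbols recalled just before the lemma.

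First I will check well-definedness. Uniqueness of \(a_0 \in \dot{S}^m\) as the leading term of a homogeneous expansion of a given \(a\in S^m_{\class}\) follows from the last clause of the homogeneous expansion proposition: if \(a \sim \sum a_j\) and \(a \sim \sum a'_j\), subtracting the \(M=0\) conditions gives \((a_0 - a'_0)\psi(\pi(\Rock)) \in S^{m-1}\), and the clause ``if the resulting symbol lies in \(S^{m'}\) with \(m' < m_0\) then \(a_0 = 0\)'' applied to the single-term expansion of \(a_0 - a'_0\) forces \(a_0 = a'_0\). To refine the target from \(\dot{S}^m\) to \(\dot{S}^m_c\), fix \(c \in \Cont_c^\infty(G)\) equal to \(1\) on the compact \(x\)\nb-support of \(a\); the equality \(a = c(x) a\) yields two expansions \(\sum a_j\) and \(\sum c(x) a_j\) of the same symbol, so by uniqueness \(c(x) a_0 = a_0\), placing the \(x\)\nb-support of \(a_0\) inside \(\supp c\).

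The inclusion \(\Psi^{m-1}_{\class} \subset \Psi^m_{\class}\) is obtained by reindexing: an expansion \(\sum_{j \geq 0} b_j\) with \(b_j \in \dot{S}^{m-1-j}\) also functions as an order\nobreakdash-\(m\) expansion with leading term~\(0\), so \(\princg_m\) vanishes there. Conversely, if \(a \in S^m_{\cp}\) satisfies \(\princg_m(\Op(a)) = a_0 = 0\), the defining condition of the homogeneous expansion at \(M = 0\) gives \(a \in S^{m-1}\), while the tail \(\{a_j\}_{j \geq 1}\) provides a classical expansion of order \(m - 1\); compactness of the integral kernel is inherited from \(a \in S^m_{\cp}\). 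Thus \(\ker \princg_m = \Psi^{m-1}_{\class}\).

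For surjectivity, given \(a \in \dot{S}^m_c\), the cutoff map from the preceding proposition produces \(c_m(a) \in S^m\), which has \(a\) as its single-term homogeneous expansion and inherits the compact \(x\)\nb-support. To meet the compact-integral-kernel requirement, I multiply the integral kernel \(\kappa_x(y^{-1}x)\) of \(\Op(c_m(a))\) by \(\chi(y^{-1}x)\) with \(\chi \in \Cont_c^\infty(G)\) identically \(1\) near the identity; the modified kernel is compactly supported in \(G \times G\) thanks to the compact \(x\)\nb-support, and the correction term is smooth on \(G \times G\) because \(\kappa_x\) is smooth away from \(0\). The off-diagonal rapid decay of kernels of \(S^m\)-symbols then places the correction in \(\Op(S^{-\infty}_{\cp})\), so the modified operator still lies in \(\Psi^m_{\class}\) and maps to \(a\). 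Finally, for \(m = 0\), the product and adjoint formulas quoted just before the lemma promote \(\princg_0\) to a \(^*\)\nb-homomorphism and \(\Psi^{-1}_{\class}\) to a \(^*\)\nb-ideal. The main obstacle will be the off-diagonal decay of \(\kappa_x\) needed to conclude that the kernel truncation is smoothing; this is standard in Fischer--Ruzhansky's framework but is not made explicit in the excerpt and must be extracted from their estimates on the kernel classes \(\kernel_{0,-m}(G)\).
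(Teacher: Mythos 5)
Your proposal is correct and follows essentially the same route as the paper: exactness away from surjectivity is handled by the uniqueness clause of homogeneous expansions, and surjectivity by applying the cutoff \(\psi(\pi(\Rock))\) and then truncating the kernel, with the error shown to be smoothing because its convolution kernel is supported away from the origin where \(\kappa_x\) is smooth. The only (immaterial) difference is the truncation itself — you multiply the convolution kernel by \(\chi(y^{-1}x)\), while the paper composes with a multiplication operator \(M_c\) in the \(y\)-variable — and the decay issue you flag at the end is glossed over in the paper in exactly the same way, by citing smoothness of \(\kappa_x\) away from zero.
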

\begin{proof}Except for surjectivity of the principal symbol map, exactness is clear. Let \(a_0\in\dot{S}^m_c\) and \(a(x,\pi)\defeq a_0(x,\pi)\psi(\pi(\Rock))\in S^m\) as in \cref{cutoff}. Then \(\princg_m(\Op(a))=a_0\) holds. We need to adjust \(a\) in a way such that its integral kernel \(K_a\) is compactly supported.  Let \(r>0\) be such that the support of \(a\) in the \(x\)-direction in contained in \(B(0,r)\) with respect to the homogeneous quasi-norm. Choose \(c\in\Cont^\infty_c(G)\) which is constant \(1\) on \(B(0,2C r)\). Here, \(C\) is the constant from the homogeneous triangle inequality in \cref{triangle}. Denote by \(M_c\colon L^2(G)\to L^2(G)\) the multiplication operator \(\varphi\mapsto c\cdot\varphi\). It belongs to \(\Psi^0_{\class}\) with symbol given by \(c(x)\Id_{\Hils_\pi}\) for \(x\in G\), \(\pi\in\widehat{G}\). Let \(Q\defeq\Op(a)M_c\). Its integral kernel \(K_a(x,y)c(y)\) is compactly supported. Moreover, \(Q\) belongs to \(\Psi^m_{\class}\) and 
	\[\princg_m(Q)=\princg_m(\Op(a))-\princg_m(\Op(a)-Q).\]
	We show that last term is zero. Let \(\kappa\) denote the convolution kernel of \(\Op(a)\). The convolution kernel of \(\Op(a)-Q\) is \(\kappa_x(y)(1-c(xy^{-1}))\). This is only non-zero if \(\norm{xy^{-1}}\geq 2C r\) and \(\norm{x}<r\). The homogeneous triangle inequality implies that \(\norm{y}\geq r\). But the \(\kappa_x\) are smooth outside zero by~\cite{fischer2016quantization}*{5.4.1}. Therefore, \(\Op(a)-Q\) is a smoothing operator and its principal symbol vanishes.
\end{proof}
\subsection{Comparison of the symbol algebras}
In this section, we identify \(\Fix^\Rp(J_G,\cl{\Rel_G})\) with the \(\Cst\)-algebra of invariant \(0\)-homogeneous symbols defined in \cite{fischer2017defect}*{5.1, 5.5}.

\begin{definition}
	The \(^*\)-algebra of \emph{invariant \(0\)\nb-homogeneous symbols} \(\tilde{S}^0\) consists of all \(a\in L^\infty(\widehat{G}, \Bound(\Hils_\pi))\) with \begin{enumerate}
		\item \(a(x,\lambda\cdot\pi)=a(x,\pi)\) for almost all \(\pi\) and \(\lambda>0\),
		\item for all \(\alpha\in\N^n_0\), the field of operators \(\Delta^\alpha a(\pi)\) is defined on smooth vectors and satisfies
		\[\sup_{\pi\in\widehat{G}}\norm{\Delta^\alpha a(\pi)\pi(\Rock)^\frac{[\alpha]}{q}}_{\Bound(\Hils_\pi)}<\infty,\]			
	\end{enumerate}	
	The \emph{\(\Cst\)-algebra of invariant \(0\)-homogeneous symbols} \(\Cst(\tilde{S}^0)\) is the closure of \(\tilde{S}^{0}\) with respect to \(\norm{a}=\sup_{\pi\in\widehat{G}/\Rp}{\norm{a(\pi)}}\). 
\end{definition}
\begin{lemma}
	The \(^*\)-algebra \(\kernel^0(G)\) of kernels of type \(0\) is isomorphic to \(\tilde{S}^0\) under Fourier transform. Moreover, \(\Fix^\Rp(J_G,\cl{\Rel_G})\) is isomorphic to \(\Cst(\tilde{S}^0)\).
\end{lemma}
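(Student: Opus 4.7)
The plan is to prove the two statements in sequence: first identify $\kernel^0(G)$ with $\tilde{S}^0$ via the operator-valued Fourier transform, and then use \cref{res:kernelzerograded} to pass to $\Cst$-closures.

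For the first isomorphism, I would start with $u \in \kernel^0(G)$. By \cref{res:type0bounded}, $T_u \colon \varphi \mapsto u*\varphi$ extends to a bounded operator on $L^2(G)$ that is invariant under the appropriate side of translation, hence lies in the group von Neumann algebra. Applying the Fourier transform \eqref{eq:Fourier-VN-Linfty} gives a field $a \defeq \widehat{u} \in L^\infty(\widehat{G},\Bound(\Hils_\pi))$. The $\Rp$-equivariance of the Fourier transform together with the identity $\sigma_{\lambda*}u = u$ yields $a(\lambda\cdot\pi) = a(\pi)$ for almost every $\pi$ and every $\lambda > 0$. For the growth condition, note that if $u \in \kernel^0(G)$ then $x^\alpha u$ is smooth off $0$ and $[\alpha]$-homogeneous, hence $x^\alpha u \in \kernel^{[\alpha]}(G)$; by the definition of difference operators, $\Delta^\alpha a(\pi) = \widehat{x^\alpha u}(\pi)$, and $\kernel^{[\alpha]}(G) = \kernel_{[\alpha],0}(G)$ translates under Fourier transform to $\sup_\pi \norm{\Delta^\alpha a(\pi)\pi(\Rock)^{[\alpha]/q}} < \infty$ (cf.\ \cite{fischer2016quantization}*{5.1.24}). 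Conversely, given $a \in \tilde{S}^0$, the convolution kernel $u$ obtained by inverting the Fourier transform is a $0$-homogeneous tempered distribution; the decay estimates on $\Delta^\alpha a$ translate back to integrability of $x^\alpha u$ away from $0$ of all orders, which by standard arguments forces smoothness of $u$ outside the origin. The $^*$-algebra structures correspond under Fourier transform because the Fourier transform intertwines convolution and pointwise composition (and involution with pointwise adjoint).

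For the second claim, I combine the above with \cref{res:kernelzerograded}, which identifies $\Fix^\Rp(J_G,\cl{\Rel_G})$ with the norm closure in $\Bound(L^2 G)$ of $\{T_u : u \in \kernel^0(G)\}$. Since $G$ is amenable and nilpotent (hence liminal), the restriction of the Fourier transform to $\Cst$-subalgebras of $\VN_L(G)$ is isometric with respect to the $\Cst$-norm on $\Cst(G)$ and the supremum norm $\sup_{\pi \in \widehat{G}}\norm{a(\pi)}$. Transferring the $T_u \mapsto \widehat{u}$ isomorphism to the $\Cst$-closure therefore identifies $\Fix^\Rp(J_G,\cl{\Rel_G})$ with the completion of $\tilde{S}^0$ under $\sup_\pi \norm{a(\pi)}$, which by definition is $\Cst(\tilde{S}^0)$. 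Invariance under the $\Rp$-action reduces the supremum over $\widehat{G}$ to the supremum over $\widehat{G}/\Rp$, matching the norm of $\Cst(\tilde{S}^0)$ as stated.

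The main obstacle I anticipate lies in the passage between pointwise operator norms on $\widehat{G}$ and essential-supremum norms with respect to the Plancherel measure: the latter is supported only on the stratum of generic orbits, whereas $\Cst(\tilde{S}^0)$ is defined by the supremum over \emph{all} orbits in $\widehat{G}/\Rp$. One has to argue that for the symbols arising from kernels of type $0$, the generic orbits are norm-determining; this should follow from the stratification results of \cref{thm:ordering} combined with upper semicontinuity of $\pi \mapsto \norm{a(\pi)}$ on $\widehat{G}$ for elements of $\tilde{S}^0$ (a consequence of the liminal property and the invariance under dilations, since each non-generic orbit is a limit of generic ones under dilation scaling).
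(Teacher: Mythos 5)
Your overall route coincides with the paper's: establish the Fourier transform correspondence between \(\kernel^0(G)\) and \(\tilde{S}^0\) at the level of dense \(^*\)\nb-algebras, then pass to completions using \cref{res:kernelzerograded}. The paper disposes of the first step by citing \cite{fischer2017defect}*{5.3}, so your reconstruction of that argument (homogeneity of \(x^\alpha u\), the dictionary between \(\Delta^\alpha\) and multiplication by \(x^\alpha\), boundedness from \cref{res:type0bounded}) is added detail rather than a divergence, and it is broadly in line with how that result is actually proved.

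The one place where your proposal does independent work is the norm comparison on the completion, and there the argument has a gap. You correctly isolate the issue: the Plancherel isomorphism identifies the operator norm of \(T_u\) on \(L^2(G)\) with an \emph{essential} supremum of \(\norm{\widehat{u}(\pi)}\) over the Plancherel-generic stratum, whereas \(\Cst(\tilde{S}^0)\) is completed in the supremum over all of \(\widehat{G}/\Rp\). But the semicontinuity you invoke points the wrong way: to dominate the value at a non-generic orbit by the generic supremum you need \emph{lower} semicontinuity of \(\pi\mapsto\norm{a(\pi)}\), so that a large value at a degenerate \(\pi_0\) forces large values on a neighbourhood, which then meets the dense generic stratum; upper semicontinuity gives the useless inequality. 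Moreover, the standard lower semicontinuity statement applies to elements of a \(\Cst\)\nb-algebra evaluated on its spectrum, while here the symbol comes from a multiplier of \(J_G\), and \(\pi\mapsto\norm{\widetilde{\pi}(T)}\) for \(T\in\Mult(J_G)\) is not lower semicontinuous without further argument. The clean way to close this within the paper is the computation of the spectrum in \cref{fullness}: every irreducible representation of \(\Fix^\Rp(J_G,\cl{\Rel_G})\) is of the form \(\widetilde{\pi}\) with \(\pi\in\widehat{G}\setminus\{\pi_{\triv}\}\), so the abstract \(\Cst\)\nb-norm of \(T\) equals \(\sup_{[\pi]}\norm{\widetilde{\pi}(T)}\), which is precisely the norm defining \(\Cst(\tilde{S}^0)\). (The paper's own one-line appeal to ``uniqueness of the \(\Cst\)\nb-completion'' is silently relying on this, or on \cite{fischer2017defect}*{5.5}.)
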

\begin{proof}By \cite{fischer2017defect}*{5.3} the kernels of type \(0\) correspond exactly to the invariant, \(0\)-homogeneous symbols under Fourier transform. By uniqueness of the \(\Cst\)-completion, \(\Cst(\tilde{S}^0)\) is isomorphic to the \(\Cst\)-algebra generated by kernels of type \(0\). This is \(\Fix^\Rp(J_G,\cl{\Rel_G})\) by \cref{res:kernelzerograded}.
\end{proof}
By \cref{res:symbolfix_field}, \(\Fix^\Rp(J_0,\cl{\Rel_0})\) is isomorphic \(\Cont_0(G,\Fix^\Rp(J_G,\cl{\Rel_G}))\).
Identifying the space of~\(0\)-homogeneous symbols with compact support in the \(x\)-direction \(\dot{S}^0_c\) with \(\Cont^\infty_c(G,\tilde{S}^0)\) as in the proof of \cite{fischer2017defect}*{5.9}, yields the following result.
\begin{corollary}\label{res:symbols=fix0}
	The \(^*\)-algebra of \(0\)-homogeneous symbols \(\dot{S}^0_c\) is isomorphic under inverse Fourier transform to a dense \(^*\)-subalgebra of \(\Fix^\Rp(J_0,\cl{\Rel_0})\).
\end{corollary}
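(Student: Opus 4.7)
The plan is to combine three earlier ingredients: the preceding lemma identifying \(\Cst(\tilde{S}^0)\) with \(\Fix^\Rp(J_G,\cl{\Rel_G})\) via inverse Fourier transform; the trivialization
\[\Theta\colon\Fix^\Rp(J_0,\cl{\Rel_0})\congto\Cont_0(G,\Fix^\Rp(J_G,\cl{\Rel_G}))\]
from \cref{res:symbolfix_field}; and the identification \(\dot{S}^0_c\cong\Cont^\infty_c(G,\tilde{S}^0)\) used in the proof of \cite{fischer2017defect}*{5.9}. Here \(\tilde{S}^0\) carries the Fréchet topology generated by the seminorms \(a\mapsto\sup_\pi\norm{\Delta^\alpha a(\pi)\pi(\Rock)^{[\alpha]/q}}\) for \(\alpha\in\N^n_0\); the bounds on \(X^\beta_x\Delta^\alpha a(x,\pi)\pi(\Rock)^{[\alpha]/q}\) in the definition of \(\dot{S}^0\), together with compact \(x\)-support, encode exactly smooth compactly supported maps \(x\mapsto a(x,\cdot)\) into \(\tilde{S}^0\).

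Second, I would build the map of the statement by postcomposing this identification with the inclusion \(\tilde{S}^0\injto\Fix^\Rp(J_G,\cl{\Rel_G})\), which is continuous because the Fréchet topology on \(\tilde{S}^0\) dominates the operator norm, and then with \(\Theta^{-1}\), giving
\[\Phi\colon\dot{S}^0_c\to\Cont_c(G,\Fix^\Rp(J_G,\cl{\Rel_G}))\subset\Fix^\Rp(J_0,\cl{\Rel_0}).\]
Injectivity of \(\Phi\) follows from the fiberwise injectivity of inverse Fourier transform on \(\kernel^0(G)\). As the product and involution on \(\dot{S}^0_c\) act pointwise in \(x\) and correspond on each fiber to the \(^*\)-algebra operations on \(\kernel^0(G)\), the map \(\Phi\) is a \(^*\)-homomorphism.

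For density I would approximate elementary tensors \(c\otimes T\) with \(c\in\Cont_c(G)\) and \(T\in\Fix^\Rp(J_G,\cl{\Rel_G})\), whose linear span is dense in \(\Cont_0(G,\Fix^\Rp(J_G,\cl{\Rel_G}))\). Pick \(T_0\in\tilde{S}^0\) close to \(T\) in operator norm and \(c_0\in\Cont^\infty_c(G)\) close to \(c\) in the supremum norm. Then \(a(x,\pi)\defeq c_0(x)T_0(\pi)\) lies in \(\dot{S}^0_c\), since \(X^\beta_x\Delta^\alpha a(x,\pi)=(X^\beta c_0)(x)\Delta^\alpha T_0(\pi)\) inherits the required uniform bounds from \(T_0\in\tilde{S}^0\), and \(\Phi(a)\) is uniformly close to \(c\otimes T\) in \(\Cont_0(G,\Fix^\Rp(J_G,\cl{\Rel_G}))\). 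The main obstacle, which I would delegate to \cite{fischer2017defect}*{5.9}, is the first step: verifying that the joint mixed seminorms on \(\dot{S}^0_c\) precisely encode smoothness as maps into the Fréchet space \(\tilde{S}^0\); the remaining verifications are then routine.
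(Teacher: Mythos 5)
Your proposal is correct and follows essentially the same route as the paper: the paper also obtains the corollary by combining the preceding lemma identifying \(\tilde{S}^0\) with the kernels of type \(0\) inside \(\Fix^\Rp(J_G,\cl{\Rel_G})\), the trivialization \(\Theta\) from \cref{res:symbolfix_field}, and the identification \(\dot{S}^0_c\cong\Cont^\infty_c(G,\tilde{S}^0)\) delegated to \cite{fischer2017defect}*{5.9}. You merely spell out the density and \(^*\)-homomorphism checks that the paper leaves implicit.
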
	
 
\subsection{Comparison of the operators}
To compare the sequence of generalized fixed point algebras to the order zero pseudo-differential extension from \eqref{psdoext}, we show that operators in \(\widetilde{p}_1(\Fix^\Rp(J,\cl{\Rel}))\) can be written as in \eqref{eq:quantization} in terms of a symbol.

\begin{lemma}\label{res:T_i(h)_special}
	Let \(f\in\Rel_\grpd^*\) with \(f_0\in\Schwartz_0(TG)\). Define operators \(T_i(f)\) by
	\[T_i(f)\varphi(x)=\int_0^\infty \chi_i(\lambda)\lambda^{-Q}\int f(x,\lambda,\alpha_{\lambda^{-1}}(x^{-1}y))\varphi(y)\diff y \tfrac{\diff\lambda}{\lambda}\]for \(\varphi\in L^2(G)\) and \(x\in G\). Then \((T_i(f))\) converges strictly to an element \(T(f)\in\widetilde{p}_1(\Fix^\Rp(J,\cl{\Rel})) \). 
\end{lemma}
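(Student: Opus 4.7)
I would decompose $f = f_{(1)} + f_{(2)}$, where $f_{(1)} \in \Rel_\grpd^*$ is a finite sum $\sum_{j=1}^N h_j^* * g_j$ with $h_j, g_j \in \Rel_\grpd^*$, and $f_{(2)} \defeq f - f_{(1)} \in \Rel_\grpd^* \cap \ker(p_0)$. Granting this decomposition, \cref{res:fix_on_l2} gives strict convergence $T_i(h_j^* * g_j) \to \widetilde{p}_1(\KET{h_j}\BRA{g_j})$ for each $j$, so $T_i(f_{(1)}) \to \sum_j \widetilde{p}_1(\KET{h_j}\BRA{g_j})$ strictly, with limit in $\widetilde{p}_1(\Fix^\Rp(J,\cl{\Rel}))$; \cref{res:compact_fact} applied to $f_{(2)}$ yields norm convergence of $T_i(f_{(2)})$ to an element of the same algebra. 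Summing proves the lemma.

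\textbf{Construction of the decomposition.} The splitting reduces to a fibrewise factorization of $f_0$ at $t=0$: one seeks $h_j, g_j \in \Rel_\grpd^*$ with $\sum_j (h_j^* * g_j)|_{t=0} = f_0$, after which multiplying $h_j, g_j$ by a cutoff $\omega \in \Cont_c^\infty([0,\infty))$ with $\omega(0) = 1$ produces an $f_{(1)} \in \Rel_\grpd^*$ so that $(f - f_{(1)})|_{t=0} = 0$. At $t=0$ the groupoid convolution restricts to fibrewise convolution in $v$, so the task is to find
\[f_0(x,v) = \sum_{j=1}^N \bigl((h_j)_0^*(x,\cdot)\,\ast\,(g_j)_0(x,\cdot)\bigr)(v)\]
in the fibre $T_xG \cong G$. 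To produce this I would adapt the proof of \cref{res:delta} to carry a smooth parameter: since $f_0$ is smooth and compactly supported in $x$ and $\Schwartz_0(G)$-valued in $v$, and $\Schwartz_0(G)$ is closed under the relevant operations by \cref{res:schwartz0}, a parametric Dixmier--Malliavin factorization yields $f_0 = k * l$ with $k, l$ smooth compactly supported in $x$ and Schwartz in $v$; the further identity $k = \sum_i X_i k_i$ with $k_i(x,\cdot) \in \Schwartz_0(G)$, combined with the integration-by-parts relation $(X_i k_i) * l = k_i * Y_i l$, produces $f_0 = \sum_i k_i * Y_i l$ of the required form.

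\textbf{Main obstacle.} The delicate point is preserving smooth compactly supported $x$-dependence through the Dixmier--Malliavin factorization. The construction in~\cite{dixmier1978factorization} is explicit enough to admit a parametric version, but requires careful bookkeeping. As a fallback, one can proceed approximately: given $\varepsilon > 0$, build $f_{(1)}^\varepsilon$ of the factored form with $\norm{f_0 - (f_{(1)}^\varepsilon)_0}_{(5)} < \varepsilon$, for instance by first approximating $f_0$ by a finite separated sum $\sum_k \varphi_k(x)\eta_k(v)$ with $\varphi_k \in \Cont_c^\infty(G)$ and $\eta_k \in \Schwartz_0(G)$, and then factoring each $\eta_k$. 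The uniform-in-$i$ bound~\eqref{normestimate} controls the error's contribution to $T_i$, and since $\widetilde{p}_1(\Fix^\Rp(J,\cl{\Rel}))$ is norm-closed in $\Bound(L^2G)$, letting $\varepsilon \to 0$ upgrades approximate identification to the strict convergence asserted by the lemma.
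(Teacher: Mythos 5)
Your proposal is correct and follows essentially the same route as the paper: the paper likewise factorizes $f_0=\sum g_i^**h_i$ via Dixmier--Malliavin as in \cref{res:delta}, lifts the factors to elements of $\Rel_\grpd^*$ restricting to them at $t=0$, and then applies \cref{res:fix_on_l2} to the factored part and \cref{res:compact_fact} to the remainder in $\ker(p_0)$. Your extra attention to the smooth compactly supported $x$-dependence in the parametric factorization (and the approximation fallback using \eqref{normestimate} and norm-closedness) supplies detail the paper leaves implicit, but it is the same argument.
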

\begin{proof}Using the Dixmier--Malliavin Theorem as in the proof of \cref{res:delta} one can factorize \(f_0=\sum_{i=1}^n g_i^**h_i\) with \(g_i,h_i\in\Rel_0\). Choose \(G_i,H_i\in\Rel^*_\grpd\) that restrict to \(g_i,h_i\) at \(t=0\). Then we can write
	\[T_i(f)=T_i\left(f-\sum_{i=1}^n{G_i^**H_i}\right)-\sum_{i=1}^n{T_i(G_i^**H_i)}.\]
	The first part converges strictly in \(\widetilde{p}_1(\Fix^\Rp(J,\cl{\Rel}))\) by \cref{res:compact_fact}, whereas the second converges by \cref{res:fix_on_l2}.
\end{proof}

We consider now a slightly different generalized fixed point algebra. The reason is that one has to understand the convolution kernels \(\kappa_x\) as a family of left-invariant operators in order to take their Fourier transform. Let \(B\defeq\Cst(TG)\otimes\Cont_0([0,\infty))\) 
with \(\Rp\)-action given by \[\beta_\lambda(h)(x,v,t)=\lambda^Qh(x,\lambda \cdot v,\lambda^{-1}t) \quad\text{for } \lambda>0 \text{ and }h\in\Cont_c(TG\times[0,\infty)).\]For \(t\geq 0\) let \(\ev_t\colon B\to \Cst(TG)\) be the homomorphism induced by the restriction.
Define the following~\(\Rp\)-invariant ideal \(J_B\) with dense subset \(\Rel_B\subset J_B\): \begin{align*}J_B&=\bigcap_{x\in G}\ker(\widehat{\pi}_{\triv}\circ q_x\circ\ev_0), \\
\Rel_B&=\left\{h\in\Cont^\infty_c([0,\infty)\times G,\Schwartz(G)) \,\middle|\,\int_G h(x,v,0)\diff v=0 \text{ for all }x\in G\right\}.\end{align*}
Similar arguments as in \cref{res:mainestimate} and \cref{res:Jisfixable} show:
\begin{lemma}
	The \(^*\)-subalgebra \(\Rel_B\subset J_B\) is square-integrable for the action \(\beta\) of \(\Rp\). Furthermore,
	\((J_B,\cl{\Rel_B})\) is a continuously square-integrable \(\Rp\)-\(\Cst\)-algebra. 
\end{lemma}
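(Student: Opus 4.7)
The plan is to mirror the proofs of \cref{res:mainestimate} and \cref{res:Jisfixable}, exploiting that the algebra $B=\Cst(TG)\otimes\Cont_0([0,\infty))$ carries a simpler multiplicative structure than $\Cst(\grpd)$: the product is convolution in the $TG$\nb-fibre direction and pointwise in $t$, with no twisting by the dilation action at $t>0$. The main technical ingredient is an $L^1$\nb-estimate: for $h_1,h_2\in\Rel_B$ the map $\lambda\mapsto\beta_\lambda(h_1)^*\cdot h_2$ lies in $L^1(\Rp,J_B)$ with respect to the Haar measure $\tfrac{\diff\lambda}{\lambda}$. By the symmetry $\norm{\beta_\lambda(h_1)^*\cdot h_2}_I=\norm{h_1^*\cdot\beta_{\lambda^{-1}}(h_2)}_I$ it suffices to integrate over $[1,\infty)$.

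For $\lambda\geq 1$ I would write out the $v$\nb-convolution, substitute $w\mapsto\alpha_\lambda(w)$ to absorb the factor $\lambda^Q$ from $\beta_\lambda$, and then decompose $h_1(x,w,\lambda^{-1}t)=h_1(x,w,0)+R_1(x,w,\lambda^{-1}t)$ with $\abs{R_1}=O(\lambda^{-1}t)$ using the $t$\nb-derivative. The $R_1$\nb-contribution yields a factor $\lambda^{-1}$ directly. For the remaining term I would invoke the vanishing condition $\int_G h_1(x,w,0)\diff w=0$ together with a mean-value estimate in the $v$\nb-variable of $h_2$ in the spirit of \cref{res:meanvalue}, replacing $h_2$ evaluated at the nearby point $\alpha_{\lambda^{-1}}(w)v$ by $h_2(x,v,t)$ at the cost of factors $\lambda^{-q_j}\leq\lambda^{-1}$. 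The remaining $v$- and $w$-integrals converge uniformly in $(x,t)$ by the Schwartz decay of $h_1,h_2$ and \cref{res:convint}. This produces a bound $\norm{\beta_\lambda(h_1)^*\cdot h_2}_I\leq C\lambda^{-1}$, with $C$ depending on seminorms of $h_1$ and $h_2$, and hence the desired integrability. By \cite{meyer2001}*{6.8} this already yields square-integrability of every $h\in\Rel_B$ together with the relative continuity $\BRAKET{h_1}{h_2}\in\Cred(\Rp,J_B)$.

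The second statement will then follow from \cref{res:completion} once three further points are checked: density of $\Rel_B$ in $J_B$, $\beta$\nb-invariance of $\Rel_B$, and $\Rel_B\cdot\Rel_B\subset\Rel_B$. Invariance is immediate from the $Q$\nb-homogeneity of the Haar measure on $G$; closure under multiplication follows from the computation $\int_G(h_1\cdot h_2)(x,v,0)\diff v=\bigl(\int_G h_1(x,w,0)\diff w\bigr)\bigl(\int_G h_2(x,u,0)\diff u\bigr)=0$ using unimodularity of $G$; density is obtained by the cutoff procedure of \cref{res:Jisfixable}, approximating $f\in J_B$ first by a smooth compactly supported function $g$ and then subtracting the mean obstruction $x\mapsto\int_G g(x,v,0)\diff v$ via an elementary tensor $k(v)\omega(t)$ with $\int_G k(v)\diff v=1$ and $\omega(0)=1$. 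The main obstacle is the bookkeeping in the $L^1$\nb-estimate, but the absence of the $R_2$\nb-type term from \cref{res:mainestimate} (which there arose from the twisted multiplication in $\grpd$ at $t>0$) makes the estimate strictly simpler than the one already carried out.
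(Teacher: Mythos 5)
Your proof is correct and follows essentially the route the paper intends: the lemma is stated there without a written proof, only with the remark that it follows by the same arguments as \cref{res:mainestimate} and \cref{res:Jisfixable}, and your $R_1$\nb-decomposition in $t$ plus the vanishing-integral/mean-value step in $v$, followed by \cite{meyer2001}*{6.8} and \cref{res:completion}, is exactly that adaptation. One small inconsistency: your closing claim that the $R_2$\nb-type term is absent contradicts the body of your own argument --- the mean-value comparison of $h_2$ at $\alpha_{\lambda^{-1}}(w)v$ and at $v$ \emph{is} the $R_2$ step and is still needed to extract the factor $\lambda^{-1}$ from the main term; what is genuinely simpler is only that the base point of $h_2$ stays at $x$ rather than moving to $x\alpha_t(v)$.
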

Hence, \(\Fix^\Rp(J_B,\cl{\Rel_B})\) is defined. The evaluations at \(t=1\) and \(x\in G\) composed with the right regular representation \(\Cst(G)\to\Bound(L^2G)\), yield strictly continuous representations
\[\widetilde{\rho}_x\colon \Fix^\Rp(J_B,\cl{\Rel_B})\to \VN_L(G).\]
\begin{lemma}\label{res:J_B_symbols}
	For \(h\in\Rel_B\) with \(h_0\in\Schwartz_0(TG)\) \[\int_0^\infty\chi_i(\lambda)\beta_\lambda(h)\tfrac{\diff\lambda}{\lambda}\] converges strictly to an element of \(\Fix^\Rp(J_B,\cl{\Rel_B})\). Its image under \(\widetilde{\rho}_x\) is given by
	\[\widetilde{\rho}_x(h)\varphi=\lim_{i}{ \int_0^\infty\chi_i(\lambda)\lambda^{-Q}\rho_x\left(h(x,\alpha_{\lambda^{-1}}(\,\cdot\,),\lambda)\right)\varphi\,\tfrac{\diff\lambda}{\lambda}}\quad\text{for }\varphi\in L^2(G).\]
\end{lemma}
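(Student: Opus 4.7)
The plan is to follow the strategy of \cref{res:T_i(h)_special}, adapted to the algebra $B$. Since $h_0\in\Schwartz_0(TG)$ is smooth and compactly supported in the base variable $x$, a parametric version of the factorization used in the proof of \cref{res:delta} yields finitely many $g_j, k_j\in\Rel_0$ such that $h_0 = \sum_{j=1}^{N} g_j^* * k_j$. I would then choose smooth lifts $G_j, K_j \in \Rel_B$ with $\ev_0(G_j) = g_j$ and $\ev_0(K_j) = k_j$, for instance by multiplying with a cut-off $\omega(t)$ satisfying $\omega(0) = 1$. This gives a decomposition $h = h' + \sum_{j=1}^{N} G_j^**K_j$ in $\Rel_B$ with $h'|_{t=0} = 0$. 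For each term in the sum, \cref{res:strictlimit} directly gives that $\int_0^\infty \chi_i(\lambda)\beta_\lambda(G_j^**K_j)\tfrac{\diff\lambda}{\lambda}$ converges strictly to $\KET{G_j}\BRA{K_j}\in\Fix^\Rp(J_B,\cl{\Rel_B})$.

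For the remainder $h' \in \Rel_B \cap \ker(\ev_0)$, I would write $h' = t\cdot\tilde{h}'$ with $\tilde{h}' \in \Cont^\infty_c([0,\infty)\times G, \Schwartz(G))$, so that $\beta_\lambda(h') = \lambda^{-1}\, m_t\,\beta_\lambda(\tilde{h}')$, where $m_t$ denotes multiplication by the $t$-coordinate. For $b\in J_B$, the defining condition $\int_G b(x,v,0)\diff v = 0$ provides cancellations against $\beta_\lambda(\tilde{h}')$, which concentrates near $t=0$ as $\lambda\to 0$; these cancellations should make $\norm{\beta_\lambda(h')\,b}_B$ integrable against $\tfrac{\diff\lambda}{\lambda}$ near $\lambda = 0$, while the compact $t$-support of $\tilde{h}'$ and the $\Cont_0$-decay of $b$ handle the regime $\lambda\to\infty$. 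Hence $\int_0^\infty \chi_i(\lambda)\,\beta_\lambda(h')\,b\,\tfrac{\diff\lambda}{\lambda}$ will be norm-Cauchy in $J_B$ (and similarly with $b$ on the left), so the integral converges strictly in $\Mult(J_B)$ to a $\beta$-invariant multiplier. This multiplier then lies in $\Fix^\Rp(J_B, \cl{\Rel_B})$ after identifying it as a norm limit of finite sums $\sum\KET{F_k}\BRA{L_k}$.

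Finally, the formula for $\widetilde{\rho}_x$ follows from the strict continuity of $\widetilde{\rho}_x\colon \Mult^\Rp(J_B) \to \VN_L(G)$: one interchanges $\widetilde{\rho}_x$ with the strict limit, evaluates $\beta_\lambda(h)$ at $t = 1$ to obtain the convolution kernel $v\mapsto \lambda^Q h(x,\alpha_\lambda v, \lambda^{-1})$ on $G$, and performs the change of variables $w = \alpha_\lambda v$ in the right regular representation $\rho_x$ to obtain the stated formula involving $\lambda^{-Q}$ and $\alpha_{\lambda^{-1}}$. The main technical obstacle in this plan is making the $\lambda\to 0$ decay of $\norm{\beta_\lambda(h')\,b}_B$ precise: it requires combining the characterization of $J_B$ via the trivial representation with an analysis of how the zoom action interacts with the spectrum of $\Cst(G)$, so that the vanishing of $b$ at the trivial representation combines with the concentration of $\beta_\lambda(\tilde{h}')$ near the identity to yield the required integrability.
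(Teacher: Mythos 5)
Your architecture matches the paper's: factorize \(h_0=\sum_j g_j^{*}*k_j\) by Dixmier--Malliavin, lift to \(\Rel_B\), handle the product terms by \cref{res:strictlimit}, isolate a remainder \(h'\in\Rel_B\cap\ker(\ev_0)\), and derive the formula for \(\widetilde{\rho}_x\) from strict continuity together with \(\ev_t\circ\beta_\lambda=\lambda^{Q}\ev_{\lambda^{-1}t}\) and the substitution \(\lambda\mapsto\lambda^{-1}\); all of that is fine. The divergence, and the gap, is in the remainder term. The paper does not invoke the trivial-representation condition on \(b\in J_B\) there at all: it trivializes \(\ker(\ev_0)\cong\Cont_0(\Rp)\otimes\Cst(TG)\) via \(\Psi(h)(t)=t^{-Q}\ev_t(h)\) (absorbing the fibrewise dilation), under which \(\beta\) becomes \(\tau\otimes 1\), an action on the base \(\Rp\) alone that is free and proper; since \(h'|_{t=0}=0\) gives \(\norm{\Psi(h')(t)}\lesssim t\) on the compact \(t\)-support, the averaged integral converges exactly as in \cref{res:compact_fact} and \cref{res:trivialfix}. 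No analysis of how the zoom action interacts with the spectrum of \(\Cst(G)\) is needed for this step.

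Your proposed mechanism, by contrast, does not close. The claim that \(\norm{\beta_\lambda(h')\,b}_B\) is \(\tfrac{\diff\lambda}{\lambda}\)-integrable near \(\lambda=0\) for general \(b\in J_B\) is false: taking \(t=\lambda T/2\) in \(\sup_t\norm{\ev_t(\beta_\lambda(h'))\ev_t(b)}\) produces \(\norm{\sigma_\lambda(g_0)\,\ev_{\lambda T/2}(b)}\) with \(g_0=\ev_{T/2}(h')\) fixed and nonzero, and the vanishing of \(\widehat{\ev_0(b)}\) at \(\pi_{\triv}\) only yields \(\norm{\sigma_\lambda(g_0)\,\ev_0(b)}\to 0\) qualitatively, with no rate -- the quantitative \(O(\lambda)\) bounds of this type in the paper (\cref{res:mainestimate}) require vanishing moments and Schwartz-type decay on \emph{both} factors, which a generic \(b\in J_B\) does not have. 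So the norm is in general only \(o(1)\), not integrable. What actually makes the tail integrals small is the support constraint hidden by the sup over \(t\): for fixed \(t\), \(\ev_t(\beta_\lambda(h'))\) vanishes unless \(\lambda\geq t/T\), and after substituting \(s=\lambda^{-1}t\) the measure \(\tfrac{\diff\lambda}{\lambda}\) combines with \(\norm{\ev_s(h')}\lesssim s\) into a finite measure on \([0,T]\), uniformly in \(t\). This interchange of \(\sup_t\) with the \(\lambda\)-integral is precisely what the trivialization \(\Psi\) packages, and it is the step you leave open as the ``main technical obstacle''; as written, your route would stall there.
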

\begin{proof} The first claim is proved as in \cref{res:T_i(h)_special}. This uses that there is an isomorphism
	\[\Psi\colon\ker(\ev_0)\to\Cont_0(\Rp)\otimes\Cst(TG)\]
	which is induced by \(\Psi(h)(t)=t^{-Q}\ev_t(h)\). The action \(\beta\) corresponds to \(\tau\otimes 1\) under the isomorphism. Here, \(\tau\) is induced by the action of \(\Rp\) on itself by scaling.	
	The second claim follows from strict continuity of \(\widetilde{\rho}_x\) and the computation \(\ev_t\circ\beta_\lambda(h)=\lambda^Q\ev_{\lambda^{-1}t}(h)\) for \(\lambda,t>0\).
\end{proof}

\begin{lemma}\label{kernelssymbols}
	For \(f\in\Rel^*_\grpd\) with \(f_0\in\Schwartz_0(TG)\), there is a symbol \(\{a_f(x,\pi)\in\Bound(\Hils_\pi)\mid x\in G, \pi\in\widehat{G}\}\) such that \(T(f)=\Op(a_f)\).
\end{lemma}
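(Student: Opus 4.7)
The plan is to identify $T(f)$, which is built out of a family of convolution-type kernels over the range space $G$, with a quantization $\Op(a_f)$ by repackaging $f$ inside the auxiliary algebra $B = \Cst(TG) \otimes \Cont_0([0,\infty))$ and exploiting the explicit formula for $\widetilde{\rho}_x$ from \cref{res:J_B_symbols}, which is precisely the Lie-group-valued Fourier quantization formula evaluated at the point $x$.

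First, transport $f$ to $J_B$: define $h \in \Cont_c^\infty([0,\infty) \times G, \Schwartz(G))$ by $h(x,v,t) \defeq f(x,t,v)$. The conditions defining $\Rel_\grpd^*$ transfer directly to the defining conditions for $\Rel_B$, because $f \in J$ already forces $\int_G h(x,v,0)\,\diff v = \int_G f(x,0,v)\,\diff v = 0$ for all $x \in G$. Moreover $h_0 = f_0 \in \Schwartz_0(TG)$, so \cref{res:J_B_symbols} applies and the strict limit
\[
S_h \defeq \lim_i \int_0^\infty \chi_i(\lambda)\,\beta_\lambda(h)\,\tfrac{\diff\lambda}{\lambda}
\]
is a well-defined element of $\Fix^\Rp(J_B, \cl{\Rel_B}) \subset \Mult(J_B)$. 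For each $x \in G$, the image $\widetilde{\rho}_x(S_h) \in \VN_L(G)$ is a bounded left-invariant operator, and by the Plancherel isomorphism \eqref{eq:Fourier-VN-Linfty} it corresponds to a field $(a_f(x,\pi))_{\pi \in \widehat{G}} \in L^\infty(\widehat{G},\Bound(\Hils_\pi))$. This field will be the required symbol.

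Next, match $T(f)$ and $\Op(a_f)$ pointwise. In $T_i(f)$ the substitution $y = xv$, which is legitimate by unimodularity of $G$ (\cref{haar}), rewrites the operator as
\[
T_i(f)\varphi(x) = \int_0^\infty \chi_i(\lambda)\,\lambda^{-Q}\!\int_G f(x,\lambda,\alpha_{\lambda^{-1}}(v))\,\varphi(xv)\,\diff v\,\tfrac{\diff\lambda}{\lambda}.
\]
On the other hand, the explicit formula from \cref{res:J_B_symbols} evaluated at $y = x$ reads
\[
(\widetilde{\rho}_x(S_h)\varphi)(x) = \lim_i \int_0^\infty \chi_i(\lambda)\,\lambda^{-Q}\!\int_G h(x,\alpha_{\lambda^{-1}}(v),\lambda)\,\varphi(xv)\,\diff v\,\tfrac{\diff\lambda}{\lambda},
\]
which coincides with $T(f)\varphi(x)$ by our choice of $h$ and \cref{res:T_i(h)_special}. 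Because $\widetilde{\rho}_x(S_h)$ is left-invariant with Fourier transform $a_f(x,\cdot)$, the inverse Plancherel formula gives, for $\varphi \in \Schwartz(G)$,
\[
(\widetilde{\rho}_x(S_h)\varphi)(x) = \int_{\widehat{G}}\tr\!\bigl(\pi(x)\,a_f(x,\pi)\,\widehat{\varphi}(\pi)\bigr)\,\diff\mu(\pi) = \Op(a_f)\varphi(x),
\]
which is exactly the quantization \eqref{eq:quantization}. Hence $T(f) = \Op(a_f)$ on the dense subspace $\Schwartz(G)$.

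The main technical point is the commutation of the strict-topology limit defining $S_h$ with the evaluation $\varphi \mapsto (\,\cdot\,\varphi)(x)$ at a fixed point $x \in G$. This is what \cref{res:J_B_symbols} provides: the representation $\rho_x\colon J_B \to \Bound(L^2 G)$ (obtained from evaluation at $t=1$ and $x$ composed with the right regular representation) is non-degenerate, so $\widetilde{\rho}_x$ is its unique strictly continuous extension to $\Mult(J_B)$, and the approximants $\int_0^\infty \chi_i(\lambda)\,\beta_\lambda(h)\,\tfrac{\diff\lambda}{\lambda}$ lie in $J_B$ itself, where the explicit pointwise formula already applies. The rest of the argument is bookkeeping with the substitution $y \mapsto xv$ and the inverse Plancherel formula on $\VN_L(G)$.
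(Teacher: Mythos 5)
Your proposal is correct and follows essentially the same route as the paper: both transport $f$ into $\Rel_B$, invoke \cref{res:J_B_symbols} to realize the symbol as the Fourier transform of the left-invariant operator $\widetilde{\rho}_x(f)$, and then identify $T(f)\varphi(x)$ with $(\varphi*\kappa_x)(x)=\Op(a_f)\varphi(x)$. Your explicit substitution $y=xv$ and the remark about commuting the strict limit with pointwise evaluation just spell out what the paper leaves implicit in its statement that the kernels $\kappa_{i,x}$ converge in $\Schwartz'(G)$ to the convolution kernel of $\widetilde{\rho}_x(f)$.
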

\begin{proof}
	We show that one can write \(T(f)\varphi(x)=\varphi*\kappa_x\) for a smooth family of kernels \(\kappa_x\in\Schwartz'(G)\) such that \(\varphi\mapsto\varphi*\kappa_x\) extends to a bounded operator on \(L^2(G)\) for all \(x\in G\). This implies that one can apply Fourier transform to \(\kappa_x\) and one obtains a symbol as above with \(a_f(x,\pi)=\widehat{\kappa_x}(\pi)\).
	
	For each \(i\in I\), one can write \(T_i(f)\varphi(x)=\varphi*\kappa_{i,x}\) with
	\[\kappa_{i,x}(v)=\int_0^\infty \chi_i(\lambda)\lambda^{-Q} f(x,\alpha_{\lambda^{-1}}(v^{-1}),\lambda) \tfrac{\diff\lambda}{\lambda}. \] We claim that all  \((\kappa_{i,x})\) for \(x\in G\) converge to distributions \(\kappa_{x}\in\Schwartz'(G)\) whose convolution operators are bounded. 
	Note that \(f\) can be understood as an element of \(\Rel_B\) with \(f_0\in\Schwartz_0(TG)\). It follows from \cref{res:J_B_symbols}
	that \(\kappa_{i,x}\) converges in \(\Schwartz'(G)\) to the convolution kernel of \(\widetilde{\rho}_x(f)\). In particular, \(a_f(x,\pi)\) is the strict limit of
	\[\int_0^\infty\chi_i(\lambda)\widehat{f}(x,\lambda\cdot \pi,\lambda)\tfrac{\diff\lambda}{\lambda}\]
	as multipliers of \(\Comp(\Hils_\pi)\).
\end{proof}
Let \(a_0\in\dot{S}^0_c\). As discussed in \cref{cutoff} \(a_0\psi(\pi(\Rock))\) is in \(S^0\). Using \cref{res:J_B_symbols} we obtain a different way to attach a symbol to \(a_0\). By \cref{res:symbols=fix0}, there is a \(h_0\in\Schwartz_0(TG)\) such that \(a_0\) is the Fourier transform of
\[\int_0^\infty \sigma_\lambda(h_0)\tfrac{\diff\lambda}{\lambda}\in\kernel^0(TG).\]
Let \(\omega\in\Cont_c^\infty([0,\infty))\) be a function with \(\omega|_{[0,1]}\equiv 1\) and \(\omega|_{[2,\infty)}\equiv 0\). Define \(h\in\Rel_B\)  by \(h(x,v,t)\defeq \omega(t)h_0(x,v)\). By \cref{kernelssymbols} this yields a symbol \(a_h\in\Cont_0(G,L^\infty(\widehat{G}))\). We compare now the symbols \(a_0\psi(\pi(\Rock))\) and \(a_h\). As a preparation, the following lemma is proved.
\begin{lemma}\label{res:prelemma}
	Let \(h_0\in\Schwartz_0(TG)\) and let \(\omega\in\Cont_c^\infty([0,\infty))\) be a function with \(\omega|_{[0,1]}\equiv 1\) and \(\omega|_{[2,\infty)}\equiv 0\). Define \(h\in\Rel_B\)  by \(h(x,v,t)\defeq \omega(t)h_0(x,v)\).
	Let \(a_h(x,\pi)\) be the Fourier transform of \(\widetilde{\rho}_x(h)\) and \(a_0(x,\pi)\) the Fourier transform of \[\int_0^\infty\sigma_{\lambda}(h_0)\tfrac{\diff\lambda}{\lambda}.\] Then for all \(m>0\), there exists a constant \(C_m>0\) with
	\[\norm{(a_0(x,\pi)-a_h(x,\pi))\psi(\pi(\Rock))(1+\pi(\Rock))^\frac{m}{q}}\leq C_m\norm{\widehat{h_0}}_{S^{-m},0,0}\]
	for all \(x\in G\) and almost all \(\pi\in\widehat{G}\).  
\end{lemma}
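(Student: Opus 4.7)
The plan is to identify $a_h$ and $a_0$ as explicit operator-valued integrals on $\widehat{G}$, subtract them, and bound the resulting integrand pointwise via functional calculus and the $q$-homogeneity of $R$.

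First I would apply \cref{res:J_B_symbols} to $h(x,v,t) = \omega(t) h_0(x,v)$. Since $\lambda^{-Q} h(x, \alpha_{\lambda^{-1}}(v), \lambda) = \omega(\lambda) (\sigma_{\lambda^{-1}} h_0)(x, v)$ and Fourier transform intertwines $\sigma_{\lambda^{-1}}$ with the $\Rp$-action on $\widehat{G}$ via $\widehat{\sigma_{\lambda^{-1}} h_0(x, \cdot)}(\pi) = \widehat{h_0}(x, \lambda \cdot \pi)$, one obtains
\[a_h(x, \pi) = \int_0^\infty \omega(\lambda) \widehat{h_0}(x, \lambda \cdot \pi)\,\tfrac{\diff\lambda}{\lambda}.\]
An analogous calculation starting from the convolution kernel $\int_0^\infty \sigma_\lambda h_0\,\tfrac{\diff\lambda}{\lambda}$ of $a_0$, combined with the substitution $\mu = \lambda^{-1}$, yields
\[a_0(x, \pi) = \int_0^\infty \widehat{h_0}(x, \lambda \cdot \pi)\,\tfrac{\diff\lambda}{\lambda}.\]
Since $1 - \omega$ vanishes on $[0, 1]$, subtracting gives
\[a_0(x, \pi) - a_h(x, \pi) = \int_1^\infty (1 - \omega(\lambda)) \widehat{h_0}(x, \lambda \cdot \pi)\,\tfrac{\diff\lambda}{\lambda}.\]

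Second, I would estimate the integrand in norm after right-multiplication by $\psi(\pi(R))(I + \pi(R))^{m/q}$. Using $(\lambda \cdot \pi)(R) = \lambda^q \pi(R)$, I factor
\[\widehat{h_0}(x, \lambda \cdot \pi) = \bigl[\widehat{h_0}(x, \lambda \cdot \pi)(\lambda \cdot \pi)(I + R)^{m/q}\bigr] \cdot (I + \lambda^q \pi(R))^{-m/q}.\]
The bracketed operator has norm bounded by $\|\widehat{h_0}\|_{S^{-m}, 0, 0}$ by definition of the symbol seminorm. The remaining factor, together with $\psi(\pi(R))(I+\pi(R))^{m/q}$, is a function of $\pi(R)$ alone; via functional calculus its norm is at most the supremum of the scalar function $s \mapsto \psi(s)\left((1+s)/(1+\lambda^q s)\right)^{m/q}$ on the spectrum. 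On the support of $\psi$ one has $s \geq 1$, and for $\lambda \geq 1$ the elementary inequality $1 + s \leq 2s$ gives $(1+s)/(1+\lambda^q s) \leq 2\lambda^{-q}$, so this scalar is bounded by $2^{m/q}\lambda^{-m}$.

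Combining these bounds and integrating yields the claim with $C_m = 2^{m/q}/m$, since $\int_1^\infty \lambda^{-m-1}\,\diff\lambda = 1/m$ converges precisely because $m > 0$. The main care needed is in the first step: correctly tracking the convention that Fourier transform converts $\sigma_{\lambda^{-1}}$ on $\Cst(G)$ into the action $\pi \mapsto \lambda \cdot \pi$ on $\widehat{G}$, and confirming that the strict limit from \cref{res:J_B_symbols} passes to the operator-valued integral on the Fourier side (where convergence is automatic since $h_0(x, \cdot)\in \Schwartz_0(G)$ forces $\widehat{h_0}(x, \pi) \to 0$ as $\pi \to \pi_{\triv}$). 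Once these identifications are in place, the estimate reduces to the scalar Rockland functional calculus computation above.
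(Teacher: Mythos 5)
Your proposal is correct and follows essentially the same route as the paper: both reduce the difference to an integral of \((1-\omega(\lambda))\widehat{h_0}(x,\lambda\cdot\pi)\) over \(\lambda\geq 1\), use the homogeneity \((\lambda\cdot\pi)(\Rock)=\lambda^q\pi(\Rock)\) together with the \(S^{-m}\) seminorm to extract a factor \(\lambda^{-m}\), and integrate. The only difference is that the paper keeps the truncations \(\chi_i\) throughout and runs a Cauchy-net argument to upgrade the strong limit to a norm limit with a uniform bound, a point you flag but do not carry out in detail; your pointwise estimate is exactly what makes that argument go through.
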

\begin{proof}The symbols can be written as strict limits
	\begin{align*}
	a_0(x,\pi)&= \lim_s\int_0^\infty \chi_i(\lambda)\widehat{h_0}(x,\lambda\cdot\pi)\tfrac{\diff \lambda}{\lambda},\\
	a_h(x,\pi)&=\lim_s \int_0^\infty \chi_i(\lambda)\omega(\lambda)\widehat{h_0}(x,\lambda\cdot\pi)\tfrac{\diff \lambda}{\lambda},
	\end{align*}
	as multipliers of \(\Comp(\Hils_\pi)\) for almost all \(\pi\in\widehat{G}\). This implies that
	\[b_i(x,\pi)\defeq \int_0^\infty \chi_i(\lambda^{-1})(1-\omega(\lambda))\widehat{h_0}(x,\lambda\cdot\pi)\tfrac{\diff \lambda}{\lambda}\,\psi(\pi(\Rock))(1+\pi(\Rock))^\frac{m}{q}\] converges strongly to  \(b(x,\pi)\defeq(a_0(x,\pi)-a_h(x,\pi))\psi(\pi(\Rock))(1+\pi(\Rock))^\frac{m}{q}\) on \(\Hils^\infty_\pi\). We show that \(b_i(x,\pi)\) is a Cauchy sequence. As \(\Hils^\infty_\pi\) is dense, this will imply that \(b_i(x,\pi)\) converges to \(b(x,\pi)\) in norm. For \(j>i\) we estimate
	\begin{align*}
	&\norm{b_j(x,\pi)-b_i(x,\pi)}\\
	=\,& \Bigl\lVert{ \int_0^\infty (\chi_j(\lambda)-\chi_i(\lambda))(1-\omega(\lambda))\widehat{h_0}(x,\lambda.\pi)\psi(\pi(\Rock))(1+\pi(\Rock))^\frac{m}{q}\tfrac{\diff \lambda}{\lambda}}\Bigr\rVert\\
	\leq\,  & \sup_{t\geq 1}{\left(\frac{1+t}{t}\right)^{\frac{m}{q}}}\int_0^\infty (\chi_j(\lambda)-\chi_i(\lambda))(1-\omega(\lambda)) \sup_{(x,\pi)}{\Bigl\lVert \widehat{h_0}(x,\lambda\cdot \pi)\pi(\Rock)^{\frac{m}{q}}\Bigr\rVert}\tfrac{\diff \lambda}{\lambda}\\
	\lesssim\, & \int_0^\infty (1-\chi_i(\lambda))\,\frac{1-\omega(\lambda)}{\lambda^m} \sup_{(x,\pi)}{\Bigl\lVert  \widehat{h_0}(x,\lambda\cdot\pi)(\lambda\cdot\pi)(\Rock)^{\frac{m}{q}}\Bigr\rVert}\tfrac{\diff \lambda}{\lambda}\\
	\lesssim \,& \sup_{t\geq 0}{\left(\frac{t}{1+t}\right)^{\frac{m}{q}}}\int_0^\infty (1-\chi_i(\lambda))\frac{1-\omega(\lambda)}{\lambda^m} \sup_{(x,\pi)}{\Bigl\lVert  \widehat{h_0}(x,\lambda\cdot\pi)(1+(\lambda\cdot \pi)(\Rock))^{\frac{m}{q}}\Bigr\rVert}\tfrac{\diff \lambda}{\lambda}\\
	\lesssim\, & \bigl\lVert{\widehat{h_0}}\bigr\rVert_{S^{-m},0,0}\int_0^\infty (1-\chi_i(\lambda)) \frac{1-\omega(\lambda)}{\lambda^{m+1}} \diff \lambda.
	\end{align*}
	The integral converges to \(0\) as the dominated convergence theorem can be applied due to the assumptions on \(\omega\). 	Note that \(\widehat{h_0}(x,\pi)\) is a smoothing symbol by \cite{fischer2016quantization}*{5.2.21}, so that 
	\(\norm{\widehat{h_0}}_{S^{-m},0,0}\) is finite for  all \(m>0\). Using the same estimates there is a constant \(C_m>0\) such that \(\norm{b_i(x,\pi)}\leq C_m\norm{\widehat{h_0}}_{S^{-m},0,0}\) for all \(i\in I\) and \((x,\pi)\in G\times\widehat{G}\). As \(b(x,\pi)\) is the norm limit of this net, the claim follows.
\end{proof}
\begin{remark}\label{rem:pi}
	The same result holds when \(\psi(\pi(\Rock))(1+\pi(\Rock))^\frac{m}{q}\) is replaced by~\(\pi(\Rock)^\frac{m}{q}\). 
\end{remark}
\begin{lemma}\label{res:diffsmoothing}
	Let \(h_0\in\Schwartz_0(TG)\), \(h\in\Rel_B\), \(a_0\) and \(a_h\) be as in \cref{res:prelemma}. Then
	\(a_0\psi(\pi(\Rock))-a_h\) is a smoothing symbol. 
\end{lemma}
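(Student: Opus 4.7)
The strategy is to verify the symbol estimates defining $S^{-\infty}$: for all $\alpha,\beta\in\N^n_0$ and $M\in\R$,
\[\sup_{(x,\pi)\in G\times\widehat{G}}\bigl\lVert X_x^\beta\Delta^\alpha\bigl(a_0\psi(\pi(\Rock))-a_h\bigr)(x,\pi)\pi(I+\Rock)^{(M+[\alpha])/q}\bigr\rVert<\infty.\]
My first move is to reduce to the case $\beta=0$. All of $X_x^\beta$, $\Delta^\alpha$, and $\psi(\pi(\Rock))$ mutually commute, and the constructions of both $a_0$ and $a_h$ from $h_0$ commute with $x$-derivatives, so $X_x^\beta$ merely replaces $h_0$ by $X_x^\beta h_0$ throughout. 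Since $x$-derivatives commute with the $v$-integration defining $\Schwartz_0(G)$, the function $X_x^\beta h_0$ still lies in $\Schwartz_0(TG)$, and I am reduced to an arbitrary $h_0\in\Schwartz_0(TG)$ with $\beta=0$.

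Next I would split $a_0\psi(\pi(\Rock))-a_h = (a_0-a_h)\psi(\pi(\Rock))-a_h(1-\psi(\pi(\Rock)))$ and treat the two pieces separately. The function $1-\psi$ is smooth and compactly supported on $[0,2]$, so by functional calculus $(1-\psi)(\pi(\Rock))$ is a left-invariant smoothing operator whose convolution kernel lies in $\Schwartz(G)$. Since the convolution kernel of $a_h$ is smooth and compactly supported in the $x$-direction, convolving it in $v$ with the Schwartz kernel of $(1-\psi)(\pi(\Rock))$ yields an integral kernel that is smooth, compactly supported in $x$, and Schwartz in $v$. By Fourier duality this shows $a_h(1-\psi(\pi(\Rock)))\in S^{-\infty}$.

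For $(a_0-a_h)\psi(\pi(\Rock))$ I would extend the Cauchy-sequence argument of \cref{res:prelemma} to incorporate the difference operator $\Delta^\alpha$. The key algebraic identity, obtained by Fourier-transforming the kernel-side relation $v^\alpha\sigma_{\lambda^{-1}}h_0=\lambda^{[\alpha]}\sigma_{\lambda^{-1}}(v^\alpha h_0)$, is
\[\Delta^\alpha\bigl[\widehat{h_0}(x,\lambda\cdot\pi)\bigr]=\lambda^{[\alpha]}(\Delta^\alpha\widehat{h_0})(x,\lambda\cdot\pi).\]
After a Leibniz-type rule for $\Delta^\alpha$ applied to the product with $\psi(\pi(\Rock))$, the task reduces to controlling finitely many integrals of the form
\[\int_0^\infty(1-\omega(\lambda))\lambda^{[\alpha_1]}(\Delta^{\alpha_1}\widehat{h_0})(x,\lambda\cdot\pi)\,(1+\pi(\Rock))^{M/q}\,\tfrac{\diff\lambda}{\lambda},\]
multiplied by uniformly bounded functional-calculus factors coming from $\Delta^{\alpha_2}\psi(\pi(\Rock))$, where $v^{\alpha_1}h_0\in\Schwartz(TG)$ ensures that $\Delta^{\alpha_1}\widehat{h_0}\in S^{-\infty}$ by \cite{fischer2016quantization}*{5.2.21}.

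The main obstacle is the extra factor $\lambda^{[\alpha_1]}$ which does not appear in \cref{res:prelemma} and threatens integrability at infinity. It is tamed precisely by the presence of $\psi(\pi(\Rock))$, which restricts the estimate to $\pi(\Rock)\ge 1$: under this constraint, a spectral comparison gives $\|(I+\lambda^q\pi(\Rock))^{-m'/q}(I+\pi(\Rock))^{M/q}\|\lesssim\lambda^{-m'}$ for $\lambda\ge 1$ whenever $m'\ge M$. Combined with the $S^{-\infty}$-decay of $\Delta^{\alpha_1}\widehat{h_0}$, this yields an integrand bounded by $\lambda^{[\alpha_1]-m'}$; choosing $m'>[\alpha_1]+M$ restores integrability against $\tfrac{\diff\lambda}{\lambda}$. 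The interval $\lambda\le 1$ contributes nothing because $1-\omega\equiv 0$ there. Assembling these estimates yields the required uniform bound, showing $a_0\psi(\pi(\Rock))-a_h\in S^{-\infty}$.
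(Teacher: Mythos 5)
Your decomposition $a_0\psi(\pi(\Rock))-a_h=(a_0-a_h)\psi(\pi(\Rock))-a_h(1-\psi(\pi(\Rock)))$ and your treatment of the first summand match the paper's proof: reduce to $\beta=0$, apply the Leibniz rule for difference operators, use the homogeneity identity $\Delta^\alpha[\widehat{h_0}(\lambda\cdot\pi)]=\lambda^{[\alpha]}(\Delta^\alpha\widehat{h_0})(\lambda\cdot\pi)$, and absorb the resulting $\lambda^{[\alpha_1]}$ by taking the decay order $m'$ in the Cauchy estimate of \cref{res:prelemma} large enough. (You are in fact slightly more explicit than the paper about where that extra power of $\lambda$ goes; the paper simply applies \cref{res:prelemma} to $X_x^\beta v^{\alpha_1}h_0$ with a shifted exponent. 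You should still invoke the weighted bound of \cite{fischer2017defect}*{4.8} for the factors $\Delta^{\alpha_2}\psi(\pi(\Rock))$ rather than calling them merely ``uniformly bounded'', since those factors must carry part of the weight $(1+\pi(\Rock))^{([\alpha]+M)/q}$.)

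The gap is in the second summand. You argue that the convolution kernel of $(1-\psi)(\pi(\Rock))$ is Schwartz (true, by Hulanicki's theorem) and that convolving it with the kernel $\kappa_x$ of $a_h$ therefore yields a kernel that is Schwartz in $v$. This inference fails: $\kappa_x(v)=\int_0^\infty\omega(\lambda)\sigma_\lambda(h_0(x,\cdot))(v)\,\tfrac{\diff\lambda}{\lambda}$ differs from the type-$0$ kernel $\int_0^\infty\sigma_\lambda(h_0(x,\cdot))\tfrac{\diff\lambda}{\lambda}$ only by the rapidly decaying tail $\int_1^\infty(1-\omega(\lambda))\sigma_\lambda(h_0(x,\cdot))\tfrac{\diff\lambda}{\lambda}$, so at infinity $\kappa_x$ inherits the exact $\norm{v}^{-Q}$ decay of the $(-Q)$-homogeneous kernel and is in particular not Schwartz, nor even integrable, in $v$. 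Convolution with a Schwartz function cannot upgrade $\norm{v}^{-Q}$ decay to rapid decay (already for $\phi\geq 0$ with $\int\phi=1$ one has $\phi*(1+\norm{v})^{-Q}\gtrsim(1+\norm{v})^{-Q}$), so the rapid decay of the kernel of $a_h(1-\psi(\pi(\Rock)))$ cannot be obtained by a pointwise convolution estimate; it comes from a cancellation on the Fourier side (the infinite-order vanishing of $a_h$ at $\pi_{\triv}$ together with the compact spectral support of $1-\psi$). The paper avoids this issue entirely by staying on the symbol side: since $1-\psi$ is supported in $[0,2]$, the weight $(1+\pi(\Rock))^{([\alpha]+m)/q}$ is bounded there, so it suffices to prove $\sup_{(x,\pi)}\norm{X^\beta_x\Delta^\alpha a_h(x,\pi)}<\infty$; this follows from \cref{kernelssymbols} applied to $X^\beta_x v^\alpha h_0$, using that the truncation bounds the homogeneity factor via $\omega(\lambda)\lambda^{[\alpha]}\leq 2^{[\alpha]}\omega(\lambda)$. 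You should replace your kernel-side argument for this term by an estimate of this type.
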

\begin{proof}
	Write
	\( a_0\psi(\pi(\Rock))-a=(a_0-a)\psi(\pi(\Rock))-a(1-\psi)(\pi(\Rock))\).	
	We claim that both summands are smoothing symbols. Recall that a symbol \(b\) is smoothing if for all \(m>0\) and \(\alpha,\beta\in\N^n_0\) 
	\[ \sup_{(x,\pi)}{\Bigl\lVert X_x^\beta\Delta^\alpha\{b(x,\pi)\}(1+\pi(\Rock))^{\frac{[\alpha]+m}{q}}\Bigr\rVert}<\infty.\]
	For \((a_0-a)\psi(\pi(\Rock))\) consider first the case \(\alpha=0\). Then the result follows by applying \cref{res:prelemma} to \(X_x^\beta h_0\in\Schwartz_0(TG)\).
	For arbitrary \(\alpha\in\N^n_0\), the Leibniz rule for difference operators \cite{fischer2017defect}*{(3.1)} yields
	\[ \Delta^\alpha\{(a_0-a)(x,\pi)\psi(\pi(\Rock))\}=\sum_{[\alpha_1]+[\alpha_2]=[\alpha]}{\left[\Delta^{\alpha_1}(a_0-a)(x,\pi)\right]\left[\Delta^{\alpha_2}\psi(\pi(\Rock))\right]}.\]
	For \(\alpha_2\neq 0\), it is shown in \cite{fischer2017defect}*{4.8} that 
	\[\sup_{\pi}{\Bigl\lVert\pi(\Rock)^{\frac{-m-[\alpha_1]}{q}}\Delta^{\alpha_2}\psi(\pi(\Rock))(1+\pi(\Rock))^{\frac{m+[\alpha]}{q}}\Bigr\rVert}<\infty.\]
	Applying \cref{rem:pi} and \cref{res:prelemma} to \(X_x^\beta v^{\alpha_1}h_0\) yields
	\[\sup_{(x,\pi)}{\Bigl\lVert X^\beta\Delta^{\alpha_1}(a_0-a)(x,\pi)\pi(\Rock)^{\frac{m+[\alpha_1]}{q}}\Bigr\rVert}<\infty.\]
	For \(\alpha_2=0\), \cref{res:prelemma} is applied to \(X^\beta_x v^\alpha h_0\in\Schwartz_0(TG)\).
	
	Consider now the symbol \(a(1-\psi)(\pi(\Rock))\). As \((1-\psi)\) is supported in \([0,2]\) and \((1+t)^{\frac{[\alpha]+m}{q}}\) is bounded on this subset, it suffices to show for all \(\alpha,\beta\in\N^n_{0}\) that 
	\[\sup_{(x,\pi)}{\Bigl\lVert X^\beta_x\Delta^\alpha a(x,\pi)\Bigr\rVert}<\infty.\]
	For \(\alpha=0\), this follows from \cref{kernelssymbols} applied to \(X^\beta_x h\). For \(\alpha\neq 0\), the net
	\[\int_0^\infty \chi_i(\lambda)\omega(\lambda )\lambda^{[\alpha]}\widehat{(X^\beta_xv^\alpha h_0})(x,\lambda\cdot\pi )\tfrac{\diff\lambda}{\lambda}\]
	is Cauchy in \(\Cont_0(G,L^\infty(\widehat{G}))\). This follows from \(\omega(\lambda)\lambda^{[\alpha]}\leq\omega(\lambda) 2^{[\alpha]}\) and \cref{kernelssymbols} applied to \(X^\beta_xv^\alpha h_0\). Then \(X^\beta_x\Delta^\alpha\) is the limit of this net as the respective convolution kernels converge in~\(\Schwartz'(G)\). 
\end{proof}
\begin{theorem}\label{pseudoext}
	Let \(G\) be a graded Lie group. 
	The order zero pseudo-differential extension from \cref{psiext} embeds into the generalized fixed point algebra extension for \(G\) such that the following diagram commutes
	\begin{equation}\label{ses:pseudo_in_fix}
	\begin{tikzcd}
	\Psi^{-1}_{\class} \arrow[d,hook]\arrow[r,hook]& \Psi^0_{\class}\arrow[r,twoheadrightarrow,"\princg_0"]\arrow[d,hook] & \dot{S}_c^0\arrow[d,hook]\\
	\Comp(L^2G) \arrow[r,hook]& \Fix^\Rp(J,\cl{\Rel}) \arrow[r,twoheadrightarrow,"\widetilde{p}_0"] & \Fix^\Rp(J_0,\cl{\Rel_0}).
	\end{tikzcd}
	\end{equation}		
\end{theorem}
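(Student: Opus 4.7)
The plan is to construct the three vertical embeddings, verify commutativity of the two squares, and check injectivity of the middle map. Throughout I identify $\Fix^\Rp(J,\cl{\Rel})$ with its image in $\Bound(L^2G)$ under the faithful representation $\widetilde p_1$ from \cref{res:compboundcommute}, so the middle vertical map will simply be the tautological inclusion once $\Psi^0_{\class}\subseteq\widetilde p_1(\Fix^\Rp(J,\cl{\Rel}))$ is established.

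The outer vertical maps are essentially already given: on the right, $\dot S^0_c\hookrightarrow \Fix^\Rp(J_0,\cl{\Rel_0})$ is provided by \cref{res:symbols=fix0}, while on the left, $\Psi^{-1}_{\class}\subseteq \Op(S^{-1}_{\cp})\hookrightarrow \Comp(L^2G)\hookrightarrow \Fix^\Rp(J,\cl{\Rel})$, with the first inclusion coming from \cref{compactop} and the last from the extension~\eqref{ses:fixedalg}.

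For the middle map, given $\Op(a)\in\Psi^0_{\class}$ with $a\sim\sum_j a_j$ and $a_j\in\dot S^{-j}_c$, I will construct $R\in \Fix^\Rp(J,\cl{\Rel})$ so that $\Op(a)-\widetilde p_1(R)\in \Comp(L^2G)$. By \cref{res:symbols=fix0} together with the Dixmier--Malliavin factorization employed in \cref{res:delta,res:everykernelaverage}, the principal symbol $a_0\in\dot S^0_c$ is realised as a finite sum $\sum_i \KET{f_i}\BRA{g_i}$ with $f_i,g_i\in\Rel_0$. Lift each $f_i,g_i$ to $F_i,G_i\in \Rel_\grpd^*$ by inserting a cutoff $\omega\in\Cont_c^\infty([0,\infty))$ with $\omega(0)=1$, for example $F_i(x,t,v)=\omega(t)f_i(x,v)$, and set $R\defeq\sum_i \KET{F_i}\BRA{G_i}$. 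Then \cref{res:T_i(h)_special} and \cref{kernelssymbols} yield $\widetilde p_1(R)=\Op(b)$ for a symbol $b\in S^0$ whose integral kernel has compact $x$\nb-support and Schwartz decay in $y$, and \cref{res:diffsmoothing} shows that $b-a_0\psi(\pi(\Rock))$ is smoothing, so $a-b\in S^{-1}+S^{-\infty}$.

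The main technical obstacle is showing that $\Op(a-b)=\Op(a)-\widetilde p_1(R)$ is compact, since $\widetilde p_1(R)$ a priori has no compactly supported kernel. I will pick $\chi\in\Cont_c^\infty(G)$ with $\chi\equiv 1$ on the $y$\nb-support of the kernel of $\Op(a)$, so that $\Op(a)M_\chi=\Op(a)$, and decompose
\begin{equation*}
\Op(a)-\widetilde p_1(R)=\bigl(\Op(a)-\widetilde p_1(R)\bigr)M_\chi \,-\, \widetilde p_1(R)(\Id-M_\chi).
\end{equation*}
The first summand has compactly supported integral kernel and symbol in $S^{-1}_{\cp}+S^{-\infty}_{\cp}$, hence is compact by \cref{compactop}. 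The second summand is supported in $G\times\{\chi\neq 1\}$, has compact $x$\nb-support and Schwartz decay in $y$ (from the estimates underlying \cref{res:mainestimate} together with the Schwartz decay built into $\Rel_\grpd^*$), so it is Hilbert--Schmidt and hence compact. This yields $\Op(a)\in\widetilde p_1(\Fix^\Rp(J,\cl{\Rel}))+\Comp(L^2G)=\widetilde p_1(\Fix^\Rp(J,\cl{\Rel}))$ by~\eqref{ses:fixedalg}, so the tautological inclusion $\Op(a)\mapsto\Op(a)$ is an injective $^*$\nb-homomorphism $\Psi^0_{\class}\hookrightarrow \Fix^\Rp(J,\cl{\Rel})$. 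For commutativity of the right square, $\widetilde p_0(R)=\sum_i \KET{f_i}\BRA{g_i}$ is exactly the image of $\princg_0(\Op(a))=a_0$ under the right vertical map, and the compact correction lies in $\ker\widetilde p_0$ by~\eqref{ses:fixedalg}; the left square commutes trivially since both compositions send $\Op(a)\in\Psi^{-1}_{\class}$ to the same compact operator in $\Bound(L^2G)$.
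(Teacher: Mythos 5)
Your overall architecture is close to the paper's: both arguments reduce to producing, for each $A\in\Psi^0_{\class}$ with principal symbol $a_0$, an element of $\widetilde p_1(\Fix^\Rp(J,\cl{\Rel}))$ lifting $a_0$ and differing from $A$ by a compact operator, with \cref{res:diffsmoothing} as the engine. The genuine gap is exactly where you invoke \cref{res:diffsmoothing}. That lemma compares $a_0\psi(\pi(\Rock))$ with the symbol $a_h$ of the \emph{product-form} lift $h(x,v,t)=\omega(t)h_0(x,v)$ of the kernel $h_0\in\Schwartz_0(TG)$ of $a_0$. Your $b$ is instead the symbol of $\sum_i T(F_i^**G_i)$, and $F_i^**G_i$ is not of product form; it agrees with $\omega(t)h_0(x,v)$ only at $t=0$. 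The two lifts differ by an element of $\Rel_\grpd^*\cap\ker(p_0)$, and for such an element \cref{res:compact_fact} gives only norm convergence in $\Comp(L^2G)$ — compactness, not smoothing. Indeed the convolution kernel of that difference still diverges on the diagonal (at $y=x$ the integrand is of size $\lambda\cdot\lambda^{-Q}\tfrac{\diff\lambda}{\lambda}$, not integrable at $\lambda=0$), so there is no justification for $b-a_0\psi(\pi(\Rock))\in S^{-\infty}$, hence none for $a-b\in S^{-1}+S^{-\infty}$, hence none for the compactness of $(\Op(a)-\widetilde p_1(R))M_\chi$ via \cref{compactop}. The repair is to insert $T(h)$ with $h=\omega\otimes h_0$ as an intermediate term: write $\Op(a)-\widetilde p_1(R)=\bigl(\Op(a)-T(h)\bigr)+\bigl(T(h)-\widetilde p_1(R)\bigr)$, note the second bracket is compact by \cref{res:compact_fact}, and run your $M_\chi$ argument on the first bracket, where \cref{res:diffsmoothing} genuinely applies.

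A second, smaller defect: for $\widetilde p_1(R)(\Id-M_\chi)$ to be Hilbert--Schmidt you must take $\chi\equiv 1$ on a \emph{neighbourhood} of the compact $x$\nb-support of the kernel of $\widetilde p_1(R)$, not merely on the closed $y$\nb-support of $K_a$. The kernel $K_R(x,y)=\kappa_x(y^{-1}x)$ behaves like $\norm{y^{-1}x}^{-Q}$ near the diagonal, which is not locally square-integrable; only when $1-\chi$ vanishes identically near every diagonal point over the $x$\nb-support does the remaining kernel live in the region $\norm{y^{-1}x}\geq\delta$, where rapid off-diagonal decay makes it $L^2$. With these two repairs your route works, and it differs from the paper's in one real respect: the paper avoids the $M_\chi$ splitting entirely by building the spatial cutoff into the groupoid element, setting $g(x,v,t)=h(x,v,t)c(x\alpha_t(v))$ so that $T(g)=T(h)M_c$ lies in $\widetilde p_1(\Fix^\Rp(J,\cl{\Rel}))$ by construction and $Q-T(g)=\Op(a_0\psi(\pi(\Rock))-a_h)M_c$ has a smooth, compactly supported kernel. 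That trick dispenses with the Hilbert--Schmidt estimate; your a posteriori splitting is a workable alternative once the issues above are fixed.
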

\begin{proof}
	Every operator in \(\Psi^{-1}_{\class}\) extends to a compact operator on \(L^2(G)\) by \cref{compactop}.		
	Let \(A\) be a classical pseudo-differential operator of order zero with principal symbol  \(a_0\in\dot{S}_c^0\). Let \(Q\in\Psi^0_{\class}\) be the element constructed in the proof of \cref{psiext} with \(\princg_0(Q)=a_0\). Recall that \(Q=\Op(a_0(x,\pi)\psi(\pi(\Rock)))M_c\) for a certain \(c\in\Cont^\infty_c(G)\).
	In the following we show that there is an element \(T\in\widetilde{p}_1(\Fix^\Rp(J,\cl{\Rel}))\) with \(\widetilde{p}_0(T)=a_0\) and \(Q-T\in\Comp(L^2G)\). Once is this established, writing
	\[A=A-Q+Q-T+T\] shows that \(A\) lies in \(\widetilde{p}_1(\Fix^\Rp(J,\cl{\Rel}))\) as \(A-Q\) has order \(-1\) since its principal symbol vanishes. In particular, \(A-Q\) is compact by \cref{compactop}. The above decomposition also shows that
	\[\tilde{p}_0(A)=\tilde{p}_0(T)=a_0=\princg_0(A)\]
	so that the diagram in \eqref{ses:pseudo_in_fix} commutes.		
	To construct \(T\), let \(h_0\in\Schwartz_0(TG)\) be such that \(a_0\) is the Fourier transform of 
	\[\int_0^\infty\sigma_\lambda(h_0)\tfrac{\diff\lambda}{\lambda}.\]  Let \(h\in\Rel_B\) and \(a_h\) be as in \cref{res:prelemma} and \(g(x,v,t)\defeq h(x,v,t)c(x\alpha_t(v))\). Then \(T(g)=T(f)M_c\) holds. \cref{kernelssymbols} implies that
	\[Q-T(f)=\Op(a_0\psi(\pi(\Rock))-a_h)M_c.\]
	This is a compact operator as its convolution kernel is smooth by \cref{res:diffsmoothing} and compactly supported. 
\end{proof}
Denote by \(\Cst(\Psi^0_{\class})\) the closure of the \(^*\)-algebra \(\Psi^0_{\class}\) in \(\Bound(L^2G)\).
\begin{corollary}
	The \(\Cst\)-algebra \(\Cst(\Psi^0_{\class})\) generated by classical order zero pseudo-differential operators on a graded Lie group \(G\) is isomorphic to \(\Fix^\Rp(J,\cl{\Rel})\). There is an extension of \(\Cst\)\nb-algebras 
	\begin{equation*}
	\begin{tikzcd}
	\Comp(L^2 G)\arrow[hook,r] & \Cst(\Psi^0_{\class})\arrow[r,twoheadrightarrow,"\widetilde{p}_0"] &\Cst(\dot{S}_c^0),
	\end{tikzcd}
	\end{equation*}
	such that \(\widetilde{p}_0\) extends the principal symbol map \(\princg_0\colon \Psi^0_{\class}\to\dot{S}^0_c\).
\end{corollary}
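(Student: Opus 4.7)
The plan is to deduce the corollary directly from \cref{pseudoext} by passing to $\Cst$\nb-closures, using the density of $\dot{S}^0_c$ in $\Fix^\Rp(J_0,\cl{\Rel_0})$ established in \cref{res:symbols=fix0} and the density of smoothing operators with compactly supported kernel in $\Comp(L^2G)$.

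First, \cref{pseudoext} presents $\Psi^0_{\class}$ as a $^*$\nb-subalgebra of $\Fix^\Rp(J,\cl{\Rel})$ inside $\Bound(L^2G)$, so taking closures inside $\Bound(L^2G)$ gives the inclusion $\Cst(\Psi^0_{\class})\subseteq \Fix^\Rp(J,\cl{\Rel})$, and the restriction of $\widetilde{p}_0$ extends $\princg_0$ by the commutativity of the diagram in \cref{pseudoext}. To obtain equality of the two $\Cst$\nb-algebras it suffices to show that $\Psi^0_{\class}$ is norm-dense in $\Fix^\Rp(J,\cl{\Rel})$.

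For the density, I would argue termwise along the two extensions in \cref{pseudoext}. By \cref{res:symbols=fix0}, $\dot{S}^0_c$ is dense in $\Fix^\Rp(J_0,\cl{\Rel_0})$, so $\widetilde{p}_0(\Psi^0_{\class})=\dot{S}^0_c$ is dense in the quotient. On the kernel side, \cref{res:calc}~(4) identifies $\Op(S^{-\infty}_{\cp})\subset \Psi^{-1}_{\class}$ with operators having smooth, compactly supported integral kernel, and such operators are well known to be norm-dense in $\Comp(L^2G)$. Given $T\in\Fix^\Rp(J,\cl{\Rel})$ and $\varepsilon>0$, choose $a\in\dot{S}^0_c$ with $\norm{\widetilde{p}_0(T)-a}<\varepsilon$; by surjectivity of $\princg_0$ in \cref{psiext} there is $A\in\Psi^0_{\class}$ with $\princg_0(A)=a$, and the commuting diagram gives $\norm{\widetilde{p}_0(T-A)}<\varepsilon$. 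Lift a suitable representative $T-A-K_0\in\Comp(L^2G)$ (with $K_0$ chosen to realise the quotient norm up to $\varepsilon$) and approximate it within $\varepsilon$ by an element of $\Op(S^{-\infty}_{\cp})\subset \Psi^{-1}_{\class}$; summing produces an element of $\Psi^0_{\class}$ within $O(\varepsilon)$ of $T$. This proves $\Cst(\Psi^0_{\class})=\Fix^\Rp(J,\cl{\Rel})$.

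The extension statement then follows by taking $\Cst$\nb-closures of the top row in \cref{pseudoext}: the closure of $\Psi^{-1}_{\class}$ is $\Comp(L^2G)$ (contained in the compacts by \cref{compactop}, dense by the smoothing-kernel remark), and the closure of $\dot{S}^0_c$ is $\Cst(\dot{S}^0_c)=\Fix^\Rp(J_0,\cl{\Rel_0})$ by \cref{res:symbols=fix0}, so passing to closures in the sequence \eqref{ses:fixed_pdo_ext} \emph{i.e.}~\eqref{ses:pseudo_in_fix}, restricted to the image of $\Psi^0_{\class}$, produces the claimed short exact sequence, with $\widetilde{p}_0$ extending $\princg_0$. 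There is no genuine obstacle here; the only subtlety is verifying that the two separate dense approximations can be combined into one, which is routine once the vertical maps in \cref{pseudoext} are seen to be isometric embeddings (they are, being inclusions of $^*$\nb-subalgebras of $\Bound(L^2G)$).
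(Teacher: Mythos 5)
Your proposal is correct and follows essentially the same route as the paper: one inclusion comes from \cref{pseudoext}, and the reverse inclusion is obtained by lifting principal symbols through the surjection \(\princg_0\) and absorbing the remainder into \(\Comp(L^2G)\), which lies in \(\Cst(\Psi^0_{\class})\) because smoothing kernels generate the compacts. The only cosmetic difference is that the paper argues exactly on the generators \(\KET{f}\BRA{g}\) of \(\Fix^\Rp(J,\cl{\Rel})\), whose principal symbols lie exactly in \(\dot{S}^0_c\) so that no \(\varepsilon\)-approximation is needed, whereas you run a two-stage \(\varepsilon\)-argument for a general element; both versions rest on the same ingredients.
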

\begin{proof}
	We show that \(\Cst(\Psi^0_{\class})=\widetilde{p}_1(\Fix^\Rp(J,\cl{\Rel}))\). The \(\Cst\)-algebra of  pseudo-differential operators of order \(0\) is contained in \(\widetilde{p}_1(\Fix^\Rp(J_0,\cl{\Rel}))\) by \cref{pseudoext}. 
	
	For the converse, note first that \(\Comp(L^2G)\subset \Cst(\Psi^0_{\class})\). This holds as \(\Psi^0_{\class}\) contains the kernels in \(\Cont_c^\infty(G\times G)\) and these generate the compact operators on \(L^2(M)\). 		
	Now, let \(f,g\in\Rel\). Let \(a\in \dot{S}_c^0\) be the inverse of \(\KET{f_0}\BRA{g_0}\in \Cont_0(G,\kernel^0(G))\) under Fourier transform. Since the principal symbol map is surjective, there is a \(P\in\Psi^0_{\class}\) with \(\princg_0(P)=a\).  Then the operator \[\widetilde{p}_1(\KET{f}\BRA{g})= \widetilde{p}_1(\KET{f}\BRA{g})-\Op(P)+\Op(P)\] is contained in \(\Cst(\Psi^0_{\class})\). This is because \(\Op(P)\) is and \(\widetilde{p}_1(\KET{f}\BRA{g})-\Op(P)\in\Comp(L^2G)\) as the diagram in \eqref{ses:pseudo_in_fix} commutes. The \(\Cst\)-algebra \(\Fix^\Rp(J,\cl{\Rel})\) is generated by \(\KET{f}\BRA{g}\) with \(f,g\in\Rel\). Thus, the result follows. 
\end{proof}

\section{Morita equivalence and \(\K\)-theory}\label{fullness}
In this section, we will show that \((J,\cl{\Rel})\) and \((J_0,\cl{\Rel_0})\) are saturated for the zoom action of \(\Rp\). Therefore, for each homogeneous Lie group \(G\) the \(\Cst\)\nb-algebras of order zero pseudo-differential operators \(\Fix^\Rp(J,\cl{\Rel})\) and principal symbols \(\Fix(J_0,\cl{\Rel_0})\) are Morita--Rieffel equivalent to \(\Cred(\Rp,J)\) and \(\Cred(\Rp,J_0)\), respectively. For the Euclidean scalings on \(G=\R^n\) this is a result of \cite{debordskandalis2014}. 
\subsection{Stratification and saturatedness}
First, consider a homogeneous Lie group \(G\). Recall the sequence of open, dilation invariant subsets of \(\widehat{G}\!\setminus\!\{\pi_{\triv}\}\) found in \eqref{sets}:
\begin{align*}
\emptyset = V_0 \subset V_1 \subset V_2 \subset \ldots \subset V_m = \widehat{G}\!\setminus\!\{\pi_{\triv}\},\end{align*}
where \(\Lambda_i=V_i\setminus V_{i-1}\) are Hausdorff for all \(i=1,\ldots,m\). Moreover, the induced \(\Rp\)\nb-action on each \(\Lambda_i\) is free and proper by \cref{res:freeproper}.
There is a corresponding increasing sequence of closed, dilation invariant ideals in \(\Cst(G)\) \begin{align}\label{ideals}
0=J_0\idealin J_1\idealin J_2\idealin\ldots\idealin J_m=J_G
\end{align} which is given by
\[ J_i=\{f\in \Cst(G)\mid \widehat{\pi}(f)=0 \text{ for }\pi\in\widehat{G}\setminus V_i\}.\]
In this section, it will be shown that the subquotients \(J_i/J_{i-1}\) of the filtration in \eqref{ideals} define continuous fields of \(\Cst\)\nb-algebras over \(\Lambda_i\), respectively. This will allow us to prove, using \cref{res:ses_saturated}, that \(\Fix(J_G,\cl{\Rel_G})\) is Morita--Rieffel equivalent to the crossed product \(\Cred(\Rp,J_G)\).
\begin{remark}
	In \cite{beltictua2016fourier} Pedersen's fine stratification \cite{pedersen1989geometric} is used to obtain a similar sequence of increasing ideals, where the respective subquotients are even isomorphic to trivial fields \(\Cont_0(\tilde{\Lambda}_i,\Comp(\Hils_i))\) for some finite- or infinite-dimensional Hilbert spaces~\(\Hils_i\). For our purposes the coarse stratification suffices. 	
\end{remark}
\begin{proposition}\label{res:quotients_fields}
	Each subquotient \(J_i/J_{i-1}\) for \(i=1,\ldots,m\) is isomorphic to a continuous field of \(\Cst\)\nb-algebras over \(\Lambda_i\) with a unique dense, relatively continuous and complete subset \(\Rel_i\) for the induced \(\Rp\)\nb-action. Furthermore, \((J_i/J_{i-1},\Rel_i)\) is saturated.
\end{proposition}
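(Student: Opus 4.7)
The plan is to recognize each subquotient \(J_i/J_{i-1}\) as an \(\Rp\)-\(\Cont_0(\Lambda_i)\)-algebra and then invoke \cref{res:saturatedspectrum}, since by \cref{res:freeproper} the induced \(\Rp\)-action on \(\Lambda_i\) is free and proper. The existence and uniqueness of \(\Rel_i\) together with saturatedness would then be immediate.

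First I would identify \(J_i/J_{i-1}\) with a continuous field of \(\Cst\)-algebras over \(\Lambda_i\). Since \(G\) is nilpotent, \(\Cst(G)\) is liminal, so all irreducible representations map into the compact operators. By construction of the Jacobson topology together with \cref{thm:ordering}, the ideal \(J_i\idealin \Cst(G)\) corresponds to the open subset \(V_i\subset\widehat{G}\) and hence \((J_i/J_{i-1})^\wedge \cong V_i\setminus V_{i-1}=\Lambda_i\), which is Hausdorff. A liminal \(\Cst\)-algebra with Hausdorff spectrum admits a canonical \(\Cont_0(\Lambda_i)\)-algebra structure by the Dauns--Hofmann theorem, exhibiting it as an upper semi-continuous (in fact continuous, by the liminal property and unique lifting of representations) field of elementary \(\Cst\)-algebras.

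Next I would check that this \(\Cont_0(\Lambda_i)\)-structure is compatible with the induced \(\Rp\)-action. The action \(\sigma\) on \(\Cst(G)\) implements the dilation action on \(\widehat{G}\) through \((\lambda\cdot\pi)(f)=\pi(\sigma_\lambda(f))\), and the subsets \(V_i\) are \(\Rp\)-invariant by \cref{thm:ordering}, so \(\sigma\) descends to each quotient. Tracing through the Dauns--Hofmann identification \(\Cont_0(\Lambda_i)\cong Z\Mult(J_i/J_{i-1})\) shows the covariance relation
\[
\sigma_\lambda(\theta(\varphi)a)=\theta(\tau_\lambda(\varphi))\sigma_\lambda(a)
\qquad\text{for }\varphi\in\Cont_0(\Lambda_i),\ a\in J_i/J_{i-1},
\]
so that \(J_i/J_{i-1}\) is an \(\Rp\)-\(\Cont_0(\Lambda_i)\)-algebra.

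Finally, by \cref{res:freeproper} the \(\Rp\)-action on \(\Lambda_i\) is free and proper, so \cref{res:saturatedspectrum} applies directly: there exists a unique dense, relatively continuous, complete subset \(\Rel_i\subset J_i/J_{i-1}\) (concretely the closure of \(\theta(\Cont_c(\Lambda_i))\cdot(J_i/J_{i-1})\) in the \(\norm{\,\cdot\,}_\si\)-norm), and the pair \((J_i/J_{i-1},\Rel_i)\) is saturated. The main obstacle I foresee is presenting the passage from liminality plus Hausdorff spectrum to a genuine continuous (not merely upper semi-continuous) field structure in a self-contained way; once that is in place, compatibility with \(\sigma\) and the application of \cref{res:saturatedspectrum} are essentially formal.
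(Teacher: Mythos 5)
Your proposal is correct and follows essentially the same route as the paper: identify \(J_i/J_{i-1}\) as an \(\Rp\)-\(\Cont_0(\Lambda_i)\)-algebra via its Hausdorff spectrum \(\Lambda_i\), note the action on \(\Lambda_i\) is free and proper by \cref{res:freeproper}, and conclude by \cref{res:saturatedspectrum}. The only cosmetic difference is that you justify the continuous-field structure via liminality and Dauns--Hofmann, whereas the paper cites Lee's theorem (\cite{nilsen1996}*{3.3}) directly.
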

\begin{proof}
	The subquotient \(J_i/J_{i-1}\) has Hausdorff spectrum as
	\[ \widehat{J_i/J_{i-1}} \cong \widehat{J_i}\setminus\widehat{J_{i-1}} \cong V_i\setminus V_{i-1} = \Lambda_i .\]
	Therefore,  \(J_i/J_{i-1}\) is isomorphic to a continuous field of \(\Cst\)-algebras over \(\Lambda_i\), see \cite{nilsen1996}*{3.3}. The isomorphism takes \([f]\in J_i/J_{i-1}\) to the section \(\widehat{f}\) defined by\
	\[\widehat{f}(\pi)=\widehat{\pi}(f)=\int_G f(x)\pi(x)\diff x\in \Bound(\Hils_\pi) \qquad \text{for }\pi\in\Lambda_i.\]
	The dilation action on \(J_i/J_{i-1}\) satisfies \(\widehat{\sigma_\lambda(f)}(\pi)=\widehat{f}(\lambda^{-1}\cdot\pi)\) for all \(\lambda>0\). Denote by \(\alpha_\lambda(\widehat{f})\) the section given by \(\alpha_\lambda(\widehat{f})(\pi)=\widehat{f}(\lambda^{-1}\cdot\pi)\). 
	Let \(\theta_i\colon\Cont_0(\Lambda_i)\hookrightarrow  Z \Mult(J_i/J_{i-1})\) denote the non-degenerate homomorphism which is given by pointwise multiplication when \(J_i/J_{i-1}\) is viewed as a continuous field. It satisfies the compatibility condition
	\begin{align*}
	\alpha_\lambda(\theta_i(\phi)\widehat{f})=\theta_i(\tau_\lambda\phi)\alpha_\lambda(\widehat{f}) \qquad\text{for }\phi\in\Cont_0(\Lambda_i)\text{ and }[f]\in J_i/J_{i-1},
	\end{align*}
	where \(\tau\) denotes the \(\Rp\)\nb-action on \(\Cont_0(\Lambda_i)\) given by \(\tau_\lambda(\phi)(\pi)=\phi(\lambda^{-1}\cdot\pi)\). Therefore, \(J_i/J_{i-1}\) is an \(\Rp\)-\(\Cont_0(\Lambda_i)\)-algebra. The dilation action on \(\Lambda_i\) is free and proper by \cref{res:freeproper}. By \cref{res:saturatedspectrum} \(J_i/J_{i-1}\) is saturated with respect to the subset
	\[ \Rel_i\defeq\overline{\theta_i(\Cont_c(\Lambda_i))(J_i/J_{i-1})}.\]
	It is the unique dense, complete, relatively continuous subset by \cref{res:spectrallyproper} as \(J_i/J_{i-1}\) is spectrally proper.
\end{proof}
Using \cref{res:ses_saturated} and an inductive argument for the sequence in~\eqref{ideals}, we obtain as a consequence:
\begin{corollary}\label{res:pointwisesaturated}
	For a homogeneous Lie group \(G\) the \(\Rp\)-\(\Cst\)-algebra  \((J_G,\cl{\Rel_G})\) is saturated for the dilation action of \(\Rp\). 		
	The generalized fixed point algebra \(\Fix^\Rp(J_G,\cl{\Rel_G})\) is Morita--Rieffel equivalent to \(\Cred(\Rp, J_G)\).
\end{corollary}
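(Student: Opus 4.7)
The plan is to induct along the filtration $0 = J_0 \idealin J_1 \idealin \cdots \idealin J_m = J_G$ from \eqref{ideals}, showing at each stage that $(J_i, \cl{\Rel_G}\cap J_i)$ is a saturated continuously square-integrable $\Rp$\nb-$\Cst$\nb-algebra. Since $\Rp\cong(\R,+)$ is amenable and hence exact, every extension $J_{i-1}\hookrightarrow J_i\twoheadrightarrow J_i/J_{i-1}$ yields an exact sequence of reduced crossed products, so \cref{res:rel_subsets} applies. Starting from $(J_G,\cl{\Rel_G})$, iterated application of \cref{res:rel_subsets} ensures that both $(J_i,\cl{\Rel_G}\cap J_i)$ and $(J_i/J_{i-1},\cl{q_i(\cl{\Rel_G}\cap J_i)})$ are continuously square-integrable for every $i$, so \cref{res:ses_saturated} is available at each step.

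For the inductive step, \cref{res:ses_saturated} says that $(J_i,\cl{\Rel_G}\cap J_i)$ is saturated whenever both $(J_{i-1},\cl{\Rel_G}\cap J_{i-1})$ and $(J_i/J_{i-1},\cl{q_i(\cl{\Rel_G}\cap J_i)})$ are. The former is the inductive hypothesis, with trivial base case $J_0=0$. For the latter, I would identify the completed image with the canonical subset $\Rel_i\subset J_i/J_{i-1}$ coming from \cref{res:quotients_fields}. By \cref{multI} the intersection $\cl{\Rel_G}\cap J_i$ equals $\cl{\Rel_G}\cdot J_i$, which by Cohen factorization is dense in $J_i$, so $q_i(\cl{\Rel_G}\cap J_i)$ is dense in $J_i/J_{i-1}$. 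Since the spectrum $\Lambda_i$ of $J_i/J_{i-1}$ carries a free and proper $\Rp$-action by \cref{res:freeproper}, this subquotient is spectrally proper; \cref{res:spectrallyproper} then forces $\cl{q_i(\cl{\Rel_G}\cap J_i)}=\Rel_i$, and saturatedness of this pair is the content of \cref{res:quotients_fields}, closing the induction.

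Taking $i=m$ yields saturatedness of $(J_G,\cl{\Rel_G})$. By the construction recalled after \cref{deffix}, $\Hilm^\Rp(J_G,\cl{\Rel_G})$ is a $\Fix^\Rp(J_G,\cl{\Rel_G})$-$J^\Rp(J_G,\cl{\Rel_G})$ imprimitivity bimodule, and saturatedness identifies $J^\Rp(J_G,\cl{\Rel_G})$ with the whole reduced crossed product $\Cred(\Rp,J_G)$, giving the Morita--Rieffel equivalence. The main technical point is the identification $\cl{q_i(\cl{\Rel_G}\cap J_i)}=\Rel_i$, where the Puk\'{a}nszky stratification plays an essential role: without Hausdorff spectra on the subquotients, one could not invoke the uniqueness in \cref{res:spectrallyproper} to match the inductively built dense subset with the canonical one arising from the $\Cont_0(\Lambda_i)$-algebra structure.
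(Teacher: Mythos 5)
Your proposal is correct and follows essentially the same route as the paper, which deduces the corollary by applying \cref{res:ses_saturated} inductively along the filtration \eqref{ideals}, with the subquotients handled by \cref{res:quotients_fields}. The only detail you make explicit that the paper leaves implicit is the identification \(\cl{q_i(\cl{\Rel_G}\cap J_i)}=\Rel_i\) via the uniqueness statement of \cref{res:spectrallyproper}, and that step is carried out correctly.
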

\subsection{Computation of the spectrum of the symbol algebra}
Recall that it was shown in \cref{res:kernelzerograded} that for a homogeneous Lie group \(G\), \(\Fix^\Rp(J_G,\cl{\Rel_G})\) is the \(\Cst\)\nb-algebra generated by kernels of type \(0\). 
As an application of the above results, we give a different proof of the description of its spectrum obtained in \cite{fischer2017defect}*{5.5}. 

\begin{proposition}
	Let \(G\) be a homogeneous Lie group. Then \(\Fix^\Rp(J_G,\cl{\Rel_G})\) is of type I. Furthermore, there is a homeomorphism 
	\[\left(\widehat{G}\!\setminus\!\{\pi_{\triv}\}\right)/\Rp\to \widehat{\Fix^\Rp(J_G,\cl{\Rel_G})}\] induced by
	\(\pi\mapsto \left(\widehat{\pi}\right)^{\sim}\) for \(\pi\in\widehat{G}\!\setminus\!\{\pi_{\triv}\}\). 	
\end{proposition}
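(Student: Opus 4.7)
The plan is to leverage the Morita--Rieffel equivalence from \cref{res:pointwisesaturated} together with the stratification in~\eqref{ideals} to reduce the statement to a stratum-by-stratum analysis via \cref{res:saturatedspectrum}. More precisely, I would start by pulling back the filtration $0=J_0\subset J_1\subset\cdots\subset J_m=J_G$ from~\eqref{ideals} to a composition series on the generalized fixed point algebra. Applying \cref{res:sesfixed} iteratively (using that each $J_i\idealin J_G$ is an $\Rp$\nb-invariant ideal, and that $\Cred(\Rp,\,\cdot\,)$ preserves the exactness in~\eqref{exact} because $\Rp\cong\R$ is amenable and hence exact), together with \cref{res:rel_subsets}, produces an increasing chain
\[
0=F_0\idealin F_1\idealin\cdots\idealin F_m=\Fix^\Rp(J_G,\cl{\Rel_G})
\]
of closed ideals with subquotients $F_i/F_{i-1}\cong\Fix^\Rp(J_i/J_{i-1},\Rel_i)$, where $\Rel_i$ is the unique dense, relatively continuous, complete subset from \cref{res:quotients_fields}.

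Next, I would analyse each subquotient using the $\Rp$\nb-$\Cont_0(\Lambda_i)$\nb-algebra structure of $J_i/J_{i-1}$ established in \cref{res:quotients_fields}, noting that the $\Rp$\nb-action on~$\Lambda_i$ is free and proper by \cref{res:freeproper}. \cref{res:saturatedspectrum}\,(2) then yields a homeomorphism $\Rp\backslash\Lambda_i\congto\widehat{F_i/F_{i-1}}$ induced by $\pi\mapsto\widetilde{\widehat{\pi}}$. For type~I-ness, observe that $J_i/J_{i-1}$ is a continuous field over the Hausdorff base $\Lambda_i$ whose fibres are the elementary $\Cst$\nb-algebras $\Comp(\Hils_\pi)$ (since $J_G$ is liminal), hence has continuous trace and is in particular of type~I. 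Because the action of $\Rp$ on~$\Lambda_i$ is proper and free and because $\Rp$ is amenable, $\Cred(\Rp,J_i/J_{i-1})$ is again of continuous trace with spectrum $\Rp\backslash\Lambda_i$ (by Green's imprimitivity theorem, which is implicit in \cref{res:saturatedspectrum}). Morita--Rieffel equivalence preserves type~I, so each $F_i/F_{i-1}$ is of type~I, and possessing such a composition series makes $\Fix^\Rp(J_G,\cl{\Rel_G})$ itself of type~I.

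Finally, to obtain the homeomorphism of spectra, I would assemble the homeomorphisms $\Rp\backslash\Lambda_i\congto\widehat{F_i/F_{i-1}}$ across the strata. The spectrum of $\Fix^\Rp(J_G,\cl{\Rel_G})$ decomposes, as a set with Jacobson topology, into the disjoint union of the locally closed subsets $\widehat{F_i/F_{i-1}}$ corresponding to the open chain of ideals $F_1\subset F_2\subset\cdots\subset F_m$. On the other hand, $\Rp\backslash(\widehat{G}\setminus\{\pi_{\triv}\})$ decomposes into the $\Rp\backslash\Lambda_i$ in exactly the same way, via the open chain $V_1\subset V_2\subset\cdots\subset V_m$ from \eqref{sets}. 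Since the map $\pi\mapsto\widetilde{\widehat{\pi}}$ is well defined on orbits (elements of $\Fix^\Rp$ are $\Rp$\nb-invariant multipliers) and restricts to the stratum-wise homeomorphism from \cref{res:saturatedspectrum}\,(2), the assembled map is a continuous bijection; the hard part will be verifying that the Jacobson topology does not create extra incidences across strata, which follows because the correspondence between open $\Rp$\nb-invariant subsets of $\widehat{G}\setminus\{\pi_{\triv}\}$ and $\Rp$\nb-invariant closed ideals of $J_G$ is order preserving, lifts to a correspondence between open sets of $\Rp\backslash(\widehat{G}\setminus\{\pi_{\triv}\})$ and closed ideals of $\Fix^\Rp(J_G,\cl{\Rel_G})$ via the ideal lattice isomorphism induced by Morita equivalence, and hence both topologies have the same closed sets.
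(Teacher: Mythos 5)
Your treatment of type~I-ness follows essentially the paper's argument (composition series from \eqref{ideals}, subquotients handled by \cref{res:quotients_fields} and \cref{res:saturatedspectrum}), although the detour through continuous trace is both unnecessary and not quite justified: a continuous field of elementary \(\Cst\)-algebras over a Hausdorff base need not satisfy Fell's condition, so ``continuous trace'' does not come for free from liminality. It is cleaner, and sufficient, to note that \(\Fix^\Rp(J_i/J_{i-1},\Rel_i)\) is separable with Hausdorff, hence \(T_0\), spectrum \(\Lambda_i/\Rp\), so it is of type~I by Glimm's theorem, and then to induct along the composition series; this is what the paper does.

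For the homeomorphism your route is genuinely different from the paper's --- you assemble the stratum-wise homeomorphisms \(\Rp\backslash\Lambda_i\to\widehat{F_i/F_{i-1}}\), whereas the paper works globally with the quasi-orbit map \(\Prim(\Cred(\Rp,J_G))\to\Prim(J_G)/\Rp\) and the Rieffel correspondence --- but your final step has a gap. You correctly isolate the hard part (matching the topologies across strata) and propose to settle it by matching ideal lattices. However, the Morita--Rieffel equivalence from \cref{res:pointwisesaturated} only gives a lattice isomorphism between ideals of \(\Fix^\Rp(J_G,\cl{\Rel_G})\) and ideals of \(\Cred(\Rp,J_G)\). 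To identify the latter with the \(\Rp\)-invariant ideals of \(J_G\) (equivalently, with the \(\Rp\)-invariant open subsets of \(\widehat{J_G}\)) you need that every ideal of \(\Cred(\Rp,J_G)\) has the form \(\Cred(\Rp,I)\) for an invariant ideal \(I\idealin J_G\), i.e.\ that \(J_G\) separates ideals in the crossed product. This is not automatic; it is precisely the injectivity of the quasi-orbit map, which the paper deduces from freeness of the \(\Rp\)-action on \(\Prim(J_G)\) together with amenability of \(\Rp\) (Gootman--Rosenberg--Effros--Hahn). Without this input your claim that ``both topologies have the same closed sets'' does not follow; with it, you have essentially reconstructed the paper's proof. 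Two smaller points: continuity of the assembled bijection is not automatic from continuity on the pieces of a locally closed partition, so even the forward direction needs the global lattice argument; and you should verify that \(\widetilde{\widehat{\pi}}\) is irreducible and that \(\pi\mapsto\ker(\widetilde{\widehat{\pi}})\) really is the map produced by the abstract identifications --- the paper does this by the explicit computation \(\Fix^\Rp(Q,\cl{\Rel_G}\cap Q)=\ker(\widetilde{\pi})\) at the end of its proof.
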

\begin{proof}
	The ideals in~\eqref{ideals} yield short exact sequences of generalized fixed point algebras for \(i=1,\ldots,m\) by \cref{res:sesfixed}:
	\begin{equation*}
	\begin{tikzcd}
	\Fix^\Rp(J_{i-1},\cl{\Rel_G}\cap J_{i-1})\arrow[r,hook] & \Fix^\Rp(J_{i},\cl{\Rel_G}\cap J_{i})\arrow[r,twoheadrightarrow,"\widetilde{q}"] & \Fix^\Rp(J_i/J_{i-1},\Rel_i).
	\end{tikzcd}
	\end{equation*}
	Each quotient \(J_i/J_{i-1}\) is an \(\Rp\)\nb-\(\Cont_0(\Lambda_i)\)\nb-algebra with a free and proper \(\Rp\)-action on \(\Lambda_i\) by \cref{res:quotients_fields}. Therefore, the spectrum of \(\Fix^\Rp(J_i/J_{i-1},\Rel_i)\) is \(\Lambda_i/\Rp\) by \cref{res:saturatedspectrum}. In particular, the spectrum is Hausdorff and, thus, \(T_0\). As \(\Fix^\Rp(J_i/J_{i-1},\Rel_i)\) is separable, this implies that it is of type I by \cite{dixmier}*{3.1.6, 9.1}. If an ideal \(I\) of a \(\Cst\)-algebra \(A\) and the quotient \(A/I\) are of type I, it follows that \(A\) is of type I. Thus, one can use an inductive argument for the sequences above to show that \(\Fix^\Rp(J_G,\cl{\Rel_G})\) is of type~I.
	
	We proceed by showing the second claim. The spectrum of \(J_G\) is given by
	\[\widehat{J_G}=\widehat{G}\!\setminus\!\{\pi_{\triv}\}=\Lambda_1\cup\ldots\cup\Lambda_m.\]
	By \cref{res:freeproper}, \(\Rp\) acts freely on \(\Lambda_i\) for all \(i=1,\ldots,m\). Hence, the action of \(\Rp\) on the spectrum of \(J_G\) is free. As \(J_G\) and \(\Fix^\Rp(J_G,\cl{\Rel_G})\) are of type I and separable, their spectra can be identified with their primitive ideal spaces. We show that there is a homeomorphism 
	\[\psi\colon \Prim(J_G)/\Rp\to \Prim(\Fix^\Rp(J_G,\cl{\Rel_G}))\]
	with \(\psi([\ker(\pi)])=\ker(\widetilde{\pi})\) for \(\pi\in\widehat{J_G}\).
	By \cite{kwa-meyer-stone}*{6.3, 6.4} there is a continuous, open  and surjective quasi-orbit map
	\[\rho\colon \Prim(\Cred(\Rp,J_G))\to\Prim(J_G)/\Rp.\]
	As the action of the amenable group \(\Rp\) on \(\Prim(J_G)\) is free, the quasi-orbit map is also injective by \cite{gootman-rosenberg-EH}*{3.3}. Hence, it is a homeomorphism. In particular, \(J_G\) separates ideals in \(\Cred(\Rp,J_G)\). We describe now the inverse of \(\rho\). 
	First, there is a homeomorphism \(\phi\colon \Prim(J_G)/\Rp \to \Prime(\mathbb{I}^\Rp(J_G))\) by \cite{kwa-meyer-stone}*{6.3}, which is induced by mapping a primitive ideal \(P\) of \(J_G\) to the largest \(\Rp\)-invariant ideal contained in \(P\):
	\begin{align*}
	P&\mapsto \bigcap_{\lambda\in\Rp}\lambda\cdot P.
	\end{align*}
	It follows from \cite{kwa-meyer-stone}*{2.11, 6.1, 6.3} that \(\rho^{-1}=i\circ \phi\) with
	\begin{align*}i\colon \Prime(\mathbb{I}^\Rp(J_G))&\to \Prim(\Cred(\Rp,J_G)),\\
	Q&\mapsto \Cred(\Rp,Q).\end{align*}
	By \cref{res:pointwisesaturated}, \(\Cred(\Rp,J_G)\) is Morita--Rieffel equivalent to \(\Fix^\Rp(J_G,\cl{\Rel_G})\). Therefore, the Rieffel correspondence gives a homeomorphism
	\[r\colon \Prim(\Cred(\Rp,J_G))\to\Prim(\Fix^\Rp(J_G,\cl{\Rel_G})).\]
	Together, we obtain a homeomorphism \[\psi\defeq r\circ \rho^{-1}\colon \Prim(J_G)/\Rp\to\Prim(\Fix^\Rp(J_G,\cl{\Rel_G})).\]	
	It is left to show that \(\psi([\ker(\pi)])=\ker(\widetilde{\pi})\) for \(\pi\in\widehat{J_G}\). Let \(Q=\phi([\ker(\pi)])\). Using that the action on \(J_G\) is saturated, \cref{ses:reduced} implies
	\begin{align*} \rho^{-1}([\ker(\pi)])&=\Cred(\Rp,Q)=\Cred(\Rp,Q)\cap\Cred(\Rp,J_G)=J^\Rp(Q,\cl{\Rel_G}\cap Q).\end{align*}
	This ideal is mapped to \(\Fix^\Rp(Q,\cl{\Rel_G}\cap Q)\) under the Rieffel correspondence. 
	
	We show that \(\Fix^\Rp(Q,\cl{\Rel_G}\cap Q)=\ker(\widetilde{\pi})\). Let \(a,b\in\cl{\Rel_G}\cap Q\). Then \(\pi(a)=\pi(b)=0\) and, consequently, \(\widetilde{\pi}(\KET{a}\BRA{b})=\KET{\pi(a)}\BRA{\pi(b)}=0\). It follows that \(\Fix^\Rp(Q,\cl{\Rel_G}\cap Q)\subseteq\ker(\widetilde{\pi})\). 	
	
	Now let \(T\in\ker(\widetilde{\pi})\). As elements of the generalized fixed point algebra are invariant under the \(\Rp\)-action, \(\left(\lambda\cdot \pi\right)^{\sim}(T)=\widetilde{\pi}(T)=0\) holds for all \(\lambda>0\). We use a similar argument as in the proof of \cref{res:sesfixed} and show that \(T^*T\in\Fix^\Rp(Q,\cl{\Rel_G}\cap Q)\). For each \(a\in\cl{\Rel_G}\), we obtain 
	\[(\lambda\cdot \pi)(T^*a)=\left(\lambda\cdot \pi\right)^{\sim}(T^*)(\lambda\cdot \pi)(a)=0 \quad\text{for all }\lambda>0.\]
	It follows that \(T^*a\in \cl{\Rel_G}\cap Q\). Now the same argument as in \cref{res:sesfixed} shows that \(T\in\Fix^\Rp(Q,\cl{\Rel_G}\cap Q)\).
\end{proof}	
As \(\Fix^\Rp(J_0,\cl{\Rel_0})\) is the trivial continuous field over \(G\) with fibre \(\Fix^\Rp(J_G,\cl{\Rel_G})\) we obtain the following result (compare \cite{fischer2017defect}*{5.11}):
\begin{corollary}
	For each homogeneous Lie group \(G\) there is a homeomorphism
	\begin{align*}
	\widehat{\Fix^\Rp(J_0,\cl{\Rel_0})}&\cong G\times (\widehat{G}\!\setminus\!\{\pi_{\triv}\})/\Rp.
	\end{align*}
\end{corollary}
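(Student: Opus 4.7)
The plan is to assemble this corollary from two results already in hand. By Lemma \ref{res:symbolfix_field} there is a canonical isomorphism $\Fix^\Rp(J_0,\cl{\Rel_0})\cong\Cont_0(G,\Fix^\Rp(J_G,\cl{\Rel_G}))$, and the preceding proposition identifies the spectrum of the fibre $\Fix^\Rp(J_G,\cl{\Rel_G})$ with the orbit space $(\widehat{G}\setminus\{\pi_{\triv}\})/\Rp$ via $\pi\mapsto\widetilde{\pi}$. So only the generic spectrum-of-a-trivial-field computation remains.

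The core step is therefore the standard identification $\widehat{\Cont_0(X,A)}\cong X\times\widehat{A}$ for a locally compact Hausdorff space $X$ and a separable type~I $\Cst$-algebra $A$. I would recall that $\Cont_0(X,A)\cong\Cont_0(X)\otimes A$ (with either $\Cst$-tensor product, since $\Cont_0(X)$ is nuclear). Every irreducible representation of a minimal tensor product of type~I $\Cst$-algebras factors as an exterior tensor product of irreducible representations of the factors, and the corresponding map
\[
\widehat{\Cont_0(X)}\times\widehat{A}\longrightarrow\widehat{\Cont_0(X)\otimes A},\qquad (\chi_x,[\pi])\longmapsto [\chi_x\otimes\pi],
\]
is a homeomorphism when one side carries the product topology of the Jacobson/hull-kernel topologies. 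Under Gelfand duality $\widehat{\Cont_0(X)}\cong X$, yielding $\widehat{\Cont_0(X,A)}\cong X\times\widehat{A}$. Applying this with $X=G$ and $A=\Fix^\Rp(J_G,\cl{\Rel_G})$, which is separable and of type~I by the preceding proposition, gives the claim.

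Concretely, I would spell out the composition of homeomorphisms
\[
\widehat{\Fix^\Rp(J_0,\cl{\Rel_0})}\;\cong\;\widehat{\Cont_0(G,\Fix^\Rp(J_G,\cl{\Rel_G}))}\;\cong\; G\times\widehat{\Fix^\Rp(J_G,\cl{\Rel_G})}\;\cong\; G\times(\widehat{G}\setminus\{\pi_{\triv}\})/\Rp,
\]
where the middle step uses the tensor-product identification recalled above and the last is the preceding proposition. It is also worth noting explicitly that a point $(x,[\pi])\in G\times(\widehat{G}\setminus\{\pi_{\triv}\})/\Rp$ corresponds to the irreducible representation $\widetilde{\pi}\circ\widetilde{q}_x$, where $\widetilde{q}_x\colon\Fix^\Rp(J_0,\cl{\Rel_0})\to\Fix^\Rp(J_G,\cl{\Rel_G})$ is evaluation at $x$ and $\widetilde{\pi}$ is the extension of $\widehat{\pi}$ to the generalized fixed point algebra.

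The only real subtlety is to ensure that the type~I assumption (required for the clean tensor-product description of the spectrum) is indeed available; but this follows from the proposition proved immediately above, so there is no genuine obstacle. The main content of the corollary is already in the preceding proposition, and what remains is the application of a standard fact about continuous $\Cst$-bundles with a fixed fibre.
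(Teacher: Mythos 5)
Your proposal is correct and matches the paper's (essentially one-line) argument: the corollary is stated there as an immediate consequence of \cref{res:symbolfix_field}, which exhibits \(\Fix^\Rp(J_0,\cl{\Rel_0})\) as the trivial field \(\Cont_0(G,\Fix^\Rp(J_G,\cl{\Rel_G}))\), together with the preceding proposition identifying the spectrum of the fibre; you have merely spelled out the standard \(\widehat{\Cont_0(X)\otimes A}\cong X\times\widehat{A}\) step. (A minor remark: the type~I hypothesis you invoke is not actually needed for that step, since \(\Cont_0(G)\) being commutative already forces every irreducible representation of the tensor product to factor.)
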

\subsection{Morita equivalence}
We deduce saturatedness for the respective ideals in the \(\Cst\)-algebras of \(TG\) and the tangent groupoid \(\grpd\). 
\begin{proposition}\label{res:morita}
	Let \(G\) be a homogeneous Lie group.
	The \(\Cst\)\nb-algebra of order~\(0\) principal symbols \(\Fix^\Rp(J_0,\cl{\Rel_0})\) is Morita--Rieffel equivalent to \(\Cred(\Rp,J_0)\). The \(\Cst\)\nb-algebra  of order \(0\) pseudo-differential operators \(\Fix^\Rp(J,\cl{\Rel})\) is Morita--Rieffel equivalent to \(\Cred(\Rp,J)\). 
\end{proposition}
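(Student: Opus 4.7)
Since the generalized fixed point algebra $\Fix^\Rp(A,\Rel)$ is by construction Morita--Rieffel equivalent to the ideal $J^\Rp(A,\Rel)\idealin\Cred(\Rp,A)$ via the imprimitivity bimodule $\Hilm^\Rp(A,\Rel)$, the entire proposition reduces to saturatedness of $(J_0,\cl{\Rel_0})$ and $(J,\cl{\Rel})$ for the zoom action. Note that $\Rp\cong(\R,+)$ is amenable and in particular $\sigma$\nb-compact and exact, so every $\Rp$\nb-equivariant extension appearing below yields an exact sequence of reduced crossed products; in particular the standing hypotheses of \cref{res:ses_saturated} and \cref{res:fixcontfield} are in force throughout.

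For the first claim I would exploit the continuous field structure. Since $TG$ is the trivial field of groups with fibre $G$, we have $\Cst(TG)\cong \Cont_0(G,\Cst(G))$ and hence $J_0\cong\Cont_0(G,J_G)$ is a trivial continuous field over $G$ with fibres $J_G$. The zoom action fixes the base $G$ pointwise and acts in the fibres by the dilation action on $\Cst(G)$; in particular the fibre projections $q_x\colon J_0\to J_G$ are $\Rp$\nb-equivariant. By \cref{res:field_saturated}, saturatedness of $(J_0,\cl{\Rel_0})$ is equivalent to saturatedness of each fibre $(J_G,\cl{q_x(\Rel_0)})$. A direct check of the defining conditions shows $q_x(\Rel_0)=\Rel_G$ (every $g\in\Rel_G$ lifts to $\chi\otimes\omega\otimes g\in\Rel$ for compactly supported cut-offs $\chi$ on $G$ and $\omega$ on $[0,\infty)$ with $\chi(x)=\omega(0)=1$), so the $\norm{\,\cdot\,}_\si$\nb-closures coincide, and \cref{res:pointwisesaturated} produces fibrewise saturatedness.

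For the second claim I would apply \cref{res:ses_saturated} to the $\Rp$\nb-equivariant extension
\[
\Cont_0(\Rp,\Comp(L^2G))\hookrightarrow J\twoheadrightarrow J_0
\]
from \eqref{restrictedses}. The quotient part is saturated by the previous paragraph. For the ideal part, \cref{actiononideal} shows that under the isomorphism $p$ of \cref{res:kernelev} the zoom action becomes $\tau\otimes 1$, where $\tau$ is the translation action of $\Rp$ on itself. In particular, the induced action on the primitive ideal space is the free and proper action $\Rp\acts\Rp$, so that $\Cont_0(\Rp,\Comp(L^2G))$ is spectrally proper. \cref{res:saturatedspectrum} then gives saturatedness for its unique dense, relatively continuous, complete subset $\tilde\Rel$, and uniqueness (\cref{res:spectrallyproper}) forces $p_*$ to identify $(\ker(p_0),\cl{\Rel}\cap\ker(p_0))$ with $(\Cont_0(\Rp,\Comp(L^2G)),\tilde\Rel)$. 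Combining the two, \cref{res:ses_saturated} delivers saturatedness of $(J,\cl{\Rel})$.

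The main technical nuisance I anticipate lies in keeping track of $\norm{\,\cdot\,}_\si$\nb-closures under the fibre projections $q_x$ and under the restriction to $\ker(p_0)$, so as to actually match the a priori distinct $\Rel$\nb-subsets arising on each side with the ones featured in \cref{res:pointwisesaturated} and \cref{res:saturatedspectrum}. The cleanest tool here is the uniqueness statement of \cref{res:spectrallyproper}, applicable precisely because each algebra in question is spectrally proper after restriction. Once these identifications are in place the conclusion is a purely formal assembly of the saturatedness criteria \cref{res:field_saturated} and \cref{res:ses_saturated} with the spectral analysis of $J_G$ already carried out in \cref{res:pointwisesaturated}.
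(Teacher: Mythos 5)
Your proposal is correct and follows essentially the same route as the paper: saturatedness of \((J_0,\cl{\Rel_0})\) via \cref{res:field_saturated} and \cref{res:pointwisesaturated}, then saturatedness of \((J,\cl{\Rel})\) via \cref{res:ses_saturated} applied to \eqref{restrictedses}, with the ideal part handled through \cref{actiononideal} and \cref{res:saturatedspectrum}. Only your closing remark that ``each algebra in question is spectrally proper after restriction'' should be tempered --- \(J_G\) itself need not be spectrally proper (only the subquotients \(J_i/J_{i-1}\) are), but your direct identification \(q_x(\Rel_0)=\Rel_G\) already makes that appeal unnecessary.
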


\begin{proof}
	As \((J_G,\cl{\Rel_G})\) is saturated by \cref{res:pointwisesaturated},  \((J_0,\cl{\Rel_0})\) is saturated by \cref{res:field_saturated}. Therefore, the generalized fixed point algebra construction gives the Morita--Rieffel equivalence between \(\Fix^\Rp(J_0,\cl{\Rel_0})\) and \(\Cred(\Rp,J_0)\).
	
	The second claim follows from \cref{res:ses_saturated} applied to the sequence in \eqref{restrictedses} if saturatedness for the ideal \(\Cont_0(\Rp)\otimes\Comp(L^2G)\) is shown. By \cref{actiononideal} the \(\Rp\)-action is given by \(\tau\otimes 1\), where \(\tau\) is induced by the action of \(\Rp\) on itself by multiplication. Then \(\Rel\cap (\Cont_0(\Rp)\otimes\Comp(L^2G))\) is the unique dense, relatively continuous and complete subspace and \(\tau\) is free and proper. Therefore, the action is saturated by \cref{res:saturatedspectrum}. The Morita--Rieffel equivalence follows again from the generalized fixed point algebra construction. 
\end{proof}

\subsection{\(\K\)-theory of the \(\Cst\)-algebra of \(0\)-homogeneous symbols}
The Morita--Rieffel equivalence between the \(\Cst\)\nb-algebra of \(0\)-homogeneous symbols and the crossed product \(C_r^*(\R,J_0)\) allows us to compute its \(\K\)\nb-theory. We recover the same result as in the Euclidean setting. 
\begin{theorem}
	Let \(G\) be a graded nilpotent Lie group with \(n=\dim\lieg\). Then \(\Fix^\Rp(J_G,\cl{\Rel_G})\) is \(\KK\)-equivalent to \(\Cont(S^{n-1})\). The \(\Cst\)-algebra of principal symbols \(\Fix^\Rp(J_0,\cl{\Rel_0})\) is \(\KK\)-equivalent to \(\Cont_0(S^*\R^n)\).
\end{theorem}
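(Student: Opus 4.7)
The plan is to chain together Morita--Rieffel equivalences, the Connes--Thom isomorphism, and Bott periodicity. By \cref{res:morita}, \(\Fix^\Rp(J_G, \cl{\Rel_G})\) and \(\Fix^\Rp(J_0, \cl{\Rel_0})\) are Morita--Rieffel equivalent, hence \(\KK\)-equivalent, to the reduced crossed products \(\Cred(\Rp, J_G)\) and \(\Cred(\Rp, J_0)\) respectively. I first treat \(\Fix^\Rp(J_G, \cl{\Rel_G})\) and then deduce the second claim via the trivial-field description from \cref{res:symbolfix_field}.

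Applying the Fack--Skandalis \(\KK\)-theoretic version of Connes' Thom isomorphism to the crossed product by \(\Rp \cong \R\) yields
\begin{equation*}
\Fix^\Rp(J_G, \cl{\Rel_G}) \sim_{\KK} \Cred(\Rp, J_G) \sim_{\KK} \Cont_0(\R) \otimes J_G.
\end{equation*}
I claim that \(J_G \sim_{\KK} \Cont_0(\R^n \setminus \{0\})\). Granting this, the homeomorphism \(\R^n \setminus \{0\} \cong \R \times S^{n-1}\) combined with Bott periodicity (\(\Cont_0(\R^2) \sim_{\KK} \C\)) gives
\begin{equation*}
\Cont_0(\R) \otimes J_G \sim_{\KK} \Cont_0(\R^2) \otimes \Cont(S^{n-1}) \sim_{\KK} \Cont(S^{n-1}),
\end{equation*}
proving the first claim.

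To compare \(J_G\) with \(\Cont_0(\R^n \setminus \{0\})\), I use the defining extension \(0 \to J_G \to \Cst(G) \to \C \to 0\) coming from the trivial representation \(\widehat{\pi}_\triv\). For a simply connected nilpotent Lie group \(G\) of dimension \(n\), iterating Connes' Thom along the descending central series (whose subquotients are Euclidean) yields a \(\KK\)-equivalence \(\Cst(G) \sim_{\KK} \Cont_0(\lieg^*) \cong \Cont_0(\R^n)\); under this equivalence \(\widehat{\pi}_\triv\) corresponds to evaluation at the origin \(0 \in \lieg^*\), since the Kirillov correspondence parametrizes \(\pi_\triv\) by the zero orbit. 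Both \(J_G\) (as an ideal in the type~I algebra \(\Cst(G)\)) and \(\Cont_0(\R^n \setminus \{0\})\) lie in the UCT bootstrap class, so comparing the two six-term exact sequences in \(\K\)-theory identifies \(\K_*(J_G) \cong \K_*(\Cont_0(\R^n \setminus \{0\}))\), and the UCT then upgrades this to a \(\KK\)-equivalence; alternatively, the five lemma applied directly to the induced morphism of extensions in \(\KK\) yields the same conclusion.

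For the second claim, \cref{res:symbolfix_field} provides a \(\Cst\)-isomorphism \(\Fix^\Rp(J_0, \cl{\Rel_0}) \cong \Cont_0(G) \otimes \Fix^\Rp(J_G, \cl{\Rel_G})\), since the field is trivial. As \(G \cong \R^n\) as a topological space via \(\exp\), and since \(\KK\)-equivalences are preserved under tensoring with a fixed nuclear algebra, the first claim yields
\begin{equation*}
\Fix^\Rp(J_0, \cl{\Rel_0}) \sim_{\KK} \Cont_0(\R^n) \otimes \Cont(S^{n-1}) \cong \Cont_0(\R^n \times S^{n-1}) \cong \Cont_0(S^* \R^n),
\end{equation*}
the last identification using that the cosphere bundle of \(\R^n\) is trivial. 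The main obstacle is pinning down the \(\KK\)-equivalence \(\Cst(G) \sim_{\KK} \Cont_0(\lieg^*)\) together with the image of \(\widehat{\pi}_\triv\); a careful execution must either inductively deform \(\Cst(G)\) via the Connes--Thom isomorphism along the central series while tracking the Kirillov correspondence, or route the argument through the UCT after explicitly computing the boundary map of the six-term sequence using the coadjoint stratification of \(\lieg^*\).
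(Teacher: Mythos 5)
Your reduction coincides with the paper's up to the decisive point: Morita--Rieffel equivalence (\cref{res:morita}) plus the Connes--Thom isomorphism reduce everything to showing that \(J_G\) is \(\KK\)-equivalent to \(\Cont_0(\R^n\setminus\{0\})\), and both your Bott-periodicity bookkeeping and your route to the second claim (via the trivial-field isomorphism of \cref{res:symbolfix_field} and tensoring with \(\Cont_0(\R^n)\), rather than the paper's second pass through \(\Cred(\Rp,J_0)\)) are correct. The genuine gap is exactly the step you flag as ``the main obstacle''. You assert that under an iterated Connes--Thom equivalence \(\Cst(G)\sim_{\KK}\Cont_0(\lieg^*)\) the trivial representation ``corresponds to evaluation at the origin, since the Kirillov correspondence parametrizes \(\pi_{\triv}\) by the zero orbit''. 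The Kirillov correspondence is a homeomorphism of spectra; it carries no information about where a given \(\KK\)-class goes under an abstract \(\KK\)-equivalence that is not implemented by a \(^*\)-homomorphism. Likewise, the five lemma needs an actual morphism of extensions -- \(\KK\)-elements making all squares, including those involving the boundary maps, commute -- and a \(\KK\)-equivalence of the middle terms together with \(\Id_\C\) on the quotients does not by itself induce anything on the ideals. The paper closes this gap with a concrete deformation: the field of groups \(\mathcal{D}_G=\R^n\times[0,1]\) interpolating between \(G\) and the abelian \(\R^n\), whose \(\Cst\)-algebra contains an ideal \(I_G\) (sections lying fibrewise in the kernel of the trivial representation) with evaluation \(^*\)-homomorphisms onto \(J_G\) at \(t=1\) and onto \(\Cont_0(\R^n\setminus\{0\})\) at \(t=0\), and quotient \(\Cont([0,1])\prto\C\). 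This produces two honest morphisms of extensions; the middle and right vertical classes are shown to be \(\KK\)-equivalences by induction on \(\dim G\) using \(G=G'\rtimes\R\), \(\mathcal{D}_G\cong\mathcal{D}_{G'}\rtimes\R\) and naturality of Connes--Thom, and the long exact sequences then force the left-hand class to be one as well.

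Your UCT fallback can in fact be completed without the deformation, and more cheaply than by ``computing the boundary map using the coadjoint stratification'': the trivial representation of \(G\) kills \([G,G]\), so \(\widehat{\pi}_{\triv}\) factors as \(\Cst(G)\to\Cst(G/[G,G])\cong\Cont_0(\R^k)\xrightarrow{\ev_0}\C\) with \(k\geq 1\); since \(\ev_0\) is homotopic to the zero homomorphism (push the evaluation point to infinity), \([\widehat{\pi}_{\triv}]=0\) in \(\KK(\Cst(G),\C)\). The six-term sequence of \(J_G\into\Cst(G)\prto\C\) then gives \(\K_0(J_G)\cong\K_0(\Cst(G))\) and a split short exact sequence \(0\to\Z\to\K_1(J_G)\to\K_1(\Cst(G))\to 0\), which matches \(\K_*(\Cont_0(\R^n\setminus\{0\}))\) in both parities once \(\K_*(\Cst(G))\cong\K_*(\Cont_0(\R^n))\) is established by the iterated Connes--Thom argument; as \(J_G\) is separable and type I, hence in the bootstrap class, the UCT upgrades this to the required \(\KK\)-equivalence. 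One of these two arguments must actually be carried out: as written, your proof asserts the key identification rather than proving it.
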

\begin{proof}
The Morita--Rieffel equivalences between \(\Fix^\Rp(J_G,\cl{\Rel_G})\) and \(\Cred(\R,J_G)\) obtained in \cref{res:pointwisesaturated} implies that they are \(\KK\)\nb-equivalent. By the Connes--Thom isomorphism, \(\Cred(\R,J_G)\) is in turn \(\KK\)\nb-equivalent to \(\Cont_0(\R)\otimes J_G\). 

Let \(\lieg\) be the Lie algebra of \(G\) and for each \(t\in[0,1]\) define \([X,Y]_t\defeq t[X,Y]\) for \(X,Y\in\lieg\). Note that here the usual scalar multiplication by \(t\in[0,1]\) is used and not the dilation action. One checks that \([\,\cdot\,,\,\cdot\,]_t\) defines a Lie bracket for all \(t\in[0,1]\). Denote by \(\lieg_t\) the corresponding Lie algebra and by \(G_t\) its Lie group. All Lie algebras \(\lieg_t\) for \(t>0\) are isomorphic to \(\lieg\) via \(X\mapsto tX\). 

Consider the groupoid \(\mathcal{D}_G=\R^n\times [0,1]\rightrightarrows[0,1]\), where source and range are given by the projection to the last coordinate and the multiplication in \(s^{-1}(t)=r^{-1}(t)=\R^n\), identified with \(G_t\) under the exponential map, is given by group multiplication in \(G_t\). This is a continuous field of groups over \([0,1]\) that deforms the graded nilpotent Lie group \(G\) into the Abelian group \(\R^n\). Using Fourier transform at \(t=0\) one obtains the short exact sequence
\begin{equation*}
\begin{tikzcd}
 \Cont_0((0,1])\otimes \Cst(G)\arrow[r,hook] & \Cst(\mathcal{D}_G) \arrow[r,twoheadrightarrow,"\ev_0"]& \Cont_0(\R^n).\end{tikzcd}
\end{equation*}
Consider the associated \(\KK\)-element \([\ev_0]^{-1}\otimes[\ev_1]\in\KK(\Cont_0(\R^n),\Cst(G))\), as described in \cite{debordlescure}. First, we shall prove as in \cite{nistor} that it is a KK-equivalence for any simply connected, nilpotent Lie group \(G\) by induction on the dimension of \(G\). If \(G\) is one-dimensional, it must be Abelian, so that \(G_t\) is the constant field and \([\ev_1]^{-1}\otimes[\ev_0]\) is the inverse class. If \(G\) has dimension greater than one, it can be written as a semidirect product \(G=G'\rtimes \R\). Furthermore \(\mathcal{D}_G\cong\mathcal{D}_{G'}\rtimes\R\) and \(\Cont_0(\R^n)\cong \Cont_0(\R^{n-1})\rtimes\R\) such that the following diagram commutes
\begin{equation*}
\begin{tikzcd}
\Cont_0(\R^n)\arrow[d,"\cong"]& \Cst(\mathcal{D}_G) \arrow[l,swap,"\ev^G_0"]\arrow[r,"\ev^G_1"]\arrow[d,"\cong"] & \Cst(G)\arrow[d,"\cong"]\\
\Cont_0(\R^{n-1})\rtimes\R & \Cst(\mathcal{D}_{G'})\rtimes\R \arrow[l,swap,"(\ev^{G'}_0)_*"]\arrow[r,"(\ev^{G'}_1)_*"] & \Cst(G')\rtimes\R.
\end{tikzcd}
\end{equation*}
The naturality of the Connes--Thom isomorphism shows that the bottom row defines a \(\KK\)\nb-equivalence by induction hypothesis, which yields that \(\Cst(G)\) and \(\Cont_0(\R^n)\)
are \(\KK\)-equivalent. We show that it restricts to a \(\KK\)\nb-equivalence between \(J_G\) and \(\Cont_0(\R^n\!\setminus\!\{0\})\). Consider the ideal \(I_G\subset\Cst(\mathcal{D}_G)\) that consists of all sections \((a_t)\in\Cst(\mathcal{D}_G)\) such that all \(a_t\in\Cst(G_t)\) lie in the kernel of the trivial representation of \(G_t\). In the commuting diagram
\begin{equation*}
\begin{tikzcd}
J_G \arrow[r,hook] & \Cst(G) \arrow[r,twoheadrightarrow] & \C\\
I_G\arrow[r,hook]\arrow{u}[swap]{\ev_1}\arrow{d}{\ev_0} & \Cst(\mathcal{D}_G)\arrow[r,twoheadrightarrow]\arrow{u}[swap]{\ev_1}\arrow{d}{\ev_0} & \Cont([0,1]) \arrow{u}[swap]{\ev_1}\arrow{d}{\ev_0}\\
\Cont_0(\R^n\!\setminus\!\{0\})\arrow[r,hook] & \Cont_0(\R^n) \arrow[r,twoheadrightarrow]& \C
\end{tikzcd}
\end{equation*}
the associated \(\KK\)-classes in the middle and on the right are \(\KK\)-equivalences. The long exact sequences in \(\KK\)-theory show that the deformation element on the left is also a \(\KK\)\nb-equivalence. In conclusion, \(\Fix^\Rp(J_G,\cl{\Rel_G})\) is \(\KK\)-equivalent to \(\Cont_0(\R)\otimes\Cont_0(\R^n\!\setminus\!\{0\})\). In the Euclidean case, the generalized fixed point algebra \(\Cont(S^{n-1})\) is likewise \(\KK\)-equivalent to \(\Cont_0(\R)\otimes\Cont_0(\R^n\!\setminus\!\{0\})\).

By \cref{res:morita}, \(\Fix^\Rp(J_0,\cl{\Rel_0})\) is Morita-equivalent to \(\Cred(\R,J_0)\), which is again by the Connes--Thom isomorphism \(\KK\)-equivalent to \(\Cont_0(\R)\otimes J_0\). As \(J_0\cong \Cont_0(\R^n)\otimes J_G\), it follows that \(\Fix^\Rp(J_0,\cl{\Rel_0})\) is \(\KK\)-equivalent to \(\Cont_0(S^*\R^n)\).
\end{proof}

\begin{bibdiv}
	\begin{biblist}*{labels={alphabetic}}
		\bibselect{extrareferences}
	\end{biblist}
\end{bibdiv}

\end{document}